\documentclass[11pt]{article}

\usepackage{latexsym}
\usepackage{amssymb}
\usepackage{amsthm}
\usepackage{amscd}
\usepackage{amsmath}
\usepackage{mathrsfs}
\usepackage{graphicx}
\usepackage{shuffle}
\usepackage[colorlinks=true, pdfstartview=FitV, linkcolor=blue, citecolor=blue, urlcolor=blue]{hyperref}
\usepackage{mathtools}
\usepackage[bbgreekl]{mathbbol}
\usepackage{mathdots}
\usepackage{ytableau}
\usepackage[enableskew,vcentermath]{youngtab}
\usepackage{tikz-cd}

\usepackage{mathtools}
\usepackage{amssymb}

\usepackage[colorinlistoftodos]{todonotes}
\usetikzlibrary{chains,scopes}
\usetikzlibrary{shapes.geometric,positioning}

\DeclareSymbolFontAlphabet{\mathbb}{AMSb}
\DeclareSymbolFontAlphabet{\mathbbl}{bbold}

\def\O{\mathsf{O}}

\definecolor{darkred}{rgb}{0.7,0,0} 
\newcommand{\defn}[1]{{\color{darkred}\emph{#1}}} 

\numberwithin{equation}{section}
\allowdisplaybreaks[1]

\theoremstyle{definition}
\newtheorem* {theorem*}{Theorem}
\newtheorem* {proposition*}{Proposition}
\newtheorem* {corollary*}{Corollary}
\newtheorem* {conjecture*}{Conjecture}
\newtheorem{theorem}{Theorem}[section]

\theoremstyle{definition}

\newtheorem* {example*}{Example}

\newtheorem{lemma}[theorem]{Lemma}
\theoremstyle{definition}
\newtheorem{definition}[theorem]{Definition}
\theoremstyle{definition}

\newtheorem{proposition}[theorem]{Proposition}
\newtheorem{corollary}[theorem]{Corollary}

\newtheorem{remark}[theorem]{Remark}
\newtheorem*{remark*}{Remark}
\theoremstyle{definition}
\newtheorem {example}[theorem]{Example}
\theoremstyle{definition}

\theoremstyle{definition}

\theoremstyle{definition}

\def\modu{\ (\mathrm{mod}\ }

\def\({\left(}
\def\){\right)}

\newcommand{\CC}{\mathbb{C}}

\newcommand{\cR}{\mathcal{R}}

\newcommand{\cC}{\mathcal{C}}
\newcommand{\cD}{\mathcal{D}}

\def\cX{\mathcal{X}}
\def\cY{\mathcal{Y}}

\def\CC{\mathbb{C}}

\def\ZZ{\mathbb{Z}}

\def\sl{\mathfrak{sl}}

\def\ch{\mathsf{ch}}

\newcommand{\g}{\mathfrak{g}}

\def\fk{\mathfrak}

\def\barr{\begin{array}}
\def\earr{\end{array}}
\def\ba{\begin{aligned}}
\def\ea{\end{aligned}}
\def\be{\begin{equation}}
\def\ee{\end{equation}}

\def\quand{\quad\text{and}\quad}

\newcommand{\gl}{\mathfrak{gl}}

\def\I{\mathcal{I}}

\def\hs{\hspace{0.5mm}}

\def\ben{\begin{enumerate}}
\def\een{\end{enumerate}}

\def\bei{\begin{itemize}}
\def\eei{\end{itemize}}

\def\hs{\hspace{0.5mm}}

\def\D{\hat D}

\def\Des{\mathrm{Des}}

\def\e{\textbf{e}}

\newcommand{\std}{\operatorname{std}}

\newcommand{\cB}{\mathcal{B}}

\def\iR{\cR^\O}

\def\sC{\mathscr{C}}

\def\q{\mathfrak{q}}

\def\Tab{\textsf{Tab}_{EG}}

\newcommand{\row}{\operatorname{row}}

\newcommand{\Sym}{\textsf{Sym}}

\newcommand{\SymP}{\Sym_{{P}}}
\newcommand{\SymQ}{\Sym_{{Q}}}

\newcommand{\weight}{\operatorname{wt}}

\def\cW{\mathcal{W}}
\newcommand{\Incr}{\operatorname{Incr}}

\def\row{\textsf{row}}

\def\path{\mathsf{path}}

\def\dash{\text{ --- }}

\def\whSym
{\mathfrak{m}\textsf{Sym}}

\def\whQSym
{\mathfrak{m}\textsf{QSym}}

\def\D{\textsf{D}}
\def\SD{\textsf{SD}}

\def\NN{\mathbb{N}}

\def\Tab{\textsf{Tab}}

\def\PHM{P^\O_{\textsf{HM}}}
\def\QHM{Q^\O_{\textsf{HM}}}

\def\PO{P_{\textsf{EG}}^\O}
\def\QO{Q_{\textsf{EG}}^\O}

\newcommand{\ytab}[1]{
\ytableausetup{boxsize = .5cm,aligntableaux=center}
{\small\begin{ytableau}  #1  \end{ytableau}}
}

\def\fks{{\fk s}}
\def\shword{\textsf{shword}}

\def\pair{\textsf{pair}}

\def\T{\mathsf{T}}

\def\BB{\mathbb{B}}

\def\concat{\mathsf{concat}}

\def\marked{\mathsf{marked}}
\def\unprime{\mathsf{unprime}}

\def\ORF{\rf^\O_n}

\def\iR{\cR_{\mathsf{inv}}}

\def\bar{\overline}

\def\fkd{\mathfrak{d}}

\def\ShTab{\mathsf{ShTab}}

\def\primes{\mathsf{primes}}

\def\ck{\textsf{ock}}

\def\double{\mathsf{double}}

\def\Incr{\mathsf{Incr}}

\def\diag{\text{\emph{diag}}}

\def\W{W^{\mathsf{BC}}}
\def\isim{\mathbin{\hat \equiv}}

\usepackage{listings}
\lstdefinelanguage{Sage}[]{Python}
{morekeywords={False,sage,True},sensitive=true}
\lstset{
  frame=single,
  showtabs=False,
  showspaces=False,
  showstringspaces=False,
  commentstyle={\ttfamily\color{dgreencolor}},
  keywordstyle={\ttfamily\color{dbluecolor}\bfseries},
  stringstyle={\ttfamily\color{dgraycolor}\bfseries},
  language=Sage,
  basicstyle={\footnotesize\ttfamily},
  aboveskip=0.75em,
  belowskip=0.75em,
  xleftmargin=.15in,
}
\definecolor{dblackcolor}{rgb}{0.0,0.0,0.0}
\definecolor{dbluecolor}{rgb}{0.01,0.02,0.7}
\definecolor{dgreencolor}{rgb}{0.2,0.4,0.0}
\definecolor{dgraycolor}{rgb}{0.30,0.3,0.30}


\def\std{\mathsf{standardize}}

\def\qq{\q^+}
\def\SShTab{\mathsf{ShTab}^+}

\def\ORF{\mathsf{Incr}^+}
\def\unpaired{\mathsf{unpaired}}

\def\Thighest{T^{\mathsf{highest}}}
\def\Tlowest{T^{\mathsf{lowest}}}
\def\TTlowest{  \hat T^{\mathsf{lowest}}}

\def\sF{\mathscr{F}}

\usepackage{bbm}
\def\One{\mathbbm{1}}

\def\fkg{\mathfrak{g}}
\def\CCCq{\CC(\hspace{-0.5mm}(q)\hspace{-0.5mm})}

\usepackage{fullpage}
\usepackage[nomarkers,figuresonly,nofiglist]{endfloat}

\begin{document}
\title{Highest weight crystals for Schur $Q$-functions}

\author{
Eric MARBERG\thanks{
Department of Mathematics, Hong Kong University of Science and Technology, {\tt emarberg@ust.hk}.
}
\and
Kam Hung TONG\thanks{
Department of Mathematics, Hong Kong University of Science and Technology, {\tt khtongad@connect.ust.hk}.
}
}
\date{}

\maketitle

\begin{abstract}
Work of Grantcharov et al. develops a theory of abstract crystals for the queer Lie superalgebra $\mathfrak{q}_n$. Such $\mathfrak{q}_n$-crystals form a monoidal category in which the connected normal objects have unique highest weight elements and characters that are Schur $P$-polynomials. This article studies a modified form of this category, whose connected normal objects again have unique highest weight elements but now possess characters that are Schur $Q$-polynomials. The crystals in this category have some interesting features not present for ordinary $\mathfrak{q}_n$-crystals. For example, there is an extra crystal operator, a different tensor product, and an action of the hyperoctahedral group exchanging highest and lowest weight elements. There are natural examples of $\mathfrak{q}_n$-crystal structures on certain families of shifted tableaux and factorized reduced words. We describe extended forms of these structures that give similar examples in our new category.
\end{abstract}

\setcounter{tocdepth}{2}

\section{Introduction}

\subsection{Overview}

 
\defn{Crystals} are an abstraction for the \defn{crystal bases} of 
quantum group representations.
Invented by Kashiwara \cite{Kashiwara1990,Kashiwara1991} and Lusztig \cite{Lusztig1990a,Lusztig1990b} in the 1990s,
crystals may be viewed concretely as
directed acyclic graphs with labeled edges, along with a map assigning weight vectors  to each vertex, satisfying certain axioms.
Isomorphisms of crystals correspond to weight-preserving graph isomorphisms,
while subcrystals correspond to unions of weakly connected graph components.

For each finite-dimensional Lie superalgebra $\fkg$ there is a category of (abstract) $\fkg$-crystals.
The structure of $\fkg$
imposes different requirements for the weight map and edge labels.
These categories have some common features. There is always a direct sum operation $\oplus$ for crystals
corresponding to the disjoint union of directed graphs.
There is also a more subtle notion of a crystal tensor product $\otimes$.
There is a character map $\ch$ assigning to each finite crystal its weight-generating function.
Finally, there is a \defn{standard crystal} $\BB$ corresponding to the vector representation of an associated quantum group $U_q(\fkg)$.

These ingredients are enough to define a full subcategory of \defn{normal $\g$-crystals}:
this consists of the $\g$-crystals whose connected components are each isomorphic to a subcrystal
of $\BB^{\otimes m} $ for some $m\geq 0$.
Such crystals form the smallest monoidal subcategory containing the standard crystal that is closed under isomorphisms, direct sums, and passage to subcrystals.

Defined in this way, the normal $\g$-crystals are typically
 the abstract $\g$-crystals that correspond directly to crystal bases of finite-dimensional integrable $U_q(\g)$-modules.
 This connection implies some desirable properties:
 for example, that each connected normal crystal has a unique \defn{highest weight element} whose weight determines the crystal's isomorphism class.
  In such cases, the character map usually identifies the split Grothendieck group of the category of normal $\g$-crystals
 with a familiar algebra of symmetric polynomials. 
 
 The next section reviews how this works in two cases that have been well-studied, when $\g=\gl_n$ is the complex general linear Lie algebra
 and when $\g=\q_n$ is the \defn{queer Lie superalgebra}.
 Section~\ref{intro3-sect} outlines our main results, which establish similar formal properties of a new category of what we call \defn{$\qq_n$-crystals}.
 It is desirable  to find proofs of crystal properties using only the relevant combinatorial axioms rather than any connection to quantum groups, and this will be
 our approach throughout.

 \subsection{Crystals for Schur functions and Schur $P$-functions}
 
 Let $n$ be a positive integer. 
When $\g=\gl_n$ the edges in each crystal graph are labeled by indices in 
$\{1,2,\dots,n-1\}$ and the weight map takes values in $\ZZ^n$. The standard $\gl_n$-crystal is
 \be\label{standard-gl-eq}  
    \begin{tikzpicture}[xscale=1.8, yscale=1,>=latex,baseline=(z.base)]
    \node at (0,0) (z) {};
      \node at (0,0) (T0) {$\boxed{1}$};
      \node at (1,0) (T1) {$\boxed{2}$};
      \node at (2,0) (T2) {$\boxed{3}$};
      \node at (3,0) (T3) {${\cdots}$};
      \node at (4,0) (T4) {$\boxed{n}$};
      \draw[->,thick]  (T0) -- (T1) node[midway,above,scale=0.75] {$1$};
      \draw[->,thick]  (T1) -- (T2) node[midway,above,scale=0.75] {$2$};
      \draw[->,thick]  (T2) -- (T3) node[midway,above,scale=0.75] {$3$};
      \draw[->,thick]  (T3) -- (T4) node[midway,above,scale=0.75] {$n-1$};
     \end{tikzpicture}
  \ee
  where the weight of $\boxed{i}$ is the standard basis vector $\e_i \in \ZZ^n$. 
  We review the precise definition of \defn{$\gl_n$-crystals}
  and their tensor product in Section~\ref{gl-sect}. 
  
A $\gl_n$-crystal is \defn{normal} if its connected components are each isomorphic to a subcrystal of a tensor power of the standard $\gl_n$-crystal.
A remarkable property of normal $\gl_n$-crystals is that they are characterized by a set of local conditions known as the \defn{Stembridge axioms} \cite{Stembridge2003}.
For this reason such crystals are sometimes called \defn{Stembridge crystals}\footnote{In Bump and Schilling's book \cite{BumpSchilling}, however, \defn{Stembridge crystals} refer to
arbitrary \defn{twists} of what we call normal crystals, where a twist is obtained by translating the weight map by a fixed multiple of $(1,1,\dots,1) \in \ZZ^n$.}.

  A vector $\lambda \in \ZZ^n$ is a \defn{partition} if $\lambda_1 \geq \lambda_2 \geq \dots \geq \lambda_n \geq 0$.
  A \defn{$\gl_n$-highest weight element} of a $\gl_n$-crystal is any vertex in the associated crystal graph with no incoming edges.
 The claims in the following theorem are well-known, and serve as a prototype for subsequent results.
 
\begin{theorem}[{See \cite[Thms. 3.2 and 8.6]{BumpSchilling}}]
\label{gl-thm}
If $\cB$ is a connected normal $\gl_n$-crystal,
then $\cB$ has a unique $\gl_n$-highest weight element, whose 
 weight  $\lambda \in \ZZ^n$  is a partition such that $\ch(\cB)=s_\lambda(x_1,x_2,\dots,x_n)$.
For each partition $\lambda \in \ZZ^n$,
there is a connected normal $\gl_n$-crystal with highest weight $\lambda$,
and 
finite normal $\gl_n$-crystals are isomorphic if and only if they have the same character.
\end{theorem}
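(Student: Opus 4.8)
The strategy is to identify every connected normal $\gl_n$-crystal with a \emph{tableau crystal} $\SSYT(\lambda,n)$ and then read off the three claims from the combinatorics of semistandard Young tableaux. First I would make the tensor powers of $\BB$ explicit: using the tensor product rule recalled in Section~\ref{gl-sect}, the vertices of $\BB^{\otimes m}$ are identified with words $w = w_1 w_2 \cdots w_m$ in the alphabet $\{1,\dots,n\}$, with weight $\mathrm{wt}(w) = \e_{w_1} + \cdots + \e_{w_m}$, and with $e_i, f_i$ computed by the usual signature (bracketing) rule applied to the positions equal to $i$ or $i+1$. The crux of the argument — and the step I expect to be the main obstacle — is the classical fact that the connected component of $w$ in $\BB^{\otimes m}$ depends only on the RSK insertion tableau $P(w)$, equivalently only on the Knuth equivalence class of $w$. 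Concretely, the Kashiwara operators commute with the elementary Knuth transformations, so $w \mapsto P(w)$ is a surjective morphism of $\gl_n$-crystals from $\BB^{\otimes m}$ onto $\bigsqcup_\lambda \SSYT(\lambda,n)$ — the union over partitions $\lambda$ of $m$ with at most $n$ rows, where $e_i, f_i$ act on tableaux through their reading words — and this morphism restricts to an isomorphism on each connected component. Establishing this requires a genuine input beyond the crystal axioms stated so far; the two standard routes are (i) to verify Stembridge's local axioms \cite{Stembridge2003} for $\SSYT(\lambda,n)$ and invoke his theorem that they characterize connected components of $\BB^{\otimes m}$ up to isomorphism, or (ii) to argue directly via jeu de taquin and the plactic monoid that reading words of tableaux form a cross-section of the components; I would adopt route (ii), and in particular show that $\SSYT(\lambda,n)$ is connected and that the reading word gives a crystal embedding $\SSYT(\lambda,n) \hookrightarrow \BB^{\otimes|\lambda|}$.

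Granting this reduction, the first two claims follow quickly. Since $\cB$ is connected and normal, it is isomorphic to a connected subcrystal — hence to a connected component — of some $\BB^{\otimes m}$, and therefore to $\SSYT(\lambda,n)$ for a unique partition $\lambda \in \ZZ^n$. In $\SSYT(\lambda,n)$ the weight of a tableau $T$ is its content vector $(c_1,\dots,c_n)$, where $c_i$ is the number of $i$'s in $T$, so by the very definition of the Schur polynomial $\ch(\SSYT(\lambda,n)) = \sum_T x^{\mathrm{wt}(T)} = s_\lambda(x_1,\dots,x_n)$. A tableau has no incoming $i$-edge for every $i$ precisely when its reading word is a lattice (Yamanouchi) word, and the unique such tableau of shape $\lambda$ is the one with every box in row $i$ equal to $i$; since $\lambda_1 \geq \cdots \geq \lambda_n \geq 0$ this tableau always exists, and its weight is exactly $\lambda$, a partition. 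This proves the first assertion, and for existence one simply takes $\cB = \SSYT(\lambda,n)$ for the given $\lambda \in \ZZ^n$, which is a connected component of $\BB^{\otimes|\lambda|}$ and hence a connected normal $\gl_n$-crystal with highest weight $\lambda$.

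Finally, any finite normal $\gl_n$-crystal is a finite direct sum of connected normal ones, so its character has the form $\sum_j s_{\mu^{(j)}}(x_1,\dots,x_n)$ for a multiset of partitions $\mu^{(j)} \in \ZZ^n$. Because the Schur polynomials $\{\,s_\mu(x_1,\dots,x_n) : \mu \in \ZZ^n \text{ a partition}\,\}$ are linearly independent in $\ZZ[x_1,\dots,x_n]$, the character determines the multiset $\{\mu^{(j)}\}$, and hence — by the classification of connected normal crystals in the previous paragraph — the isomorphism type of the crystal; the reverse implication is immediate since isomorphic crystals have equal characters. This completes the proof modulo the reduction to tableau crystals in the first step, which is where essentially all of the work lies.
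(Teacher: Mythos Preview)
The paper does not actually prove this theorem: it is stated with the attribution ``See \cite[Thms.~3.2 and 8.6]{BumpSchilling}'' and the surrounding text explicitly calls the claims ``well-known'' and a ``prototype for subsequent results,'' so there is no proof in the paper to compare against.

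Your sketch is a correct outline of the standard argument, and in fact it tracks the development in \cite{BumpSchilling} fairly closely: the identification of $\BB^{\otimes m}$ with words under the signature rule, the reduction of connected components to tableau crystals $\SSYT(\lambda,n)$, the identification of the highest weight element as the ``superstandard'' tableau of weight $\lambda$, and the linear-independence-of-Schur-polynomials argument for the last claim are all essentially how the cited reference proceeds. You are right that the substantive step is the equivalence between connected components of $\BB^{\otimes m}$ and tableau crystals; either of your two suggested routes (Stembridge's local axioms or the plactic/reading-word argument) works, and Bump--Schilling essentially takes a version of route~(ii). One small wording issue: the ``reading word'' embedding you describe is not quite the inverse of $w\mapsto P(w)$ on the nose, but it does land in the correct component, which is all that is needed.
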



Here $s_\lambda(x_1,x_2,\dots,x_n)$ denotes the \defn{Schur polynomial} of a partition $\lambda$ in $n$ commuting variables.
The Schur polynomials indexed by partitions $\lambda \in \ZZ^n$ are a $\ZZ$-basis for the 
subring $\Sym(x_1,x_2,\dots,x_n)$
  of all symmetric polynomials in $\ZZ[x_1,x_2,\dots,x_n]$.
  
The \defn{split Grothendieck group} of an additive category $\sC$ is the abelian group
generated by the symbols $[A]$ for all objects $A \in \sC$, 
subject to the relations $[A]+[B]=[M]$ for all objects with $A \oplus B \cong M$.
  When $\sC$ is monoidal, this group is a ring with multiplication $[A][B] := [A\otimes B]$.
If $\cB$ and $\cC$ are finite crystals, then $\ch(\cB\otimes \cC) = \ch(\cB)\ch(\cC)$,
  so the following is immediate.
  
  \begin{corollary}
The map assigning a $\gl_n$-crystal to its character defines
  a ring isomorphism from the split Grothendieck group of the 
category of finite normal $\gl_n$-crystals to $\Sym(x_1,x_2,\dots,x_n)$.
  \end{corollary}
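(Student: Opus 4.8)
The plan is to deduce this from Theorem~\ref{gl-thm} together with the multiplicativity of the character map under $\otimes$. Let $\sC$ denote the category of finite normal $\gl_n$-crystals, with split Grothendieck group $K_0(\sC)$, and write $\ch : K_0(\sC) \to \Sym(x_1,\dots,x_n)$ for the map induced by $\cB \mapsto \ch(\cB)$. First I would check that this map is well defined on $K_0(\sC)$: since $\ch$ is clearly additive with respect to $\oplus$ (the weight-generating function of a disjoint union of crystal graphs is the sum of the weight-generating functions), it respects the defining relations $[A]+[B]=[M]$ whenever $A\oplus B\cong M$, so it descends to a group homomorphism on $K_0(\sC)$. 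That it is a \emph{ring} homomorphism is exactly the stated identity $\ch(\cB\otimes\cC)=\ch(\cB)\ch(\cC)$ for finite crystals, together with the fact that $[\cB][\cC]:=[\cB\otimes\cC]$ defines the multiplication on the split Grothendieck group of the monoidal category $\sC$ (one should note in passing that $\otimes$ preserves normality and finiteness, so $\sC$ is indeed monoidal).

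Next I would establish surjectivity. By Theorem~\ref{gl-thm}, for every partition $\lambda\in\ZZ^n$ there is a connected normal $\gl_n$-crystal $\cB_\lambda$ with $\ch(\cB_\lambda)=s_\lambda(x_1,\dots,x_n)$, and $\cB_\lambda$ is finite (being a subcrystal of a tensor power of the finite standard crystal $\BB$, or more simply because $s_\lambda$ is a polynomial with finitely many terms, each of bounded multiplicity). Hence every Schur polynomial $s_\lambda$ lies in the image of $\ch$, and since the $s_\lambda$ for $\lambda$ a partition form a $\ZZ$-basis of $\Sym(x_1,\dots,x_n)$ and the image of $\ch$ is a subgroup of $\Sym(x_1,\dots,x_n)$, it follows that $\ch$ is surjective.

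Finally I would establish injectivity. Every object of $\sC$ is a finite direct sum of connected normal crystals (decompose the crystal graph into its connected components, of which there are finitely many since the crystal is finite), and by Theorem~\ref{gl-thm} each connected component is isomorphic to some $\cB_\lambda$ and is determined up to isomorphism by the partition $\lambda$, i.e.\ by its character $s_\lambda$. Therefore $K_0(\sC)$ is the free abelian group on the isomorphism classes $[\cB_\lambda]$, $\lambda$ ranging over partitions in $\ZZ^n$; indeed one must also check there are no \emph{further} relations, which holds because the split Grothendieck group of a category in which every object decomposes uniquely into indecomposables is precisely the free abelian group on those indecomposables, and each $\cB_\lambda$ is indecomposable by connectedness. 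Under $\ch$ this free basis maps to the $\ZZ$-basis $\{s_\lambda\}$ of $\Sym(x_1,\dots,x_n)$, so $\ch$ carries a basis to a basis bijectively and is therefore an isomorphism of abelian groups; combined with the first paragraph it is a ring isomorphism. The only mild subtlety — the ``main obstacle'' such as it is — is the bookkeeping in this last step: one must argue that there are no relations in $K_0(\sC)$ beyond those forced by $\oplus$-decomposition into the connected pieces $\cB_\lambda$, which is where the uniqueness clause ``finite normal $\gl_n$-crystals are isomorphic if and only if they have the same character'' from Theorem~\ref{gl-thm} does the real work.
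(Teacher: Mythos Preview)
Your proof is correct and follows exactly the approach the paper intends: the paper simply observes that $\ch(\cB\otimes\cC)=\ch(\cB)\ch(\cC)$ and declares the corollary ``immediate'' from Theorem~\ref{gl-thm}, and your write-up is precisely a careful unpacking of that immediacy. The only difference is level of detail; your argument for injectivity via the free basis $\{[\cB_\lambda]\}$ is the standard elaboration of what the paper leaves implicit.
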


The general linear Lie algebra $\gl_n$ has two super-analogues given by $\gl_{m|n-m}$ (see \cite{BKK})
and the \defn{queer Lie superalgebra} $\q_n$.
Grantcharov et al. develop a theory of crystals for $\q_n$ in \cite{GJKK10,GJKKK,GJKKK15}.
In this theory,
the edges in each crystal graph are labeled by indices in   $\{ \bar 1, 1,2,\dots,n-1\}$ and the weight map takes values in $\NN^n$ where $\NN := \{0,1,2,\dots\}$.
The standard $\q_n$-crystal is formed by adding a single $\bar 1$-arrow to the standard $\gl_n$-crystal:
 \be\label{standard-q-eq}
    \begin{tikzpicture}[xscale=1.8, yscale=1,>=latex,baseline=(z.base)]
    \node at (0,0) (z) {};
      \node at (0,0) (T0) {$\boxed{1}$};
      \node at (1,0) (T1) {$\boxed{2}$};
      \node at (2,0) (T2) {$\boxed{3}$};
      \node at (3,0) (T3) {${\cdots}$};
      \node at (4,0) (T4) {$\boxed{n}$};
      \draw[->,thick,dashed,color=darkred]  (T0.15) -- (T1.165) node[midway,above,scale=0.75] {$\overline 1$};
      \draw[->,thick]  (T0.345) -- (T1.195) node[midway,below,scale=0.75] {$1$};
      \draw[->,thick]  (T1) -- (T2) node[midway,above,scale=0.75] {$2$};
      \draw[->,thick]  (T2) -- (T3) node[midway,above,scale=0.75] {$3$};
      \draw[->,thick]  (T3) -- (T4) node[midway,above,scale=0.75] {$n-1$};
     \end{tikzpicture}
\ee
For the precise definitions of \defn{$\q_n$-crystals} 
  and their tensor product, see Section~\ref{qn-sect}.
Besides in \cite{GJKK10,GJKKK,GJKKK15},
these crystals have been studied in \cite{AssafOguz,ChoiKwon,GHPS,Hiroshima2018,Hiroshima,Marberg2019b}, for example.

\defn{Normal $\q_n$-crystals} are defined in terms of tensor powers of the standard $\q_n$-crystal in the same way as in the $\gl_n$-case.
The notion of \defn{$\q_n$-highest weight elements} for $\q_n$-crystals is slightly different:
these are again the vertices with no incoming edges, but now in an \defn{extended crystal graph}
  involving additional arrows with labels in $\{\overline{n-1},\dots,\bar 2,\bar 1, 1,2,\dots,n-1\}$; 
  see Definition~\ref{q-highest-def}.  

 A partition $\lambda \in \NN^n$ is \defn{strict} if it has no repeated nonzero entries.
 The following $\q_n$-analogue of Theorem~\ref{gl-thm}
contains several results in \cite{GJKKK}, especially
\cite[Thm. 2.5 and Cor. 4.6]{GJKKK}.

\begin{theorem}[See \cite{GJKKK}]
\label{main-q-thm}
If $\cB$ is a connected normal $\q_n$-crystal,
then $\cB$ has a unique $\q_n$-highest weight element, whose 
 weight  $\lambda \in \NN^n$  is a strict partition such that $\ch(\cB)=P_\lambda(x_1,x_2,\dots,x_n)$.
For each strict partition $\lambda \in \NN^n$,
there is a connected normal $\q_n$-crystal with highest weight $\lambda$,
and 
finite normal $\q_n$-crystals are isomorphic if and only if they have the same character.
\end{theorem}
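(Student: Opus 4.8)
The plan is to deduce Theorem~\ref{main-q-thm} from the $\gl_n$-theory (Theorem~\ref{gl-thm}) together with an explicit shifted-tableau model, following the overall strategy of \cite{GJKKK} but keeping every argument at the level of the crystal axioms reviewed in Section~\ref{qn-sect}. Forgetting all $\bar 1$-arrows turns a $\q_n$-crystal into a $\gl_n$-crystal and a normal $\q_n$-crystal into a normal $\gl_n$-crystal, so by Theorem~\ref{gl-thm} each $\gl_n$-component of a connected normal $\q_n$-crystal $\cB$ has a well-defined weight lattice and a unique $\gl_n$-highest weight element of partition weight. I would then introduce the crystal $\cB_\lambda$ of semistandard shifted tableaux of strict-partition shape $\lambda$ in the marked alphabet $1'<1<2'<2<\cdots$ with unprimed main diagonal, equipped with the operators $e_i,f_i$ ($1\le i\le n-1$) acting as the usual $\gl_n$-operators on the reading word and $e_{\bar 1},f_{\bar 1}$ acting on the relevant prefix by the queer rule. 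One checks directly from the axioms that $\cB_\lambda$ is a connected $\q_n$-crystal; then, using a shifted analogue of RSK insertion $\BB^{\otimes m}\to\bigsqcup_{\lambda}\cB_\lambda\times R_\lambda$ (with $R_\lambda$ a suitable set of recording tableaux) that intertwines the $\q_n$-operators acting on the first tensor factor, one concludes that $\cB_\lambda$ is normal and that \emph{every} connected component of $\BB^{\otimes m}$ is isomorphic to some $\cB_\lambda$. This is the structural core of the theorem and yields the decomposition of an arbitrary normal $\q_n$-crystal into copies of the $\cB_\lambda$.

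Next I would treat highest weight elements. In the extended crystal graph of Definition~\ref{q-highest-def}, the operators $e_{\bar\imath},f_{\bar\imath}$ ($1\le i\le n-1$) are built from $e_{\bar 1},f_{\bar 1}$ by conjugating with the $\gl_n$-crystal reflections, and I would show: (a) this extended graph is acyclic on a connected normal $\q_n$-crystal and each weight space is finite, so a $\q_n$-highest weight vertex exists; (b) a vertex is $\q_n$-highest precisely when it is $\gl_n$-highest and killed by $e_{\bar 1}$, the remaining conditions $e_{\bar\imath}b=0$ ($i\ge2$) then being automatic for a $\gl_n$-highest vertex; (c) in $\cB_\lambda$ the unique $\gl_n$-highest tableau (row $i$ constant equal to $i$) is killed by $e_{\bar 1}$ and has weight $\lambda$, which is strict precisely because the shape is. Since the $\gl_n$-highest weight element of a connected $\gl_n$-crystal is unique by Theorem~\ref{gl-thm}, part (b) gives uniqueness of the $\q_n$-highest weight element and identifies its weight with the strict partition $\lambda$ indexing the component.

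For characters, existence, and the classification: the character of $\cB_\lambda$ is $\sum_T x^{\weight(T)}=P_\lambda(x_1,\dots,x_n)$ by the standard tableau formula for the Schur $P$-polynomial, so combined with the reduction step, $\ch(\cB)=P_\lambda$ whenever $\cB$ is connected normal with highest weight $\lambda$; existence for each strict $\lambda$ is witnessed by $\cB_\lambda$ itself. A finite normal $\q_n$-crystal is a direct sum of connected components, hence has character $\sum_\lambda c_\lambda P_\lambda$ with $c_\lambda\in\NN$ the number of components of highest weight $\lambda$; since two connected normal $\q_n$-crystals with the same highest weight are isomorphic (both being isomorphic to $\cB_\lambda$) and the $P_\lambda$ are $\ZZ$-linearly independent, the character recovers the multiset of components and hence the isomorphism class.

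The genuinely hard input is the reduction step: unlike the $\gl_n$-case there is no known local (Stembridge-type) axiomatization of normal $\q_n$-crystals, so the isomorphism between components of $\BB^{\otimes m}$ and shifted-tableau crystals must be built by hand. Concretely, the work lies in verifying that $e_{\bar 1},f_{\bar 1}$ interact correctly with the tensor product and with shifted insertion, and that a $\q_n$-highest weight vertex generates its whole component under $\{e_i,f_i,e_{\bar\imath},f_{\bar\imath}\}$. Everything downstream — the weights being strict partitions, the $P_\lambda$ character formula, and the isomorphism classification — is then routine bookkeeping.
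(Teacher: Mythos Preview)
The paper does not supply its own proof of Theorem~\ref{main-q-thm}; it attributes the result to \cite{GJKKK} (specifically \cite[Thm.~2.5 and Cor.~4.6]{GJKKK}) and only remarks that the character identification with $P_\lambda$ follows from \cite[Thm.~2.17]{Serrano}. So there is no in-paper proof to compare against. That said, your overall architecture---reduce every connected normal $\q_n$-crystal to a shifted-tableau crystal $\cB_\lambda$ via a shifted insertion algorithm, then read off the character and highest weight---is exactly the strategy the paper later implements for its $\qq_n$-analogue (Theorem~\ref{main-thm}), and it is also how the combinatorial literature \cite{AssafOguz,HPS,Hiroshima2018} treats the $\q_n$-case.

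There is, however, a genuine gap in your uniqueness argument. Your claim (b), that for a $\gl_n$-highest weight vertex the conditions $e_{\bar\imath}(b)=0$ for $i\ge 2$ are ``automatic,'' is neither proved nor true in the generality you state it: the operator $e_{\bar\imath}=\sigma_{i-1}\sigma_i e_{\overline{i-1}}\sigma_i\sigma_{i-1}$ first moves $b$ away from the $\gl_n$-highest position via the $\sigma$'s, so knowing $e_j(b)=0$ for $j\in[n-1]$ tells you nothing directly about $e_{\overline{i-1}}(\sigma_i\sigma_{i-1}(b))$. More seriously, even granting (b), your final sentence invokes ``the $\gl_n$-highest weight element of a connected $\gl_n$-crystal is unique'' to conclude uniqueness of the $\q_n$-highest weight element---but a connected normal $\q_n$-crystal is almost never connected as a $\gl_n$-crystal (already $\BB_3\otimes\BB_3$ in Example~2.5 has two $\gl_3$-components), so there are several $\gl_n$-highest weight elements to contend with, and (b) alone does not single one out.

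The fix is to abandon (b) and instead prove uniqueness directly in the tableau model: once you know $\cB\cong\cB_\lambda$, verify by an explicit computation in $\cB_\lambda$ that $\Thighest_\lambda$ (row $i$ constant equal to $i$) is the \emph{only} element annihilated by all $e_i$ and all $e_{\bar\imath}$. This is precisely what the paper imports as Theorem~\ref{thigh-thm} from \cite[Thm.~3.3]{Hiroshima2018}, and what it reproves in the extended $\qq_n$-setting as Theorem~\ref{highest-wt-thm} via Lemmas~\ref{i,inotinT}--\ref{lem-qn+highestwt}. Your paragraphs on characters, existence, and the isomorphism classification are then correct as written.
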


In this statement, $P_\lambda(x_1,x_2,\dots,x_n)$ is 
the generating function for semistandard shifted tableaux known as a
  \defn{Schur $P$-polynomial} (see Section~\ref{sym-sect} for the definition).
  The fact that the characters of connected normal $\q_n$-crystals are Schur $P$-polynomials
is not explicitly stated in \cite{GJKKK} but can be deduced using \cite[Thm. 2.17]{Serrano}.
The family of Schur $P$-polynomials indexed by strict partitions $\lambda \in \NN^n$ form a basis for a subring $\SymP(x_1,x_2,\dots,x_n)\subset\Sym(x_1,x_2,\dots,x_n)$.

  \begin{corollary}
The map assigning a $\q_n$-crystal to its character defines
  a ring isomorphism from the split Grothendieck group of the 
 category of finite normal $\q_n$-crystals to $\SymP(x_1,x_2,\dots,x_n)$.  \end{corollary}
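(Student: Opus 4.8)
The plan is to deduce the statement from Theorem~\ref{main-q-thm} by the same formal argument that gives the corollary following Theorem~\ref{gl-thm}. Write $K$ for the split Grothendieck group of the category of finite normal $\q_n$-crystals. The first step is to check that the character map descends to a ring homomorphism $[\ch]\colon K\to\Sym(x_1,x_2,\dots,x_n)$. Additivity, $\ch(\cB\oplus\cC)=\ch(\cB)+\ch(\cC)$, is immediate from the description of $\ch$ as a weight-generating function and of $\oplus$ as disjoint union of crystal graphs, so $\ch$ respects the defining relations $[\cB]+[\cC]=[\cM]$ whenever $\cB\oplus\cC\cong\cM$; multiplicativity is the already-quoted identity $\ch(\cB\otimes\cC)=\ch(\cB)\ch(\cC)$. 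Moreover, since any finite normal $\q_n$-crystal is the disjoint union of its (weakly) connected components, each of which is again normal, Theorem~\ref{main-q-thm} shows that every such character is a sum of Schur $P$-polynomials $P_\lambda$; hence the image of $[\ch]$ lies in $\SymP(x_1,x_2,\dots,x_n)$.

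The second step is to identify $K$ explicitly. Because the underlying directed graph of a finite crystal decomposes uniquely (up to reordering) into its connected components, the commutative monoid of isomorphism classes of finite normal $\q_n$-crystals under $\oplus$ is free on the set of isomorphism classes of \emph{connected} finite normal $\q_n$-crystals; consequently $K$ is the free abelian group on that set. By Theorem~\ref{main-q-thm}, sending a connected normal $\q_n$-crystal $\cB$ to the weight $\lambda\in\NN^n$ of its unique $\q_n$-highest weight element is a bijection from isomorphism classes of connected finite normal $\q_n$-crystals onto the set of strict partitions, and the crystal $\cB_\lambda$ corresponding to $\lambda$ satisfies $\ch(\cB_\lambda)=P_\lambda$. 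Thus $[\ch]$ carries the $\ZZ$-basis $\{[\cB_\lambda] : \lambda\text{ a strict partition in }\NN^n\}$ of $K$ bijectively onto the family $\{P_\lambda : \lambda\text{ a strict partition in }\NN^n\}$.

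For the third step, recall (as stated just before the corollary) that the Schur $P$-polynomials indexed by strict partitions $\lambda\in\NN^n$ form a $\ZZ$-basis of $\SymP(x_1,x_2,\dots,x_n)$. Hence $[\ch]$ is a ring homomorphism sending a $\ZZ$-basis bijectively onto a $\ZZ$-basis, so it is an isomorphism of abelian groups and therefore an isomorphism of rings. I do not anticipate a genuine obstacle: all the substantive content is packaged in Theorem~\ref{main-q-thm}, and the remaining points — that $\ch$ is a homomorphism of commutative monoids under $\oplus$ and $\otimes$, and that $K$ is free on the connected objects — are formal. The only place needing even a sentence of justification is the unique decomposition of a finite crystal into connected components, which is transparent from the graph-theoretic description of $\oplus$.
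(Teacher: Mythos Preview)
Your proposal is correct and is precisely the formal argument the paper has in mind; the paper gives no explicit proof, treating the corollary as immediate from Theorem~\ref{main-q-thm} together with the multiplicativity $\ch(\cB\otimes\cC)=\ch(\cB)\ch(\cC)$ noted just before the analogous $\gl_n$-corollary. You have simply spelled out the details the paper suppresses.
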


\subsection{Crystals for Schur $Q$-functions}\label{intro3-sect}

Our main results concern a generalization of the category of $\q_n$-crystals.
We call the objects of this new category \defn{$\qq_n$-crystals}.
Viewed as directed graphs, 
these crystals have 
edges labeled by indices in $\{ \bar 1,0,1,2,\dots,n-1\}$ and weights in $\NN^n$.
The \defn{standard $\qq_n$-crystal} is 
 \be\label{standard-qq-eq}
    \begin{tikzpicture}[xscale=1.8, yscale=1,>=latex,baseline=(z.base)]
    \node at (0,0.7) (z) {};
      \node at (0,0) (T0) {$\boxed{1'}$};
      \node at (1,0) (T1) {$\boxed{2'}$};
      \node at (2,0) (T2) {$\boxed{3'}$};
      \node at (3,0) (T3) {${\cdots}$};
      \node at (4,0) (T4) {$\boxed{n'}$};
      \node at (0,1.4) (U0) {$\boxed{1}$};
      \node at (1,1.4) (U1) {$\boxed{2}$};
      \node at (2,1.4) (U2) {$\boxed{3}$};
      \node at (3,1.4) (U3) {${\cdots}$};
      \node at (4,1.4) (U4) {$\boxed{n}$};
      \draw[->,thick,dashed,color=darkred]  (T0.15) -- (T1.165) node[midway,above,scale=0.75] {$\overline 1$};
      \draw[->,thick]  (T0.345) -- (T1.195) node[midway,below,scale=0.75] {$1$};
      \draw[->,thick]  (T1) -- (T2) node[midway,above,scale=0.75] {$2$};
      \draw[->,thick]  (T2) -- (T3) node[midway,above,scale=0.75] {$3$};
      \draw[->,thick]  (T3) -- (T4) node[midway,above,scale=0.75] {$n-1$};
      \draw[->,thick,dashed,color=darkred]  (U0.15) -- (U1.165) node[midway,above,scale=0.75] {$\overline 1$};
      \draw[->,thick]  (U0.345) -- (U1.195) node[midway,below,scale=0.75] {$1$};
      \draw[->,thick]  (U1) -- (U2) node[midway,above,scale=0.75] {$2$};
      \draw[->,thick]  (U2) -- (U3) node[midway,above,scale=0.75] {$3$};
      \draw[->,thick]  (U3) -- (U4) node[midway,above,scale=0.75] {$n-1$};
      \draw[->,thick,dotted,color=blue]  (U0) -- (T0) node[midway,left,scale=0.75] {$0$};
     \end{tikzpicture}
\ee
where both $\boxed{i}$ and $\boxed{i'}$ have weight $\e_i \in \ZZ^n$. This is isomorphic to the direct sum of
two copies of the standard $\q_n$-crystal, with one additional $0$-arrow.
The $\qq_n$-tensor product is slightly unusual and combines features of queer crystals and of $\gl_{m|n}$-crystals from \cite{BKK} in the degenerate case $m= n=1$. 
For the precise definitions, see Section~\ref{ext-sect}.

\defn{Normal $\qq_n$-crystals} are defined in terms of tensor powers of the standard $\qq_n$-crystal in the same way as in the $\gl_n$- and $\q_n$-cases.
The \defn{$\qq_n$-highest weight elements} of a $\qq_n$-crystal are again the source vertices in a certain extended crystal graph;
see Definition~\ref{qq-highest-def}.
Our main result is the following extension of Theorem~\ref{main-q-thm}, which combines Theorem~\ref{highest-wt-thm}, Corollary~\ref{any-normal-cor}, and Theorem~\ref{sshtab-upgrade}.

\begin{theorem}\label{main-thm}
If $\cB$ is a connected normal $\qq_n$-crystal,
then $\cB$ has a unique $\qq_n$-highest weight element, whose 
 weight  $\lambda \in \NN^n$  is a strict partition such that $\ch(\cB)=Q_\lambda(x_1,x_2,\dots,x_n)  $.
For each strict partition $\lambda \in \NN^n$,
there is a connected normal $\qq_n$-crystal with highest weight $\lambda$,
and 
finite normal $\qq_n$-crystals are isomorphic if and only if they have the same character.
\end{theorem}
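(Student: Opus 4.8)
The plan is to bootstrap everything from the known $\q_n$-theory (Theorem~\ref{main-q-thm}) by exploiting the fact that the standard $\qq_n$-crystal is, after forgetting the $0$-arrows, a direct sum of two copies of the standard $\q_n$-crystal. First I would make the forgetful relationship precise: there is a functor from $\qq_n$-crystals to $\q_n$-crystals that deletes every $0$-arrow and leaves the weight map, the $\bar 1$-arrows, and the $1,\dots,n-1$-arrows untouched. The key structural input to establish is that the $0$-operators commute appropriately with the $\q_n$-operators on any normal $\qq_n$-crystal --- this should follow from analyzing the $\qq_n$-tensor product rule on $\BB^{\otimes m}$ directly (the rule is spelled out in Section~\ref{ext-sect}), combining the queer tensor signature with the degenerate $\gl_{1|1}$-type behavior of the $0$-arrow. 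From this one deduces that the underlying $\q_n$-crystal of a connected normal $\qq_n$-crystal decomposes into $\q_n$-connected components that are permuted transitively by the $0$-arrows.

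Next I would pin down the highest weight element. A $\qq_n$-highest weight element is a source in the extended graph (Definition~\ref{qq-highest-def}), which includes the extended $\q_n$-arrows together with the $0$-arrow. On each $\q_n$-connected component, Theorem~\ref{main-q-thm} supplies a unique $\q_n$-highest weight element of strict-partition weight; the $0$-arrows then select exactly one of these across the whole $\qq_n$-component, namely the one that is also a source for the $0$-operator. Uniqueness and existence of the $\qq_n$-highest weight element thus reduce to a finite check on how the $0$-arrows interconnect the $\q_n$-highest weight vertices of the various components --- concretely, that the $0$-operator acts on the set of $\q_n$-highest weight elements as a single chain with one source. For the weight being a strict partition, this is inherited directly from the $\q_n$-statement since the $\qq_n$-highest weight vertex is in particular $\q_n$-highest in its component.

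For the character formula, I would compute $\ch(\cB)$ by summing the $\q_n$-characters of the components: each is a Schur $P$-polynomial $P_{\mu^{(i)}}$, and the $0$-arrow structure forces the shapes $\mu^{(i)}$ to range over a prescribed multiset so that $\sum_i P_{\mu^{(i)}} = Q_\lambda$. The cleanest way to get this is to verify it first on the generators $\BB^{\otimes m}$ --- where $\ch(\BB) = \sum_i(x_i + x_i)$ matches the classical identity $\ch$ of the standard object underlying $Q$-functions --- and then argue that both $\ch$ and the operation "sum of $P$-characters of $\q_n$-components, grouped by $0$-orbit" are additive under $\oplus$ and compatible with passage to subcrystals, using that $Q_\lambda = 2^{\ell(\lambda)} P_\lambda$ on the generator level and that $\qq_n$-subcrystals restrict to unions of $0$-orbits of $\q_n$-subcrystals. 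Existence of a connected normal $\qq_n$-crystal for each strict $\lambda$ comes from the explicit shifted-tableau model of Theorem~\ref{sshtab-upgrade}, and the isomorphism-from-character statement follows from the corresponding $\q_n$-fact applied componentwise together with the observation that the $0$-arrow pattern is determined by the weights.

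**Main obstacle.** The delicate point is not the highest-weight or character bookkeeping but verifying that the $0$-operators interact correctly with the $\q_n$-operators --- i.e., that on normal $\qq_n$-crystals the $0$-arrows genuinely permute $\q_n$-components and restrict to a single-source chain on the set of $\q_n$-highest weight elements. Because the $\qq_n$-tensor product is "slightly unusual" and mixes queer signatures with $\gl_{1|1}$-degeneracy, establishing these commutation/compatibility relations on all of $\BB^{\otimes m}$ (rather than invoking a quantum-group black box, which the authors explicitly avoid) is where the real combinatorial work lies; everything downstream is then a relatively formal reduction to Theorem~\ref{main-q-thm}.
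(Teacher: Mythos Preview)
Your central reduction fails: normal $\qq_n$-crystals are \emph{not} normal as $\q_n$-crystals, so Theorem~\ref{main-q-thm} does not apply to their $\q_n$-components. The paper states this explicitly just before Example~\ref{abnormal-remark} and exhibits a concrete counterexample there: a full $\qq_2$-subcrystal of $(\BB_2^+)^{\otimes 3}$ whose $\q_2$-components include one with \emph{two} $\q_2$-highest weight elements. The reason is that the forgetful functor (delete $0$-arrows) does not commute with tensor products---the $\qq_n$-tensor rule for $e_{\bar 1},f_{\bar 1}$ in Theorem~\ref{qq-thmdef} genuinely mixes in $e_0,f_0$ on the right tensor factor---so tensor powers of $\BB_n^+$ do not restrict to tensor powers of $\BB_n$ as $\q_n$-crystals. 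Your claim that the $0$-arrows ``permute $\q_n$-components transitively'' is also false in that same example: the three $\q_2$-components have sizes $2$, $4$, $2$, so no permutation by isomorphisms is possible.

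What the paper does instead is avoid the $\q_n$-reduction entirely. It constructs the explicit model $\SShTab_n(\lambda)$, proves directly (Theorem~\ref{highest-wt-thm}, via detailed analysis of the tableau operators and Hiroshima's results) that it is connected with unique $\qq_n$-highest weight $\lambda$, and then shows via orthogonal Edelman--Greene insertion (Theorem~\ref{qqo-morphism-thm}, Corollary~\ref{qq-o-morphism-cor}) that every connected normal $\qq_n$-crystal is isomorphic to some $\SShTab_n(\lambda)$. Normality of $\SShTab_n(\lambda)$ itself (Theorem~\ref{sshtab-upgrade}) is proved last, by embedding it in a tensor product of one-row crystals using a character argument. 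The ``main obstacle'' you identify---commutation of $0$-operators with $\q_n$-operators---is not the real issue; the real issue is that after forgetting $0$, the resulting $\q_n$-crystal simply has no good structure to exploit.
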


Here $Q_\lambda(x_1,x_2,\dots,x_n)$ is the \defn{Schur $Q$-polynomial} of a strict partition $\lambda$,
which is defined to be $2^{\ell(\lambda)} P_\lambda(x_1,x_2,\dots,x_n)$ where $\ell(\lambda)$ is the number of nonzero parts
of $\lambda$.
As $\lambda$ ranges over strict partitions in $\NN^n$ these polynomials are a $\ZZ$-basis 
for another subring $\SymQ(x_1,x_2,\dots,x_n)$. 

  \begin{corollary}
The map assigning a $\qq_n$-crystal to its character defines a ring isomorphism from the split Grothendieck group of the 
  category of finite normal $\qq_n$-crystals to  $\SymQ(x_1,x_2,\dots,x_n)$.  
  \end{corollary}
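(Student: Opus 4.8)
The plan is to derive this exactly as the analogous corollaries follow from Theorems~\ref{gl-thm} and \ref{main-q-thm}, taking the classification in Theorem~\ref{main-thm} as the only nontrivial input. First I would record that the category of finite normal $\qq_n$-crystals is additive and monoidal: it is closed under $\oplus$ and under the $\qq_n$-tensor product, and both operations preserve finiteness, so that the split Grothendieck group is a well-defined ring with $[\cB][\cC]=[\cB\otimes\cC]$. Closure under $\otimes$ follows once one knows that $\otimes$ is associative, since then $\BB^{\otimes m}\otimes\BB^{\otimes k}\cong\BB^{\otimes(m+k)}$, and subcrystals of tensor powers of $\BB$ tensor to subcrystals of a tensor power of $\BB$; these facts belong to the foundational setup in Section~\ref{ext-sect}.

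Next I would observe that $\ch$ descends to a ring homomorphism. Since the crystal graph of $\cB\oplus\cC$ is the disjoint union of the crystal graphs of $\cB$ and $\cC$, we have $\ch(\cB\oplus\cC)=\ch(\cB)+\ch(\cC)$, so $\ch$ respects the defining relations of the split Grothendieck group; and since $\ch(\cB\otimes\cC)=\ch(\cB)\ch(\cC)$ (established with the tensor product in Section~\ref{ext-sect}), the induced map $[\cB]\mapsto\ch(\cB)$ is a ring homomorphism. Its image lies in $\SymQ(x_1,x_2,\dots,x_n)$ because, by Theorem~\ref{main-thm} applied to each connected component, $\ch(\cB)$ is a $\ZZ_{\geq 0}$-linear combination of the polynomials $Q_\lambda(x_1,\dots,x_n)$.

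Then I would pin down the structure of the domain. Every finite crystal decomposes uniquely, up to isomorphism and reordering of summands, into its connected components — a purely graph-theoretic fact — and if the crystal is normal then each component is again a connected normal $\qq_n$-crystal. By Theorem~\ref{main-thm}, the isomorphism classes of connected normal $\qq_n$-crystals are indexed by strict partitions $\lambda\in\NN^n$ via the highest weight; let $\cB_\lambda$ denote a representative of each class. It follows that the split Grothendieck group of finite normal $\qq_n$-crystals is free abelian on the classes $\{[\cB_\lambda]\}$, with the class of an arbitrary object equal to the sum of the classes of its connected components.

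Finally I would conclude: the ring homomorphism above carries the $\ZZ$-basis $\{[\cB_\lambda]\}$ to $\{\ch(\cB_\lambda)\}=\{Q_\lambda(x_1,\dots,x_n)\}$ by Theorem~\ref{main-thm}, and the latter is by definition a $\ZZ$-basis of $\SymQ(x_1,\dots,x_n)$; a homomorphism sending a basis bijectively to a basis is an isomorphism. I do not expect a genuine obstacle here: all the substantive content has been absorbed into Theorem~\ref{main-thm} (uniqueness of highest weight elements and the character formula $\ch(\cB)=Q_\lambda$), so what remains is the bookkeeping just described. If I had to single out a step requiring care within the present argument, it is the verification of the monoidal closure properties of normal $\qq_n$-crystals and the multiplicativity of $\ch$ under the (slightly unusual) $\qq_n$-tensor product, both of which are part of the groundwork in Section~\ref{ext-sect} rather than of this corollary.
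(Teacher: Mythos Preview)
Your proposal is correct and matches the paper's approach: the paper states this corollary without proof, treating it as immediate from Theorem~\ref{main-thm} together with the general fact (noted just before the $\gl_n$ corollary) that $\ch(\cB\otimes\cC)=\ch(\cB)\ch(\cC)$. Your write-up simply unpacks this ``immediate'' into the standard bookkeeping, and the points you flag as needing care---monoidal closure of normal crystals and multiplicativity of $\ch$---are indeed handled by Theorem~\ref{qq-thmdef} and the weight-additivity $\weight(b\otimes c)=\weight(b)+\weight(c)$, so there is nothing subtle left.
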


As an application of Theorem~\ref{main-thm}, we can derive a new shifted Littlewood-Richardson rule for products of Schur $Q$-polynomials.
The classical shifted Littlewood-Richardson rule (see \cite[(8.17)(i)]{Macdonald} or \cite[Thm.~8.3]{Stembridge1989})
expands products of Schur $P$-functions as $\NN$-linear combinations of Schur $P$-functions.
This can be converted to a rule for Schur $Q$-functions by dividing by appropriate powers of two,
but then it is not obvious that the coefficients that appear are all integers. 
Using $\qq_n$-crystals lets us avoid this issue.

For each strict partition $\lambda\in \NN^n$, fix a connected normal $\qq_n$-crystal $\cB_\lambda$ with highest weight $\lambda$.
Using Theorem~\ref{main-thm} to decompose the character of $\cB_\lambda \otimes \cB_\mu$ implies the following:

\begin{corollary}
It holds that
$
Q_{\lambda}(x_1,x_2,\dots,x_n) Q_{\mu}(x_1,x_2,\dots,x_n) = \sum_{\nu} g^{\nu}_{\lambda \mu} Q_{\nu}(x_1,x_2,\dots,x_n)
$
for all strict partitions $\lambda,\mu \in \NN^n$,
where the sum is over all strict partitions $\nu \in \NN^n$ and $g^{\nu}_{\lambda \mu} \in \NN$  is the number of $\qq_n$-highest weight elements in $\cB_\lambda \otimes \cB_\mu$ of weight $\nu$.
\end{corollary}

Queer crystals may be used to show that certain power series are \defn{Schur $P$-positive} in the sense of being positive linear combinations of Schur $P$-functions (see \cite[Cors. 3.34 and 3.38]{Marberg2019b}, for example). A similar application of $\qq_n$ crystals is to demonstrate \defn{Schur $Q$-positivity} (see Corollary~\ref{q-pos-cor}).

The latter is a stronger property compared to Schur $P$-positivity, as is Theorem~\ref{main-thm} compared to Theorem~\ref{main-q-thm}.
Although there is a commutative diagram forgetful functors 
\[\begin{tikzcd}[row sep=small, column sep=small]
\{ \text{ $\qq_n$-crystals }\}  \arrow[rr] \arrow[dr] && \{ \text{ $\q_n$-crystals }\} \arrow[dl] 
\\
 & \{ \text{ $\gl_n$-crystals }\} 
\end{tikzcd}
\]
the horizontal arrow does not take normal $\qq_n$-crystals to normal $\q_n$-crystals.
This means that Theorem~\ref{main-q-thm} does not directly imply 
 similar properties of normal $\qq_n$-crystals. As such, extending Theorem~\ref{main-q-thm} to Theorem~\ref{main-thm}
is nontrivial.

An interesting feature of $\qq_n$-crystals concerns an action of the finite Coxeter group $\W_n$ of type $\mathsf{BC}_n$.
There is an action of the symmetric group $S_n$ on the vertices of normal $\gl_n$- and $\q_n$-crystals.
Under this action, the longest permutation $w_0 \in S_n$ interchanges highest and lowest weight elements; see Proposition~\ref{low-prop}.
This property does not hold for normal $\qq_n$-crystals. Instead, we show that there is an action of $\W_n$ on the vertices 
of normal $\qq_n$-crystals, and for this action the longest preimage of $w_0$ under the projection $\W_n \to S_n$
  interchanges highest and lowest weight elements; see Proposition~\ref{wplus-prop}.

Our strategy for proving Theorem~\ref{main-thm} has the following outline.
 In Section~\ref{crystal-incr-sect}, we describe a $\qq_n$-crystal structure on increasing factorizations of \defn{primed involution words}, which are certain analogues of reduced words for permutations. 
This generalizes a $\q_n$-crystal identified by Hiroshima in \cite{Hiroshima}. It is relatively easy to show that every connected normal $\qq_n$-crystal
 may be embedded in one of these objects; this is carried out later in Section~\ref{morphisms-subsect1}.
 
Next, we show in Section~\ref{crystal-tab-sect} how to extend a $\q_n$-crystal structure on semistandard shifted tableaux
 studied in \cite{AssafOguz,HPS,Hiroshima2018} to a $\qq_n$-crystal on a larger set.
Building on results in \cite{Hiroshima2018}, we are able to prove that each $\qq_n$-crystal of shifted tableaux of a fixed strict partition shape 
 is connected with unique highest and lowest weight elements; see Theorem~\ref{highest-wt-thm}. 
 
In Section~\ref{morphisms-subsect2}
 we show how to embed our $\qq_n$-crystals of increasing factorizations into $\qq_n$-crystals of shifted tableaux.
This requires some technical results from \cite{M2021a} about a shifted form of \defn{Edelman-Greene insertion}.
Combining these steps lets us deduce that each connected normal $\qq_n$-crystal occurs as a crystal of shifted tableaux and therefore has a Schur $Q$-polynomial as its character. 

Our final task in Section~\ref{last-sect} is to
show that all of our $\qq_n$-crystals of shifted tableaux are normal. We can prove this directly for crystals of one-row tableaux.
Each Schur $Q$-polynomial appears as constituent of some product of Schur $Q$-polynomials indexed by one-row partitions.
Using this fact, we deduce that each $\qq_n$-crystal of shifted tableaux occurs  as a full subcrystal of a tensor product of 
crystals of one-row tableaux, and is therefore normal; see Theorem~\ref{sshtab-upgrade}.

\subsection{Connections to representation theory}

We briefly summarize how crystals arise from representation theory.
The quantum group $U_q(\gl_n)$ may be defined as a bialgebra over the field of formal Laurent series $\CCCq$
with generators $e_i$ and $f_i$ indexed by $i \in \{1,2,\dots,n-1\}$. 
These generators give rise to operators $\tilde e_i$ and $\tilde f_i$ on 
  $U_q(\gl_n)$-modules $M$ that are  \defn{integrable} in the sense of \cite[\S1.2]{Kashiwara1991}. 
Each pair $e_i$ and $f_i$ generates a copy of $U_q(\sl_2)$ and $\tilde e_i$ and $\tilde f_i$ are defined in terms of the 
 $U_q(\sl_2)$-decomposition of $M$ \cite[\S2.2]{Kashiwara1991}.

Kashiwara's results in \cite{Kashiwara1990,Kashiwara1991} show that every integrable module $M$ has a \defn{crystal basis}, which consists of a 
pair $(L,B)$ where $L$ is a free $\CC[[q]]$-module with $M = \CCCq \otimes_{\CC[[q]]} L$
and $B \subset L$ is $\CC$-basis of $L/qL$, subject to several conditions involving $\tilde e_i$ and $\tilde f_i$ \cite[\S 2.3]{Kashiwara1991}. In particular, one must have $\tilde e_i (B) \subset B \sqcup \{0\}$ and $\tilde f_i (B) \subset B \sqcup \{0\}$ 
and if $b,c \in B$, then $\tilde f_i (b) = c$ if and only if $\tilde e_i (c) = b$.
This means that much of the information in a crystal basis may be recorded in the \defn{crystal graph}
on $B$ with labeled directed edges $b \xrightarrow {i} c$ whenever $\tilde f_i (b) = c$.
This graph gives an example of a normal $\gl_n$-crystal, and every finite normal $\gl_n$-crystal arises in this way.

 The quantum queer superalgebra $U_q(\q_n)$ is  another bialgebra over $\CCCq$, also 
with generators $e_i$ and $f_i$ indexed by $i \in \{1,2,\dots,n-1\}$ but now with one extra generator $k_{\bar 1}$ \cite[Def. 1.1]{GJKKK15}.
There is a semisimple category of integral modules for $U_q(\q_n)$ \cite[Def. 1.5]{GJKKK15} 
and on such modules $M$ 
the generators of $U_q(\q_n)$ give rise to certain operators $\tilde e_i$ and $\tilde f_i$ 
indexed by $i \in \{\bar 1, 1,2,\dots,n-1\}$ and an additional operator $\tilde k_{\bar 1}$  \cite[\S 2]{GJKKK15}.
 
In this context, a \defn{crystal basis} \cite[Def. 2.2]{GJKKK15} for $M$ also consists of a pair $(L,B)$ where 
$L$ is a free $\CC[[q]]$-module $L$ such that $M = \CCCq \otimes_{\CC[[q]]} L$.
The set $B$, however, is not a basis for $L/qL$ but instead a set of $\tilde k_{\bar 1}$-invariant subspaces that give 
a direct sum decomposition of $L/qL$.
It is again required that $\tilde e_i (B) \subset B \sqcup \{0\}$ and $\tilde f_i (B) \subset B \sqcup \{0\}$, so
for each subspace $b \in B$ the operators $\tilde e_i$ and $\tilde f_i$ must either restrict to the zero map modulo $q$
or an isomorphism $b \xrightarrow{\sim} c$ for some other $c \in B$.
For $b,c \in B$ one agains insists that  $\tilde f_i (b) = c$ if and only if $\tilde e_i (c) = b$.
The corresponding \defn{crystal graph} on $B$ with edges $b \xrightarrow{ i} c$ whenever $\tilde f_i(b) = c$
is a normal $\q_n$-crystal. Grantcharov et al. prove that every integral $U_q(\q_n)$-module
has a crystal basis and that every finite normal $\q_n$-crystal arises from such a basis \cite[Thm. 4.6]{GJKKK15}.

Our concept of normal $\qq_n$-crystals should correspond in a similar way to crystal bases $(L,B)$ for integrable $U_q(\q_n)$-modules,
but with the following additional information. Namely, one must also specify a refined direct sum decomposition of each 
subspace $b \in B$, which is compatible with $\tilde e_i$ and $\tilde f_i$, 
such the action of $\tilde k_{\bar 1}$ on the summands of this decomposition defines a $\gl_{1|1}$-crystal in the sense
of \cite[\S2.3]{BKK}.
We have not yet found a completely satisfactory way of characterizing the data that makes up this kind of \defn{extended crystal basis}
for integrable $U_q(\q_n)$-modules.
For both $\g=\gl_n$ and $\g=\q_n$,
the natural tensor product for integrable $U_q(\g)$-modules gives rise to a tensor product for crystal bases,
and this informs the definition of the relevant tensor product for $\g$-crystals.
We also do not yet fully understand how to motivate the tensor product for $\qq_n$-crystals described in Section~\ref{ext-sect}
from representation theory.
We hope clarify these points in future work.
%

\subsection{Comparison with Gillespie-Levinson-Purbhoo crystals}

On the way to proving Theorem~\ref{main-thm},
we construct a connected normal $\qq_n$-crystal on the set $\SShTab_n(\lambda)$ of semistandard shifted tableaux 
of a given strict partition shape $\lambda$ with all entries at most $n$. The character of this object  is
the Schur $Q$-polynomial $Q_\lambda(x_1,x_2,\dots,x_n)$.
In  \cite{GLP,GilLev} Gillespie, Levinson, Purbhoo study another crystal-like structure on semistandard shifted tableaux.
Their objects are also encoded as certain
directed acyclic graphs with labeled edges, and have characters that are
 Schur $Q$-polynomials. 
 
Several differences offset these formal similarities, and we do not know of any way to derive our crystal
 constructions from those in \cite{GLP,GilLev}  or vice versa. In particular:
\begin{itemize}
\item The vertices in Gillespie, Levinson, Purbhoo's crystal graph are
a proper subset of $\SShTab_n(\lambda)$, consisting of representatives for a certain equivalence relation; see \cite[Def. 2.6]{GLP}.

\item There are $2(n-1)$ edge labels for the crystal graphs in \cite{GLP,GilLev}
whereas the edge labels for our crystals $\SShTab_n(\lambda)$ come from the $(n+1)$-element set $\{\bar 1,0,1,\dots,n-1\}$.
 The crystal operators corresponding to these two sets of edges do not seem to be easily related.
 
\item  There is an axiomatic definition of Gillespie, Levinson, Purbhoo's crystal graphs in \cite{GilLev},
but no notion of a tensor product analogous to the tensor product for $\qq_n$-crystals.
\end{itemize}
Independent of this comparison, it is an interesting open problem to give the category of objects in \cite{GilLev} a monoidal structure
and to relate this to representation theory.

\subsection{Outline}

Here is a brief outline of the rest of this article.
Section~\ref{prelim-sect} explains some notational conventions and preliminaries on symmetric functions.
Section~\ref{abstract-sect} gives the precise definitions of the $\gl_n$-, $\q_n$- and $\qq_n$-crystals
discussed informally above.
In Sections~\ref{words-sect}, \ref{crystal-incr-sect}, and \ref{crystal-tab-sect}
we construct three families of $\qq_n$-crystals---on words, increasing factorizations, and shifted tableaux, respectively.
Then in Section~\ref{morphisms-sect} we describe several morphisms between these crystals, in order to prove Theorem~\ref{main-thm}.

\subsection*{Acknowledgements}

This work was partially supported by grants ECS 26305218 and GRF 16306120
from the Hong Kong Research Grants Council.
We thank Dan Bump for his helpful answers to several questions about crystals,
especially related to the group action discussed in Remark~\ref{weyl-remark}. 
We also thank Travis Scrimshaw for many useful discussions.

\section{Preliminaries}\label{prelim-sect}

Given integers $p,q \in \ZZ$
let $[p,q] := \{ i \in \ZZ : p \leq i \leq q\}$ and $[q] := [1,q]$.
Recall that $\NN := \{0,1,2,\dots\}$.
%
For   $i \in \ZZ$, we set $i' := i -\frac{1}{2}$ and $\ZZ' := \ZZ -\frac{1}{2}$, so that 
$ \ZZ\sqcup \ZZ' = \{ \dots < 0' < 0 < 1' < 1 < \dots \} = \frac{1}{2}\ZZ.$
We refer to elements of $\ZZ'$ as \defn{primed numbers}.
\defn{Removing the prime} for some $i \in \ZZ \sqcup \ZZ'$ means to replace $i$ with $\lceil i \rceil$.
\defn{Adding a prime} to a number  $i \in \ZZ\sqcup \ZZ'$ means to replace it with $\lceil i \rceil-\frac{1}{2}$.
Throughout, we fix a positive integer $n$ and
let $x_1,x_2,x_3,\dots$ be commuting variables.

\subsection{Shifted tableaux}\label{tab-sect}


Assume $\lambda = (\lambda_1 \geq \lambda_2 \geq \dots \geq 0)$ is a partition and $\mu = (\mu_1> \mu_2> \dots \geq 0)$ is a strict partition.
Let $\ell(\lambda) = |\{ i > 0: \lambda_i >0\}|$.
The \defn{diagram} of $\lambda$ is 
the set  $\D_\lambda:= \{ (i,j) : i\in  [\ell(\lambda)] \text{ and } j \in [\lambda_i]\}.$
The \defn{shifted diagram} of  $\mu$
is the set $\SD_\mu:= \{ (i,i+j-1) : (i,j) \in \D_\mu\}.$
A \defn{tableau} of shape $\lambda$  is a map $ \D_\lambda \to \ZZ$. 
A \defn{shifted tableau} of shape $\mu$ is a  map $ \SD_\mu \to \ZZ \sqcup \ZZ'$.

If $T$ is a (shifted) tableau, then we  write $(i,j) \in T$ to indicate that $(i,j)$ belong to the domain of $T$
and we let $T_{ij}$ denote the value assigned to this position.
We draw tableaux in French notation, so that row indices increase from bottom to top and column indices increase from left to right.
If
\be\label{tableau-ex}
S = \ytab{   3 & 3& 7 \\ 1 & 1 & 4 & 6}
\quand 
T = \ytab{ \none & 2' & 2 & 4' \\ 1' & 1 & 1 & 4'}  ,
\ee
then $S$ is a tableau and $T$ is a shifted tableau of shape $\lambda=(4,3)$,
and $S_{23} = 7$ while $T_{23} = 2$. The \defn{(main) diagonal} of a shifted tableau
is the set of positions $(i,j)$ in its domain with $i=j$.

A (shifted) tableau is \defn{semistandard} if its entries are all positive and its rows and columns are weakly increasing,
such that no primed entry is repeated in any row and no unprimed entry is repeated in any column.
The examples 
in \eqref{tableau-ex}
are both semistandard.
%
For   $n \in \NN$,
we write $\Tab_n(\lambda)$ for the set of semistandard tableaux of shape $\lambda$ with all entries in $[n]$,
  $\SShTab_n(\mu)$
 for the set of semistandard shifted tableaux of shape $\mu$ with all entries in $\{1'<1<\dots<n'<n\}$,
 and $ \ShTab_n(\mu)$ for the subset of elements in $ \SShTab_n(\mu)$
with no primed entries on the diagonal.

\subsection{Symmetric polynomials}\label{sym-sect}

Our main reference below is Macdonald's book \cite{Macdonald}.
If $T$ is a (shifted) tableau, then we set
$x^T := x_1^{a_1} x_2^{a_2} \cdots x_n^{a_n}$ where $a_k =  | \{ (i,j) \in T : T_{ij} \in \{k,k'\}\}|$.
The \defn{Schur polynomial}  in $n$ variables corresponding to a partition $\lambda $ is 
then  $
s_\lambda(x_1,x_2,\dots,x_n) := \sum_{T \in \Tab_n(\lambda)} x^T .$
As noted in the introduction, when $\lambda$ varies over all partitions in $\NN^n$ (i.e., over all partitions with at most $n$ parts),
these polynomials are a $\ZZ$-basis for the subring 
 of symmetric polynomials $\Sym(x_1,x_2,\dots,x_n)\subset \ZZ[x_1,x_2,\dots,x_n]$.
%
The \defn{Schur $P$- and $Q$-polynomials}  in $n$ variables indexed by a strict partition $\mu \in \NN^n$ are 
\be
P_\mu(x_1,x_2,\dots,x_n) := \sum_{T \in \ShTab_n(\mu)} x^T
\quand
Q_\mu(x_1,x_2,\dots,x_n) := \sum_{T \in \SShTab_n(\mu)} x^T .\ee
As noted in the introduction, it holds that $Q_\mu(x_1,x_2,\dots,x_n)  = 2^{\ell(\mu)}P_\mu (x_1,x_2,\dots,x_n) $
where $\ell(\mu) $ is the number of nonzero parts of $\mu$.
As $\mu$ varies over all strict partitions in $\NN^n$,
the Schur $Q$-polynomials and Schur $P$-polynomials are $\ZZ$-bases for respective subrings 
\[
\SymQ(x_1,x_2,\dots,x_n)\subset \SymP(x_1,x_2,\dots,x_n)\subset \Sym(x_1,x_2,\dots,x_n);
\]
see
\cite[Chapter III, (8.9)]{Macdonald}.
If $n=1$, then  $\Sym(x_1) = \SymP(x_1) = \ZZ[x_1]$ and $\SymQ(x_1) = 2\ZZ[x_1]$.
When $n\geq 2$ these subrings are characterized as 
\[\ba
\SymP(x_1,\dots,x_n) &= \left\{ f \in \Sym(x_1,\dots,x_n) : f(x_1,-x_1,x_3,\dots,x_n) \in \ZZ[x_3,\dots,x_n]\right\},
\\ 
\SymQ(x_1,\dots,x_n) &=\left \{ f \in \SymP(x_1,\dots,x_n) : f - f(0,x_2,\dots,x_n) \in 2x_1\ZZ[x_1,\dots,x_n]\right\};
\ea\]
see, for example, the discussion in \cite[\S3.3]{IkedaNaruse} with $\beta=0$.

One has
$s_\lambda(x_1,x_2,\dots,x_{n}) = s_{\lambda}(x_1,x_2,\dots,x_{n},0)
$ and $P_\mu(x_1,x_2,\dots,x_{n}) = P_{\mu}(x_1,x_2,\dots,x_{n},0)
$, 
so one can
define formal power series by 
$s_\lambda := \lim_{n\to \infty} s_{\lambda}(x_1,\dots,x_n)$,
$ P_\mu := \lim_{n\to \infty} P_{\mu}(x_1,\dots,x_n)$,
and 
$ Q_\mu := \lim_{n\to \infty} Q_{\mu}(x_1,\dots,x_n) = 2^{\ell(\mu)}P_\mu$,
since the coefficients of any fixed monomial in these sequences of polynomials are eventually constant as $n\to\infty$.
The resulting symmetric elements of $\ZZ[[x_1,x_2,\dots]]$ are the \defn{Schur functions}, \defn{Schur $P$-functions}, and \defn{Schur $Q$-functions}, respectively.

\section{Abstract crystals} \label{abstract-sect}

This section contains the precise definitions of
the abstract and normal $\gl_n$-, $\q_n$- and $\qq_n$-crystals 
discussed in the introduction.
Each of these structures will formally consist of a nonempty set $\cB$ with 
a \defn{weight map} $\weight : \cB \to \ZZ^n$ and a family of \defn{raising operators} $e_i : \cB \to \cB \sqcup \{0\}$
and  \defn{lowering operators} $f_i : \cB \to \cB \sqcup \{0\}$,
where $0 \notin \cB$ is an auxiliary element. When $\cB$ is finite, its \defn{character} is the Laurent polynomial
$\ch (\cB) := \sum_{b \in \cB} x_1^{\weight(b)_1}x_2^{\weight(b)_2}\cdots x_n^{\weight(b)_n}$.

The \defn{crystal graph} associated to this data has vertex set $\cB$ and labeled edges $b\xrightarrow{i} c$ whenever $f_i(b) = c \neq 0$.
This graph determines both the raising and lowering operators, since 
it will always be required for $b,c \in\cB$ that $f_i(b)=c$ if and only if $e_i(c) = b$.

A subset of $\cB$ that forms a weakly connected component in the crystal graph inherits its own crystal structure
and is called a \defn{full subcrystal}.
Within each family of crystals, an \defn{isomorphism}  
will mean a weight-preserving map that defines an isomorphism of the corresponding crystal graphs.

\subsection{Crystals for general linear Lie algebras}\label{gl-sect}

The definition of a \defn{$\gl_n$-crystal} explained below is fairly standard in the literature.
In presenting this material we follow the conventions of Bump and Schilling's book \cite{BumpSchilling}.

Let $\cB$ be a nonempty set with a function $
\weight : \cB\to \ZZ^n
$ and an auxiliary element $0 \notin \cB$. %
For each $i \in [n-1]$, assume that maps
$
e_i,f_i : \cB \to \cB \sqcup \{0\}$ are given.  
We define $\varepsilon_i, \varphi_i : \cB \to \NN\sqcup\{\infty\}$ by 
\be\label{var-eq}\varepsilon_i(b) := \max\left\{ k\in \NN : e_i^k(b)\neq0 \right \}
\quand \varphi_i(b) := \max\left\{ k\in \NN : f_i^k(b) \neq 0\right \}.\ee
We refer 
to 
the $\varepsilon_i$'s and $\varphi_i$'s as \defn{string lengths}.
The value of $\weight(b)$ is the \defn{weight} of $b \in \cB$.
Finally, write $\e_1,\e_2,\dots,\e_n$ for the standard basis of $\ZZ^n$.

\begin{definition}[{See \cite[\S2.2]{BumpSchilling}}]
\label{crystal-def}
The set $\cB$ is an \defn{(abstract) $\gl_n$-crystal} relative to the weight map $\weight$
and the operators $e_i$ and $f_i$
if 
for all $b,c \in \cB$ and $i \in [n-1]$ one has:
\ben
\item[(S1)]  It holds that $e_i(b) = c$ if and only if $f_i(c) = b$, in which case
$\weight(c)- \weight(b) = \e_i - \e_{i+1}.$

\item[(S2)]  Both $\varepsilon_i(b)$ and 
$\varphi_i(b) $ are finite and  $\varphi_i(b) - \varepsilon_i(b) = \weight(b)_i - \weight(b)_{i+1}$.
\een
\end{definition}

More precisely, this is the definition of a \defn{seminormal $\gl_n$-crystal} in \cite{BumpSchilling}.
The character of any finite $\gl_n$-crystal is a symmetric Laurent polynomial
\cite[\S2.6]{BumpSchilling}.

The notion of highest and lowest weight elements for $\gl_n$-crystals is straightforward. Namely,
if $\cB$ is a $\gl_n$-crystal, then
a \defn{$\gl_n$-highest} (respectively, \defn{$\gl_n$-lowest}) \defn{weight element} $b \in \cB$
is an element with  $e_i (b) = 0$ (respectively, $f_i(b) = 0$) for all $i \in [n-1]$.

An essential feature of each of category of crystals is the existence of a nontrivial tensor product.
If  $\cB$ and $\cC$ are nonempty sets, then let $\cB \otimes \cC := \{ b\otimes c : b\in \cB,\ c\in \cC\}$
be the set of formal tensors of elements of $\cB$ with elements of $\cC$.
The next definition follows the ``anti-Kashiwara'' convention.

\begin{theorem}[{See \cite[\S2.3]{BumpSchilling}}]\label{thmdef1}
Let $\cB$ and $\cC$ be $\gl_n$-crystals.
Then $\cB \otimes \cC$ has a unique $\gl_n$-crystal structure
with weight map  
$
\weight(b\otimes c) := \weight(b) + \weight(c)
$
and crystal operators  
\[
e_i(b\otimes c) := \begin{cases}
b \otimes e_i(c) &\text{if }\varepsilon_i(b) \leq \varphi_i(c) \\
e_i(b) \otimes c &\text{if }\varepsilon_i(b) > \varphi_i(c)
\end{cases}
\quand
f_i(b\otimes c) := \begin{cases}
b \otimes f_i(c) &\text{if }\varepsilon_i(b) < \varphi_i(c) \\
f_i(b) \otimes c &\text{if }\varepsilon_i(b) \geq \varphi_i(c)
\end{cases}
\]
for $i \in [n-1]$,  
where  it is understood that $b\otimes 0 = 0\otimes c = 0$.
Moreover, if  $\cD$ is another $\gl_n$-crystal,
then the bijection $(b\otimes c) \otimes d \mapsto b\otimes(c\otimes d)$ 
is a  $\gl_n$-crystal isomorphism $(\cB\otimes\cC) \otimes \cD \cong \cB\otimes(\cC\otimes \cD)$.
\end{theorem}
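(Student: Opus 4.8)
The plan is to verify axioms (S1) and (S2) for the prescribed data on $\cB\otimes\cC$ and then to check that the re-association map is a crystal isomorphism; uniqueness needs no argument, since the weight map and the operators $e_i,f_i$ are given by explicit formulas. The tool that makes everything transparent is the \emph{$i$-signature rule}. To each element $b$ of a $\gl_n$-crystal and each $i\in[n-1]$ I would attach the word $\sigma_i(b):=(-)^{\varphi_i(b)}(+)^{\varepsilon_i(b)}$, consisting of $\varphi_i(b)$ minus-signs followed by $\varepsilon_i(b)$ plus-signs; to a tensor $b\otimes c$ I attach the concatenation $\sigma_i(b)\sigma_i(c)$ and then repeatedly delete adjacent pairs consisting of a $+$ immediately followed by a $-$, until none remain, arriving at a \defn{reduced signature} $(-)^{p}(+)^{q}$. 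A short case check, splitting on whether $\varepsilon_i(b)\le\varphi_i(c)$, shows that this reduction cancels $\min(\varepsilon_i(b),\varphi_i(c))$ pairs and that $p$, $q$, and the tensor factor contributing the rightmost surviving $-$ (respectively, the leftmost surviving $+$) agree with the recursive definitions of $f_i(b\otimes c)$ and $e_i(b\otimes c)$ in Theorem~\ref{thmdef1}. With this identification in hand the rest is routine bookkeeping with sign words.

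Axiom (S1) I would check directly. If $e_i(b\otimes c)\ne 0$ and $\varepsilon_i(b)\le\varphi_i(c)$, then $e_i(b\otimes c)=b\otimes e_i(c)$ with $e_i(c)\ne 0$, and since $\varphi_i(e_i(c))=\varphi_i(c)+1>\varepsilon_i(b)$ the rule for $f_i$ gives $f_i(b\otimes e_i(c))=b\otimes f_i(e_i(c))=b\otimes c$; if instead $\varepsilon_i(b)>\varphi_i(c)$, then $e_i(b\otimes c)=e_i(b)\otimes c$ and $\varepsilon_i(e_i(b))=\varepsilon_i(b)-1\ge\varphi_i(c)$, so $f_i(e_i(b)\otimes c)=b\otimes c$. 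The converse implication is symmetric, and in every case a single tensor factor is modified with weight change $\e_i-\e_{i+1}$ by (S1) for that factor, so the weight of $b\otimes c$ also changes by $\e_i-\e_{i+1}$. For axiom (S2), deletion of ``$+$ then $-$'' pairs preserves the number of $-$'s minus the number of $+$'s, so a reduced signature $(-)^{p}(+)^{q}$ for $b\otimes c$ satisfies
\[
\varphi_i(b\otimes c)-\varepsilon_i(b\otimes c)=p-q=\bigl(\varphi_i(b)-\varepsilon_i(b)\bigr)+\bigl(\varphi_i(c)-\varepsilon_i(c)\bigr),
\]
which by (S2) for $\cB$ and $\cC$ equals $\weight(b\otimes c)_i-\weight(b\otimes c)_{i+1}$; and $p+q$ is finite, being at most the length $\varphi_i(b)+\varepsilon_i(b)+\varphi_i(c)+\varepsilon_i(c)$ of the unreduced word. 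This establishes that $\cB\otimes\cC$ is a $\gl_n$-crystal.

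For associativity, the map $\psi\colon(b\otimes c)\otimes d\mapsto b\otimes(c\otimes d)$ is visibly a weight-preserving bijection $(\cB\otimes\cC)\otimes\cD\to\cB\otimes(\cC\otimes\cD)$, so I only need it to commute with every $e_i$ and $f_i$. Applying the signature description twice, $f_i$ on $(b\otimes c)\otimes d$ is computed by first reducing $\sigma_i(b)\sigma_i(c)$ and then reducing the result against $\sigma_i(d)$, whereas $f_i$ on $b\otimes(c\otimes d)$ reduces $\sigma_i(c)\sigma_i(d)$ first and then reduces $\sigma_i(b)$ against that. I would then invoke confluence of the ``$+$ then $-$''-deletion process: the fully reduced word, and the original position of each surviving letter, do not depend on the order in which deletions are carried out --- this is the standard parenthesis-matching argument. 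Hence both computations return the reduced signature of $\sigma_i(b)\sigma_i(c)\sigma_i(d)$ and single out the same surviving letter, so they act on the same one of $b,c,d$; thus $f_i\circ\psi=\psi\circ f_i$, and likewise for $e_i$, and $\psi$ is a $\gl_n$-crystal isomorphism.

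I expect the only real obstacle to be the preliminary identification of the piecewise tensor-product rule with the signature rule, together with a careful formulation of confluence and of which original block a surviving letter ``comes from''; these points are elementary but must be stated precisely, after which (S1), (S2), and associativity all reduce to formal manipulations of sign words. As an alternative to the signature rule for (S1) and (S2), one can instead iterate the recursive definition to obtain the closed formulas $\varepsilon_i(b\otimes c)=\max\bigl(\varepsilon_i(c),\,\varepsilon_i(b)+\varepsilon_i(c)-\varphi_i(c)\bigr)$ and $\varphi_i(b\otimes c)=\max\bigl(\varphi_i(b),\,\varphi_i(b)+\varphi_i(c)-\varepsilon_i(b)\bigr)$ and check the axioms from these, but the signature viewpoint is what renders associativity painless.
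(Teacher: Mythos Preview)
Your proof is correct and follows the standard approach via the signature rule; this is essentially the argument given in the cited reference \cite[\S2.3]{BumpSchilling}. The paper itself does not supply a proof of this theorem but defers to that reference, so there is nothing further to compare.
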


Let $\One$ be a $\gl_n$-crystal with  a single element, whose weight is $0 \in \ZZ^n$.
The \defn{standard $\gl_n$-crystal} is the crystal in \eqref{standard-gl-eq};
we denote this by $\BB_n$.
As in the introduction, a
 $\gl_n$-crystal is \defn{normal} 
if each of its full subcrystals is isomorphic to a full subcrystal of $\BB_n^{\otimes m}$ for some $m\in \NN$,
where  $\BB_n^0 := \One$.

\begin{remark}\label{bracketrulegln}
The following well-known \defn{signature rule} (discussed, for example, in  \cite[\S2.4]{BumpSchilling}) 
can be used to compute the crystal operators for $\BB_n^{\otimes m}$. Suppose 
$w = \boxed{w_1} \otimes \boxed{w_2} \otimes \dots \otimes \boxed{w_m} \in \BB_n^{\otimes m}$, 
and $i \in [n-1]$. Mark the entries $w_k = i$ by a right parenthesis ``)"  and entries $w_j = i+1$ 
by a left parenthesis ``(". The \defn{i-unpaired indices} in $w$ are the indices $j \in [m]$ with 
$w_j \in \{i, i+1\}$ that are not the positions of matching parentheses. 
Now let $k$ be the last $i$-unpaired index of $w$ with $w_k =i$. If no such index exists 
then $f_i(w) = 0$; otherwise $f_i(w)$ is formed from $w$ by changing $w_k$ to $i+1$. 
Similarly, let $j$ be the first $i$-unpaired index of $w$ with $w_j = i+1$. If no such index 
exists, then $e_i(w) = 0$; otherwise $e_i(w)$ is formed from $w$ by changing $w_j$ to $i$.
\end{remark}


\subsection{Crystals for queer Lie superalgebras}\label{qn-sect}

Suppose $n\geq 2$ and
$\cB$ is a $\gl_n$-crystal with maps $e_{\bar 1},f_{\bar 1}: \cB \to \cB\sqcup\{0\}$,
to be called the \defn{queer raising and lowering operators}.
Define   $\varepsilon_{\bar 1},\varphi_{\bar 1} : \cB \to \NN\sqcup\{\infty\}$
by the formulas in  \eqref{var-eq} with $i=\bar 1$.
Grantcharov et al. introduce the following abstract crystals in \cite[Def. 1.9]{GJKKK}:

\begin{definition}[See \cite{GJKKK,GJKKK15}]
\label{q-def}
The $\gl_n$-crystal $\cB$ is an \defn{(abstract) $\q_n$-crystal} relative to the operators $e_{\bar 1}$ and $f_{\bar 1}$
if the weight map satisfies $\weight(\cB) \subset \NN^n$ and for all $b,c \in \cB$ one has:
\ben
\item[(P1)] 
It holds that $e_{\overline 1}(b)=c$ if and only if $f_{\overline 1}(c) = b$,
in which case 
$\weight(c)- \weight(b) = \e_1 - \e_2$
as well as 
$\varepsilon_i(b)=\varepsilon_i(c)$
and
$\varphi_i(b) = \varphi_i(c)$ for all $i \in [3,n-1]$.

\item[(P2)] If $i \in [3,n-1]$, then $e_i$ and $f_i$ commute
 with $e_{\overline 1}$ and $f_{\overline 1}$.

\item[(P3)] If $\weight(b)_1=\weight(b)_2= 0$, then $(\varepsilon_{\overline 1} + \varphi_{\overline 1})(b) =0$, and otherwise $(\varepsilon_{\overline 1} + \varphi_{\overline 1})(b) =1$.
\een
\end{definition}

We typically consider $\q_n$-crystals when $n\geq 2$, but for convenience we 
 also define an \defn{(abstract) $\q_1$-crystal} to be a nonempty set $\cB$
with a weight map $\weight :\cB \to\NN$.

The definitions in \cite{GJKKK,GJKKK15} 
omit 
(P3), which
 implies 
that the character of any finite $\q_n$-crystal 
is in  $\SymP(x_1,x_2,\dots,x_n)$ \cite[Prop. 2.5]{Marberg2019b}.
In \cite{GJKKK,GJKKK15},  abstract $\q_n$-crystals are also required to be normal as $\gl_n$-crystals,
but it is common to omit this condition.

Our description of the tensor product for $\q_n$-crystals again follows the ``anti-Kashiwara'' convention, which is opposite to that of \cite[Thm. 1.8]{GJKKK}
and \cite[Thm. 2.7]{GJKKK15}.

\begin{theorem}[{See \cite{GJKKK,GJKKK15}}]
\label{q-tensor-thm}
Suppose
 $\cB$ and $\cC$ are $\q_n$-crystals.
Then the $\gl_n$-crystal
$\cB \otimes \cC$
has a unique $\q_n$-crystal structure 
whose queer crystal operators are given by 
\[
\ba
 f_{\overline 1}(b\otimes c) := \begin{cases} 
 b \otimes f_{\overline 1}(c)&\text{if }\weight(b)_1 = \weight(b)_2 = 0
 \\
  f_{\overline 1}(b) \otimes c
&\text{otherwise} 
 \end{cases}
  \\[-12pt]
\\
  e_{\overline 1}(b\otimes c) := \begin{cases} 
 b \otimes e_{\overline 1}(c)
&\text{if }\weight(b)_1 = \weight(b)_2 = 0
 \\
  e_{\overline 1}(b) \otimes c
&\text{otherwise} 
 \end{cases}
 \ea
\]
when $n\geq 2$.
Moreover, if  $\cD$ is another $\q_n$-crystal,
then the bijection $(b\otimes c) \otimes d \mapsto b\otimes(c\otimes d)$ 
is a  $\q_n$-crystal isomorphism $(\cB\otimes\cC) \otimes \cD \cong \cB\otimes(\cC\otimes \cD)$.
\end{theorem}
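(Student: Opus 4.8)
The plan is to treat the two assertions separately and to reduce both to short case analyses driven by a single Boolean flag, namely whether $\weight(b)_1 = \weight(b)_2 = 0$ for the left tensor factor. The case $n = 1$ is vacuous: a $\q_1$-crystal is just a nonempty set with a map to $\NN$, and $\cB\otimes\cC$ with $\weight(b\otimes c) := \weight(b)+\weight(c)$ is trivially one, with trivial uniqueness and associativity. So assume $n \geq 2$. By Theorem~\ref{thmdef1} the set $\cB\otimes\cC$ already carries a $\gl_n$-crystal structure, and $\weight(\cB\otimes\cC)\subset\NN^n$ since both summands lie in $\NN^n$. A $\q_n$-crystal structure refining a fixed $\gl_n$-crystal is exactly a choice of $e_{\bar 1}$ and $f_{\bar 1}$ satisfying (P1)--(P3), so uniqueness is immediate once existence is shown, and existence means precisely verifying (P1)--(P3) for the displayed operators. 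Three elementary observations carry the argument: (a) applying $e_i$ or $f_i$ with $i\in[3,n-1]$ changes the weight by $\pm(\e_i - \e_{i+1})$, so it leaves coordinates $1$ and $2$ fixed and hence does not alter the branch condition $\weight(b)_1 = \weight(b)_2 = 0$; (b) by (P3) for $\cB$, whenever $\weight(b)_1 = \weight(b)_2 = 0$ one has $e_{\bar 1}(b) = f_{\bar 1}(b) = 0$, which is what makes the two clauses in the definition of the queer operators mutually consistent; (c) by (P1) for $\cB$ and $\cC$, the queer operators preserve $\varepsilon_i$ and $\varphi_i$ for all $i\in[3,n-1]$, while applying $f_{\bar 1}$ to a left factor strictly increases its first weight coordinate and applying $e_{\bar 1}$ strictly increases its second, so once the process enters the ``act on the left factor'' branch it remains there.

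Given these, (P1) is a two-case check: when $\weight(b)_1 = \weight(b)_2 = 0$ both $e_{\bar 1}$ and $f_{\bar 1}$ act through $\cC$ and the assertion reduces to (P1) for $\cC$; otherwise both act through $\cB$ and, using observation (c) to see that the resulting element still has a nonzero coordinate among $\{1,2\}$ in its left factor, it reduces to (P1) for $\cB$; the weight shift $\e_1-\e_2$ and the preservation of $\varepsilon_i,\varphi_i$ for $i\in[3,n-1]$ then follow from (P1) for the relevant factor together with the standard $\gl_n$-tensor-product formulas expressing $\varepsilon_i(b\otimes c)$ and $\varphi_i(b\otimes c)$ in terms of $\varepsilon_i,\varphi_i$ of $b$ and $c$. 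For (P2), the branch selecting $e_i(b\otimes c)$ or $f_i(b\otimes c)$ depends only on $\varepsilon_i(b)$ and $\varphi_i(c)$, which are untouched by $e_{\bar 1},f_{\bar 1}$ by observation (c), while the branch selecting $e_{\bar 1},f_{\bar 1}$ is untouched by $e_i,f_i$ by observation (a); so each of the four cross-compositions splits into subcases governed by the same inequalities, and within each subcase it collapses to (P2) for $\cB$ or $\cC$. For (P3), in the branch $\weight(b)_1 = \weight(b)_2 = 0$ the queer operators never touch $b$, so $(\varepsilon_{\bar 1}+\varphi_{\bar 1})(b\otimes c) = (\varepsilon_{\bar 1}+\varphi_{\bar 1})(c)$; in the complementary branch, observation (c) guarantees the left factor always has a nonzero coordinate among $\{1,2\}$ throughout, so $(\varepsilon_{\bar 1}+\varphi_{\bar 1})(b\otimes c) = (\varepsilon_{\bar 1}+\varphi_{\bar 1})(b)$; in either case (P3) for the relevant factor gives the stated value, and the sub-subcase $\weight(b\otimes c)_1 = \weight(b\otimes c)_2 = 0$ forces $\weight(b)_1=\weight(b)_2=\weight(c)_1=\weight(c)_2=0$, so both queer operators vanish and the sum is $0$.

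For associativity, the bijection $(b\otimes c)\otimes d\mapsto b\otimes(c\otimes d)$ is already a weight-preserving $\gl_n$-crystal isomorphism by Theorem~\ref{thmdef1}, so it remains only to check that it intertwines $e_{\bar 1}$ and $f_{\bar 1}$. Since $\weight(b\otimes c)_k = 0$ if and only if $\weight(b)_k = \weight(c)_k = 0$, this is a three-case verification according to whether (i) $\weight(b)_1 = \weight(b)_2 = 0$ and $\weight(c)_1 = \weight(c)_2 = 0$, (ii) $\weight(b)_1 = \weight(b)_2 = 0$ but $\weight(c)$ has a nonzero coordinate in $\{1,2\}$, or (iii) $\weight(b)$ has a nonzero coordinate in $\{1,2\}$; in each case both sides unwind to $b\otimes(c\otimes e_{\bar 1}(d))$, $b\otimes(e_{\bar 1}(c)\otimes d)$, or $e_{\bar 1}(b)\otimes(c\otimes d)$ respectively (and likewise for $f_{\bar 1}$).

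I expect the only real obstacle to be the bookkeeping in (P1) and (P3): one must carefully track which clause of the queer-operator definition is active before and after each application, and the decisive point is observation (c) -- that acting with $f_{\bar 1}$ (resp.\ $e_{\bar 1}$) on the left factor pushes its first (resp.\ second) weight coordinate above zero and thereby pins the process in that branch -- which is exactly what forces the string-length sum in (P3) to equal $1$ rather than $2$. Everything else is routine and parallels the established treatment of $\q_n$-crystal tensor products, adjusted for the anti-Kashiwara convention used here.
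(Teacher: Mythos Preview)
Your verification is correct. The paper does not actually supply a proof of this theorem: it is stated as a cited result from \cite{GJKKK,GJKKK15}, so there is no ``paper's own proof'' to compare against. Your approach---checking (P1)--(P3) directly via the two-branch case analysis governed by whether $\weight(b)_1=\weight(b)_2=0$, and then verifying associativity by the three-branch split on the weight coordinates of $b$ and $c$---is exactly the kind of argument the paper itself labels ``straightforward to derive from the $\qq_n$-crystal axioms'' when proving the harder analogue Theorem~\ref{qq-thmdef}; indeed, your associativity check for $e_{\bar 1}$ is a strict subset of the case analysis carried out there.
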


The \defn{standard $\q_n$-crystal} is the crystal \eqref{standard-q-eq} specified in the introduction;
we denote this object by $\BB_n$.
Crystal graphs of $\q_n$-crystals have edges labeled by indices in $\{\bar 1,1,2,\dots,n-1\}$.

\begin{example}\label{q3tensorq3}
The $\q_3$-crystal $\BB_3 \otimes \BB_3$ has crystal graph
  \begin{center}
    \begin{tikzpicture}[xscale=2.5, yscale=1.5,>=latex]
      \node at (0,2) (T31) {$\boxed{1} \otimes \boxed{1}$};
      \node at (1,2) (T32) {$\boxed{1} \otimes \boxed{2}$};
      \node at (2,2) (T33) {$\boxed{1}\otimes \boxed{3}$};
      \node at (0,1) (T21) {$\boxed{2} \otimes \boxed{1}$};
      \node at (1,1) (T22) {$\boxed{2} \otimes \boxed{2}$};
      \node at (2,1) (T23) {$\boxed{2}\otimes \boxed{3}$};
      \node at (0,0) (T11) {$\boxed{3} \otimes \boxed{1}$};
      \node at (1,0) (T12) {$\boxed{3} \otimes \boxed{2}$};
      \node at (2,0) (T13) {$\boxed{3}\otimes \boxed{3}$};
      \draw[->,thick]  (T31) -- (T32) node[midway,above,scale=0.75] {$1$};
      \draw[->,thick]  (T32) -- (T33) node[midway,above,scale=0.75] {$2$};
      \draw[->,thick,dashed,color=darkred]  (T31) -- (T21) node[midway,left,scale=0.75] {$\overline 1$};
      \draw[->,thick]  (T32.285) -- (T22.75) node[midway,right,scale=0.75] {$1$};
      \draw[->,thick,dashed,color=darkred]  (T32.255) -- (T22.105) node[midway,left,scale=0.75] {$\overline{1}$};
      \draw[->,thick]  (T33.285) -- (T23.75) node[midway,right,scale=0.75] {$1$};
      \draw[->,thick,dashed,color=darkred]  (T33.255) -- (T23.105) node[midway,left,scale=0.75] {$\overline{1}$};
      \draw[->,thick]  (T22) -- (T23) node[midway,above,scale=0.75] {$2$};
      \draw[->,thick]  (T21) -- (T11) node[midway,left,scale=0.75] {$2$};
      \draw[->,thick]  (T23) -- (T13) node[midway,right,scale=0.75] {$2$};
      \draw[->,thick]  (T11.7) -- (T12.173) node[midway,above,scale=0.75] {$1$};
     \draw[->,thick,dashed,color=darkred]  (T11.353) -- (T12.187) node[midway,below,scale=0.75] {$\overline{1}$};
     \end{tikzpicture}
  \end{center}
and weight map $\weight(\boxed{i}\otimes \boxed{j}) = \e_i  + \e_j$.
\end{example}

The $1$-element $\gl_n$-crystal $\One$ may be regarded as a $\q_n$-crystal.
A
 $\q_n$-crystal is \defn{normal} 
if each of its full $\q_n$-subcrystals is isomorphic to a full $\q_n$-subcrystal of $\BB_n^{\otimes m}$ for some $m\in \NN$,
where  $\BB_n^0 := \One$.

It remains to give the formal definition of the \defn{$\q_n$-highest weight elements} that are mentioned in Theorem~\ref{main-q-thm}.
This is more complicated than in the $\gl_n$-case and involves the following operators. 
Let $\cB$ be a $\gl_n$-crystal.
For each $i \in [n-1]$ define a map $\sigma_i : \cB \to \cB$ by
\be\label{weyl-action-eq} \sigma_i (b) :=
\begin{cases} e_i^{-k} (b)&\text{if $k\leq0$}\\
f_i^{k} (b)&\text{if $k\geq 0$}
\end{cases}
\quad\text{where }k := \varphi_i(b) - \varepsilon_i(b).
\ee
When we erase all arrows except those of the form $\xrightarrow{\ i \ }$,
the crystal graph of $\cB$ becomes a disjoint union of paths called \defn{$i$-strings},
 and $\sigma_i$ reverses the order of the elements in each $i$-string.
One has $\sigma_i^2(b) = b$, and $\weight(\sigma_i(b))$ 
is obtained from $\weight(b) $ by interchanging $\weight(b)_i$  and $\weight(b)_{i+1}$.

\begin{remark}\label{weyl-remark}
If $\cB$ is a normal $\gl_n$-crystal, then there is a unique group action of $S_n$ on the set $\cB$ in which the transposition $(i,i+1)$ acts as $\sigma_i$
\cite[Thm. 11.14]{BumpSchilling}.
In general there may fail to be such a group action. For example, 
there is a unique  $\gl_3$-crystal with crystal graph
 \begin{center}
    \begin{tikzpicture}[xscale=1.25, yscale=0.5,>=latex]
      \node[inner sep=0,outer sep=0] at (-4,0) (1) {$\bullet$};
      \node[inner sep=0,outer sep=0] at (-3,-1) (3) {$\bullet$};
      \node[inner sep=0,outer sep=0] at (-3,1) (5) {$\bullet$};
      \node[inner sep=0,outer sep=0] at (-2,-1) (7) {$\bullet$};
      \node[inner sep=0,outer sep=0] at (-2,1) (9) {$\bullet$};
      \node[inner sep=0,outer sep=0] at (-1,-1) (11) {$\bullet$};
      \node[inner sep=0,outer sep=0] at (-1,1) (13) {$\bullet$};
      \node[inner sep=0,outer sep=0] at (0,-1) (15) {$\bullet$};
      \node[inner sep=0,outer sep=0] at (0,1) (16) {$\bullet$};
      \node[inner sep=0,outer sep=0] at (1,1) (14) {$\bullet$};
      \node[inner sep=0,outer sep=0] at (1,-1) (12) {$\bullet$};
      \node[inner sep=0,outer sep=0] at (2,1) (10) {$\bullet$};
      \node[inner sep=0,outer sep=0] at (2,-1) (8) {$\bullet$};
      \node[inner sep=0,outer sep=0] at (3,1) (6) {$\bullet$};
      \node[inner sep=0,outer sep=0] at (3,-1) (4) {$\bullet$};
      \node[inner sep=0,outer sep=0] at (4,0) (2) {$\bullet$};
      \draw[->,thick,color=blue]  (1) -- (3) node[midway,above,scale=0.75] {$ 1$};
      \draw[->,thick,color=blue]  (11) -- (15) node[midway,above,scale=0.75] {$ 1$};
      \draw[->,thick,color=blue]  (5) -- (9) node[midway,above,scale=0.75] {$ 1$};
      \draw[->,thick,color=blue]  (9) -- (13) node[midway,above,scale=0.75] {$ 1$};
      \draw[->,thick,color=blue]  (2) -- (6) node[midway,above,scale=0.75] {$ 1$};
      \draw[->,thick,color=blue]  (4) -- (8) node[midway,above,scale=0.75] {$ 1$};
      \draw[->,thick,color=blue]  (8) -- (12) node[midway,above,scale=0.75] {$ 1$};
      \draw[->,thick,color=blue]  (14) -- (16) node[midway,above,scale=0.75] {$ 1$};
      \draw[->,thick,color=darkred]  (1) -- (5) node[midway,above,scale=0.75] {$ 2$};
      \draw[->,thick,color=darkred]  (3) -- (7) node[midway,above,scale=0.75] {$ 2$};
      \draw[->,thick,color=darkred]  (7) -- (11) node[midway,above,scale=0.75] {$ 2$};
      \draw[->,thick,color=darkred]  (13) -- (16) node[midway,above,scale=0.75] {$ 2$};
      \draw[->,thick,color=darkred]  (2) -- (4) node[midway,above,scale=0.75] {$ 2$};
      \draw[->,thick,color=darkred]  (6) -- (10) node[midway,above,scale=0.75] {$ 2$};
      \draw[->,thick,color=darkred]  (10) -- (14) node[midway,above,scale=0.75] {$ 2$};
      \draw[->,thick,color=darkred]  (12) -- (15) node[midway,above,scale=0.75] {$ 2$};
     \end{tikzpicture}
  \end{center}
  whose two highest weight elements both have weight $(2,1,0)$,
but on this crystal $\sigma_1\sigma_2\sigma_1 \neq \sigma_2\sigma_1\sigma_2$.
%
This subtlety  is sometimes overlooked in discussions
 involving the $\sigma_i$ maps. 
 \end{remark}

Assume  $\cB$ is a $\q_n$-crystal. 
Define $e_{\bar i}: \cB \to \cB \sqcup\{0\}$ and $  f_{\bar i} : \cB \to \cB \sqcup\{0\}$ for $i\in[2,n-1]$ by
\be
\label{bar-i-eq} 
\ba
e_{\bar i} &:=  (\sigma_{i-1}   \sigma_i)    \cdots  (\sigma_2 \sigma_3) (\sigma_1 \sigma_2) e_{\bar 1} (\sigma_2 \sigma _1) (\sigma_3 \sigma_2) \cdots (\sigma_i   \sigma_{i-1}) = \sigma_{i-1}   \sigma_i   e_{\overline{i-1}}   \sigma_i   \sigma_{i-1},
\\
f_{\bar i} &:=  (\sigma_{i-1}   \sigma_i)    \cdots (\sigma_2 \sigma_3) (\sigma_1 \sigma_2) f_{\bar 1}( \sigma_2 \sigma _1) (\sigma_3 \sigma_2) \cdots (\sigma_i   \sigma_{i-1}) =  \sigma_{i-1}   \sigma_i   f_{\overline{i-1}}   \sigma_i   \sigma_{i-1} ,
\ea
\ee
using the convention that $\sigma_i(0) = 0$. 

\begin{example} 
In Example~\ref{q3tensorq3}, the operator $f_{\bar 2}$ acts as
  \begin{center}
    \begin{tikzpicture}[xscale=2.5, yscale=1.5,>=latex]
      \node at (0,2) (T31) {$\boxed{1} \otimes \boxed{1}$};
      \node at (1,2) (T32) {$\boxed{1} \otimes \boxed{2}$};
      \node at (2,2) (T33) {$\boxed{1}\otimes \boxed{3}$};
      \node at (0,1) (T21) {$\boxed{2} \otimes \boxed{1}$};
      \node at (1,1) (T22) {$\boxed{2} \otimes \boxed{2}$};
      \node at (2,1) (T23) {$\boxed{2}\otimes \boxed{3}$};
      \node at (0,0) (T11) {$\boxed{3} \otimes \boxed{1}$};
      \node at (1,0) (T12) {$\boxed{3} \otimes \boxed{2}$};
      \node at (2,0) (T13) {$\boxed{3}\otimes \boxed{3}$};
      %
      %
      \draw[->,thick]  (T32) -- (T33) node[midway,above,scale=0.75] {$\overline 2$};
     %
      %
      \draw[->,thick]  (T22) -- (T12) node[midway,left,scale=0.75] {$\overline 2$};
      \draw[->,thick]  (T21) -- (T11) node[midway,left,scale=0.75] {$\overline 2$};
      \draw[->,thick]  (T23) -- (T13) node[midway,left,scale=0.75] {$\overline2$};
      %
     \end{tikzpicture}
  \end{center}
which means that $f_{\bar 2}(b) = f_2(b)$ for all crystal elements $b \neq \boxed{2}\otimes \boxed{2}$.
\end{example}

Define $\sigma_{w_0} : \cB \to \cB$ by
\be\label{w0-eq} \sigma_{w_0} := (\sigma_1) (\sigma_2\sigma_1)(\sigma_3\sigma_2\sigma_1) \cdots  (\sigma_{n-1} \cdots \sigma_2\sigma_1).\ee
Each $\sigma_i$ is invertible so $\sigma_{w_0}$ is also invertible, and if 
$ \weight(b) = (\alpha_1,\alpha_2,\dots,\alpha_n)\in \ZZ^n$, then one can check that
$\weight(\sigma_{w_0}(b)) = (\alpha_n,\dots,\alpha_2,\alpha_1)$.
If $\cB$ is normal as a $\gl_n$-crystal, then $\sigma_{w_0}$ gives the action of the
reverse permutation $w_0 := n\cdots 321 \in S_n$ on $\cB$ and $\sigma_{w_0} =\sigma_{w_0}^{-1}$.
For $i \in [n-1]$ define 
 \be e_{\bar i'} := \sigma_{w_0}  f_{\overline{n-i}}  \sigma_{w_0}^{-1}
 \quand
 f_{\bar i'} := \sigma_{w_0}   e_{\overline{n-i}}   \sigma_{w_0}^{-1}.\ee

Denote the indexing sets for these maps by 
 $I := [n-1]$,
$\overline{I} := \{\bar i : i \in I\}$, and $\overline{I}' := \{\bar i' : i \in I\}$.
We will see in a moment the following definition is equivalent to \cite[Def. 1.12]{GJKKK}.
\begin{definition}
\label{q-highest-def}
Suppose $\cB$ is a $\q_n$-crystal.
A \defn{$\q_n$-highest} (respectively, \defn{$\q_n$-lowest}) \defn{weight element} $b \in \cB$
is an element with  $e_i (b) = 0$ for all $i \in I\sqcup \overline I$  (respectively, $f_i(b)  = 0$ for all $i \in I\sqcup \overline{I}')$.
\end{definition}

The unique $\q_n$-highest and $\q_n$-lowest weight elements in $\BB_n$ (respectively, $\BB_3\otimes \BB_3$)
are $\boxed{1}$ and $\boxed{n}$ (respectively, $\boxed{1}\otimes \boxed{1}$ and $\boxed{3}\otimes \boxed{3}$).
Let $\prec$ be the transitive closure of the relation on $\ZZ^n$ that has $v \prec v + \e_i - \e_{i+1}$ for all $i \in [n-1]$.
This relation is a strict partial order. 

\begin{proposition}
Suppose $\cB$ is a $\q_n$-crystal. If $\cB$ has a unique $\q_n$-highest weight element, 
then this element is also 
 the unique element $b \in \cB$ with $\{ c \in \cB : \weight(b) \prec \weight(c)\} = \varnothing$,
 as well as
the unique element $b \in \cB$ with $e_i(b)= 0$ for all $i \in I\sqcup \overline{I}\sqcup \overline{I}'$.
\end{proposition}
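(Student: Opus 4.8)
The plan is to derive both assertions from one fact about the partial order $\prec$ on weights, after establishing weight-shift formulas for the operators indexed by $\overline I$ and $\overline I'$. Concretely, the first step is to check that, whenever the relevant operator does not send $b$ to $0$, one has $\weight(e_{\bar j}(b)) = \weight(b) + \e_j - \e_{j+1}$ for $j \in [n-1]$ and $\weight(e_{\bar i'}(b)) = \weight(b) + \e_i - \e_{i+1}$ for $i \in [n-1]$. For $e_{\bar 1}$ this is part of (P1); for $e_{\bar j}$ with $j \geq 2$ it follows by induction from $e_{\bar j} = \sigma_{j-1}\sigma_j e_{\overline{j-1}}\sigma_j\sigma_{j-1}$, using that each $\sigma_i$ acts on weights by the transposition of coordinates $i$ and $i+1$: commuting the inductively known shift $\e_{j-1}-\e_j$ past the two conjugating transpositions turns it into $\e_j - \e_{j+1}$, and the outer $\sigma$'s then cancel in pairs. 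For $e_{\bar i'} = \sigma_{w_0} f_{\overline{n-i}}\sigma_{w_0}^{-1}$ the same bookkeeping applies, now using that $\sigma_{w_0}$ and $\sigma_{w_0}^{-1}$ both act on weights by the reversal $\alpha \mapsto (\alpha_n,\dots,\alpha_1)$ and that $f_{\overline{n-i}}$ shifts weights by $\e_{n-i+1} - \e_{n-i}$.

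Write $b_0$ for the unique $\q_n$-highest weight element. The key lemma is: if $b \in \cB$ is not $\q_n$-highest, then there is a $c \in \cB$ with $\weight(b) \prec \weight(c)$. This is immediate, since by definition some $e_i(b)$ with $i \in I \sqcup \overline I$ is nonzero, and whether $i$ is unbarred or barred, (S1), (P1), or the formulas above give $\weight(e_i(b)) = \weight(b) + \e_k - \e_{k+1}$ for some $k \in [n-1]$, so $c := e_i(b)$ works. Taking the contrapositive and using uniqueness of $b_0$, any $b$ with $\{c\in\cB:\weight(b)\prec\weight(c)\}=\varnothing$ must equal $b_0$. For the reverse direction note that every relation $v \prec w$ forces $\sum_k v_k = \sum_k w_k$ (each $\e_k - \e_{k+1}$ has coordinate sum $0$), and $\NN^n$ contains only finitely many vectors of a given coordinate sum; hence no infinite $\prec$-ascending chain of weights occurs in $\cB$. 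If some $c_1 \in \cB$ had $\weight(b_0) \prec \weight(c_1)$, then $c_1 \neq b_0$ by irreflexivity, so $c_1$ is not $\q_n$-highest, so the lemma gives $c_2$ with $\weight(c_1) \prec \weight(c_2)$, and iterating contradicts finiteness. Thus $\{c\in\cB:\weight(b_0)\prec\weight(c)\}=\varnothing$, and $b_0$ is the unique element with this property.

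For the last assertion, observe that the condition ``$e_i(b)=0$ for all $i \in I \sqcup \overline I \sqcup \overline I'$'' is strictly stronger than being $\q_n$-highest, so any element satisfying it equals $b_0$; it remains only to verify that $b_0$ itself satisfies it, that is, that $e_{\bar i'}(b_0) = 0$ for every $i \in [n-1]$. If instead $c := e_{\bar i'}(b_0) \neq 0$, then the weight-shift formula from the first step gives $\weight(c) = \weight(b_0) + \e_i - \e_{i+1}$, so $\weight(b_0) \prec \weight(c)$, contradicting the $\prec$-maximality of $b_0$ just proved. (When $n = 1$ all three index sets are empty and the proposition reduces to the hypothesis, so one may assume $n \geq 2$ throughout.)

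The only delicate point is the first step: since each $\sigma_i$ permutes coordinates rather than translating by a constant vector, the weight computations for $e_{\bar j}$ and $e_{\bar i'}$ cannot be done by naively adding up shifts, and one has to track how the simple-root shift commutes past the conjugating transpositions (or the reversal permutation); the conjugating permutations do collapse, but only after this commutation. Everything afterward is elementary manipulation of $\prec$ together with the defining property of a unique $\q_n$-highest weight element.
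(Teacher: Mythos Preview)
Your proof is correct and follows essentially the same approach as the paper's. The paper packages the argument more tersely by setting $X = \{b : \text{no }c\text{ has }\weight(b)\prec\weight(c)\}$, $Y = \{b : e_i(b)=0\text{ for all }i\in I\sqcup\overline I\sqcup\overline I'\}$, $Z = \{b : e_i(b)=0\text{ for all }i\in I\sqcup\overline I\}$, observing the weight shifts you computed, and concluding $\varnothing\neq X\subset Y\subset Z$; since $|Z|=1$ by hypothesis, all three sets are $\{b_0\}$. Your iteration-and-finiteness argument is exactly what underlies the paper's unproved assertion $X\neq\varnothing$, and your explicit verification of the $\e_i-\e_{i+1}$ shift for $e_{\bar j}$ and $e_{\bar i'}$ supplies details the paper omits.
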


\begin{proof}
Let $X := \{ b\in \cB : \text{no }c \in \cB \text{ has }\weight(b) \prec \weight(c)\}$, 
$Y := \{ b \in \cB : e_i(b) = 0\text{ if }i \in  I \sqcup\overline{I} \sqcup \overline{I}'\}$,
and $Z:=\{ b \in \cB : e_i(b) = 0\text{ if }i \in  I \sqcup \overline{I}  \}$.
Since the operators $e_i$, $e_{\overline{i}}$, and $e_{\overline{i}'}$ 
change the weight of an element by 
adding $\e_i-\e_{i+1}$ when they do not act as zero, we have $X \subset Y \subset Z$.

All weights for elements of $\q_n$-crystals are in $\NN^n$,
so if $b_0,b_1,b_2,\dots,b_k \in \cB$ are such $\weight(b_j) - \weight(b_{j-1}) \in \{ \e_i - \e_{i+1} : i \in [n-1]\}$ for all $j \in [k]$ 
then we must have $k \leq \weight(b_0)_1 +2 \weight(b_0)_2+3 \weight(b_0)_3 + \dots + n\weight(b_0)_n$.
Hence any $b \in \cB$ must have $b\prec c$ for some $c \in X$, so the set $X$ is nonempty if $\cB$ is nonempty.
This means that if $\cB$ has a unique $\q_n$-highest weight element $b$ then $\varnothing \neq X \subset Y \subset Z = \{b\}$
so $X=Y=Z=\{b\}$.
\end{proof}

\begin{proposition}\label{low-prop}
Suppose $\cB$ is a normal $\gl_n$-crystal. Then 
$b\in\cB$ is a $\gl_n$-lowest weight element if and only if $\sigma_{w_0}(b) \in \cB$ is a $\gl_n$-highest weight element.
If $\cB$ is also a $\q_n$-crystal, then 
$b\in\cB$ is a $\q_n$-lowest weight element if and only if $\sigma_{w_0}(b) \in \cB$ is a $\q_n$-highest weight element.
\end{proposition}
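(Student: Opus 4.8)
The plan is to establish the $\gl_n$-statement first and then deduce the $\q_n$-statement from it, the latter reducing to bookkeeping with the definitions \eqref{bar-i-eq}--\eqref{w0-eq} and Definition~\ref{q-highest-def}.

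For the $\gl_n$-part, the essential input is that on any normal $\gl_n$-crystal the bijection $\sigma_{w_0}$ (extended by $\sigma_{w_0}(0):=0$) intertwines the crystal operators according to the diagram automorphism $i\mapsto n-i$ induced by $w_0$; that is, $e_i \circ \sigma_{w_0} = \sigma_{w_0}\circ f_{n-i}$ for all $i\in[n-1]$. I would obtain this by recalling that a normal $\gl_n$-crystal is a disjoint union of connected components, each isomorphic to a tableau crystal $\cB(\lambda)$, that the maps $\sigma_i$, and hence $\sigma_{w_0}$, preserve components, and that $\sigma_{w_0}$ restricted to $\cB(\lambda)$ is the Lusztig involution, which by construction exchanges $e_i$ with $f_{n-i}$ (see \cite{BumpSchilling}). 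Granting this, if $b$ is $\gl_n$-lowest then for every $i\in[n-1]$ we have $e_i(\sigma_{w_0}(b)) = \sigma_{w_0}(f_{n-i}(b)) = \sigma_{w_0}(0) = 0$, so $\sigma_{w_0}(b)$ is $\gl_n$-highest; conversely, the same identity together with injectivity of $\sigma_{w_0}$ shows that $\gl_n$-highestness of $\sigma_{w_0}(b)$ forces $f_{n-i}(b)=0$ for all $i$. A self-contained alternative uses Theorem~\ref{gl-thm} and its lowest-weight mirror together with the weight-reversal property of $\sigma_{w_0}$: a $\gl_n$-lowest element $b$ must be the unique lowest weight element of its component $\cong\cB(\lambda)$, hence $\weight(b)=w_0\lambda$ and $\weight(\sigma_{w_0}(b))=\lambda$; since $\lambda$ occurs with multiplicity one in $\cB(\lambda)$ and labels its highest weight element, $\sigma_{w_0}(b)$ is $\gl_n$-highest, and the converse follows using $\sigma_{w_0}^2=\mathrm{id}$.

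Now assume $\cB$ is also a $\q_n$-crystal. By Definition~\ref{q-highest-def}, $b$ is $\q_n$-lowest exactly when $f_i(b)=0$ for all $i\in I$ and $f_{\bar i'}(b)=0$ for all $i\in[n-1]$, while $\sigma_{w_0}(b)$ is $\q_n$-highest exactly when $e_j(\sigma_{w_0}(b))=0$ for all $j\in I$ and $e_{\bar j}(\sigma_{w_0}(b))=0$ for all $j\in[n-1]$. The unbarred families of conditions are equivalent to each other under $b\leftrightarrow\sigma_{w_0}(b)$ by the $\gl_n$-statement just proved. For the barred conditions, substitute the definition $f_{\bar i'} = \sigma_{w_0}\circ e_{\overline{n-i}}\circ\sigma_{w_0}^{-1}$ and use that $\sigma_{w_0}^{-1}=\sigma_{w_0}$, which holds because $\cB$ is normal as a $\gl_n$-crystal; since $\sigma_{w_0}$ is a bijection fixing $0$, this gives $f_{\bar i'}(b)=0 \iff e_{\overline{n-i}}(\sigma_{w_0}(b))=0$. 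As $i$ ranges over $[n-1]$ so does $n-i$, so the barred family for $b$ is equivalent to the barred family for $\sigma_{w_0}(b)$. Conjoining the unbarred and barred equivalences yields the $\q_n$-statement.

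The only step that is not routine is the intertwining identity for $\sigma_{w_0}$ in the $\gl_n$-case, and that is where I expect the real work to be: one must verify either that the composite \eqref{w0-eq} is independent of the reduced word chosen for $w_0$ (ensured by Remark~\ref{weyl-remark}) and implements the Lusztig involution componentwise, or else carry out the multiplicity-one argument rigorously, including the lowest-weight analogue of Theorem~\ref{gl-thm}. Everything downstream of that identity, and in particular the entire $\q_n$-half, is a direct unwinding of the definitions.
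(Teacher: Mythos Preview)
Your proposal is correct, and your ``self-contained alternative'' for the $\gl_n$-part is essentially the paper's own argument: the paper works in the tableau model $\cB(\lambda)$, observes that the weights $\lambda$ and $w_0\lambda$ each occur with multiplicity one, uses weight-reversal to conclude $\sigma_{w_0}(T_\lambda)=U_\lambda$, and then invokes the Lusztig involution precisely to supply the ``lowest-weight mirror of Theorem~\ref{gl-thm}'' that you flagged as the one nontrivial input. Your $\q_n$-reduction via $f_{\bar i'}=\sigma_{w_0}\, e_{\overline{n-i}}\,\sigma_{w_0}^{-1}$ and $\sigma_{w_0}^{-1}=\sigma_{w_0}$ is exactly what the paper does, only spelled out more carefully.

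One remark on your primary approach: the full intertwining identity $e_i\circ\sigma_{w_0}=\sigma_{w_0}\circ f_{n-i}$ is stronger than what the paper actually establishes or needs. The paper does not prove that $\sigma_{w_0}$ coincides with the Lusztig involution as a map on all of $\cB(\lambda)$; it only shows they agree on the highest weight element (and hence on the lowest, by $\sigma_{w_0}^2=\mathrm{id}$). That weaker fact suffices for the proposition, so your alternative route is both closer to the paper and more economical.
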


The characterization of 
$\q_n$-lowest weight elements in this proposition is \cite[Def. 1.12]{GJKKK}.

\begin{proof}
A connected normal $\gl_n$-crystal with highest weight $\lambda=(\lambda_1,\lambda_2,\dots,\lambda_n) \in \NN^n$
may be identified with the set of semistandard tableaux of shape $\lambda$ with all entries at most $n$; see \cite[Chapter 8]{BumpSchilling}.
This set has only one element $T_\lambda$ of weight $\lambda$ and one element $U_\lambda$ of weight $(\lambda_n,\dots,\lambda_2,\lambda_1)$.
As
$(\lambda_n,\dots,\lambda_2,\lambda_1)$ is also the weight of $\sigma_{w_0}(T_\lambda)$,
we must have $\sigma_{w_0}(T_\lambda) = U_\lambda$.

The tableaux $T_\lambda$ and $U_\lambda$ are interchanged by the \defn{Lusztig involution}, 
which also swaps highest and lowest weight elements; see \cite[Exercises 5.1 and 5.2]{BumpSchilling} or \cite[\S2.4]{Lenart}.
Since $T_\lambda$ is the unique highest weight element \cite[Thm. 3.2]{BumpSchilling}, 
 $U_\lambda$ is therefore the unique lowest weight element.
  The involution $\sigma_{w_0}$ therefore interchanges highest and lowest weight elements in normal $\gl_n$-crystals.

When $\cB$ is a $\q_n$-crystal that is normal as a $\gl_n$-crystal, $\sigma_{w_0} f_{\bar i'}  = e_{\overline{n-i}}\sigma_{w_0}$ for $i \in [n-1]$ by definition,
so
 $f_i(b) =0$ for all $i \in I\sqcup \overline{I}'$ if and only if $e_i \sigma_{w_0}(b) = 0$ for all $i\in I\sqcup \overline{I}$.
\end{proof}

\subsection{Extended queer supercrystals}\label{ext-sect}

In this section $n$ is allowed to be any positive integer.
Suppose $\cB$ is a $\q_n$-crystal with additional maps $e_{0},f_{0}: \cB \to \cB\sqcup\{0\}$.
Define   $\varepsilon_{0},\varphi_{0} : \cB \to \NN\sqcup\{\infty\}$
by the formula \eqref{var-eq} with $i=0$.
The following extension of Definition~\ref{q-def} is our primary subject.

\begin{definition}
\label{qq-def}
The $\q_n$-crystal $\cB$ is an \defn{(abstract) $\qq_n$-crystal}
relative to the operators $e_0$ and $f_0$ if for all $b,c \in \cB$ one has:
\ben
\item[(Q1)]  It holds that  $e_{0}(b)=c$ if and only if $f_{0}(c) = b$, 
in which case $\weight(b) = \weight(c)$ as well as
$\varepsilon_i(b)=\varepsilon_i(c)$
and
$\varphi_i(b) = \varphi_i(c)$ for all $i \in [n-1]$ and also for $i=\bar 1$ if $n\geq 2$.

\item[(Q2)] If $i \in [2,n-1]$, then $e_i$ and $f_i$ commute with  $e_{0}$ and $f_{0}$.

\item[(Q3)]
If $\weight(b)_1= 0$, then $(\varepsilon_{0} + \varphi_{0})(b) =0$, and otherwise $(\varepsilon_{0} + \varphi_{0})(b) =1$.

\een
\end{definition}

 Here is a first link between $\qq_n$-crystals and Schur $Q$-functions:

\begin{proposition}
If $\cB$ is a finite $\qq_n$-crystal,
then $\ch (\cB)\in\SymQ(x_1,x_2,\dots,x_n)$.
 \end{proposition}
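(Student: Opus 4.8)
The plan is to verify membership in $\SymQ(x_1,\dots,x_n)$ using the explicit description recalled in Section~\ref{sym-sect}: for $n \geq 2$ one must check both that $\ch(\cB) \in \SymP(x_1,\dots,x_n)$ and that $\ch(\cB) - \ch(\cB)(0,x_2,\dots,x_n)$ lies in $2x_1\ZZ[x_1,\dots,x_n]$, while for $n=1$ only the analogue of the second condition is at issue. The first condition comes for free: discarding the operators $e_0,f_0$ leaves a finite $\q_n$-crystal, and, as noted after Definition~\ref{q-def}, axiom (P3) then forces $\ch(\cB) \in \SymP(x_1,\dots,x_n)$ by \cite[Prop. 2.5]{Marberg2019b}. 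So the real work is with the $0$-operators.

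First I would examine how the crystal graph decomposes into $0$-strings, the connected pieces left after keeping only the edges $b \xrightarrow{0} c$. There are no $0$-loops, since a loop would make $\varepsilon_0$ or $\varphi_0$ infinite, contradicting (Q3); hence each $0$-string is a path, of length $\varepsilon_0(b) + \varphi_0(b) + 1$ through any of its elements $b$. Axiom (Q3) therefore says exactly that each $0$-string is either a single element $b$ with $\weight(b)_1 = 0$, or a pair $\{b, f_0(b)\}$ with $\weight(b)_1 \geq 1$; moreover, by the equivalence and weight-preservation clauses of (Q1), the two elements of such a pair have equal weight. Setting $\cB_0 := \{b \in \cB : \weight(b)_1 = 0\}$ and $\cB_{\geq 1} := \cB \setminus \cB_0$, it follows that pairing each element of $\cB_{\geq 1}$ with its $0$-string partner is a fixed-point-free, weight-preserving involution of $\cB_{\geq 1}$, so
\[
\ch(\cB_{\geq 1}) \;=\; 2\!\!\sum_{\text{$0$-strings }\{b,c\}\subseteq\cB_{\geq 1}}\!\! x_1^{\weight(b)_1}\cdots x_n^{\weight(b)_n} \;\in\; 2x_1\ZZ[x_1,\dots,x_n],
\]
the last membership because each summand has $\weight(b)_1 \geq 1$.

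To conclude, substituting $x_1 = 0$ in $\ch(\cB) = \sum_{b\in\cB} x^{\weight(b)}$ kills precisely the monomials of elements of $\cB_{\geq 1}$ and fixes those of $\cB_0$, so $\ch(\cB)(0,x_2,\dots,x_n) = \ch(\cB_0)$ and therefore $\ch(\cB) - \ch(\cB)(0,x_2,\dots,x_n) = \ch(\cB_{\geq 1}) \in 2x_1\ZZ[x_1,\dots,x_n]$. Together with $\ch(\cB)\in\SymP(x_1,\dots,x_n)$ this yields $\ch(\cB)\in\SymQ(x_1,\dots,x_n)$ for $n\geq 2$ by the characterization; for $n=1$ the same identity shows that every monomial $x_1^k$ with $k\geq 1$ appears in $\ch(\cB)$ with even coefficient, which again places $\ch(\cB)$ in $\SymQ(x_1)$. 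I expect essentially no obstacle here: the cited $\q_n$-result supplies symmetry and $\SymP$-membership, and the only genuinely new ingredient is the elementary observation that (Q1) together with (Q3) makes the positive-$x_1$-degree part of $\ch(\cB)$ twice an integral polynomial.
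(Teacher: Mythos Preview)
Your proof is correct and follows essentially the same approach as the paper: both invoke \cite[Prop.~2.5]{Marberg2019b} for $\SymP$-membership, then use (Q1) and (Q3) to pair up the elements of positive first weight into weight-preserving $0$-strings of length two, yielding $\ch(\cB)-\ch(\cB)(0,x_2,\dots,x_n)\in 2x_1\ZZ[x_1,\dots,x_n]$. Your version is slightly more explicit in spelling out the $0$-string decomposition and the absence of loops, but the substance is identical.
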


\begin{proof}
Let $\cB$ be a finite $\qq_n$-crystal.
Since $\qq_n$-crystals are $\q_n$-crystals, we know that
$\ch(\cB)\in \SymP(x_1,x_2,\dots,x_n)$ \cite[Prop. 2.5]{Marberg2019b}.
As noted in Section~\ref{sym-sect}, an element $f(x_1,x_2,\dots,x_n) \in  \SymP(x_1,x_2,\dots,x_n)$ belongs to $ \SymQ(x_1,x_2,\dots,x_n)$ 
if  
$f- f(0,x_2,\dots,x_n) $ is divisible by $ 2x_1$; this holds even if $n=1$.
The difference
$\ch(\cB)(x_1,x_2,\dots,x_n)- \ch(\cB)(0,x_2,\dots,x_n)$ is the sum of
$x^{\weight(b)}$ over all $b \in \cB$ with $\weight(b)_1> 0$.
As all such elements $b$ satisfy $\varepsilon_0(b) + \varphi_0(b) = 1$ by (Q3),
this equals $2x_1 \sum_{b} x^{\weight(b)-\e_1}$
where the sum is over all $b \in \cB$ with $\weight(b)_1>0$ and $e_0(b)\neq 0$,
as needed.
\end{proof}

The following result gives 
a  tensor product for $\qq_n$-crystals. This
is more complicated than for $\q_n$-crystals,
but will turn out to have several desirable properties.

\begin{theorem}\label{qq-thmdef}
Let $\cB$ and $\cC$ be $\qq_n$-crystals.
Then the $\gl_n$-crystal $\cB\otimes \cC$
has a unique $\qq_n$-crystal structure
in which
$e_0$ and $f_0$ are given by 
\[\ba
  e_{0}(b\otimes c) := \begin{cases} 
 e_0(b) \otimes c&\text{if }\weight(b)_1 \neq 0
 \\
b \otimes e_0(c) & \text{if }\weight(b)_1  = 0
 \end{cases}
\quand
 f_{0}(b\otimes c) := \begin{cases} 
 f_0(b) \otimes c&\text{if }\weight(b)_1 \neq 0
 \\
b \otimes f_0(c) & \text{if }\weight(b)_1 = 0
 \end{cases}
\ea
\]
and in which (when $n\geq 2$)  $e_{\bar 1}$ and $f_{\bar 1}$ are given by  
\[
\ba
  e_{\overline 1}(b\otimes c) &:= \begin{cases} 
 b \otimes e_{\overline 1}(c)
 &\text{if }\weight(b)_1 = \weight(b)_2 = 0
 \\
 f_0  e_{\bar 1} (b) \otimes e_0(c) 
 &\text{if $\weight(b)_1 = 0$, $f_0e_{\bar 1}(b) \neq 0$, and $e_0(c)\neq 0$}
\\
 e_0  e_{\bar 1} (b) \otimes f_0(c) 
 &\text{if $\weight(b)_1 = 0$, $e_0e_{\bar 1}(b)\neq 0$, and $f_0(c) \neq 0$}
 \\
  e_{\overline 1}(b) \otimes c
&\text{otherwise}
 \end{cases}
  \\[-12pt]
\\
 f_{\overline 1}(b\otimes c) &:= \begin{cases} 
 b \otimes f_{\overline 1}(c)&\text{if }\weight(b)_1 = \weight(b)_2 = 0
 \\
  f_{\overline 1}f_0(b) \otimes e_0(c)
 &\text{if }\weight(b)_1 = 1, \text{ $f_{\overline 1}f_0(b)\neq 0 $, and $e_0(c)\neq 0$}
\\
  f_{\overline 1}e_0(b) \otimes f_0(c)
 &\text{if }\weight(b)_1 = 1, \text{ $f_{\overline 1}e_0(b) \neq 0$, and $f_0(c) \neq 0$}
  \\
    f_{\overline 1}(b) \otimes c &\text{otherwise}
 \end{cases}
 \ea
\]
where it is understood that $b\otimes 0 = 0\otimes c = 0$.
Moreover, if  $\cD$ is another $\qq_n$-crystal,
then the bijection $(b\otimes c) \otimes d \mapsto b\otimes(c\otimes d)$ 
is a  $\qq_n$-crystal isomorphism $(\cB\otimes\cC) \otimes \cD \cong \cB\otimes(\cC\otimes \cD)$.
\end{theorem}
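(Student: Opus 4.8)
The plan is to check directly that the displayed formulas define operators making $\cB\otimes\cC$ a $\qq_n$-crystal, and then to check that the standard reassociation bijection intertwines every crystal operator. Uniqueness is automatic once the formulas are prescribed, so the content is: well-definedness of the formulas, the axioms (S1)--(S2), (P1)--(P3), (Q1)--(Q3), and associativity. Write $\weight(b)=(\beta_1,\beta_2,\dots)$ and $\weight(c)=(\gamma_1,\gamma_2,\dots)$. I would use constantly the dichotomies from (P3) and (Q3): for any element $x$ of a $\qq_n$-crystal, $(\varepsilon_{\bar 1}+\varphi_{\bar 1})(x)$ is $0$ when $\weight(x)_1=\weight(x)_2=0$ and $1$ otherwise, while $(\varepsilon_0+\varphi_0)(x)$ is $0$ when $\weight(x)_1=0$ and $1$ otherwise. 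Thus every string relevant to these formulas has length at most one, so each case analysis below runs over only finitely many configurations of $(\beta_1,\beta_2,\gamma_1,\gamma_2)$ and of which operators are nonzero on $b$ and $c$.

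\textbf{Step 1 (well-definedness).} For $e_0,f_0$ this is immediate: $\weight(b\otimes c)_1=\beta_1+\gamma_1=0$ iff $\beta_1=\gamma_1=0$, and then all candidate outputs vanish by (Q3), so the branches agree. For $e_{\bar 1}$ (and symmetrically $f_{\bar 1}$) I would argue that the three non-default branches are pairwise exclusive and that the default branch absorbs the rest: if $\beta_1=0$ and $e_{\bar 1}(b)\neq 0$ then (P3) forces $\beta_2>0$, so $\weight(e_{\bar 1}(b))_1=1$ and (Q3) applied to $e_{\bar 1}(b)$ makes exactly one of $e_0e_{\bar 1}(b)$, $f_0e_{\bar 1}(b)$ nonzero, while (Q3) for $c$ prevents $e_0(c)$ and $f_0(c)$ from both being nonzero; the first branch requires $\beta_2=0$, which forces $e_{\bar 1}(b)=0$. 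Hence exactly one branch applies to each $b\otimes c$.

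\textbf{Step 2 (axioms).} Conditions (S1)--(S2) are inherited from Theorem~\ref{thmdef1}. For (Q1)--(Q3): (Q1) follows from (Q1) for $\cB,\cC$ together with the fact that $e_0,f_0$ preserve $\weight$ and hence all $\varepsilon_i,\varphi_i$ on the factor they act on; (Q2) follows from (Q2) for $\cB,\cC$ and the observation that the $\gl_n$-tensor rule for $e_i,f_i$ with $i\in[2,n-1]$ compares string lengths and weight coordinates that $e_0,f_0$ leave unchanged; (Q3) is immediate from $\weight(b\otimes c)_1=\beta_1+\gamma_1$ and the matching rule for $e_0,f_0$. For (P1)--(P3) on the modified $e_{\bar 1},f_{\bar 1}$: (P2) is inherited much as (Q2). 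For (P1) and (P3) one runs through the configurations of $(\beta_1,\beta_2,\gamma_1,\gamma_2)$; the essential point, and the conceptual crux of the whole theorem, is that the new operators differ from the $\q_n$-tensor operators of Theorem~\ref{q-tensor-thm} only when $\beta_1\in\{0,1\}$ and certain $0$-operators act nontrivially, and the cross-terms have been arranged exactly so that each non-default branch of $e_{\bar 1}$ is inverted by a corresponding branch of $f_{\bar 1}$ (with the correct weight shift $\e_1-\e_2$): for instance if $e_{\bar 1}(b\otimes c)=f_0e_{\bar 1}(b)\otimes e_0(c)$ then its first factor has weight-coordinate one equal to $1$, and applying $f_{\bar 1}$ lands in the branch $f_{\bar 1}e_0(\cdot)\otimes f_0(\cdot)$, which returns $b\otimes c$ via (Q1) and (P1) on the factors. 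A parallel bookkeeping gives that exactly one of $e_{\bar 1},f_{\bar 1}$ is nonzero on $b\otimes c$ unless $\weight(b\otimes c)_1=\weight(b\otimes c)_2=0$, which is (P3).

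\textbf{Step 3 (associativity).} By Theorem~\ref{thmdef1} the bijection $(b\otimes c)\otimes d\mapsto b\otimes(c\otimes d)$ already intertwines the $\gl_n$-operators, so only $e_0,f_0,e_{\bar 1},f_{\bar 1}$ remain. For $e_0,f_0$ this is a short argument of ``signature rule'' type: the operator acts on the leftmost tensor factor whose first weight coordinate is nonzero (possibly yielding $0$), $\weight$ is additive, and this selection is associative. The main obstacle is showing the reassociation intertwines $e_{\bar 1}$ and $f_{\bar 1}$: one expands $e_{\bar 1}\bigl((b\otimes c)\otimes d\bigr)$ by applying the formula to the outer pair and substituting the formulas for $e_{\bar 1},e_0,f_0$ on $b\otimes c$, expands $e_{\bar 1}\bigl(b\otimes(c\otimes d)\bigr)$ symmetrically, and reconciles the two using (P1)--(P3) and (Q1)--(Q3) for $\cB,\cC,\cD$. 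The difficulty is that the cross-terms nest, so expressions such as $f_0e_{\bar 1}(b)\otimes e_0(c)\otimes d$ arise and one must track how $e_0$ distributes over $c\otimes d$ according to whether $\gamma_1=0$. Since all of $e_0,f_0,e_{\bar 1},f_{\bar 1}$ only see weight coordinates $1$ and $2$ and commute with $e_i,f_i$ for $i\geq 3$ (and $e_0,f_0$ for $i\geq 2$), this is in effect a bounded ``rank two'' verification; I would organize it by the first two weight coordinates of $b$ (where the $e_{\bar 1}$-formula branches) and, within each case, by the $0$-string positions of $c$ and $d$.
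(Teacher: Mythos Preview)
Your proposal is correct and follows essentially the same route as the paper: the paper disposes of your Steps 1--2 in a single paragraph (invoking (Q1) to note that $e_0,f_0$ preserve all other string lengths and calling the remaining axiom checks ``straightforward''), then handles associativity exactly as you outline, observing that only $e_{\bar 1},f_{\bar 1}$ require work and running a case analysis on the branches of the $e_{\bar 1}$-formula applied to $b\otimes(c\otimes d)$ (your cases on the first two weight coordinates of $b$). Your explicit discussion of branch exclusivity and of why each $e_{\bar 1}$-branch is inverted by the matching $f_{\bar 1}$-branch is more detailed than what the paper writes out, but the underlying argument is the same.
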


\begin{proof}
When $e_0$ or $f_0$ do not act as zero, they
do not change the values of 
$\varepsilon_{i}$ or $\varphi_{i}$ for any $i \neq 0$ by property (Q1).
From this observation, the conditions in Definitions~\ref{q-def} and \ref{qq-def}
are straightforward to derive from the $\qq_n$-crystal axioms for $\cB$ and $\cC$,
so $\cB \otimes \cC$ is 
a $\qq_n$-crystal.

Now suppose $\cB$, $\cC$, and $\cD$ are $\qq_n$-crystals.
The natural bijection
$(\cB\otimes\cC) \otimes \cD \xrightarrow{\sim} \cB\otimes(\cC\otimes \cD)$
commutes with the $\gl_n$-crystal operators and also with $e_0$ and $f_0$, while preserving the weight map.
It remains to check that this map commutes with $e_{\bar 1}$ and $f_{\bar 1}$. This requires a somewhat involved case analysis.
Fix $b \in\cB$, $c \in \cC$, and $d \in \cD$.
We check that $e_{\bar 1}(b \otimes (c\otimes d)) = e_{\bar 1}((b \otimes c)\otimes d)$:
\begin{itemize}
\item[(a)] Assume that $\weight(b)_1 = \weight(b)_2 = 0$.
If $\weight(c)_1 = \weight(c)_2 = 0$, then 
\[
e_{\bar 1}(b \otimes (c\otimes d)) = e_{\bar 1}((b \otimes c)\otimes d)=
b\otimes c\otimes e_{\bar 1}(d).
\]
If $\weight(c)_1 = 0$, $f_0e_{\bar 1}(c) \neq 0$, and $e_0(d)\neq 0$, then
\[
e_{\bar 1}(b \otimes (c\otimes d)) = e_{\bar 1}((b \otimes c)\otimes d)=
b\otimes f_0 e_{\bar 1}(c)\otimes e_{0}(d)
\]
since in this case we have  $\weight(b\otimes c)_1 = 0$ and
$f_0e_{\bar 1}(b\otimes c)= b\otimes f_0e_{\bar 1}(c) \neq 0$.
It follows similarly that
if 
$\weight(b)_1 = 0$, $e_0e_{\bar 1}(b)\neq 0$, and $f_0(c) \neq 0$, then
\[
e_{\bar 1}(b \otimes (c\otimes d)) = e_{\bar 1}((b \otimes c)\otimes d)=
b\otimes e_0 e_{\bar 1}(c)\otimes f_{0}(d),
\]
and that in the remaining case 
$
e_{\bar 1}(b \otimes (c\otimes d)) = e_{\bar 1}((b \otimes c)\otimes d)=
b\otimes e_{\bar 1}(c)\otimes d.
$

\item[(b)] Assume that $\weight(b)_1 = 0$, $f_0e_{\bar 1}(b) \neq 0$, and $e_0(c\otimes d)\neq 0$.
If  $e_0(c\otimes d) = e_0(c) \otimes d$, then we must have 
$e_0(c) \neq 0$ and $\weight(b\otimes c)_1 = \weight(c)_1 \neq 0$, so $e_{\bar 1}((b \otimes c)\otimes d) = e_{\bar 1}(b \otimes c)\otimes d$
and thus
\[
e_{\bar 1}(b \otimes (c\otimes d)) = e_{\bar 1}((b \otimes c)\otimes d)=
f_0e_{\bar 1}(b)\otimes e_0(c)\otimes d.
\]
If $e_0(c\otimes d) = c\otimes e_0(d)$, then  
$\weight(b\otimes c)_1 =0$, $f_0e_{\bar 1}(b\otimes c) = f_0e_{\bar 1}(b)\otimes c \neq 0$, and $e_0(d) \neq 0$,
so
\[
e_{\bar 1}(b \otimes (c\otimes d)) = e_{\bar 1}((b \otimes c)\otimes d)=
f_0e_{\bar 1}(b)\otimes c\otimes e_0(d).
\]

\item[(c)] Next assume that
$\weight(b)_1 = 0$, $e_0e_{\bar 1}(b)\neq 0$, and $f_0(c\otimes d) \neq 0$.
This case is similar to the previous one.
If $f_0(c\otimes d) = f_0(c) \otimes d$, then one checks that
\[
e_{\bar 1}(b \otimes (c\otimes d)) = e_{\bar 1}((b \otimes c)\otimes d)=
e_0e_{\bar 1}(b)\otimes f_0(c)\otimes d
\]
and if $f_0(c\otimes d) = c \otimes f_0(d)$, then one checks that
\[
e_{\bar 1}(b \otimes (c\otimes d)) = e_{\bar 1}((b \otimes c)\otimes d)=
e_0e_{\bar 1}(b)\otimes c\otimes f_0(d).
\]

\item[(d)] Finally suppose  that $\weight(b)_1 \neq 0$ or $\weight(b)_2\neq 0$,
and that if $\weight(b)_1 = 0$, then we have (1) $f_0e_{\bar 1}(b) =0$ or $e_0(c\otimes d)= 0$ 
and also (2)  $e_0e_{\bar 1}(b) =0$ or $f_0(c\otimes d)= 0$.
We claim that
\[
e_{\bar 1}(b \otimes (c\otimes d)) =
 e_{\bar 1}((b \otimes c)\otimes d)= e_{\bar 1}(b) \otimes c\otimes d.\]
The first and last terms are equal by assumption.
The second equality holds if $\weight(b\otimes c)_1 \neq 0$ 
as then we must have $ e_{\bar 1}(b \otimes c) = e_{\bar 1}(b) \otimes c$
since if $\weight(b)_1 = 0$, then $\weight(c)_1 \neq 0$,
which means that $e_0(c\otimes d)= 0$ $\Leftrightarrow$ $e_0(c)= 0$  
and
$f_0(c\otimes d)= 0$ $\Leftrightarrow$ $f_0(c)= 0$.
 
 Assume $\weight(b\otimes c)_1 = 0$, so that $\weight(b)_1 = \weight(c)_1 = 0$ and $\weight(b\otimes c)_2  \geq \weight(b)_2 > 0$.
Then 
 \[
 e_{\bar 1}(b\otimes c) =e_{\bar 1}(b)\otimes c,
 \quad
  f_0e_{\bar 1}(b\otimes c) = f_0e_{\bar 1}(b)\otimes c,
  \quand
 e_0e_{\bar 1}(b\otimes c) = e_0e_{\bar 1}(b)\otimes c,\]
 while we also have
 $e_0(c\otimes d)= c\otimes e_0(d)$  and  $f_0(c\otimes d)= c\otimes f_0(d)$.
 It follows from (1) and (2) that
 $ e_{\bar 1}((b \otimes c)\otimes d) =  e_{\bar 1}(b \otimes c)\otimes d$ which equals $e_{\bar 1}(b)\otimes c\otimes d $ as needed.

\end{itemize} 
This shows that the $e_{\bar 1}$ 
 operator commutes with 
the natural bijection
$(\cB\otimes\cC) \otimes \cD \xrightarrow{\sim} \cB\otimes(\cC\otimes \cD)$.
A similar argument shows that $f_{\bar 1}$ also commutes with this map.
\end{proof}

The \defn{standard $\qq_n$-crystal} is the crystal described by \eqref{standard-qq-eq} in the introduction;
we denote this object by $\BB^+_n$.
Crystal graphs of $\qq_n$-crystals have edges labeled by indices in $\{\bar 1,0,1,\dots,n-1\}$.

\begin{example}\label{qq-bb-bb-ex}
The crystal graph of $\BB^+_2\otimes \BB^+_2$ is 
\be\label{bb2-eq}
    \begin{tikzpicture}[xscale=2.0, yscale=1.2,>=latex,baseline=(z.base)]
    \node at (0,2.1) (z) {};
      \node at (0,0) (22') {$\boxed{2}\otimes \boxed{2'}$};
     \node at (0,1.4) (12') {$\boxed{1}\otimes \boxed{2'}$};
      \node at (0,2.8) (1'2') {$\boxed{1'}\otimes \boxed{2'}$};
      \node at (0,4.2) (2'2') {$\boxed{2'}\otimes \boxed{2'}$};
      \node at (1.4,0) (21') {$\boxed{2}\otimes \boxed{1'}$};
     \node at (1.4,1.4) (11') {$\boxed{1}\otimes \boxed{1'}$};
      \node at (1.4,2.8) (1'1') {$\boxed{1'}\otimes \boxed{1'}$};
      \node at (1.4,4.2) (2'1') {$\boxed{2'}\otimes \boxed{1'}$};
      \node at (2.8,0) (21) {$\boxed{2}\otimes \boxed{1}$};
     \node at (2.8,1.4) (11) {$\boxed{1}\otimes \boxed{1}$};
      \node at (2.8,2.8) (1'1) {$\boxed{1'}\otimes \boxed{1}$};
      \node at (2.8,4.2) (2'1) {$\boxed{2'}\otimes \boxed{1}$};
      \node at (4.2,0) (22) {$\boxed{2}\otimes \boxed{2}$};
     \node at (4.2,1.4) (12) {$\boxed{1}\otimes \boxed{2}$};
      \node at (4.2,2.8) (1'2) {$\boxed{1'}\otimes \boxed{2}$};
      \node at (4.2,4.2) (2'2) {$\boxed{2'}\otimes \boxed{2}$};
      \draw[->,thick]  (12.280) -- (22.80) node[midway,right,scale=0.75] {$ 1$};
      \draw[->,thick,dashed,color=darkred]  (12.260) -- (22.100) node[midway,left,scale=0.75] {$\overline 1$};
      \draw[->,thick,dotted,color=blue]  (12) -- (1'2) node[midway,right,scale=0.75] {$ 0$};
      \draw[->,thick]  (11) -- (12) node[midway,above,scale=0.75] {$ 1$};
      \draw[->,thick,dotted,color=blue]  (11) -- (1'1) node[midway,right,scale=0.75] {$ 0$};
      \draw[->,thick,dashed,color=darkred]  (11) -- (21) node[midway,right,scale=0.75] {$\overline 1$};
      \draw[->,thick,dotted,color=blue]  (21) -- (21') node[midway,above,scale=0.75] {$ 0$};
      \draw[->,thick,dotted,color=blue]  (2'1) -- (2'1') node[midway,above,scale=0.75] {$ 0$};      
      \draw[->,thick,dotted,color=blue]  (11') -- (1'1') node[midway,right,scale=0.75] {$ 0$}; 
      \draw[->,thick]  (11') -- (12') node[midway,above,scale=0.75] {$ 1$};
      \draw[->,thick,dashed,color=darkred]  (11') -- (2'1) node[midway,left,scale=0.75] {$\overline 1$\ \ };
      \draw[->,thick,dotted,color=blue]  (12') -- (1'2') node[midway,right,scale=0.75] {$ 0$};
      \draw[->,thick]  (12'.280) -- (22'.80) node[midway,right,scale=0.75] {$ 1$};
      \draw[->,thick,dashed,color=darkred]  (12'.260) -- (22'.100) node[midway,left,scale=0.75] {$\overline 1$};
      \draw[->,thick]  (1'2.80) -- (2'2.280) node[midway,right,scale=0.75] {$ 1$};
      \draw[->,thick,dashed,color=darkred]  (1'2.100) -- (2'2.260) node[midway,left,scale=0.75] {$\overline 1$};
      \draw[->,thick]  (1'2'.80) -- (2'2'.280) node[midway,right,scale=0.75] {$ 1$};
      \draw[->,thick,dashed,color=darkred]  (1'2'.100) -- (2'2'.260) node[midway,left,scale=0.75] {$\overline 1$};
     \draw[->,thick]  (1'1') -- (1'2') node[midway,above,scale=0.75] {$ 1$};
     \draw[->,thick,dashed,color=darkred]  (1'1') -- (2'1') node[midway,right,scale=0.75] {$ \overline 1$};
     \draw[->,thick]  (1'1) -- (1'2) node[midway,above,scale=0.75] {$ 1$};
      \draw[->,thick,dashed,color=darkred]  (1'1) -- (21') node[midway,right,scale=0.75] {\ $\overline 1$};
     \end{tikzpicture}
\ee
 There are two full $\qq_2$-subcrystals $\cB$ and $\cC$, which are isomorphic via the map that exchanges the two elements in the middle of the top row with
  the two elements in the middle of the bottom row,
 and reflects all other elements across the central vertical axis.
 The character of $\BB^+_2$ is the Schur $Q$-polynomial $Q_{(1)}(x_1,x_2) = 2x_1 + 2x_2$
 while $\ch(\cB) = \ch(\cC) =  2x_1^2 + 4x_1x_2  + 2x_2^2 = Q_{(2)}(x_1,x_2)$.
The crystal decomposition $\BB^+_2 \otimes \BB^+_2 = \cB \sqcup \cC$ lifts the Schur $Q$-function identity $Q_{(1)}Q_{(1)} = 2 Q_{(2)}$.
 \end{example}

\begin{remark}
We can use Example~\ref{qq-bb-bb-ex} to explain the origin of the tensor product rules in Theorem~\ref{qq-thmdef}.
The formulas for $e_0(b\otimes c)$ and $f_0(b\otimes c)$ are designed so that if only the $0$-arrows are retained in the crystal graph,
then any normal $\qq_n$-crystal becomes a \defn{$\gl_{1|1}$-crystal} in the sense of \cite[\S2.4]{BKK}; this idea is explained more fully in Remark~\ref{gl11-rmk}.

It remains to motivate the definitions of  $e_{\bar 1}(b\otimes c)$ and $f_{\bar 1}(b\otimes c)$.
Here, we are lead by three principles. First, we want these formulas to agree with the ones in Theorem~\ref{q-tensor-thm}
if $\{b,c\}\subset \BB_n \subset \BB^+_n$.
Second, when $i\neq0$ we want $e_i$ and $f_i$ to commute with the map $\unprime: \BB^+_n \otimes \BB^+_n \to \BB_n \otimes \BB_n$
that removes the primes from each factor of $\boxed{i_1}\otimes \boxed{i_2}$ (and sends $0\mapsto 0$).
Finally, we want the crystal graph of $\BB^+_2\otimes \BB^+_2$ to have two isomorphic connected components.

The first principle requires us to have 
$f_{\bar 1}(\boxed{1}\otimes \boxed{1}) = \boxed{2}\otimes \boxed{1}$ 
and
 $f_{\bar 1}(\boxed{1}\otimes \boxed{2}) = \boxed{2}\otimes \boxed{2}$,
 which already places the 8 elements on the right side of \eqref{bb2-eq} in one connected component.
Since a vertex in the crystal graph can only be the target node of one arrow with a given label, the other principles then force us to have 
 $f_{\bar 1}(\boxed{1'}\otimes \boxed{1}) = \boxed{2}\otimes \boxed{1'}$ 
and
 $f_{\bar 1}(\boxed{1'}\otimes \boxed{2}) = \boxed{2'}\otimes \boxed{2}$.
 The remaining $\bar 1$-arrows in \eqref{bb2-eq} are uniquely determined if we want there to be two isomorphic components.
 Expressing the resulting cases for $e_{\bar 1}(b\otimes c)$ and $f_{\bar 1}(b\otimes c)$, just for $b,c \in \BB^+_2$,
 solely in terms of the weight map and the values of $e_0$, $f_0$, $e_{\bar 1}$, and $f_{\bar 1}$ on each factor
leads to the formulas in Theorem~\ref{qq-thmdef}.
  
%
\end{remark}

The $1$-element $\gl_n$-crystal $\One$ may be regarded as a $\qq_n$-crystal.
Following the conventions in the previous sections, we define a $\qq_n$-crystal to be \defn{normal} 
if each of its full $\qq_n$-subcrystals is isomorphic to a full $\qq_n$-subcrystal of $(\BB^+_n)^{\otimes m}$ for some $m\in \NN$,
where $(\BB^+_n)^0  := \One$.

 
Since there is an isomorphism of $\gl_n$-crystals $\BB_n^+\cong \BB_n \sqcup \BB_n$,
a  normal $\qq_n$-crystal is normal as a $\gl_n$-crystal.
However, normal $\qq_n$-crystals
are not necessarily normal as $\q_n$-crystals. 
This means that results like Theorem~\ref{main-q-thm} do not directly imply 
 similar properties of normal $\qq_n$-crystals. 
 
 \begin{example}\label{abnormal-remark}
The following crystal graph shows a full $\qq_2$-subcrystal  of $\BB_2^+ \otimes \BB_2^+ \otimes \BB_2^+$:
   \begin{center}
    \begin{tikzpicture}[xscale=1.8, yscale=1.5,>=latex]
\node at (0,3) (T121) {$121$};
\node at (-1,2) (T221) {$221$};
\node at (0,2) (T1'21) {$1'21$};
\node at (1,2) (T121') {$121'$};
\node at (0,1) (T221') {$221'$};
\node at (1,1) (T2'21) {$2'21$};
\node at (2,1) (T1'21') {$1'21'$};
\node at (1,0) (T2'21') {$2'21'$};
      \draw[->,thick,dashed,color=darkred]  (T121.235) -- (T221.35) node[midway,below,scale=0.75] {$\overline 1$};
      \draw[->,thick]  (T121.215) -- (T221.55) node[midway,above,scale=0.75] {$1$};
      \draw[->,thick,dotted,color=blue]  (T121) -- (T1'21) node[midway,left,scale=0.75] {$0$};
 \draw[->,thick,dotted,color=blue]  (T2'21) -- (T2'21') node[midway,left,scale=0.75] {$0$};
  \draw[->,thick,dotted,color=blue]  (T121') -- (T1'21') node[midway,above,scale=0.75] {$0$};
 \draw[->,thick]  (T1'21) -- (T2'21) node[midway,above,scale=0.75] {$1$};
\draw[->,thick,dashed,color=darkred]  (T1'21) -- (T221') node[midway,left,scale=0.75] {$\overline 1$};
\draw[->,thick,dashed,color=darkred]  (T121') -- (T2'21) node[midway,left,scale=0.75] {$\overline 1$};
  \draw[->,thick]  (T121') -- (T221') node[midway,below,scale=0.75] {$1$};
 \draw[->,thick,dotted,color=blue]  (T221) -- (T221') node[midway,above,scale=0.75] {$0$};
      \draw[->,thick,dashed,color=darkred]  (T1'21'.235) -- (T2'21'.35) node[midway,below,scale=0.75] {$\overline 1$};
      \draw[->,thick]  (T1'21'.215) -- (T2'21'.55) node[midway,above,scale=0.75] {$1$};
     \end{tikzpicture}
  \end{center}
Here we write $abc$  to denote the element $\boxed{a}\otimes \boxed{b}\otimes\boxed{c}\in \BB_2^+ \otimes \BB_2^+ \otimes \BB_2^+$.
Although this is a normal $\qq_2$-crystal, it is not a normal $\q_2$-crystal.
The elements $1'21$, $121'$, $221'$, and $2'21$ make up a full $\q_2$-subcrystal which
 is not normal, since it has
two $\q_2$-highest weight elements. 
\end{example}

To define highest and lowest weight elements for $\qq_n$-crystals, we need a few more operators.
Assume $\cB$ is a $\qq_n$-crystal. 
For each $i \in [n]$ let 
$ e_{0}^{[i]} : \cB \to \cB \sqcup\{0\}$ and $  f_{0}^{[i]}  : \cB \to \cB \sqcup\{0\}$
be the maps 
\be\label{ef0-eq} e_{0}^{[i]}  := \sigma_{i-1} \cdots \sigma_2 \sigma_1 e_0 \sigma_1 \sigma_2 \cdots \sigma_{i-1}
\quand
f_{0}^{[i]}  := \sigma_{i-1} \cdots \sigma_2 \sigma_1 f_0 \sigma_1 \sigma_2 \cdots \sigma_{i-1}
\ee
where  $\sigma_i$ is defined as in \eqref{weyl-action-eq}.
This means that $e_{0}^{[1]} = e_0$ and $f_{0}^{[1]} = f_0$.
\begin{example}
In Example \ref{qq-bb-bb-ex}, the operator $f_{0}^{[2]}$ acts as
\begin{center}
    \begin{tikzpicture}[xscale=2.0, yscale=1.2,>=latex]
      \node at (0,0) (22') {$\boxed{2}\otimes \boxed{2'}$};
     \node at (0,1.4) (12') {$\boxed{1}\otimes \boxed{2'}$};
      \node at (0,2.8) (1'2') {$\boxed{1'}\otimes \boxed{2'}$};
      \node at (0,4.2) (2'2') {$\boxed{2'}\otimes \boxed{2'}$};
      \node at (1.4,0) (21') {$\boxed{2}\otimes \boxed{1'}$};
     \node at (1.4,1.4) (11') {$\boxed{1}\otimes \boxed{1'}$};
      \node at (1.4,2.8) (1'1') {$\boxed{1'}\otimes \boxed{1'}$};
      \node at (1.4,4.2) (2'1') {$\boxed{2'}\otimes \boxed{1'}$};
      \node at (2.8,0) (21) {$\boxed{2}\otimes \boxed{1}$};
     \node at (2.8,1.4) (11) {$\boxed{1}\otimes \boxed{1}$};
      \node at (2.8,2.8) (1'1) {$\boxed{1'}\otimes \boxed{1}$};
      \node at (2.8,4.2) (2'1) {$\boxed{2'}\otimes \boxed{1}$};
      \node at (4.2,0) (22) {$\boxed{2}\otimes \boxed{2}$};
     \node at (4.2,1.4) (12) {$\boxed{1}\otimes \boxed{2}$};
      \node at (4.2,2.8) (1'2) {$\boxed{1'}\otimes \boxed{2}$};
      \node at (4.2,4.2) (2'2) {$\boxed{2'}\otimes \boxed{2}$};
   \draw[->, thick] (22'.west) to [out=105, in = 255] node[midway,left,scale=0.75] {$ f_0^{[2]}$} (2'2'.west) ;
   \draw[->, thick] (22.east) to [out=75, in = 285] node[midway,right,scale=0.75] {$ f_0^{[2]}$} (2'2.east) ;
      \draw[->,thick]  (12) -- (1'2) node[midway,right,scale=0.75] {$ f_0^{[2]}$};
      \draw[->,thick]  (21) -- (21') node[midway,above,scale=0.75] {$ f_0^{[2]}$};
      \draw[->,thick]  (2'1) -- (2'1') node[midway,above,scale=0.75] {$ f_0^{[2]}$};      
      \draw[->,thick]  (12') -- (1'2') node[midway,right,scale=0.75] {$ f_0^{[2]}$};
     \end{tikzpicture}
  \end{center}
\end{example}

\begin{definition}\label{qq-highest-def}
Suppose $\cB$ is a $\qq_n$-crystal.
A \defn{$\qq_n$-highest weight element}
$b \in \cB$ is a 
$\q_n$-highest weight element
with $e_{0}^{[i]}(b)  = 0$ for all $i \in [n] $.
%
A \defn{$\qq_n$-lowest weight element}
$b \in \cB$ is a 
$\q_n$-lowest weight element
with $f_{0}^{[i]}(b)  = 0$ for all $i\in [n] $.
\end{definition}

 The unique $\qq_n$-highest and $\qq_n$-lowest weight elements in $\BB^+_n$
are $\boxed{1}$ and $\boxed{n'}$.
The $\qq_2$-highest weight elements in the crystal $\BB^+_2\otimes \BB^+_2$ 
from Example~\ref{qq-bb-bb-ex}
are $\boxed{1}\otimes \boxed{1}$
and $\boxed{1}\otimes \boxed{1'}$, while 
the $\qq_2$-lowest weight elements are $\boxed{2'}\otimes \boxed{2}$
and $\boxed{2'}\otimes \boxed{2'}$.

Theorem~\ref{main-thm}, which we prove in Section~\ref{morphisms-sect},
asserts that each connected normal $\qq_n$-crystal has a unique $\qq_n$-lowest weight element. 
The following statement is a corollary of this property.

\begin{corollary}\label{nu-cor}
Let $\cB$ be a normal $\qq_n$-crystal. Then there is a unique map $\nu_0 : \cB \to \NN$
such that $\nu_0(b) = 0$ if $b \in \cB$ is a lowest $\qq_n$-weight element and $\nu_0(e_i(b)) = \nu_0(b) + \delta_{i0}$ 
if 
$e_i(b) \neq 0$.
\end{corollary}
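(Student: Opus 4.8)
The plan is to reduce to the case of a connected normal $\qq_n$-crystal and then transport the problem to the shifted-tableau model furnished by Theorem~\ref{main-thm}. A normal $\qq_n$-crystal is the disjoint union of its weakly connected components, each of which is again a connected normal $\qq_n$-crystal, and both conditions imposed on $\nu_0$ only involve arrows lying inside a single component; so I would first assume $\cB$ connected. Uniqueness is then immediate: if $\nu_0$ and $\nu_0'$ both satisfy the conditions, then $\nu_0-\nu_0'$ is unchanged by every $e_i$ (the terms $\delta_{i0}$ cancel) and hence by every $f_i$, so it is constant on the connected set $\cB$; since $\cB$ has a unique $\qq_n$-lowest weight element $b_0$ by Theorem~\ref{main-thm}, at which $\nu_0$ and $\nu_0'$ both vanish, they agree everywhere.

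For existence, I would invoke that Theorem~\ref{main-thm}, together with the constructions of Sections~\ref{crystal-tab-sect} and~\ref{morphisms-sect}, identifies $\cB$ with the $\qq_n$-crystal $\SShTab_n(\lambda)$ of semistandard shifted tableaux of a unique strict partition shape $\lambda$, and on this model I would set $\nu_0(T):=|\{i : T_{ii}\text{ is unprimed}\}|$, the number of unprimed entries on the main diagonal. This manifestly takes values in $\{0,1,\dots,\ell(\lambda)\}\subseteq\NN$, so two points remain. First, the recursion $\nu_0(e_i(T))=\nu_0(T)+\delta_{i0}$ for every crystal operator $e_i$: this reduces to the statements that the queer and $\gl_n$-type operators $e_{\bar 1},e_1,\dots,e_{n-1}$ never change whether a diagonal cell is primed, while $e_0$ removes exactly one prime from a diagonal cell whenever it acts nontrivially; both follow from the explicit combinatorial description of the $\qq_n$-crystal structure on shifted tableaux in Section~\ref{crystal-tab-sect}. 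Second, that $\nu_0(b_0)=0$: here one exhibits $b_0$ and checks that all its diagonal entries are primed, which can be deduced from the conditions $f_0^{[i]}(b_0)=0$ for $i\in[n]$ in Definition~\ref{qq-highest-def} once one knows which diagonal cell each of the operators $f_0^{[i]}$ of \eqref{ef0-eq} acts upon.

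I expect the first point---the invariance of diagonal primedness under the non-$0$ operators, above all $e_{\bar 1}$---to be the main obstacle, because the tensor product of Theorem~\ref{qq-thmdef} couples the $0$-arrows with the $\bar 1$- and $1$-arrows in a way that must be tracked. Two cross-checks double as alternative constructions. On a tensor power $(\BB^+_n)^{\otimes m}$ the function ``number of unprimed tensor factors'' satisfies the recursion---the $f_0$ and $e_0$ occurring together in the mixed cases of the $e_{\bar 1}$- and $f_{\bar 1}$-formulas of Theorem~\ref{qq-thmdef} being precisely what keeps that count balanced---and restricting to the full subcrystal isomorphic to $\cB$ and subtracting the value at $b_0$ then yields $\nu_0$, non-negativity following since $b_0$ minimizes this count on its component. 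Alternatively one may take $\nu_0(b):=\sum_{i=1}^{n}\varphi_0^{[i]}(b)$ with $\varphi_0^{[i]}(b):=\max\{k\in\NN:(f_0^{[i]})^k(b)\neq 0\}$; this lies in $\NN$ by (Q3) and its $\sigma$-conjugates and vanishes at $b_0$ by Definition~\ref{qq-highest-def}, so once more only the recursion needs to be checked.
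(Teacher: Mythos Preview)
Your argument is correct, and in fact your first cross-check is exactly the paper's proof: set $\nu(\boxed{a}):=1$, $\nu(\boxed{a'}):=0$ on $\BB^+_n$, observe that $\nu(b\otimes c):=\nu(b)+\nu(c)$ again satisfies the recursion on any tensor product (your remark that the paired $e_0/f_0$ occurring in the mixed cases of the $e_{\bar 1}$- and $f_{\bar 1}$-formulas of Theorem~\ref{qq-thmdef} keep this count balanced is precisely the point), and restrict to full subcrystals of $(\BB^+_n)^{\otimes m}$. The paper then just says that the existence of such a $\nu:\cB\to\NN$ suffices, leaving uniqueness and normalization implicit. Your primary route through the tableau model is a legitimate alternative: the invariance of diagonal primedness under the non-$0$ operators that worries you is exactly what Lemmas~\ref{shtab-primed-lem} and~\ref{shtab-primed-lem2} record for $e_i,f_i$ with $i\in[n-1]$, and what one reads off directly from Definitions~\ref{tab-qf-def}--\ref{shtab-f0} for $e_{\bar 1},f_{\bar 1},e_0,f_0$. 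The trade-off is that your main route invokes the full classification $\cB\cong\SShTab_n(\lambda)$ from Theorem~\ref{main-thm} together with the detailed operator descriptions of Section~\ref{crystal-tab-sect}, while the paper uses only the tensor formula of Theorem~\ref{qq-thmdef} and the unique $\qq_n$-lowest weight; in return, your version makes the non-negativity of $\nu_0$ and the vanishing at $\TTlowest_\lambda$ transparent, which the paper leaves to the reader.

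One caution: the statement as printed says ``lowest $\q_n$-weight element'', but in $\SShTab_n(\lambda)$ every $T$ with $\unprime_\diag(T)=\Tlowest_\lambda$ is $\q_n$-lowest (there are $2^{\ell(\lambda)}$ such $T$, by Lemma~\ref{tab-unprime-lem} and Theorem~\ref{thigh-thm}), and your $\nu_0$ takes each value in $\{0,1,\dots,\ell(\lambda)\}$ on this set. Both your argument and the paper's only check $\nu_0=0$ at the unique $\qq_n$-lowest weight $\TTlowest_\lambda$; the hypothesis should be read that way.
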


\begin{proof}
It suffices to show that there exists a map $\nu : \cB \to \NN$ with $\nu(e_i(b)) = \nu(b) + \delta_{i0}$ for all $b \in \cB$ and $i \in \{\bar 1,0,1,\dots,n-1\}$
with $e_i(b) \neq 0$.
When  $\cB=\BB^+_n$ this is given by setting $\nu(\boxed{a}) := 1$ and $\nu(\boxed{a'}) := 0$.
If $\cB $ and $\cC$ both have such a map, then $\nu(b\otimes c) := \nu(b) + \nu(c)$
is a map $\nu : \cB \otimes \cC \to \NN$ with the desired property. Therefore such a map $\nu$ exists for any normal $\qq_n$-crystal.
\end{proof}


We can use this result to motivate part of the $\qq_n$-crystal tensor product.

\begin{remark}\label{gl11-rmk}
Assume $\cB$ is a normal $\qq_n$-crystal. 
Let $\ZZ^{1|1} = \ZZ^{2}$ with the bilinear form 
$ \langle v,w\rangle_{1|1} = v_1w_1  - v_{2}w_{2}$
and define $\weight^{1|1} : \cB \to \ZZ^{1|1}$ by
\[
\weight^{1|1}(b) :=  (\nu_0(b)+\varepsilon_0(b)+\varphi_0(b))\e_1  -\nu_0(b) \e_2
\]
where $\nu_0 : \cB \to \NN$ is as in Corollary~\ref{nu-cor}.
For all $b \in \cB$ we have $\varepsilon_0(b)+\varphi_0(b)=\langle \weight^{1|1}(b) , \e_1-\e_2\rangle_{1|1} $,
and if  $e_0(b) = c\neq0$
then $\weight^{1|1}(c) - \weight^{1|1}(b) = \e_1-\e_2$.
These properties mean that $\cB$ is an \defn{(abstract) $\gl_{1|1}$-crystal} relative to $\weight^{1|1}$, $e_0$, $f_0$ in the sense
of \cite[\S2.3]{BKK}. If we ignore  the 
other operators, then Theorem~\ref{qq-thmdef} coincides with the tensor product 
for $\gl_{1|1}$-crystals in \cite[\S2.4]{BKK}.
\end{remark}

\section{Crystal operators on words}\label{words-sect}

It is useful to provide a model for the $\qq_n$-crystals $(\BB^+_n)^{\otimes m}$ when $m \in \NN$.
Define $\cW^+_n(m)$ to be the set of words of length $m$ with letters in $\{1'<1<2'<2<\dots<n'<n\}$,
so that $\cW^+_n(0) = \{\emptyset\}$.
The \defn{weight} of $w \in \cW^+_n(m)$ is  $\weight(w) := (a_1,a_2,\dots,a_n) \in \NN^n$
where $a_i$ is the number of letters of $w$ equal to $i'$ or $i$.
The map $\boxed{w_1} \otimes \boxed{w_2} \otimes \cdots \otimes \boxed{w_m} \mapsto w_1w_2\cdots w_m$
is a weight-preserving bijection $(\BB_n^+)^{\otimes m} \to \cW^+_n(m) $ 
which transfers a $\qq_n$-crystal structure to
$\cW^+_n(m)$. We describe this below.

\subsection{Formulas for crystal operators}

Let $w =w_1w_2\cdots w_m \in \cW^+_n(m)$ and $i \in [n-1]$.
%
Consider the word formed from $w$ by replacing each letter $w_k \in \{i',i\}$ by a right parenthesis ``)''
and each letter $w_j \in \{ i+1,i+1'\}$ by a left parenthesis ``(''.
The \defn{$i$-unpaired indices} in $w$ are the indices $ j \in [m] $
with $w_j \in \{i',i,i+1',i+1\}$
that are not the positions of matching parentheses in this word.
For example,  if $w=131'22'131'2 $ and $i=1$, then the word with parentheses is $)3)(()3)($ 
so the $i$-unpaired indices are $1$, $3$, and $9$.

The following 
description of the lowering and raising operators $f_i$ and $e_i$ for $\cW^+_n(m)$
 is a minor generalization of the signature rule in Remark~\ref{bracketrulegln}.


\begin{definition}
Let $k$ be the last $i$-unpaired index of $w$ with $w_k \in \{i',i\}$.
If no such index exists, then $f_i(w) :=0$. Otherwise form $f_i(w)$ from $w$ by adding $1$ to $w_k$. 
Similarly,
let $j$ be the first $i$-unpaired index of $w$ with $w_j \in \{i+1',i+1\}$.
If no such index exists, then $e_i(w) :=0$. Otherwise form $e_i(w)$ from $w$ by subtracting $1$ from $w_j$.
\end{definition}

For example,
$f_1(13{\color{red}1'}22'131'2 ) =13{\color{red}2'}22'131'2 $ and 
$ e_1(131'22'131'{\color{red}2} )= 131'22'131'{\color{red}1}$.

The definitions of $f_{0}(w)$ and $e_{0}(w)$ are next given as follows:

 \begin{definition}\label{words-ef0-def}
Let $j\in[m]$ be minimal with $w_j \in \{1',1\}$.
If no such $j$ exists or $w_j \neq 1$, then $f_0(w) := 0$. Otherwise $f_0(w)$ is formed from $w$ by changing $w_j$ to $1'$.
Similarly, if no such $j$ exists or $w_j \neq 1'$, then $e_0(w) := 0$. Otherwise $e_0(w)$ is formed by changing $w_j$ to $1$.
\end{definition}

For example $f_0(3{\color{red}1}21'1) = 3{\color{red}1'}21'1$ and $e_0(3{\color{red}1'}21'1) = 3{\color{red}1}21'1$.

The definitions of $f_{\bar 1}(w)$ and $e_{\bar 1}(w)$ 
require a few more cases to state in full.



\begin{definition}
Let $j \in [m]$ be minimal with $w_j\in\{1',1\}$.
If no such $j$ exists or  $w_i\in\{2,2'\}$ for some $i \in [j-1]$, then $f_{\bar 1}(w):=0$.
Otherwise let $k \in [j+1,m]$ be minimal with $w_k \in \{1',1\}$.
\begin{itemize}
\item If such $k$ exists, then $f_{\bar 1}(w)$ is formed from $w$ by changing $w_j$ to $w_k+1$ and $w_k$ to $w_j$.
\item Otherwise,  $f_{\bar 1}(w)$ is formed from $w$ by adding $1$ to $w_j$.

 \end{itemize}
\end{definition}

 Thus $f_{\bar 1}(3{\color{red}1'}21'1) = 3{\color{red}2'}21'1   $ and
$f_{\bar 1}(3{\color{red}1}2{\color{red}1'}1) = 3{\color{red}2'}2{\color{red}1}1 $
and
$f_{\bar 1}(3{\color{red}1'}2{\color{red}1}1) = 3{\color{red}2}2{\color{red}1'}1  $.

\begin{definition}\label{words-ebar-def}
Let $j \in [m]$ be minimal with $w_j\in\{2',2\}$.
If no such $j$ exists or $w_i \in \{1',1\}$ for some $i \in [j-1]$, then $e_{\bar 1}(w) :=0$.
Otherwise let $k \in [j+1,m]$ be minimal with $w_k \in \{1',1\}$.
\begin{itemize}
\item If such $k$ exists, then $e_{\bar 1}(w)$ is formed from $w$ by changing $w_j$ to $w_k$ and $w_k$ to $w_j-1$.
\item Otherwise,  $e_{\bar 1}(w)$ is formed from $w$ by subtracting $1$ from $w_j$.

  \end{itemize}
\end{definition}
 Thus $e_{\bar 1}(3{\color{red}2'}21'1 )= 3{\color{red}1'}21'1 $ and
$e_{\bar 1}( 3{\color{red}2'}2{\color{red}1}1 ) =  3{\color{red}1}2{\color{red}1'}1$
and
$e_{\bar 1}(3{\color{red}2}2{\color{red}1'}1 ) = 3{\color{red}1'}2{\color{red}1}1 $.

Checking the following is straightforward from Theorems~\ref{thmdef1} and \ref{qq-thmdef}:

\begin{proposition}\label{qq-words-prop}
Relative to   these operators  
 $ \cW^+_n(m)$ 
is a $\qq_n$-crystal isomorphic to $(\BB^+_n)^{\otimes m}$. 
\end{proposition}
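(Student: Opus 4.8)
The plan is to verify that the weight-preserving bijection $\Phi\colon \cW^+_n(m) \to (\BB^+_n)^{\otimes m}$, $w = w_1w_2\cdots w_m \mapsto \boxed{w_1}\otimes \boxed{w_2}\otimes\cdots\otimes\boxed{w_m}$, carries the combinatorial operators defined above to the crystal operators on the iterated tensor power. By Theorems~\ref{thmdef1} and \ref{qq-thmdef}, the $m$-fold tensor power $(\BB^+_n)^{\otimes m}$ is a $\qq_n$-crystal, well-defined up to canonical isomorphism since the associativity isomorphisms let us omit parentheses; so once the operators are matched, the isomorphism statement follows at once from the fact that $\Phi$ preserves weights. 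I would argue by induction on $m$, using the identification $(\BB^+_n)^{\otimes m}\cong \BB^+_n\otimes(\BB^+_n)^{\otimes(m-1)}$ under which $\Phi(w)$ corresponds to $\boxed{w_1}\otimes\Phi(w')$ with $w':= w_2w_3\cdots w_m$. The base case $m\le 1$ is immediate from the crystal graph \eqref{standard-qq-eq}.

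In the inductive step one treats the three types of operators in turn, peeling off the first box $b := \boxed{w_1}$ and applying the relevant tensor product formula with $c := \Phi(w')$, to which the inductive hypothesis applies. For $e_i$ and $f_i$ with $i\in[n-1]$: in $\BB^+_n$ one has $\varphi_i(\boxed{j}) = \varphi_i(\boxed{j'})$ equal to $1$ if $j=i$ and $0$ otherwise, and $\varepsilon_i(\boxed{j}) = \varepsilon_i(\boxed{j'})$ equal to $1$ if $j=i+1$ and $0$ otherwise, so in the rule of Theorem~\ref{thmdef1} a letter in $\{i,i'\}$ behaves as a single closing parenthesis and a letter in $\{i+1,(i+1)'\}$ as a single opening parenthesis; unwinding the recursion across the word reproduces exactly the definition of the $i$-unpaired indices and the prescription to add $1$ to the last unpaired letter in $\{i,i'\}$, and dually for $e_i$. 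This is the classical signature rule, only lightly decorated by primes. For $e_0$ and $f_0$: since $\weight(\boxed{w_1})_1\in\{0,1\}$, the formula of Theorem~\ref{qq-thmdef} says $f_0$ passes through the initial run of letters outside $\{1,1'\}$ and then acts on the single box at the first position with $w_j\in\{1,1'\}$; as $f_0(\boxed 1) = \boxed{1'}$ and $f_0(\boxed{1'}) = 0$, this is precisely Definition~\ref{words-ef0-def}, and symmetrically for $e_0$.

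The remaining and most delicate case, which I expect to be the main obstacle, is $e_{\bar 1}$ and $f_{\bar 1}$, where the four-branch formula of Theorem~\ref{qq-thmdef} must be reconciled with the three-bullet combinatorial definition together with its vanishing conditions. The point is that with $b=\boxed{w_1}$ a single box every ingredient of that formula is explicit: $\weight(b)_1,\weight(b)_2\in\{0,1\}$, and $f_0(b)$, $e_0(b)$, $f_{\bar 1}(b)$, $f_{\bar 1}f_0(b)$, $f_{\bar 1}e_0(b)$ are read off from \eqref{standard-qq-eq}. If $w_1\notin\{1,1',2,2'\}$ the operator recurses into $c$; if $w_1\in\{2,2'\}$ one lands in the ``otherwise'' branch with $f_{\bar 1}(\boxed{w_1}) = 0$, which is exactly the vanishing clause ``$w_i\in\{2,2'\}$ for some $i$ preceding the first letter in $\{1,1'\}$''; and if $w_1\in\{1,1'\}$ the three nontrivial branches correspond to the three bullets of the definition once one observes (using the inductive hypothesis for $e_0$ and $f_0$ on $w'$) that $e_0(c)\neq 0$ means the first letter of $w'$ in $\{1,1'\}$ equals $1'$ while $f_0(c)\neq 0$ means it equals $1$. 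Explicitly, the branch $f_{\bar 1}f_0(b)\otimes e_0(c)$ forces $w_1=1$ and performs $(w_j,w_k) = (1,1')\mapsto(2',1)$; the branch $f_{\bar 1}e_0(b)\otimes f_0(c)$ forces $w_1=1'$ and performs $(w_j,w_k) = (1',1)\mapsto(2,1')$; and the ``otherwise'' branch with $w_1\in\{1,1'\}$, which occurs precisely when there is no later letter in $\{1,1'\}$ or when $w_j=w_k$, sends $w_1\mapsto w_1+1$. These are exactly the three cases defining $f_{\bar 1}$. The analysis of $e_{\bar 1}$ (Definition~\ref{words-ebar-def}) is entirely parallel, with the roles of $\{1,1'\}$ and $\{2,2'\}$ exchanged. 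Matching all the operators completes the induction, and then $\Phi$ is the desired $\qq_n$-crystal isomorphism.
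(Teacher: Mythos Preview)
Your proof is correct and is precisely the verification the paper has in mind: the paper's entire proof is the sentence ``Checking the following is straightforward from Theorems~\ref{thmdef1} and \ref{qq-thmdef},'' and your inductive case analysis---signature rule for $i\in[n-1]$, passing through to the first letter in $\{1,1'\}$ for $e_0,f_0$, and the four-branch tensor formula for $e_{\bar 1},f_{\bar 1}$ with $b=\boxed{w_1}$---is exactly that check carried out in full. The key observation you isolate, that for a single box $b$ the conditions $f_{\bar 1}f_0(b)\neq 0$ and $f_{\bar 1}e_0(b)\neq 0$ pin down whether $w_1=1$ or $w_1=1'$ while $e_0(c)\neq 0$ versus $f_0(c)\neq 0$ detects whether the next letter $w_k\in\{1,1'\}$ is primed or not, is what makes the matching with the three bullets go through; your treatment of $e_{\bar 1}$ by symmetry is likewise valid.
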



\begin{corollary}
Suppose $\cB$ is a normal $\qq_n$-crystal and $b,c \in \cB$.
\ben
\item[(a)] If $\weight(b)_1\neq 0$, then $ e_{\bar 1} e_0(b) = e_0e_{\bar 1} (b) $ and $  e_{\bar 1} f_0(b) = f_0e_{\bar 1} (b)$.

\item[(b)] If   $e_{\bar 1}(b)=c$, then $\varepsilon_0(b) \leq \varepsilon_0(c)$ and $\varphi_0(b) \leq \varphi_0(c)$.
\een
\end{corollary}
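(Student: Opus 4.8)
The plan is to reduce claim (a) to an explicit finite verification inside the word crystal $\cW^+_n(m)$ of Proposition~\ref{qq-words-prop}, and then to obtain (b) from (a) using only the axioms (P1), (Q1) and (Q3). For the reduction, recall that $e_0$, $f_0$ and $e_{\bar 1}$ each carry every full $\qq_n$-subcrystal of $\cB$ into itself (or to $0$), and that a $\qq_n$-crystal isomorphism is weight-preserving and intertwines all crystal operators, hence all of the $\varepsilon_i$ and $\varphi_i$ as well. Since $\cB$ is normal, the full $\qq_n$-subcrystal containing $b$ is isomorphic to a full $\qq_n$-subcrystal of $(\BB^+_n)^{\otimes m} \cong \cW^+_n(m)$ for some $m$, and this isomorphism transports $\weight(b)_1$, the two equalities in (a), and the two inequalities in (b) to the corresponding statements for the image word. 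So it suffices to treat the case $\cB = \cW^+_n(m)$ with $b = w$ a word satisfying $\weight(w)_1 \ne 0$.

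First I would record, from Definition~\ref{words-ef0-def}, that $e_0$ and $f_0$ (when nonzero) act on the leftmost letter of $w$ in $\{1',1\}$ only by toggling its prime; in particular they do not change which positions carry letters in $\{1',1\}$ or in $\{2',2\}$, and do not move the leftmost letter in either of these two sets. Let $p$ be the position of the leftmost letter of $w$ in $\{2',2\}$ (if one exists) and $q$ the position of the leftmost letter in $\{1',1\}$ (which exists since $\weight(w)_1 \ne 0$). By Definition~\ref{words-ebar-def}, if there is no such $p$ or if $q < p$, then $e_{\bar 1}(w) = 0$, and the same holds after applying $e_0$ or $f_0$; so in this case both sides of each identity in (a) are $0$. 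Otherwise $p < q$, and applying $e_{\bar 1}$ turns the letter in position $p$ into a letter of $\{1',1\}$, which is then the leftmost such letter; consequently $e_0$ and $f_0$ act in position $p$ on $e_{\bar 1}(w)$ but in position $q$ on $w$. The remaining work is to verify the two identities by running through the value $w_q \in \{1',1\}$ together with the three branches of Definition~\ref{words-ebar-def} (selected by $w_p \in \{2',2\}$), and checking in each branch that the primes of positions $p$ and $q$ come out the same no matter whether $e_{\bar 1}$ is applied before or after $e_0$ (respectively $f_0$), with both composites equal to $0$ in the degenerate branches. I expect this case check to be the only real obstacle: it is elementary but delicate, since in two of the three branches $e_{\bar 1}$ itself alters both positions $p$ and $q$, so the matching of prime patterns must be confirmed case by case. (For example, when $w_p = 2$ and $w_q = 1$: $e_{\bar 1}(w)$ has $w_p = 1$ and $w_q = 1$, so $f_0 e_{\bar 1}(w)$ has $w_p = 1'$, $w_q = 1$; on the other side $f_0(w)$ has $w_q = 1'$, after which $e_{\bar 1}$ uses its third branch to give $w_p = 1'$, $w_q = 1$, in agreement.)

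For (b), I would argue formally from (a). If $\weight(b)_1 = 0$ then $\varepsilon_0(b) = \varphi_0(b) = 0$ by (Q3), so both inequalities hold trivially. If $\weight(b)_1 \ne 0$, then $\weight(c)_1 = \weight(b)_1 + 1 > 0$ by (P1), hence $\varepsilon_0(b) + \varphi_0(b) = \varepsilon_0(c) + \varphi_0(c) = 1$ by (Q3), and it is enough to show $e_0(b) \ne 0 \iff e_0(c) \ne 0$. If $e_0(b) \ne 0$, then $\varepsilon_{\bar 1}(e_0(b)) = \varepsilon_{\bar 1}(b) \ge 1$ by (Q1), so $e_{\bar 1}e_0(b) \ne 0$, and then (a) gives $e_0(c) = e_0 e_{\bar 1}(b) = e_{\bar 1}e_0(b) \ne 0$. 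If instead $e_0(b) = 0$, then (a) gives $e_0(c) = e_0 e_{\bar 1}(b) = e_{\bar 1}e_0(b) = e_{\bar 1}(0) = 0$. Either way $\varepsilon_0(b) = \varepsilon_0(c)$, and therefore also $\varphi_0(b) = \varphi_0(c)$, which establishes (b).
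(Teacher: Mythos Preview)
Your proposal is correct and follows essentially the same approach as the paper: reduce to the word crystal $\cW^+_n(m)$ and verify the claims from the explicit formulas in Definitions~\ref{words-ebar-def} and~\ref{words-ef0-def}. The paper's proof is a two-sentence pointer to those definitions, while you spell out the position analysis and the case check for (a); your verification is accurate (in particular your handling of the three branches of $e_{\bar 1}$ and the fact that $e_0,f_0$ never move the positions $p,q$ is exactly the right observation).

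The one place you deviate is part (b): the paper simply says (b) is also clear from the word formulas, whereas you derive (b) formally from (a) together with axioms (P1), (Q1), (Q3). Your derivation is clean and correct (with the standard convention that all operators send $0$ to $0$, which is needed to interpret $e_{\bar 1}e_0(b)$ when $e_0(b)=0$), and it has the small advantage of showing that (b) is really a formal consequence of (a) rather than an independent computation. Either route is fine here.
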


\begin{proof}
It suffices to prove this when $\cB= \cW^+_n(m)$ for an arbitrary $m\in \NN$.
But in that case both properties are clear from the formulas for $e_{\bar 1}$, $e_0$, and $f_0$
in Definitions~\ref{words-ebar-def} and \ref{words-ef0-def}.
\end{proof}

%

Given $w \in \cW^+_n(m)$, write $\unprime(w)$ for the word formed by removing the primes from all letters.
For each $w \in \cW^+_n(m)$ and $i \in \{\bar 1,1,2,\dots,n-1\}$,
it is clear from the definitions above that
\be\label{first-unprime-eq} e_i ( \unprime(w)) = \unprime ( e_i(w))\quand  f_i ( \unprime(w))= \unprime ( f_i(w))\ee
under the convention that $\unprime(0) := 0$. 
The set $\cW_n(m) := \{ \unprime(w) : w \in \cW^+_n(m)\}$   is therefore a $\q_n$-subcrystal.
This is
 isomorphic to $\BB_n^{\otimes m}$
by 
  \cite[Remarks 2.3 and 2.4]{GHPS}.

If $S \subset [n]$, then   $\{w \in \cW^+_n(m) : w_i\in \ZZ'\text{ if and only if } i \in S\}$
is evidently a $\gl_n$-subcrystal of $\cW^+_n(m)$ and
it follows from \eqref{first-unprime-eq} that $\unprime$ defines a $\gl_n$-crystal isomorphism from this set to
 $\cW_n(m)$. We can reformulate this observation as the following property.
 
 We define a \defn{quasi-isomorphism} between $\gl_n$-, $\q_n$-, or $\qq_n$-crystals, respectively,
to be
a map $\psi : \cB \to \cC$  
such that 
 for each full subcrystal $\cX \subset \cB$, the image $\cY := \psi(\cX)$ is a full subcrystal of $ \cC$
and the restricted map $\psi: \cX \to \cY$ is a crystal isomorphism.

\begin{proposition}
The map $\unprime : \cW^+_n(m) \to \cW_n(m)$ is a quasi-isomorphism of $\gl_n$-crystals.
\end{proposition}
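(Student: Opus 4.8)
The plan is to deduce this from the two properties already established just above the statement: that $\unprime$ intertwines the $\gl_n$-operators (equation \eqref{first-unprime-eq}), and that it is an outright isomorphism onto $\cW_n(m)$ when restricted to any single ``prime-pattern'' stratum. For $w \in \cW^+_n(m)$ write $\primes(w) := \{ k \in [m] : w_k \in \ZZ'\}$ for the set of positions holding a primed letter, and for $S \subseteq [m]$ put $\cW^+_n(m)^S := \{ w \in \cW^+_n(m) : \primes(w) = S\}$.

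First I would check that $\primes$ is constant on full subcrystals. For $i \in [n-1]$ the operators $e_i$ and $f_i$ alter exactly one letter of $w$ by $\pm 1$, and translation by $\pm 1$ maps $\ZZ$ to $\ZZ$ and $\ZZ'$ to $\ZZ'$; hence $\primes(e_i(w)) = \primes(f_i(w)) = \primes(w)$ whenever the left sides are defined. Since a full $\gl_n$-subcrystal $\cX \subseteq \cW^+_n(m)$ is connected, $\primes$ takes a single value $S$ on $\cX$, so $\cX \subseteq \cW^+_n(m)^S$.

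Next I would record that $\cW^+_n(m)^S$ is closed under the $\gl_n$-operators in both directions, hence is a $\gl_n$-subcrystal whose string lengths $\varepsilon_i,\varphi_i$ coincide with those in $\cW^+_n(m)$; that $\unprime$ restricts to a bijection $\cW^+_n(m)^S \to \cW_n(m)$, with inverse re-priming exactly the letters in the positions of $S$; and that this bijection is weight-preserving and, by \eqref{first-unprime-eq}, commutes with $e_i$ and $f_i$ for all $i \in [n-1]$, so it is a $\gl_n$-crystal isomorphism $\cW^+_n(m)^S \xrightarrow{\sim} \cW_n(m)$. A crystal isomorphism sends connected components to connected components and restricts on each to an isomorphism; applying this to $\cX \subseteq \cW^+_n(m)^S$ gives that $\cY := \unprime(\cX)$ is a full $\gl_n$-subcrystal of $\cW_n(m)$ and that $\unprime : \cX \to \cY$ is a $\gl_n$-crystal isomorphism. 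Since $\cX$ was arbitrary, this is exactly the definition of a quasi-isomorphism.

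There is no genuine obstacle here---the argument is essentially bookkeeping with the definitions---but the one point requiring care is the order of the steps: one must first use connectedness together with the invariance of $\primes$ to confine $\cX$ to a single stratum $\cW^+_n(m)^S$, and only then invoke the stratumwise isomorphism. Doing it the other way around, e.g.\ trying to work with $\unprime$ directly on all of $\cW^+_n(m)$ where it fails to be injective, would not produce a crystal isomorphism.
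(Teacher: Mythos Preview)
Your proof is correct and follows essentially the same approach as the paper: stratify $\cW^+_n(m)$ by the set $S$ of primed positions, observe that each stratum is a $\gl_n$-subcrystal on which $\unprime$ restricts to an isomorphism onto $\cW_n(m)$, and deduce the quasi-isomorphism. The paper states this more tersely in the paragraph immediately preceding the proposition, while you have spelled out the details (in particular the invariance of $\primes$ under $e_i,f_i$ and the handling of connected components) more explicitly.
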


\subsection{Weyl group action}

On normal $\qq_n$-crystals, there is a action of hyperoctahedral group
extending  \eqref{weyl-action-eq}.
This is an interesting feature of $\qq_n$-crystals not present for $\q_n$-crystals.

Suppose $\cB$ is a $\qq_n$-crystal. Then the formula \eqref{weyl-action-eq} for $\sigma_i:\cB\to\cB$
makes sense when $i=0$, and gives
a self-inverse, weight-preserving bijection $\sigma_0 : \cB \to \cB$ satisfying
$ \sigma_0(b) = e_0(b)$ if $e_0(b)\neq 0$,
$ \sigma_0(b) = f_0(b)$ if $f_0(b)\neq 0$,
and
$ \sigma_0(b) = b$  otherwise.
%
Let $\W_n$ denote the group whose elements are the permutations $w$ of $\ZZ$
satisfying $w(-i) = -w(i)$ for all $i \in [n]$ and $w(i) = i$ for all $i>n$. 
This is the finite Coxeter group of type $\mathsf{BC}$ and rank $n$.
Its simple generators 
are given by $t_0 := (-1,1)$ and $t_i := (i,i+1)(-i,-i-1)$ for $i \in [n-1]$.

\begin{theorem}\label{bc-action-prop}
Suppose $\cB$ is a normal $\qq_n$-crystal.
Then there exists a unique action of $\W_n$ on $\cB$
in which $t_0$ and $t_i$ for $i \in [n-1]$
act as the operators $\sigma_0$ and $\sigma_i$, respectively.
\end{theorem}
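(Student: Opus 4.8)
The plan is to build the action from the Coxeter presentation of $\W_n$. As a Coxeter group of type $\mathsf{BC}_n$, the group $\W_n$ is presented by the generators $t_0,t_1,\dots,t_{n-1}$ subject to $t_i^2=1$ for all $i$, $(t_0t_1)^4=1$, $(t_jt_{j+1})^3=1$ for $j\in[n-2]$, and $(t_it_j)^2=1$ whenever $|i-j|\geq 2$. Hence a homomorphism $\W_n\to\mathrm{Sym}(\cB)$ sending $t_i\mapsto\sigma_i$ exists as soon as the permutations $\sigma_0,\sigma_1,\dots,\sigma_{n-1}$ of $\cB$ satisfy these same relations, and such a homomorphism is automatically unique since the $t_i$ generate $\W_n$. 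Each $\sigma_i$ is already known to be a self-inverse bijection (see Section~\ref{qn-sect} for $i\in[n-1]$ and the paragraph above the theorem for $i=0$), and $\sigma_i(b)\in\{b,e_i(b),f_i(b)\}$ shows that $\sigma_i$ carries every full subcrystal of $\cB$ into itself. So each relation holds on $\cB$ if and only if it holds on every full $\qq_n$-subcrystal of $\cB$; and since a $\qq_n$-crystal isomorphism intertwines $\weight$ and all of the $e_i,f_i$, hence all of the $\varepsilon_i,\varphi_i$, hence all of the $\sigma_i$ with $i\in\{0,1,\dots,n-1\}$, normality reduces the entire problem to checking the relations on $(\BB^+_n)^{\otimes m}\cong\cW^+_n(m)$ for every $m\in\NN$ (Proposition~\ref{qq-words-prop}).

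Of these relations, the ones not involving $t_0$ come essentially for free: since $\BB^+_n\cong\BB_n\sqcup\BB_n$ as $\gl_n$-crystals, each $(\BB^+_n)^{\otimes m}$ is normal as a $\gl_n$-crystal, so by \cite[Thm.~11.14]{BumpSchilling} the operators $\sigma_1,\dots,\sigma_{n-1}$ generate an $S_n$-action and in particular satisfy $\sigma_i^2=\id$, $\sigma_i\sigma_{i+1}\sigma_i=\sigma_{i+1}\sigma_i\sigma_{i+1}$, and $\sigma_i\sigma_j=\sigma_j\sigma_i$ for $i,j\in[n-1]$ with $|i-j|\geq 2$. We also already know $\sigma_0^2=\id$. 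For $\sigma_0\sigma_i=\sigma_i\sigma_0$ with $i\in[2,n-1]$ I would argue directly on words: by Definition~\ref{words-ef0-def}, $e_0$ and $f_0$ only toggle the leftmost letter lying in $\{1',1\}$ between $1$ and $1'$, whereas $\sigma_i$ alters only letters in $\{i',i,i+1',i+1\}$ and hence changes neither which positions of a word carry a letter in $\{1',1\}$ nor the value (primed versus unprimed) at such a position; thus $\sigma_0$ and $\sigma_i$ act on complementary data and commute.

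The remaining relation $(\sigma_0\sigma_1)^4=\id$ is the one I expect to require real work. My first step would be to localize it to $\cW^+_2(m)$: for a word $w$, neither $\sigma_0$ nor $\sigma_1$ ever changes a letter outside $\{1',1,2',2\}$, and the $1$-pairing underlying $\sigma_1$ treats such letters as invisible, so both $\sigma_0$ and $\sigma_1$ act on $w$ purely through the subword $u$ of its letters lying in $\{1',1,2',2\}$: they commute with the extraction $w\mapsto u$, act on the result exactly as the corresponding operators of $\cW^+_2(|u|)$, and leave the remaining letters of $w$ untouched. Hence $(\sigma_0\sigma_1)^4$ fixes $w$ if and only if it fixes $u$, and it suffices to establish $(\sigma_0\sigma_1)^4=\id$ on $\cW^+_2(m)$ for all $m$. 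There I would run a finite case analysis on $w\in\{1',1,2',2\}^m$, bookkeeping the position of the leftmost $\{1',1\}$-letter and the run of $1$-unpaired letters, and verify $\sigma_0\sigma_1\sigma_0\sigma_1(w)=\sigma_1\sigma_0\sigma_1\sigma_0(w)$ in each case; this is the most laborious part of the argument. (As a sanity check, on $\cW^+_2(1)=\BB^+_2$ one has $\sigma_0=(1,1')$ and $\sigma_1=(1,2)(1',2')$, so $\sigma_0\sigma_1=(1,2,1',2')$ has order $4$.) Once all the Coxeter relations are confirmed on every $\cW^+_n(m)$, the homomorphism $\W_n\to\mathrm{Sym}(\cB)$ exists, giving the asserted action, and uniqueness was noted at the outset.
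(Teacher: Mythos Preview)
Your approach is correct and essentially identical to the paper's: reduce to the Coxeter relations, obtain the type~$\mathsf{A}$ relations from $\gl_n$-normality, handle $\sigma_0\sigma_i=\sigma_i\sigma_0$ for $i\geq 2$ easily (the paper invokes axioms (Q1)--(Q2) directly rather than working on words), and then verify $\sigma_0\sigma_1\sigma_0\sigma_1=\sigma_1\sigma_0\sigma_1\sigma_0$ by a case analysis on $\cW^+_n(m)$. Your further localization to the subword in $\{1',1,2',2\}$ is a valid simplification the paper does not make, but the paper actually carries out the case analysis you only outline, so you should expect to fill in those details.
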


\begin{proof}
It suffices to check that $\sigma_0,\sigma_1,\sigma_2,\dots, \sigma_{n-1}$ 
satisfy the braid relations for  type $\mathsf{BC}$.
Since $\cB$ is normal as a $\gl_n$-crystal, we already know 
from Remark~\ref{weyl-remark} that 
all relevant (type $\mathsf{A}$) braid relations among $\sigma_1,\sigma_2,\dots, \sigma_{n-1}$ hold.
It is also clear from axioms (Q1) and (Q2) in Definition~\ref{qq-def} that $\sigma_0$ commutes with $\sigma_i$ for all $i \in [2,n-1]$.
Thus it remains only to show that $\sigma_0\sigma_1\sigma_0\sigma_1 = \sigma_1\sigma_0\sigma_1\sigma_0$ as operators $\cB\to \cB$.
For this, we may assume that $\cB = \cW^+_n(m)$ for some $m\geq 0$.

Choose an element $w=w_1w_2\cdots w_m \in \cW^+_n(m)$. The subword of letters in $w$ at $1$-unpaired indices has the form $a_1a_2\cdots a_r b_1b_2\cdots b_s$ where each
$a_i \in \{1',1\}$ and each $b_i \in \{2',2\}$. One can check that $\sigma_1$
acts on $w$ by changing this subword to $c_1 c_2 \dots c_s d_1d_2 \cdots d_r$ where each $c_i \in \{1',1\}$ and
each $d_i \in \{2',2'\}$, with  primed letters occurring at the same locations as in $w$.
On the other hand, $\sigma_0$ acts on $w$ by simply toggling the prime on the first letter equal to $1'$ or $1$, fixing $w$ if there are no such letters.
For example,  $\sigma_1(31'251'22'2) = 31'251'11'2$ and $\sigma_0(31'251'22'2) =31251'22'2$.

If $w$ contains no letters equal to $1'$ or $1$, then 
$\sigma_0\sigma_1\sigma_0\sigma_1(w)$ and $ \sigma_1\sigma_0 \sigma_1\sigma_0(w)$
are both formed from $w$ by toggling the prime on the first letter equal to $2'$ or $2$, if one exists.
Likewise, if $r=s=0$, then $\sigma_0\sigma_1\sigma_0\sigma_1(w) = \sigma_1\sigma_0 \sigma_1\sigma_0(w) = w$.
Assume $w$ contains a letter equal to $1'$ or $1$, and let $j$ be the index of the first such letter.
Also assume $r+s>0$ so that $w$ has at least one $1$-unpaired index, and let $k$ be the first such index.
If $j < k$, then  $\sigma_0\sigma_1(w) = \sigma_1\sigma_0(w)$ so $\sigma_0\sigma_1\sigma_0\sigma_1(w) = \sigma_1\sigma_0 \sigma_1\sigma_0(w) =w$.
The same conclusion holds $j=k$ and $s > 0$. 

We can only have $k<j$ if $r=0$, and in this case $\sigma_0\sigma_1\sigma_0\sigma_1(w) $ and $\sigma_1\sigma_0 \sigma_1\sigma_0(w)$
are both formed from $w$ by toggling the primes on both $w_j$ and $w_k$.
Assume finally that $j=k$ and $s=0$.
If every letter of $w$ equal to $1'$ or $1$ occurs at a $1$-unpaired position, then $\sigma_0\sigma_1\sigma_0\sigma_1(w) = \sigma_1\sigma_0 \sigma_1\sigma_0(w) = \sigma_0(w)$.
Otherwise, let $l$ be the first index of a letter of $w$  equal to $1'$ or $1$ that is not $1$-unpaired.
Then $k<l$, and the words $\sigma_0\sigma_1\sigma_0\sigma_1(w) $ and $\sigma_1\sigma_0 \sigma_1\sigma_0(w)$
are both formed from $w$ by toggling the primes on $w_k$ and $w_l$.
Thus in all cases   $\sigma_0\sigma_1\sigma_0\sigma_1(w) =\sigma_1\sigma_0 \sigma_1\sigma_0(w)$ as needed.
\end{proof}

\section{Crystal operators on increasing factorizations}\label{crystal-incr-sect}

This section describes a $\qq_n$-crystal on factorizations of certain analogues of reduced words for involutions in symmetric groups.
This structure extends a $\q_n$-crystal studied in \cite{Hiroshima2018,Marberg2019b},
which is itself based on a $\gl_n$-crystal described in \cite{MorseSchilling}.

\subsection{Involution words}\label{iw-sect}

Let $S_\ZZ$ be the group of permutations of $\ZZ$ that fix all but finitely many points.
This is a Coxeter group with simple generators $s_i = (i,i+1)$ for $i \in \ZZ$.

 There is a unique associative operation $\circ : S_\ZZ \times S_\ZZ\to S_\ZZ$ 
such that $\pi \circ s_i = \pi$ if $\pi(i) > \pi(i+1)$ and $\pi\circ s_i = \pi s_i$ if $\pi(i)<\pi(i+1)$ for each $i \in \ZZ$ \cite[Thm. 7.1]{Humphreys}. 
A \defn{reduced word} for $\pi \in S_\ZZ$ is an integer sequence $a_1a_2\cdots a_n$ of shortest possible length such that $\pi=s_{a_1}s_{a_2}\cdots s_{a_n}$ (equivalently with $\pi=s_{a_1}\circ s_{a_2}\circ \cdots \circ s_{a_n}$).
 Let 
 $I_\ZZ := \{ \pi \in S_\ZZ : \pi=\pi^{-1}\}$. 
 If $z \in I_\ZZ$ and $i \in \ZZ$, then 
 \be
 s_i \circ z \circ s_i= 
 \begin{cases}  z &\text{if }z(i) > z(i+1) \\
 zs_i = s_iz &\text{if $z(i)=i$ and $z(i+1)=i+1$} \\
 s_izs_i &\text{otherwise}.
 \end{cases}	
 \ee
This implies that $I_\ZZ = \{ \pi^{-1} \circ \pi : \pi \in S_\ZZ\}$, so the following construction exists for any $z \in I_\ZZ$:
 
\begin{definition}\label{inv-def}
An \defn{involution word} for  $z \in I_\ZZ$ is an integer sequence $a_1a_2\cdots a_n$
of shortest possible length with
$z=s_{a_n}\circ \cdots   \circ s_{a_2}\circ s_{a_1} \circ 1 \circ s_{a_1}  \circ s_{a_2} \circ \cdots\circ s_{a_n}$.\footnote{Since $s_i \circ 1 \circ s_i = s_i$, if $n > 0$, then 
$s_{a_n}\circ \cdots   \circ s_{a_2}\circ s_{a_1} \circ 1 \circ s_{a_1}  \circ s_{a_2} \circ \cdots\circ s_{a_n} =
s_{a_n}\circ \cdots   \circ s_{a_2}\circ s_{a_1}    \circ s_{a_2} \circ \cdots\circ s_{a_n}$.}
Write $\iR(z)$ for the set of such words.
\end{definition}

If $z = (1,3)(2,4)$, then
$ \iR(z) = \{ 132,312\}
$
and if $z = (1,4)$, then
$ \iR(z) = \{
123, 231, 213, 321
\}.$
Involution words have been studied before in various forms, for example, in \cite{CJW,HMP2,HanssonHultman,HuZhang1,RichSpring}.

The following generalization of Definition~\ref{inv-def} is considered in \cite{M2021a,M2021b}.
A \defn{commutation} for  $a_1a_2\cdots a_n  \in \iR(z)$ is an index $i \in [n]$ such that
both $a_i$ and $1+a_{i}$ are fixed points of 
 the involution $s_{a_{i-1}}\circ \cdots \circ s_{a_2} \circ s_{a_1} \circ 1 \circ s_{a_1} \circ s_{a_2} \circ\cdots \circ s_{a_{i-1}}.$
 If $n>0$, then $i=1$ is always a commutation.
 
 \begin{definition}\label{inv-def2}
A \defn{primed involution word} for
$z\in I_\ZZ $ is a primed word whose unprimed form is in $\iR(z)$ and whose primed letters occur at commutations.
Write $\iR^+(z)$ for the set of such words.
\end{definition}
For example,  if $z = (1,3)(2,4)$, then $\iR^+(z) = \left\{ 
132, 13'2, 1'32, 1'3'2, 312, 31'2, 3'12, 3'1'2
\right\}$.
The number of commutations is the same for every involution word 
for $z \in I_\ZZ$, and given by
the \defn{absolute length}  $\ell_{\mathsf{abs}}(z) := |\{ i \in \ZZ : i < z(i)\}|$.
Therefore $|\iR^+(z)| = 2^{\ell_{\mathsf{abs}}(z)} |\iR(z)|$.

Definitions~\ref{inv-def} and \ref{inv-def2} can be formulated for arbitrary Coxeter systems (see \cite[\S5]{M2021b}),
but our applications only require the versions for permutations just given.
The next two propositions recall a few special properties of primed involution words 
that will be useful later. 

\begin{proposition}[{\cite[Prop. 8.2]{M2021b}}]
\label{isim-lem2}
 Suppose  $z \in I_\ZZ$ and $X,Y \in \ZZ$. No word in $\iR^+(z)$
contains any of the following as consecutive subwords:
\[  \ba XX,&& X'X, && XX',&& X'X',&& X'(X+1'),&& (X+1')X', \\  XY'X,&& X'Y'X, && X'YX',&& XY'X',&&\text{or}&& X'Y'X'.\ea\]
A word in $\iR^+(z)$ also cannot begin with 
$X(X+1'),$ $ (X+1)X',$ $ XYX,$ $ X'YX,$ or $ XYX'$,
and may only contain $XYX$, $X'YX$, or $XYX'$ as a consecutive (non-initial) subword if $|X-Y| = 1$.
\end{proposition}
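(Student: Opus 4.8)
The plan is to reformulate every forbidden pattern as an impossible constraint on the sequence of partial involutions attached to a primed involution word. Given $a_1a_2\cdots a_n \in \iR^+(z)$, put $z_0 := 1$ and $z_k := s_{\lceil a_k\rceil}\circ z_{k-1}\circ s_{\lceil a_k\rceil}$ for $k \in [n]$, so that $z_n = z$. Two facts from the theory recalled in Section~\ref{iw-sect} drive everything. First, an integer sequence $b_1\cdots b_n$ lies in $\iR(w)$ for $w := z_n$ exactly when $\lceil b_k\rceil$ is an \emph{ascent} of $z_{k-1}$ at every step, where $a$ is an ascent of $w$ when $w(a)<w(a+1)$. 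Second, position $k$ is a commutation --- so that $a_k$ may carry a prime --- exactly when $\lceil a_k\rceil$ and $\lceil a_k\rceil+1$ are both fixed by $z_{k-1}$; in that case $z_k=z_{k-1}\cdot(\lceil a_k\rceil,\lceil a_k\rceil+1)$, and otherwise $z_k=s_{\lceil a_k\rceil}z_{k-1}s_{\lceil a_k\rceil}$.

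The engine is the elementary observation that if $a$ is an ascent of $w\in I_\ZZ$, then $a$ is a descent of $w':=s_a\circ w\circ s_a$, which I verify by splitting into the three cases $w(a)=a$, $w(a+1)=a+1$, and $\{w(a),w(a+1)\}\cap\{a,a+1\}=\varnothing$ (the remaining possibilities $w(a)=a+1$ and $w(a+1)=a$ contradict the ascent hypothesis). This instantly kills the two-letter patterns: $XX$ is impossible because applying the ascent $X$ makes $X$ a descent, so $X'X$, $XX'$, $X'X'$ are impossible a fortiori (a weak ascent is in particular an ascent); and $X'(X+1')$, $(X+1')X'$, together with the initial patterns $X(X+1')$ and $(X+1)X'$, are ruled out because the first step (a commutation-multiplication, resp.\ applied at $z_0=1$) leaves exactly one of the two relevant positions outside the fixed-point position demanded by the second step.

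For the three-letter patterns write the subword as $c_1c_2c_3$ at positions $i+1,i+2,i+3$ with $\lceil c_1\rceil=\lceil c_3\rceil=X$ and $\lceil c_2\rceil=Y$; we may assume $Y\ne X$, as $Y=X$ degenerates to a two-letter case. After $c_1$ the value $X$ is a descent of $z_{i+1}$: by the engine when $c_1=X$, and because $z_{i+1}=z_i\cdot(X,X+1)$ forces $z_{i+1}(X)=X+1>X=z_{i+1}(X+1)$ when $c_1=X'$. The heart of the matter is that the $c_2$-step cannot undo this. If $|X-Y|\ge 2$ then $\{X,X+1\}\cap\{Y,Y+1\}=\varnothing$, so a commutation-multiplication at $Y$ leaves $z_{i+1}(X)$ and $z_{i+1}(X+1)$ unchanged, while a conjugation by $s_Y$ can reverse their relative order only if $\{z_{i+1}(X),z_{i+1}(X+1)\}=\{Y,Y+1\}$ with $z_{i+1}(X)=Y+1$ --- and that configuration forces $z_{i+1}(Y)=X+1>X=z_{i+1}(Y+1)$, making $Y$ a descent of $z_{i+1}$ and so contradicting applicability of $c_2$. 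Hence $X$ remains a descent of $z_{i+2}$, contradicting applicability of $c_3$; this shows $XYX$, $X'YX$, $XYX'$ and the primed variants $XY'X$, $X'Y'X$, $X'YX'$, $XY'X'$, $X'Y'X'$ are all forbidden when $|X-Y|\ge 2$. When $|X-Y|=1$, the patterns whose middle letter is primed, or whose first and last letters are both primed, are excluded by a short direct computation on the three consecutive positions involved ($\{X,X+1,X+2\}$ if $Y=X+1$, or $\{X-1,X,X+1\}$ if $Y=X-1$): the ascent conditions and the fixed points forced by the primes turn out to be jointly inconsistent. Finally, for an \emph{initial} occurrence of $XYX$, $X'YX$ or $XYX'$ we have $z_i=1$ and $z_{i+1}=(X,X+1)$, and this same local computation --- now valid for every $Y$, since $z_{i+1}$ is so rigid --- shows $X$ remains a descent, which is why these are forbidden at the start regardless of $|X-Y|$ but only for $|X-Y|\ge 2$ in the interior.

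I expect the main obstacle to be the bookkeeping in the third paragraph: confirming that conjugation by $s_Y$ affects the positions $X,X+1$ in exactly the way claimed --- the subtlety being that even with $|X-Y|\ge 2$ the conjugation can still move the \emph{images} $z_{i+1}(X)$, $z_{i+1}(X+1)$ if those lie in $\{Y,Y+1\}$, and one must check that the ascent hypothesis on $Y$ eliminates precisely the single order-reversing case --- and, separately, organizing the finitely many sub-cases of the $|X-Y|=1$ and initial computations without omissions. Everything else is routine.
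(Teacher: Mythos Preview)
The paper does not actually prove this proposition; it is quoted verbatim from \cite[Prop.~8.2]{M2021b} and stated without proof here. So there is no in-paper argument to compare against, and your task was really to supply a proof from scratch.

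Your approach---tracking the sequence of partial involutions $z_k$ and showing that the ascent/commutation constraints at the relevant positions become inconsistent---is the natural direct method, and the overall structure is sound. The ``engine'' (an ascent becomes a descent after one step) correctly disposes of the two-letter patterns, and your analysis of the $|X-Y|\ge 2$ three-letter case is right, including the subtle point that conjugation by $s_Y$ can move the \emph{images} $z_{i+1}(X),z_{i+1}(X+1)$ even though it fixes the positions $X,X+1$; your observation that the single order-reversing sub-case forces $Y$ to be a descent of $z_{i+1}$ (via the involution property) is exactly what is needed.

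The one place where your write-up is thin is the $|X-Y|=1$ computation and the initial-position computation, which you describe only as ``short'' and ``local.'' These do work---for instance, for an initial $X(X{-}1)X$ one gets $z_2=(X{-}1,X{+}1)$ with $z_2(X)=X>X{-}1=z_2(X{+}1)$, still a descent---but there are enough sub-cases (two choices of $Y=X\pm 1$, and several priming configurations) that you should actually lay them out rather than assert them. None of them hides a surprise, but a reader cannot verify ``jointly inconsistent'' without seeing the computation, and since this is precisely the part you flag as the main obstacle, it deserves to be written in full.
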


Define $\isim$ to be the transitive closure of the symmetric relation on primed words that has 
$ aXYb \isim aYXb
$ for all primed words $a,b$ and $X,Y\in \ZZ\sqcup \ZZ'$
with $|\lceil X\rceil- \lceil Y\rceil|  > 1$, as well as 
$
aXYXb \isim aYXYb $ and $ aX'YXb\isim aYXY'b
$
for all primed words $a,b$ and  $X,Y \in \ZZ$ with $|X-Y|=1$, 
and finally  with
$ Xa \isim X'a$ and $ XYa\isim YXa 
$
for all primed words $a$ and  $X,Y \in \ZZ$.
For example,  
$ 1'232' \isim 1'3'23 \isim 13'23 \isim 3'123 \isim 3123 \isim 1323 \isim 1232
\isim 2132
\isim 2312 
\isim 3212\isim 3121.
$
\begin{proposition}[{\cite[Cor. 8.3]{M2021b}}]
\label{isim-lem}
Each set $\iR^+(z)$ for $z \in I_\ZZ$ is an equivalence class under $\isim$.
\end{proposition}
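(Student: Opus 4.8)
The plan is to establish two complementary facts: first, that the relation $\isim$ preserves each set $\iR^+(z)$ (\textbf{closure}); and second, that any two primed involution words for the same $z$ are $\isim$-equivalent (\textbf{connectivity}). Since $\isim$ is by definition an equivalence relation, closure shows that the $\isim$-class of any fixed $w\in\iR^+(z)$ lies inside $\iR^+(z)$, while connectivity shows $\iR^+(z)$ lies inside that class; together these say $\iR^+(z)$ is exactly one $\isim$-class. (One may also deduce the statement from the general theorem on primed Demazure words for arbitrary Coxeter systems in \cite{M2021b}, but I will sketch the direct argument.)

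\textbf{Closure.} Here one checks each generating move. A far commutation $aXYb\isim aYXb$ with $|\lceil X\rceil-\lceil Y\rceil|>1$ and a braid move $aXYXb\isim aYXYb$ with $|X-Y|=1$ leave the Demazure-type product $s_{a_n}\circ\cdots\circ s_{a_1}\circ 1\circ s_{a_1}\circ\cdots\circ s_{a_n}$ unchanged (they are the commuting-generators relation and the braid relation in $S_\ZZ$), so the underlying unprimed word stays in $\iR(z)$; moreover the intermediate involutions appearing in the definition of a commutation are either unchanged or conjugated by reflections of disjoint support, so the set of commutations is unchanged and the primed positions remain legal. The prime-carrying move $aX'YXb\isim aYXY'b$ uses the same braid relation, and one verifies with the forbidden-subword list of Proposition~\ref{isim-lem2} (which rules out the degenerate adjacent patterns) that the prime moves from one commutation to another. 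The toggle $Xa\isim X'a$ is legal because position $1$ is always a commutation; and the initial swap $XYa\isim YXa$ is a far commutation if $|\lceil X\rceil-\lceil Y\rceil|>1$, is a half-braid move preserving $\iR(z)$ if $|\lceil X\rceil-\lceil Y\rceil|=1$ (the first letter contributes only once to the product), and cannot have $\lceil X\rceil=\lceil Y\rceil$ by Proposition~\ref{isim-lem2}.

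\textbf{Connectivity.} I would first handle unprimed words: that $\iR(z)$ is a single equivalence class under far commutations, braid triples, and the initial swap $XYa\isim YXa$. This is the involution analogue of the Matsumoto--Tits theorem and follows from the classical dictionary identifying $\iR(z)$ with the reduced words of the atoms of $z$ (the minimal-length $w\in S_\ZZ$ with $w^{-1}\circ w=z$), applying Tits' theorem inside each atom and the initial half-braid move to pass between adjacent atoms. Next, given an arbitrary $w\in\iR^+(z)$, I would induct on the number of primed letters: using far commutations $aX'Yb\isim aYX'b$ and the prime-carrying braid moves $aYXY'b\isim aX'YXb$ one transports a chosen prime leftward until it occupies position $1$, where the toggle $X'a\isim Xa$ removes it. Combined with the unprimed connectivity, this shows every element of $\iR^+(z)$ is $\isim$-equivalent to one fixed unprimed word of $\iR(z)$, which finishes the proof.

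The step I expect to be the main obstacle is the prime-transport claim inside connectivity: one must show that whenever a prime is not already in position $1$, some allowed move strictly decreases its distance to the front --- i.e.\ it is never permanently stuck immediately to the right of an adjacent-index letter that fails to complete a braid triple. This is precisely where the structural restrictions of Proposition~\ref{isim-lem2} on the consecutive subwords of primed involution words do the work, forcing the local configuration around any interior prime to admit a leftward move. By contrast, the closure step is routine bookkeeping and the unprimed connectivity is essentially classical, so controlling the migration of primes is the heart of the argument.
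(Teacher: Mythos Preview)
The paper does not prove this proposition; it is quoted directly from \cite[Cor.~8.3]{M2021b}, with the sentence after it noting that the unprimed analogue is Hu--Zhang's theorem \cite[Thm.~3.1]{HuZhang1}. So there is no in-paper argument to compare against, and your closure/connectivity outline is the natural strategy one would use to reprove the cited result from scratch. The closure checks and the reduction of unprimed connectivity to Hu--Zhang are fine.

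Your self-diagnosis of the weak point is accurate, and the gap is real rather than cosmetic. To move a prime at position $j$ leftward when $|\lceil w_{j-1}\rceil-\lceil w_j\rceil|=1$, you need the triple $w_{j-2}w_{j-1}w_j$ to already be of the form $YXY'$; nothing in Proposition~\ref{isim-lem2} forces $\lceil w_{j-2}\rceil=\lceil w_j\rceil$, and when it fails neither generating move applies at $j$. The fix is not simply ``Proposition~\ref{isim-lem2} rules out the bad configurations'': one has to argue that some combination of moves \emph{elsewhere} in the word (possibly moving the prime rightward first, or applying unprimed rewrites away from position $j$) eventually creates a usable braid triple, and ``distance of the leftmost prime from position 1'' is not the right monovariant for that. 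The proof in \cite{M2021b} avoids this bookkeeping by proving the primed and unprimed word properties simultaneously via a single induction in the general twisted Coxeter setting, rather than first establishing the unprimed case and then transporting primes as a separate step. Your approach can be made to work, but the prime-transport lemma needs a genuine inductive argument, not just an appeal to the forbidden-pattern list.
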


This generalizes \cite[Thm. 3.1]{HuZhang1}, which describes a similar relation spanning $\iR(z)$.

\subsection{Formulas for crystal operators}\label{incr-f-sect}

Fix an involution  $z \in I_\ZZ$ and define $\ORF_n(z)$ to be the set of tuples
$a=(a^1,a^2,\cdots ,a^n)$ 
where each $a^i$ is a strictly increasing (possibly empty) primed word such that
the concatenation 
$
\concat(a) := a^1a^2\cdots a^n
$ is in $ \iR^+(z)$. 
Define the weight of $a \in \ORF_n(z)$ to be 
\be\label{incr-wt-eq}\weight(a) := (\ell(a^1),\ell(a^2),\dots,\ell(a^n))  \in \NN^n.\ee
We will make $ \ORF_n(z)$ into a $\qq_n$-crystal with this weight map below.

The sequence of 
numbers 
$\hat c_i(z) := | \{  j \in \ZZ :  z(j) < i<j\text{ and } z(i) >  z(j) \}|$ for $i \in \ZZ$
make up the \defn{involution code} of $z$. One has $\hat c_i(z) = 0$ for all but finitely many $i \in \ZZ$.
The transpose of the partition sorting $(\dots,\hat c_1(z), \hat c_2(z), \hat c_3(z),\dots)$
is a strict partition, called the \defn{involution shape} of $z$ in \cite{HMP4} and denoted $\mu(z)$.
For example, if $z = (1,5)(2,3) \in I_\ZZ$, then $\mu(z) = (4,1)$; this also holds if
$z = (k+1,k+5)(k+2,k+3) \in I_\ZZ$ for any $k \in \ZZ$.
The following is useful to note:

\begin{proposition}\label{mu-prop} The set $\ORF_n(z)$ is nonempty if and only if $\mu(z)$ has at most $n$
nonzero parts. 
\end{proposition}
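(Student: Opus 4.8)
The plan is to prove the sharper fact that the smallest number of strictly increasing primed words into which some element of $\iR^+(z)$ can be split (as a concatenation) equals $\ell := \ell(\mu(z))$, the number of nonzero parts of $\mu(z)$. The proposition follows at once: a primed word $w$ is a concatenation of at most $n$ strictly increasing pieces exactly when its maximal strictly increasing runs number at most $n$, and padding with empty factors shows $\ORF_n(z)\neq\varnothing$ iff some $w\in\iR^+(z)$ has at most $n$ such runs. (Recall $\iR^+(z)\neq\varnothing$ for every $z\in I_\ZZ$, and that $\ell$ is the largest part of the partition sorting $(\hat c_i(z))_{i\in\ZZ}$, i.e. $\ell=\max_i\hat c_i(z)$.)

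For the lower bound I would invoke the shifted Edelman--Greene correspondence from \cite{M2021a}, which is the same tool used later in Section~\ref{morphisms-sect}: inserting any $w\in\iR^+(z)$ produces a fixed insertion tableau of shifted shape $\mu(z)$, whose longest column has exactly $\ell$ cells, so (by the shifted analogue of Greene's theorem) $w$ contains a subsequence that is weakly decreasing in the order $1'<1<2'<2<\cdots$ and has length $\ell$. Any such subsequence meets each strictly increasing factor of $w$ at most once, since two entries of a single increasing factor are strictly increasing from left to right; hence every concatenation expression for $w$ uses at least $\ell$ factors. For the upper bound I would use the other direction of the same correspondence: increasing factorizations of words in $\iR^+(z)$ biject with pairs $(P_z, Q)$, where $P_z$ is the common insertion tableau and $Q$ is a semistandard shifted tableau of shape $\mu(z)$ recording the factor indices. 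Taking $Q$ to be the tableau whose $i$th row is filled entirely with $i$ — this is legitimate because columns of $\SD_{\mu(z)}$ form contiguous bottom-anchored strips that then read $1,2,3,\dots$ strictly increasing from bottom to top, with no primes — yields a word in $\iR^+(z)$ that splits into exactly $\ell$ nonempty strictly increasing factors, so $\ORF_\ell(z)\neq\varnothing$ and therefore $\ORF_n(z)\neq\varnothing$ for all $n\geq\ell$. (An alternative, avoiding insertion entirely, is to build an explicit involution word for $z$ from a column reading of the involution code $(\hat c_i(z))$, whose entries are all $\leq\ell$, and check directly that it is a concatenation of $\ell$ strictly increasing runs; this gives the upper bound without $P_z$, at the cost of a more hands-on verification.)

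The main obstacle is that both bounds lean on properties of shifted Edelman--Greene insertion that are imported rather than proved here: that the insertion tableau of any word in $\iR^+(z)$ is a fixed tableau of shape $\mu(z)$, that insertion interacts correctly with increasing factorizations and their recording tableaux, and the Greene-type statement relating the longest column of the insertion shape to a weakly decreasing subsequence. Granting these from \cite{M2021a} and \cite{HMP4}, the remainder is bookkeeping; the only genuinely self-contained step is verifying that the superstandard recording tableau used in the upper bound is a well-defined semistandard shifted tableau of shape $\mu(z)$ with the claimed content.
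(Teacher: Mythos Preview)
The paper's proof is a one-line citation: \cite[Remark 3.16]{Marberg2019b} already states that $\ORF_n(z)\neq\varnothing$ if and only if $\max_i \hat c_i(z)\leq n$, and since $\mu(z)$ is the transpose of the partition sorting $(\hat c_i(z))_i$, one has $\ell(\mu(z))=\max_i\hat c_i(z)$. Your approach is genuinely different: you try to prove the equivalence directly via the orthogonal Edelman--Greene correspondence. This is a reasonable route, and in fact the bijection you invoke for the upper bound already gives both directions at once: granting that $a\mapsto(\PO(a),\QO(a))$ is a bijection from $\ORF_n(z)$ to pairs $(P,Q)$ with $Q\in\SShTab_n(\mu(z))$, one has $\ORF_n(z)\neq\varnothing$ iff $\SShTab_n(\mu(z))\neq\varnothing$ iff $n\geq\ell(\mu(z))$, since the diagonal entry of $Q$ in row $i$ must be at least $i$. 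The cost is a forward reference to Section~\ref{morphisms-subsect2} and to the (not explicitly stated) fact that the insertion shape is always $\mu(z)$.

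The weak point is your separate lower-bound argument. You invoke ``the shifted analogue of Greene's theorem'' to extract a weakly decreasing subsequence of length $\ell$ from any $w\in\iR^+(z)$. No such statement is established in the paper or its references for orthogonal Edelman--Greene insertion of primed involution words, and it is not obvious that one holds: unlike ordinary Edelman--Greene insertion (where the insertion shape agrees with the RSK shape, so classical Greene applies), the shifted/primed setting does not come with a ready-made Greene invariant of this form. This step is both unsupported and unnecessary---drop it and use the recording-tableau argument for the lower bound as well.
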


\begin{proof}
It is noted in \cite[Remark 3.16]{Marberg2019b} that $\ORF_n(z)$ is nonempty if and only if 
 $\max_{i \in \ZZ} \hat c_i(z) \leq n$.
\end{proof}

For the rest of this section we assume $\ell(\mu(z)) \leq n$ so that $ \ORF_n(z) \neq \varnothing$ and $\mu(z)\in \NN^n$.
The crystal operators on $ \ORF_n(z)$ are defined in terms of the following pairing procedure:

\begin{definition}\label{pair-def}
Suppose $v=v_1v_2\cdots v_p$ and $w=w_1w_2\cdots w_q$ are strictly increasing words
with letters in $\ZZ \sqcup \ZZ' $.
Form a set of paired letters $\pair(v,w)$ by iterating over the letters in $w$ from largest to smallest; at each iteration,
the current letter $w_j$ is paired with the smallest unpaired letter $v_i$ with $\lceil v_i\rceil >\lceil w_j\rceil $ (if such a letter exists)
and then $(v_i,w_j)$ is added to $\pair(v,w)$.
\end{definition}

If $v=1,3,4,5,8,10',11$ and $w=2',6,9,12,13$, then 
$\pair(v,w) = \{ (10',9), (8,6),(3,2')\}.$
In the sequence of definitions below, we 
fix $i \in [n-1]$ and $a=(a^1,a^2,\dots,a^n) \in \ORF_n(z)$.

\begin{definition}
\label{orf-def}
If every letter in $a^i$ is the first term of an element of $ \pair(a^i,a^{i+1})$,
then $f_i(a) := 0$.
Otherwise, let $x \in \ZZ\sqcup \ZZ'$ be the largest unpaired letter in $a^i$,  
let $y  \in \ZZ$ be the smallest integer not in $\unprime(a^{i+1})$ 
with $y\geq \lceil x\rceil $, and construct an $n$-tuple of strictly increasing words $f_i(a)$
by applying the following procedure to $a$:

\begin{itemize}
\item[(L1)] If  $x \in \ZZ'$,
then remove $x$ from $a^i$ and add $y'$ to $a^{i+1}$:
\[ 
 a=(\dots,1{\color{red}3'}459,347',\dots) \mapsto (\dots,1459,34{\color{red}5'}7',\dots)=f_i(a).\]

\item[(L2)]  If $x \in \ZZ$,
then remove $x$ from $a^i$ and add $y$ to $a^{i+1}$.
Then, for each integer $v \in [x,y-1]$
with $v+1 \in a^i$ and $v' \in a^{i+1}$, replace $v+1 \in a^i$ by $v+1'$ and $v' \in a^{i+1}$ by $v$:
\[ 
 a=(\dots,1{\color{red}3}4{\color{red}5}69,3{\color{red}4'}58,\dots) \mapsto (\dots,14{\color{red}5'}69,3{\color{red}4}5{\color{red}6}8,\dots)=f_i(a).\]
\end{itemize}
\end{definition}

\begin{definition}
\label{ore-def}
If every letter in $a^{i+1}$ is the second term of an element of $ \pair(a^i,a^{i+1})$,
then $e_i(a) := 0$.
Otherwise, let $y \in \ZZ\sqcup \ZZ'$ be the smallest unpaired letter in $a^{i+1}$,  
let $x  \in \ZZ$ be the largest integer not in $\unprime(a^{i})$ 
with $x\leq \lceil y\rceil $, and construct an $n$-tuple of strictly increasing words $e_i(a)$
by applying the following procedure to $a$:
\begin{itemize}
\item[(R1)]   If  $y \in \ZZ'$,
then remove $y$ from $a^{i+1}$ and add $x'$ to $a^{i}$:
\[ 
 a= (\dots,1459,34{\color{red}5'}7',\dots)\mapsto (\dots,1{\color{red}3'}459,347',\dots) =e_i(a).\]

\item[(R2)]  If $y \in \ZZ$,
then remove $y$ from $a^{i+1}$ and add $x$ to $a^{i}$.
Then, for each integer $v \in [x,y-1]$
with $v+1' \in a^i$ and $v \in a^{i+1}$, replace $v+1' \in a^i$ by $v+1$ and $v \in a^{i+1}$ by $v'$:
\[ 
 a= (\dots,14{\color{red}5'}69,3{\color{red}4}5{\color{red}6}8,\dots)\mapsto(\dots,1{\color{red}3}4{\color{red}5}69,3{\color{red}4'}58,\dots) =e_i(a).\]
\end{itemize}
\end{definition}


In Definitions~\ref{orf-def2} and \ref{ore-def2} we assume that $n\geq 2$.
\begin{definition}
\label{orf-def2}
If $a^1$ is empty or if the first letter of $a^1$ is not strictly smaller than every letter in $a^2$,
then  $f_{\bar 1}(a) := 0$.
If $a^1$ has at least two letters and the first two of these are not both primed or unprimed,
then reverse the primes on these letters and move the modified first letter of $a^1$ to the start of $a^2$.
 %
Otherwise, move the first letter of $a^1$ to the start of $a^2$.
\end{definition}

For example
$f_{\bar 1} ({\color{red}1'3}4,25,\dots) =  ({\color{red}3'}4,{\color{red}1}25,\dots)$
and
$f_{\bar 1} ({\color{red}1'}3'4,25,\dots)=  (3'4,{\color{red}1'}25,\dots)  $.

\begin{definition}
\label{ore-def2}
If $a^2$ is empty or if the first letter of $a^2$ is not strictly smaller than every letter in $a^1$,
then  $e_{\bar 1}(a) := 0$.
 If $a^1$ is nonempty and the first letters of $a^1$ and $a^2$ are not both primed or unprimed,
then reverse the primes on these letters and move the modified first letter of $a^2$ to the start of $a^1$.
%
Otherwise, move the first letter of $a^2$ to the start of $a^1$.
%
\end{definition}

For example
$e_{\bar 1} ({\color{red}3'}4,{\color{red}1}25,\dots) = ({\color{red}1'3}4,25,\dots) $
and
$e_{\bar 1} (3'4,{\color{red}1'}25,\dots) = ({\color{red}1'}3'4,25,\dots) $.

\begin{definition}
\label{orf-def3}
If $a^1$ is empty or begins with a primed letter, then $f_{0}(a):=0$.
Otherwise,  form $f_0(a)$ from $a$ by adding a prime to the first letter of $a^1$.
Similarly, if
 $a^1$ is empty or begins with an unprimed letter, then $e_{0}(a):=0$.
Otherwise,  form $e_0(a)$ from $a$ by removing the prime from the first letter of $a^1$.
Thus 
 $f_0({\color{red}1}3'4,25,\dots) = ({\color{red}1'}3'4,25,\dots) $
 and $ e_0({\color{red}1'}3'4,25,\dots) = ({\color{red}1}3'4,25,\dots) $.
\end{definition}


Given a factorization $a \in \ORF_n(z)$  let 
\be \unprime(a) := (\unprime(a^1), \unprime(a^2),\dots, \unprime(a^n)).\ee
Denote the set of increasing factorizations of (unprimed) involution words for $z$ by
 \be\label{incr-eq} \Incr_n(z) := \{ a \in \ORF_n(z) : a = \unprime(a) \} = \{ \unprime(a) : a \in \ORF_n(z)\}.\ee
Restricted to this set, the operators $e_i$ and $f_i$ defined above
for $i \in \{\bar 1,1,2,\dots,n-1\}$
 coincide with the ones in \cite[Thm. 3.1]{Hiroshima2018} 
and make $ \Incr_n(z)$
into a $\q_n$-crystal, 
which is normal by \cite[Cor. 3.33]{Marberg2019b}.
Our ultimate goal is to show that the larger set $\ORF_n(z)$ is likewise a normal $\qq_n$-crystal;  this will eventually be stated as Corollary~\ref{orf-upgrade}.
In this section we only prove one part of this claim:

\begin{proposition}\label{qq-orf-prop}
Assume $z \in I_\ZZ$ has $\ell(\mu(z)) \leq n$. 
Relative to the operators 
defined
above
 and  the weight map \eqref{incr-wt-eq},
the nonempty set 
$ \ORF_n(z)$ 
is a $\qq_n$-crystal.
\end{proposition}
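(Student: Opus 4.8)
The plan is to verify, in order, the axioms of Definition~\ref{crystal-def} (the $\gl_n$-axioms (S1)--(S2)), Definition~\ref{q-def} (the $\q_n$-axioms (P1)--(P3)) when $n\ge 2$, and Definition~\ref{qq-def} (the $\qq_n$-axioms (Q1)--(Q3)). The guiding principle is that each crystal operator alters only a bounded piece of a factorization $a=(a^1,\dots,a^n)$: the operators $e_i,f_i$ with $i\in[n-1]$ touch only the pair of factors $a^i,a^{i+1}$, the operators $e_{\bar1},f_{\bar1}$ touch only $a^1$ and $a^2$, and $e_0,f_0$ only toggle the prime on the first letter of $a^1$. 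Throughout, I would make repeated use of the description of $\iR^+(z)$ as a single $\isim$-equivalence class (Proposition~\ref{isim-lem}) and of the list of forbidden consecutive subwords in Proposition~\ref{isim-lem2}.

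For the $\gl_n$-layer: the first thing to check is that $e_i,f_i$ for $i\in[n-1]$ are genuinely maps $\ORF_n(z)\to\ORF_n(z)\sqcup\{0\}$. That each modified factor in Definitions~\ref{orf-def} and \ref{ore-def} stays strictly increasing is a finite check of cases (L1),(L2),(R1),(R2), using that no factor $a^j$ can contain both $k'$ and $k$. That $\concat$ of the output remains a primed involution word for $z$ is the one genuinely Coxeter-theoretic point; the operation changes $\concat(a)$ only through moves of the kind generating $\isim$, so it preserves $z$ by Proposition~\ref{isim-lem} --- alternatively, this is exactly what is established for the unprimed crystal $\Incr_n(z)$ in \cite{Marberg2019b,Hiroshima2018} and for the primed refinement via the shifted Edelman--Greene machinery of \cite{M2021a} (cf.\ also \cite{MorseSchilling} for the underlying $\gl_n$-skeleton). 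Granting well-definedness, (S1) follows since (L1) is inverse to (R1), (L2) is inverse to (R2), and each operator transfers a single letter between factors $i$ and $i+1$, so the weight changes by $\pm(\e_i-\e_{i+1})$; and (S2) follows since $\varphi_i(a)$ and $\varepsilon_i(a)$ count the letters of $a^i$ and of $a^{i+1}$ left unpaired by $\pair(a^i,a^{i+1})$ --- a pairing depending only on the ceilings of letters --- so their difference equals $\ell(a^i)-\ell(a^{i+1})=\weight(a)_i-\weight(a)_{i+1}$.

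For the $\q_n$-layer ($n\ge2$): Definitions~\ref{orf-def2} and \ref{ore-def2} move the first letter of $a^1$ to the front of $a^2$ (or conversely), possibly swapping the primes on the two letters involved. Here Proposition~\ref{isim-lem2} is essential: the exclusion of consecutive subwords such as $XX$, $X'X$, $XX'$, $X'X'$, $X'(X+1')$, $(X+1')X'$ at the $a^1$/$a^2$ junction is precisely what makes the prime-swapping branch land back in $\ORF_n(z)$ and makes $e_{\bar1},f_{\bar1}$ mutually inverse, giving (P1); the weight moves by $\pm(\e_1-\e_2)$, the factors $a^i$ with $i\ge3$ are untouched, and $e_i,f_i$ for $i\in[3,n-1]$ commute with $e_{\bar1},f_{\bar1}$ by disjointness of support, giving the remaining clauses of (P1) and all of (P2). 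For (P3) I would note $\varepsilon_{\bar1}(a),\varphi_{\bar1}(a)\in\{0,1\}$ and that the prime-forgetting map $\unprime:\ORF_n(z)\to\Incr_n(z)$ from \eqref{incr-eq} intertwines $e_{\bar1}$ and $f_{\bar1}$ with the corresponding operators on the normal $\q_n$-crystal $\Incr_n(z)$ --- the identifications again resting on Proposition~\ref{isim-lem2} to exclude the boundary configurations where a prime would change which letter is movable --- so $(\varepsilon_{\bar1}+\varphi_{\bar1})(a)=(\varepsilon_{\bar1}+\varphi_{\bar1})(\unprime(a))$, which vanishes exactly when $\weight(a)_1=\weight(a)_2=0$.

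For the $\qq_n$-layer (which is all that is needed when $n=1$): $e_0,f_0$ from Definition~\ref{orf-def3} toggle the prime on the first letter of $a^1$; they are obviously mutually inverse, fix $\unprime(a)$ and hence the weight, and --- since the pairings $\pair(a^i,a^{i+1})$ and the $\varepsilon_{\bar1},\varphi_{\bar1}$ data depend only on $\unprime(a)$ together with boundary comparisons constrained by Proposition~\ref{isim-lem2} --- they preserve $\varepsilon_i,\varphi_i$ for all $i\in[n-1]$ and for $i=\bar1$, which is (Q1). Axiom (Q2) holds because $e_i,f_i$ with $i\in[2,n-1]$ never read the first letter of $a^1$. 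Axiom (Q3) is immediate: if $a^1$ is empty then $\weight(a)_1=0$ and $e_0(a)=f_0(a)=0$; otherwise $a^1$ begins with exactly one of a primed or an unprimed letter, so exactly one of $e_0,f_0$ acts, and does so only once. The main obstacle in the whole argument is the well-definedness step in the $\gl_n$- and $\q_n$-layers --- showing the outputs actually lie in $\ORF_n(z)$, especially that $\concat$ remains a primed involution word --- together with the attendant boundary case analysis; once the right instances of Propositions~\ref{isim-lem} and \ref{isim-lem2} are in hand, the rest is routine bookkeeping.
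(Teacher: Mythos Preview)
Your overall architecture matches the paper's: verify the $\gl_n$-, then $\q_n$-, then $\qq_n$-axioms, using Propositions~\ref{isim-lem2} and~\ref{isim-lem} for the boundary configurations and the $\unprime$ map to import facts from the known crystal $\Incr_n(z)$. The treatment of the $\q_n$- and $\qq_n$-layers is essentially what the paper does.

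The gap is in the $\gl_n$-layer, at exactly the point you flag as ``the one genuinely Coxeter-theoretic point.'' Your justification that ``the operation changes $\concat(a)$ only through moves of the kind generating $\isim$'' is the \emph{conclusion} of the paper's Lemmas~\ref{L5.3}--\ref{L5.7}, not something that can be read off from Definitions~\ref{orf-def}--\ref{ore-def}. One must first pin down the structure near the last unpaired letter $x$ (Lemmas~\ref{L5.3}, \ref{L5.4}: there is a consecutive block $x,x+1,\dots,x+q$ in $a^i$ matched by $x,\dots,x+q-1$ in $a^{i+1}$, with specific gaps), and then track which indices are commutations so that the primes introduced or moved in cases (L1)/(L2) land at legal positions (parts (a)--(d) of Lemmas~\ref{L5.6}, \ref{L5.7}). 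In case (L2), for instance, a letter $x$ migrates to a new value $y>x$ and primes are swapped along an entire run of intermediate letters; showing this is realizable by $\isim$-moves, and hence stays in $\iR^+(z)$, is a several-page commutation-and-braid analysis. Deferring to \cite{M2021a} does not help, since that reference supplies the insertion machinery used \emph{later} in the paper but not this specific statement. This step is the substance of the proposition, not routine bookkeeping.

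A smaller methodological difference: for (S1) you assert directly that (L1)/(L2) invert (R1)/(R2). The paper instead uses Lemma~\ref{ef-ick-lem} to push down to $\Incr_n(z)$, where (S1) is already known, and then observes that knowing $f_i(\unprime(c))=\unprime(b)$ pins down which letter of $c^i$ is the last unpaired one, after which the primed identity $f_i(c)=b$ is immediate from Definition~\ref{orf-def}. Your direct inversion is plausible but would require its own case analysis; the paper's reduction is shorter once Lemma~\ref{ef-ick-lem} is in hand.
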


For an example of the crystal $\ORF_n(z)$, see Figure~\ref{incr-fig}.
Our proof of Proposition~\ref{qq-orf-prop} is at the end of this section, following several lemmas. The first of these is clear from the definitions:

\begin{lemma}\label{ef-ick-lem}
If $a \in \ORF_n(z)$ and $i \in [n-1]$,
then it holds that
$e_i ( \unprime(a)) = \unprime ( e_i(a))$ and $ f_i ( \unprime(a))= \unprime ( f_i(a))$
under the convention that $\unprime(0) := 0$. 
When $n\geq 2$ the same identities hold  for $i=\bar 1$.
\end{lemma}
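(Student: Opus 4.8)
The plan is to verify the four identities directly from Definitions~\ref{orf-def}, \ref{ore-def}, \ref{orf-def2}, and \ref{ore-def2}, isolating in each operator the steps that are sensitive to primes from the steps that merely move, add, or delete letters between factors. First I would record two preliminary observations. By Proposition~\ref{isim-lem2} no word in $\iR^+(z)$ contains $X'X$ as a consecutive subword, so within each strictly increasing factor $a^j$ the word $\unprime(a^j)$ is again strictly increasing; hence $\unprime(a)$ lies in $\Incr_n(z)\subseteq \ORF_n(z)$ and the left-hand sides make sense. Second, the pairing of Definition~\ref{pair-def} refers to letters only through their ceilings $\lceil\cdot\rceil$, so $\pair(a^i,a^{i+1})$ and $\pair(\unprime(a^i),\unprime(a^{i+1}))$ pair exactly the same positions. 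Consequently $a$ and $\unprime(a)$ have the same unpaired letters, the same largest (resp. smallest) unpaired position, and the same auxiliary integer $y$ (resp. $x$) selected in Definition~\ref{orf-def} (resp. Definition~\ref{ore-def}).

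For $i\in[n-1]$ I would treat $f_i$, the case of $e_i$ being symmetric. The zero test (whether every letter of $a^i$ is a first coordinate of a pair) agrees for $a$ and $\unprime(a)$ by the pairing remark, so $f_i(a)=0\iff f_i(\unprime(a))=0$. When the output is nonzero, the selected letter of $a^i$ sits in the same position as that of $\unprime(a^i)$ and the target value $y$ coincides. On $\unprime(a)$ every letter is unprimed, so only branch (L2) can fire and its re-priming loop, which requires $v'\in a^{i+1}$, is vacuous; thus $f_i(\unprime(a))$ simply deletes the chosen letter of $\unprime(a^i)$ and inserts $y$ into $\unprime(a^{i+1})$. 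On $a$, branch (L1) deletes a primed letter and inserts $y'$, while branch (L2) deletes $x$, inserts $y$, and performs the replacements $v{+}1\leftrightarrow v{+}1'$ and $v'\leftrightarrow v$, each of which only toggles a prime and so is invisible to $\unprime$. In either branch $\unprime(f_i(a))$ deletes the same position from $\unprime(a^i)$ and inserts $y$ into $\unprime(a^{i+1})$, matching $f_i(\unprime(a))$.

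The queer case $i=\bar 1$ is similar in spirit: the prime reversals in Definitions~\ref{orf-def2} and \ref{ore-def2} vanish under $\unprime$, and the structural step ``move the first letter of $a^1$ to the start of $a^2$'' (or the reverse) commutes with $\unprime$, so whenever neither operator returns $0$ the outputs agree after removing primes. The one delicate point, which I expect to be the main obstacle, is that the trigger condition must itself be insensitive to primes: for $f_{\bar 1}$ this is the requirement that the first letter $v$ of $a^1$ be strictly smaller than every letter of $a^2$. Writing $w$ for the first letter of $a^2$, this is equivalent to its unprimed form $\lceil v\rceil<\lceil w\rceil$ \emph{except} when $v=X'$ and $w=X$ for $X=\lceil v\rceil$, that is, unless $a^1$ begins with $X'$ and $a^2$ begins with $X$.

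To finish I would rule out this discrepant configuration using the structure of primed involution words. If $a^1=X'$, then $\concat(a)$ contains the forbidden consecutive subword $X'X$. Otherwise $\unprime(\concat(a))\in\iR(z)$ begins with $X,c_2,c_3,\dots$ followed later by a second occurrence of $X$, where $X<c_2<c_3<\cdots$ are the ceilings of the remaining letters of $a^1$, all exceeding $X$. Commuting this second $X$ leftward past every intervening $c_j$ with $c_j\ge X+2$ (which commute with $s_X$) keeps the word inside $\iR(z)$, by the unprimed commutation relations underlying Proposition~\ref{isim-lem}, and produces a word beginning either $XX$ (if no $c_j$ equals $X+1$) or $X(X+1)X$ (as soon as the $X$ meets a letter $X+1$). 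Both initial patterns are forbidden by Proposition~\ref{isim-lem2}, a contradiction. Hence the configuration never occurs, the trigger conditions for $f_{\bar 1}$ on $a$ and on $\unprime(a)$ coincide, and the same argument with the roles of $a^1$ and $a^2$ exchanged handles $e_{\bar 1}$, completing the proof.
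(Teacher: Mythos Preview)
Your proposal is correct and follows the same approach the paper takes, namely direct inspection of the definitions; the paper in fact gives no proof at all for this lemma, simply asserting that it is ``clear from the definitions.'' You have supplied a careful verification where the paper omits one, and in particular you correctly identify and resolve the one genuinely subtle point: that the trigger condition for $f_{\bar 1}$ (and $e_{\bar 1}$) could in principle differ between $a$ and $\unprime(a)$ when the first letters of $a^1$ and $a^2$ have the same ceiling. The paper establishes the key fact $\lceil \min(a^1)\rceil \neq \lceil \min(a^2)\rceil$ only later, in the proof of Proposition~\ref{qq-orf-prop}, via essentially the same commutation argument you give (reducing to a word beginning with $XX$ or $X(X{+}1)X$, forbidden by Proposition~\ref{isim-lem2}); your version makes the logical dependence explicit at the point where it is actually needed.
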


Since $\Incr_n(z)$ is a $\q_n$-crystal, this lemma mostly
reduces the proof of Proposition~\ref{qq-orf-prop} to showing that 
 $e_i$ and $f_i$  
are well-defined maps $\ORF_n(z) \to \ORF_n(z)\sqcup\{0\}$.
This is nontrivial because these operators can change the locations of the primed letters in a factorization.

In Lemmas~\ref{L5.3}, \ref{L5.4}, \ref{L5.6}, and \ref{L5.7}, we assume $a$ and $b$ are strictly increasing words with no primed letters
such that the concatenation $ab$ is a reduced word for some element of $S_\ZZ$.
If $x$ is a letter in $a$ that is not the first term of a pair in $\pair(a,b)$,
then we say that $x$ is \defn{unpaired} in $a$.
If $y$ is a letter in $b$ that is not the second term of a pair in $\pair(a,b)$,
then we say that $y$ is \defn{unpaired} in $b$.

\begin{lemma}
\label{L5.3}
Suppose $x$ is the last unpaired letter in $a$.
Then $x-1 \notin b$ and there exists $q \in \NN$ such that $x+i \in a$ and $x-1+i \in b$ for all $i \in [q]$
while $x+q+1 \notin a$ and $x+q \notin b$.
\end{lemma}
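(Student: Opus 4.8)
The plan is to take $q:=q_a$, where $q_a\in\NN$ is the largest integer with $\{x+1,x+2,\dots,x+q_a\}\subseteq a$ (this is finite since $a$ is a finite word), and to verify that this $q$ satisfies all the stated requirements. With this choice, (i) $x+i\in a$ for all $i\in[q]$ and (ii) $x+q+1\notin a$ hold by the definition of $q_a$, so the remaining tasks are to show (iii) $x-1\notin b$, (iv) $x-1+i\in b$ for all $i\in[q]$, and (v) $x+q\notin b$. I will also introduce $q_b\in\NN$, the largest integer with $\{x,x+1,\dots,x+q_b-1\}\subseteq b$; then (iv) and (v) both reduce to showing $q_a=q_b$.

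To prove (iii), suppose $x-1\in b$ and consider the step of the iteration in Definition~\ref{pair-def} (which runs over the letters of $b$ from largest to smallest) at which the letter $x-1$ of $b$ is processed. Every letter of $b$ of value $\geq x$ is matched, if at all, to a letter of $a$ of strictly larger value, hence never to $x$; so at this step the letter $x\in a$ is still unpaired. Since $x$ is the smallest value exceeding $x-1$ and lies in $a$, it is therefore the smallest unpaired letter of $a$ exceeding $x-1$, so the procedure pairs $x-1$ with $x$. This contradicts the hypothesis that $x$ is unpaired in $a$, and hence $x-1\notin b$.

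Next I would prove, by induction on $j\geq1$, that whenever $\{x,x+1,\dots,x+j\}\subseteq a$ one has $\{x,x+1,\dots,x+j-1\}\subseteq b$ and, more precisely, the letter $x+i$ of $a$ is paired with the letter $x+i-1$ of $b$ for each $i\in[j]$. In the inductive step (the base case $j=1$ is the same argument with no hypothesis to invoke), the letter $x+j$ of $a$ has value exceeding $x$, so---because $x$ is the \emph{largest} unpaired letter of $a$---it is paired with some letter $y$ of $b$ with $y<x+j$. If $y<x$, then at the moment $y$ is processed the letter $x\in a$ is still unpaired and exceeds $y$, so $y$'s partner has value at most $x<x+j$, a contradiction; if $x\leq y\leq x+j-2$, the induction hypothesis already pairs $y$ with $x+(y-x)+1\neq x+j$, again a contradiction. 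Hence $y=x+j-1$, completing the induction. Applying this with $j=q_a$ (the case $q_a=0$ being vacuous) gives $\{x,\dots,x+q_a-1\}\subseteq b$, which is exactly (iv), and in particular $q_b\geq q_a$.

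It remains to prove $q_b\leq q_a$, for then $q_a=q_b=q$ and (v) reads $x+q_b\notin b$, which holds by definition of $q_b$. Suppose instead $q_b\geq q_a+1$. Then $\{x,x+1,\dots,x+q_a\}$ is contained in both $a$ and $b$; being a run of consecutive integers it occupies a contiguous block in each of these strictly increasing words, so we may write $a=a_<\cdot(x,x+1,\dots,x+q_a)\cdot a_>$ and $b=b_<\cdot(x,x+1,\dots,x+q_a)\cdot b_>$, where $a_<,b_<$ collect the letters below $x$ and $a_>,b_>$ collect the letters above $x+q_a$. Since $x+q_a+1\notin a$ by definition of $q_a$ and $x-1\notin b$ by (iii), every letter of $a_>$ is at least $x+q_a+2$ and every letter of $b_<$ is at most $x-2$; hence every generator occurring in the subword $a_>b_<$ commutes with each of $s_x,s_{x+1},\dots,s_{x+q_a}$. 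Sliding the second block leftward past $a_>b_<$ rewrites the product $ab$ as $a_<\cdot(s_xs_{x+1}\cdots s_{x+q_a})^2\cdot a_>b_<\cdot b_>$. Since $s_xs_{x+1}\cdots s_{x+q_a}$ represents a $(q_a+2)$-cycle, its square has Coxeter length $2q_a<2(q_a+1)$, so this expression uses strictly fewer simple reflections than the word $ab$ has letters, contradicting that $ab$ is reduced. Therefore $q_b\leq q_a$, whence $q_a=q_b$ and conditions (i)--(v) all hold with $q:=q_a$. I expect the commutation/reducedness argument of this last paragraph to be the main obstacle---getting the block bookkeeping exactly right and confirming that the square of the cycle $s_x\cdots s_{x+q_a}$ has Coxeter length strictly below $2(q_a+1)$---whereas the pairing arguments for (iii) and for the inductive claim are routine once one is careful about the order in which Definition~\ref{pair-def} processes letters.
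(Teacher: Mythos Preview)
Your proof is correct and follows essentially the same approach as the paper: show $x-1\notin b$ via the pairing, define $q$ as the longest run in $a$ above $x$, inductively force $x,\dots,x+q-1$ into $b$ by the maximality of $x$, and rule out $x+q\in b$ by commuting to a non-reduced consecutive subword. Your version is in fact slightly cleaner in the final step, since you use the full block $x(x{+}1)\cdots(x{+}q_a)$ in both $a$ and $b$ and compute the exact Coxeter length $2q_a$ of the squared $(q_a{+}2)$-cycle, whereas the paper just asserts the analogous subword is ``straightforward to check'' non-reduced.
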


This result is essentially \cite[Lem. 10.4]{BumpSchilling}, but since our notational conventions are 
quite different, we include a direct proof for completeness.

\begin{proof}

If we had $x-1 \in b$, then we would have $(x,x-1) \in \pair(a,b)$, contradicting  the assumption that $x$ is unpaired.
Let $q \in \NN$ be maximal such that $x+q \in a$. It suffices to show that $x+i \in b$ for $0\leq i <q$ but $x+q \notin b$.

If $x+1 \in a$, then $(x+1,y) \in \pair(a,b)$ for some $y \in b$, and since $x \in a$ is unpaired we must have $y=x$.
If we also have $x+2 \in a$, then
   $(x+1,y) \in \pair(a,b)$ for some $y \in b$, and as $x \in a$ is unpaired it must hold that $y=x+1$.
Repeating this argument shows that $x-1+i \in b$ for all $i \in [q]$. 
 
Finally suppose  $x+q \in b$. The letters in $ab$ that are between the subword $ x(x+1)(x+2)\cdots(x+q) $ in $a$ 
and the subword $ (x+1)(x+2)\cdots (x+q)$ in $b$ are each either greater than $x+q+1$ or less than $x - 1$.
Therefore $ab$ belongs to the same commutation class as a word containing
the consecutive subword
$ x(x+1)(x+2)\cdots(x+q) (x+1)(x+2)\cdots (x+q)$. But it is straightforward to check that the latter word is not reduced.
As $ab$ is a reduced word, we must have $x+q \notin b$.
\end{proof}

The proof of the following complementary result is similar. We omit the details.

\begin{lemma}
\label{L5.4}
Suppose $y$ is the first unpaired letter in $b$.
Then $y+1 \notin a$ and there exists $q \in \NN$ such that $y+1-i \in a$ and $y-i \in b$ for all $i \in [q]$
while $y-q \notin a$ and $y-q-1 \notin b$.
\end{lemma}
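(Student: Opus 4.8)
The plan is to deduce this from Lemma~\ref{L5.3} using the order- and sign-reversing symmetry of the hypotheses. For a strictly increasing integer word $c=c_1c_2\cdots c_k$, set $c^\star:=(-c_k)(-c_{k-1})\cdots(-c_1)$, which is again a strictly increasing integer word. First I would check that $b^\star a^\star$ is a reduced word for an element of $S_\ZZ$: reversing a reduced word yields a reduced word (for the inverse permutation), and negating every letter of a reduced word again yields a reduced word, since $s_i\mapsto s_{-i}$ extends to an automorphism of $S_\ZZ$; composing these two operations turns $ab$ into $b^\star a^\star$.

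The substantive point is that the greedy pairing of Definition~\ref{pair-def} is compatible with this symmetry, in the precise sense that
\[
(v,w)\in\pair(a,b)\iff(-w,-v)\in\pair(b^\star,a^\star).
\]
Since $a$ and $b$ have no primed letters, all the ceilings in Definition~\ref{pair-def} are trivial, and I expect this identity to follow by a routine induction on the number of iterations (it is the usual fact that this kind of greedy matching does not depend on the direction in which one scans). Granting it, a letter $w$ of $b$ fails to be a second coordinate of a pair in $\pair(a,b)$ if and only if $-w$ fails to be a first coordinate of a pair in $\pair(b^\star,a^\star)$, and ``first'' (smallest) in $b$ corresponds to ``last'' (largest) in $b^\star$. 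Hence if $y$ is the first unpaired letter in $b$, then $x:=-y$ is the last unpaired letter in the word $b^\star$ (playing the role of ``$a$'' in Lemma~\ref{L5.3}, with $a^\star$ playing the role of ``$b$''). Applying Lemma~\ref{L5.3} to the pair $(b^\star,a^\star)$ and the letter $x$, and translating each of its conclusions back through $t\mapsto-t$, yields the conclusions claimed here.

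I expect the compatibility statement for $\pair$ to be the only real work; the rest is bookkeeping. One could instead bypass the symmetry and mirror the proof of Lemma~\ref{L5.3} line by line, interchanging $a$ and $b$, replacing ``largest'' by ``smallest'' and ``$+1$'' by ``$-1$'' throughout, and taking $q$ to be maximal with $y-q\in b$. On that route the crux is the mirror of the final paragraph of the proof of Lemma~\ref{L5.3}: one shows that if $y-q\in a$, then after performing commutations $ab$ would contain the consecutive subword $m,\,m+1,\dots,m+q,\,m+1,\dots,m+q$ (read in one orientation or the other) for a suitable $m$, which is not reduced, contradicting the hypothesis on $ab$. It is worth recording where the assumption that $y$ is the \emph{smallest} unpaired letter of $b$ enters: it forces every letter of $b$ strictly below $y$ to be paired, the analogue of the property, exploited in Lemma~\ref{L5.3}, that every letter of $a$ strictly above $x$ is paired.
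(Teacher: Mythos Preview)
Both routes you sketch are sound, and the paper's own argument is precisely your second option: it declares the proof ``similar'' to that of Lemma~\ref{L5.3} and omits it, meaning one mirrors that proof with the roles of $a$ and $b$ exchanged, ``largest'' replaced by ``smallest,'' and $+1$ by $-1$. Your symmetry reduction via $c\mapsto c^\star$ is a tidier packaging of the same idea; the compatibility $(v,w)\in\pair(a,b)\Longleftrightarrow(-w,-v)\in\pair(b^\star,a^\star)$ does hold and follows from the standard fact that this greedy matching gives the same result whether one scans from the largest element of the second word or from the smallest element of the first.

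One point deserves care, though. You assert that the translation ``yields the conclusions claimed here,'' but if you actually carry it out, the clause $x-1\notin b$ in Lemma~\ref{L5.3} becomes $-y-1\notin a^\star$, i.e.\ $y+1\notin a$, not $y+1\notin b$ as printed. The printed clause is in fact false in general: with $a=(3)$ and $b=(1,2)$ the word $ab=312$ is reduced, the unique unpaired letter of $b$ is $y=1$, and $y+1=2\in b$. The correct conclusion is $y+1\notin a$; this is exactly what your symmetry produces, what the mirrored first paragraph of the proof of Lemma~\ref{L5.3} shows (if $y+1\in a$ then when $y$ is processed one finds $(y+1,y)\in\pair(a,b)$, contradicting that $y$ is unpaired), and what is needed for the commutation step in the symmetric proof of Lemma~\ref{L5.7}. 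So your plan is right, but do not expect the translation to reproduce that one printed clause verbatim.
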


We continue to let $a$ and $b$ be strictly increasing words with no primed letters.

\begin{lemma}
\label{L5.6}
Assume that  $ab$ is a consecutive subword of an involution word $w$ for some  $z \in I_\ZZ$.
Suppose there is an unpaired letter in $a$ and let $x$ be the last such letter.
Let $q \in \NN$ be maximal such that $x+q \in a$. 
Let $\hat a$ be the subword of $a$ formed by removing $x$,
 let $\hat b$ be the strictly increasing word formed by adding $x+q$ to $b$,
and  let $\hat w$ be the word formed from $w$ by replacing the subword $ab$ by $\hat a \hat b$.
Define $i$ and $j$ to be the respective indices of $x \in a$ in $w$ and $x \in \hat b$ in $\hat w$.\footnote{
Note by Lemma~\ref{L5.3} that if $q=0$, then $x\notin b$ and if $q>0$, then the index of $x \in b$ in $w$ is $j+1$.}
Then: 
\ben
\item[(a)] No index in $\{i+1,i+2,\dots,i+q\}$ is a commutation in $w$.

\item[(b)] At most one index in $\{i\} \sqcup\{j+1,j+2,\dots,j+q\}$ is a commutation in $w$.
\een
 \ben
\item[(c)] One has $\hat w \in \iR(z)$, and if $i$ is a commutation in $w$,
then $j+q$ is a commutation in $\hat w$.

\item[(d)] If $p\in[q]$ and $j+p$ is a commutation in $w$, 
then $i-1+p$ is a commutation in $\hat w$.

\een

\end{lemma}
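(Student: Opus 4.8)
The plan is to reduce Lemma~\ref{L5.6} to a purely local computation and then track how commutations (and, in the primed refinement, primes) migrate through a short, explicit sequence of elementary moves on involution words, invoking the forbidden‑subword list of Proposition~\ref{isim-lem2} and the fact that each $\iR^+(z)$ is a single $\isim$‑class (Proposition~\ref{isim-lem}).

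First I would record the local picture from Lemma~\ref{L5.3}. Since $a$ is strictly increasing with $x,x+1,\dots,x+q\in a$ and $x+q+1\notin a$, the letters $x,x+1,\dots,x+q$ occupy the consecutive positions $i,i+1,\dots,i+q$ of $w$; since $x-1,x+q\notin b$ but $x,x+1,\dots,x+q-1\in b$, those letters occupy the consecutive positions $j+1,j+2,\dots,j+q$ of $w$ (the identification of the position of $x\in b$ as $j+1$ being the footnote's remark). Moreover every letter of $a$ to the right of $x+q$ is $\ge x+q+2$ and every letter of $b$ to the left of $x$ is $\le x-2$, so every letter of $w$ strictly between positions $i+q$ and $j+1$ differs from each of $x,x+1,\dots,x+q$ by at least $2$. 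Using the commutation moves of Proposition~\ref{isim-lem} one slides the block $x(x+1)\cdots(x+q)$ rightward past all of these intervening letters, replacing $w$ by an $\isim$‑equivalent word $w'\in\iR(z)$ in which the two runs are adjacent, forming a block $B=x(x+1)\cdots(x+q)\,x(x+1)\cdots(x+q-1)$; each such move leaves the set of commutations unchanged up to the evident relabelling of positions, because conjugating a prefix involution by $s_k$ preserves whether it fixes any given value lying outside $\{k,k+1\}$. So it suffices to treat $w'$, i.e., to assume the two runs are adjacent.

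Next I would prove, by induction on $q$ (the case $q=0$ being trivial and $q=1$ being the single braid move $x(x+1)x\isim(x+1)x(x+1)$), the identity
\[
x(x+1)\cdots(x+q)\,x(x+1)\cdots(x+q-1)\ \isim\ (x+1)(x+2)\cdots(x+q)\,x(x+1)\cdots(x+q),
\]
realised by: commuting the leading $x$ of the second run leftward through $(x+q),(x+q-1),\dots,(x+2)$; applying $x(x+1)x\isim(x+1)x(x+1)$; applying the inductive hypothesis to the shifted pair of runs $(x+1)\cdots(x+q)$ and $(x+1)\cdots(x+q-1)$; and commuting the displaced $x$ rightward through $(x+2),\dots,(x+q)$. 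Performing exactly this sequence inside $w'$ turns $w'$ into $\hat w$, so $\hat w\in\iR(z)$ by Proposition~\ref{isim-lem}, which is the first assertion of~(c). The sequence uses exactly $q$ braid moves, and the middle letters of the successive braid triples are (the current positions of) $x+1,x+2,\dots,x+q$, i.e., precisely the letters originally at positions $i+1,\dots,i+q$; by Proposition~\ref{isim-lem2} the middle of a braid triple $XYX$ can never be a commutation (priming it yields the forbidden pattern $XY'X$), and this gives~(a). Every position not a braid‑middle is either fixed by a commutation move or is an outer letter of a braid triple, and by Proposition~\ref{isim-lem2} at most one outer letter of a triple can be a commutation at a time (two would force the forbidden pattern $X'YX'$), while the priming relations $X'YX\isim YXY'$ and $XYX'\isim Y'XY$ show that each braid move simply transports a single such commutation between the two outer positions and creates none; propagating this through the sequence yields~(b), and reading off where the unique possible commutation of $B$ ends up gives the rest: a commutation at position $i$ is carried to the newly created final letter $x+q$ of $\hat b$, i.e. to position $j+q$ of $\hat w$ (assertion~(c)), and a commutation at position $j+p$ is carried to $x+p$ in $\hat a$, i.e. to position $i-1+p$ of $\hat w$ (assertion~(d)). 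Formally one bundles all of (a)--(d) into the single inductive claim describing the commutations of $B_q$ before and after this transformation.

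The step I expect to be the real obstacle is this last one: verifying that the unique migrating commutation of $B$ lands at \emph{exactly} the positions named in~(c) and~(d), while simultaneously pinning down~(b), requires threading a careful case analysis of the forbidden‑pattern list through the whole move sequence and keeping two indexings straight — positions relative to the original $w$ versus positions in $\hat w$, which differ by the deletion of $x$ from $a$ and the insertion of $x+q$ into $b$ (and are further displaced by the commutation moves of the reduction step). An alternative that trades the combinatorics for bare computation is to work directly with the prefix involutions $y_m:=s_{a_m}\circ\cdots\circ s_{a_1}\circ 1\circ s_{a_1}\circ\cdots\circ s_{a_m}$, using that $m$ is a commutation iff $y_{m-1}$ fixes both $a_m$ and $a_m+1$, together with the fact that by the unpairedness of $x$ and Lemma~\ref{L5.3} the involutions $y_{i-1},y_i,\dots,y_{i+q}$ and $y_j,\dots,y_{j+q}$ each differ from their predecessors only by conjugation by one simple transposition inside the narrow window $[x-1,x+q]$; one then just computes which of these involutions fix which of $x,\dots,x+q+1$. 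Either way the bookkeeping, not any conceptual difficulty, is the crux.
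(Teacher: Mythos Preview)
Your approach is essentially the paper's: both reduce to adjacent runs via commutation moves, transform $B_q = x(x{+}1)\cdots(x{+}q)\cdot x(x{+}1)\cdots(x{+}q{-}1)$ into the target by an explicit sequence of $q$ braid moves, read (a) off from the fact that the middle of a braid triple can never be primed (Proposition~\ref{isim-lem2}), and obtain (c) and (d) by tracking a single prime through the braids via $X'YX\isim YXY'$ and $XYX'\isim Y'XY$. The paper only differs in packaging: it interleaves the two runs to the pattern $1^{\alpha_1}2^{\alpha_2}1^{\beta_1}3^{\alpha_3}2^{\beta_2}\cdots$ and applies the braids in one pass rather than recursing on $q$, but the pairing of positions is identical --- in both sequences the $k$th braid has outer letters carrying the commutation data of positions $i$ and $j{+}k$.

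There is, however, a genuine gap in your derivation of (b). The forbidden pattern $X'YX'$ only rules out both outer letters of a \emph{single} braid triple being commutations, so the sequence shows that $i$ and any fixed $j{+}k$ cannot both be commutations. But positions $j{+}k$ and $j{+}k'$ with $k\neq k'$ are never the outer pair of the same triple in either braid sequence, so ``propagating this through the sequence'' does not exclude them from both being commutations, and the induction on $q$ does not close on this point either: when $i$ is not a commutation, the inductive hypothesis for the shifted $B_{q-1}$ constrains only $\{i,j{+}2,\dots,j{+}q\}$ and says nothing relating $j{+}1$ to the rest. The paper fills exactly this gap with a short direct computation using your prefix involutions $y_m$: taking $x=1$, if $j{+}k$ is the minimal commutation among $j{+}1,\dots,j{+}q$ then one checks inductively that $(k,p{+}1)$ is a $2$-cycle of $y_{j+p}$ for every $p\in[k,q]$, whence $y_{j+p}$ does not fix $p{+}1=w_{j+p+1}$ and so $j{+}p{+}1$ is not a commutation. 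So your ``alternative'' is not optional here; it is precisely the missing ingredient for the full statement of (b).
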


\begin{proof} 
To refer to primed numbers 
we introduce the notation $h^{\alpha} := h-\alpha/2$ for $h \in \ZZ$ and $\alpha \in \{0,1\}$.
Observe that in this notation,
the relation $\isim$ from Proposition~\ref{isim-lem}
gives $\cdots 1^\alpha 3^\beta\cdots \isim \cdots 3^\beta 1^\alpha\cdots$ and $\cdots 1^\alpha 2^\beta 1^\gamma\cdots \isim \cdots 2^\gamma 1^\beta 2^\alpha\cdots$
for example.

Without loss of generality we may assume that $x=1$.
Suppose $W$ is a primed involution word for $z$  such that $\unprime(W) = w$.
Let $A$ and $B$ be the (primed) consecutive subwords of $W$ 
corresponding to $a$ and $b$.
We know from Lemma~\ref{L5.3} that $A$ has a subword of the form 
\be\label{subword1}
1^{\alpha_1}2^{\alpha_2}3^{\alpha_3}\cdots (q+1)^{\alpha_{q+1}}
\ee
for some choice of $\alpha_1,\alpha_2,\dots,\alpha_{q+1}   \in \{0,1\}$
while $B$ has a subword of the form
\be\label{subword2}
1^{\beta_1}2^{\beta_2}\cdots q^{\beta_q}
\ee
for some choice of $\beta_1,\beta_2,\dots,\beta_q  \in \{0,1\}.$
Note that $i$ is a commutation in $w$ if and only if $\alpha_1 \neq 0$ for some choice of $W$,
with similar observations applying to  other indices.
Form $\hat A$ from $A$ and $\hat B$ from $B$ by changing
the subwords \eqref{subword1} and \eqref{subword2} to 
$
2^{\beta_1}3^{\beta_2}4^{\beta_3} \cdots(q+1)^{\beta_q} 
$
and
$ 1^{\alpha_2}2^{\alpha_3}3^{\alpha_4}\cdots q^{\alpha_{q+1}}(q+1)^{\alpha_1}
$
respectively.
Then form $\hat W$ from $W$ 
by replacing $AB$ by $\hat A \hat B$. Observe that $\unprime(\hat W) = \hat w$.

We first show that $W$ and $\hat W$ are equivalent under  $\isim$ from Proposition~\ref{isim-lem}. 
This will suffice to prove parts (c) and (d).
Let $L$ be the part of $A$ that comes after \eqref{subword1} and let $M$ be the part of $B$ that comes before \eqref{subword2}, and define $K$ and $N$ to be the primed words such that 
\[ W =  K \cdot 1^{\alpha_1}2^{\alpha_2}3^{\alpha_3}\cdots (q+1)^{\alpha_{q+1}}
\cdot L\cdot  M\cdot 
1^{\beta_1}2^{\beta_2}\cdots q^{\beta_q}\cdot N.\]
Then we similarly have 
$\hat W =  K \cdot 2^{\beta_1}3^{\beta_2}4^{\beta_3} \cdots(q+1)^{\beta_q} 
\cdot L\cdot  M\cdot 
1^{\alpha_2}2^{\alpha_3}3^{\alpha_4}\cdots q^{\alpha_{q+1}}(q+1)^{\alpha_1}\cdot N$
and it follows from Lemma~\ref{L5.3} that 
\[ \ba
W &\isim  K\cdot L \cdot 1^{\alpha_1}2^{\alpha_2}3^{\alpha_3}\cdots (q+1)^{\alpha_{q+1}} 
1^{\beta_1}2^{\beta_2}\cdots q^{\beta_q}\cdot M\cdot  N,
\\
 \hat W &\isim  K\cdot L \cdot 2^{\beta_1}3^{\beta_2}4^{\beta_3} \cdots(q+1)^{\beta_q} 
1^{\alpha_2}2^{\alpha_3}3^{\alpha_4}\cdots q^{\alpha_{q+1}}(q+1)^{\alpha_1}\cdot M\cdot  N.
\ea\]
It is easy to see that the words on the right can be transformed by a sequence of commutation relations of the form  $\cdots X^\alpha Y^\beta \cdots \isim \cdots Y^\beta X^\alpha \cdots$
to 
\be\label{ssubword1}
K\cdot L \cdot 1^{\alpha_1}2^{\alpha_2}1^{\beta_1}3^{\alpha_3}2^{\beta_2} 4^{\alpha_4} 3^{\beta_3} \cdots (q+1)^{\alpha_{q+1}}q^{\beta_q}\cdot M\cdot  N
\ee
and
\[
 K\cdot L \cdot 2^{\beta_1}1^{\alpha_2}3^{\beta_2} 2^{\alpha_3}4^{\beta_3}3^{\alpha_4}  \dots (q+1)^{\beta_q} q^{\alpha_{q+1}}(q+1)^{\alpha_1}\cdot M\cdot  N
\]
respectively. The first of these becomes the second
after applying a sequence of braid relations of the form
$\cdots X^\alpha Y^\beta X^\gamma\cdots \isim \cdots Y^\gamma X^\beta Y^\alpha\cdots$
so we have $W \isim \hat W$ as desired.

As mentioned above, this fact implies parts (c) and (d).
For parts (a) and (b), we note that for each $k\in [q]$, the word obtained after applying $k-1$ braid relations 
to \eqref{ssubword1}
has the form
\[
K\cdot L \cdot 2^{\beta_1} 1^{\alpha_2} 
 \cdots k ^{\beta_{k-1}}(k-1)^{\alpha_{k}} \cdot k^{\alpha_{1}} \cdot (k+1)^{\alpha_{k+1}} k^{\beta_{k}}\cdots (q+1)^{\alpha_{q+1}}q^{\beta_q}\cdot M\cdot  N
.\] This is a primed involution word by Proposition~\ref{isim-lem},
so to avoid the patterns forbidden in Proposition \ref{isim-lem2} we must have $\alpha_{k+1} = 0$
and $\alpha_1 + \beta_k \in \{ 0, 1\}$. As this applies to all $k \in [q]$ and all primed involution words $W$ with $w=\unprime(W)$,
we deduce that none of the indices $i+1,i+2,\dots,i+q$ are commutations in $w$
and that if $i$ is a commutation in $w$, then none of $ j+1,j+2,\dots,j+q$ is also a commutation.

The last thing to check is that at most one of  $ j+1,j+2,\dots,j+q$ is a commutation in $w$.
Writing $w_k$ for the $k$th letter of $w$, define
 $\delta_k(w) := s_{w_k}\circ \cdots \circ s_{w_2} \circ s_{w_1}  \circ 1\circ s_{w_1} \circ s_{w_2} \circ \cdots \circ s_{w_k}$.
 Suppose $q>0$ and $j+k$ is a commutation in $w$ for some minimal $k \in [q]$.
 Since we assume $x=1$, we have $w_{j+p} = p$ for all $p \in [q]$, so this means that $k$ and $k+1$ are both fixed points of $\delta_{j+k-1}(w)$. It is easy to check by induction that $(k,p+1)$ is then a cycle of $\delta_{j+p}(w)$ for each $p \in [k,q]$ and $j+p$ is not a commutation for any $p \in [k+1,q]$. This completes the proof of parts (a) and (b).
\end{proof}

There is again a complementary result with a symmetric proof,
whose details we omit.

\begin{lemma}
\label{L5.7}
Assume that  $ab$ is a consecutive subword of an involution word $w$ for some  $z \in I_\ZZ$.
Suppose there is an unpaired letter in $b$ and let $y$ be the first such letter.
Let $q \in \NN$ be maximal such that $y-q \in b$.
Let $\hat b$ be the subword of $b$ formed by removing $y$,
 let $\hat a$ be the strictly increasing word formed by adding $y-q$ to $a$,
 and let $\hat w$ be the word formed from $w$ by replacing the subword $ab$ by $\hat a \hat b$.
Define $i$ and $j$ to be the respective indices of $y \in \hat a$ in $\hat w$ and $y \in b$ in $ w$.\footnote{
Note by Lemma~\ref{L5.4} that if $q=0$, then $y\notin a$ and if $q>0$, then the index of $y \in b$ in $w$ is $i-1$.}
Then:
\ben
\item[(a)] No index in $\{ j-1,j-2,\dots,j-q\}$ is a commutation in $w$.

\item[(b)] At most one index in $\{i-1,i-2,\dots,i-q \}\sqcup \{j\}$ is a commutation in $w$.
\een
\ben
\item[(c)] One has $\hat w \in \iR(z)$, and if $j$ is a commutation in $w$, then $i-q$ is a commutation in $\hat w$.

\item[(d)] If $p \in [q]$ and  $i-p$ is a commutation in $w$, 
then $j+1-p$ is a commutation in $\hat w$.
\een
\end{lemma}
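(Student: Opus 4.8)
The plan is to run the argument of Lemma~\ref{L5.6} in reverse, exchanging the roles of $a$ and $b$ and of the two ends of the runs appearing there. In fact the construction in Lemma~\ref{L5.7} is exactly the inverse of the one in Lemma~\ref{L5.6}: deleting the top letter of the run of $b$ and prepending $y-q$ to $a$ undoes deleting an unpaired letter of $a$ and appending the appropriate element to $b$. One could therefore try to invoke Lemma~\ref{L5.6} applied to $\hat w$, $\hat a$, $\hat b$, but that would first require knowing that $\hat a$ has a last unpaired letter with run length exactly $q$; it is cleaner to repeat the proof with the mirrored local analysis, which is what I would do.

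First I would normalize so that the maximal run of $b$ containing $y$ is $1 < 2 < \dots < q+1$, so $y = q+1$; by Lemma~\ref{L5.4} the letters $2, 3, \dots, q+1$ then also lie in $a$, while $1 \notin a$ and $q+2 \notin b$. Then I would fix a primed involution word $W$ with $\unprime(W) = w$ and let $A$ and $B$ be the primed consecutive subwords of $W$ corresponding to $a$ and $b$. Writing $h^\alpha := h - \alpha/2$ as in the proof of Lemma~\ref{L5.6}, Lemma~\ref{L5.4} gives that $A$ contains a subword $2^{\alpha_2} 3^{\alpha_3} \cdots (q+1)^{\alpha_{q+1}}$ and $B$ a subword $1^{\beta_1} 2^{\beta_2} \cdots (q+1)^{\beta_{q+1}}$, and that an index of $w$ is a commutation precisely when the corresponding letter can be primed for some choice of $W$. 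I would then define $\hat A$, $\hat B$, and $\hat W$ by replacing the displayed subword of $A$ with $1^{\beta_{q+1}} 2^{\beta_1} 3^{\beta_2} \cdots (q+1)^{\beta_q}$, the displayed subword of $B$ with $1^{\alpha_2} 2^{\alpha_3} \cdots q^{\alpha_{q+1}}$, and $AB$ in $W$ with $\hat A \hat B$, and verify $\unprime(\hat W) = \hat w$. Bringing the two runs adjacent by commutations and then applying a chain of braid relations $\cdots X^\alpha Y^\beta X^\gamma \cdots \isim \cdots Y^\gamma X^\beta Y^\alpha \cdots$ produces $W \isim \hat W$, so by Proposition~\ref{isim-lem} we get $\hat w \in \iR(z)$; tracking the prime carried from the top of the run of $b$ to the front of the run prepended to $a$ then yields the transport statements (c) and (d).

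For (a) and (b) I would examine the words along this braid chain, as in the proof of Lemma~\ref{L5.6}: after $k-1$ braid moves the word contains a consecutive subword to which the forbidden-pattern list of Proposition~\ref{isim-lem2} applies, forcing every letter of the run of $b$ strictly below $y$ to be unprimed in every $W$ (so none of $j-1, \dots, j-q$ is a commutation in $w$), and forcing, whenever the bubbled letter can be primed, that the letters of the run of $a$ matching positions $i-1, \dots, i-q$ cannot be; this gives (a) together with one half of (b). The remaining half of (b)---that at most one of $i-1, \dots, i-q$ is a commutation in $w$---I would obtain, as in the final paragraph of the proof of Lemma~\ref{L5.6}, by an induction on the cycle structure of the involutions $\delta_m(w) = s_{w_m} \circ \cdots \circ s_{w_1} \circ 1 \circ s_{w_1} \circ \cdots \circ s_{w_m}$: if $i-p$ is the first commutation among $i-1, i-2, \dots, i-q$, a fixed pair of values becomes a $2$-cycle of every subsequent $\delta_m(w)$ and precludes any later commutation.

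The hard part is the index bookkeeping---keeping precise track of the positions $i$, $j$ and of the run indices $1, \dots, q+1$ under the substitution $AB \mapsto \hat A \hat B$---which is more delicate here than in Lemma~\ref{L5.6} because the prime carried from the top of the run of $b$ wraps around to the front of the run prepended to $a$, shifting positions in the opposite direction; in particular the footnote's identification of the index of $y$ in $w$ must be re-checked. I would also take care that the asymmetry of $\isim$, namely its special front rules, is respected: one cannot simply reverse words, so each local move and the cycle-structure argument must be re-derived in mirrored form rather than quoted verbatim.
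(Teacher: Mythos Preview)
Your proposal is correct and is precisely the approach the paper intends: the paper omits the proof entirely, saying only that there is ``a complementary result with a symmetric proof, whose details we omit,'' and your outline---mirroring the proof of Lemma~\ref{L5.6} via Lemma~\ref{L5.4}, Propositions~\ref{isim-lem2} and~\ref{isim-lem}, and the same cycle-structure induction on $\delta_m(w)$---is exactly how one fills in those details. Your caution about the asymmetry of $\isim$ (so that one must redo the local moves rather than literally reverse words) is well placed, and your tracking of the prime from position $j$ in $w$ to position $i-q$ in $\hat w$ matches what part~(c) asserts.
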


We may now prove Proposition~\ref{qq-orf-prop}.

\begin{proof}[Proof of Proposition~\ref{qq-orf-prop}]
First let $i \in [n-1]$. Everything that needs to be checked to conclude that 
the operator $f_i$ from Definition~\ref{orf-def} (respectively, $e_i$ from Definition~\ref{ore-def})
is a well-defined map $\ORF_n(z) \to \ORF_n(z)\sqcup \{0\}$ is immediate from 
Lemma~\ref{L5.6} (respectively, Lemma~\ref{L5.7}).

Now suppose $b,c \in \ORF_n(z)$. 
If $e_i(b)=c$, then 
$e_i(\unprime(b)) = \unprime(c)$ by  Lemma~\ref{ef-ick-lem} so
$f_i(\unprime(c)) = \unprime(b)$ since $\Incr_n(z)$ is a $\q_n$-crystal.
For this to hold, the last unpaired letter in $\unprime(c^i)$ must be the unique letter
not also present in $ \unprime(b^i)$, and given this observation it is clear from  
Definition~\ref{ore-def} that   $f_i(c) = b$. 
If $f_i(c) = b$, then it follows similarly that $e_i(b)=c$.
This confirms axiom (S1) in Definition~\ref{crystal-def}.
Since $\unprime : \ORF_n(z)\to\Incr_n(z)$ is a weight-preserving map, 
axiom (S2) holds by Lemma~\ref{ef-ick-lem}, so $ \ORF_n(z)$ is a $\gl_n$-crystal.

 If $n\geq 2$ and the words $b^1$ and $b^2$ are both nonempty,
 then  $\unprime(\min(b^1)) \neq \unprime(\min(b^2))$ since 
 otherwise $\unprime(\concat(b)) \in \iR(z)$ would be equivalent under $\isim$
 to a word starting with $XYX$ for some $X,Y \in \ZZ$, contradicting Propositions~\ref{isim-lem2} and \ref{isim-lem}. 
 Once we note this, checking that the operators $e_{\bar 1}$ and $f_{\bar 1}$
 in Definitions~\ref{orf-def2} and \ref{ore-def2} are 
 well-defined maps
 $\ORF_n(z) \to \ORF_n(z)\sqcup \{0\}$ satisfying $e_{\bar 1}(b) = c$ if and only if $f_{\bar 1}(c)=b$
 is straightforward using Propositions~\ref{isim-lem2} and \ref{isim-lem}.
 
 The other conditions in Definition~\ref{q-def} are clear, so  $ \ORF_n(z)$ is a $\q_n$-crystal.
The axioms in Definition~\ref{qq-def} are also mostly self-evident for $\ORF_n(z)$.
The only relevant property  that is not completely trivial from the definitions is the claim that $e_0$ and $f_0$ preserve
the string lengths $\varepsilon_{i}$ and $\varphi_{i}$ for $i \in \{ \bar 1 , 1\}$, 
but this follows from Lemma~\ref{ef-ick-lem}.
Thus $ \ORF_n(z)$ is a $\qq_n$-crystal.
\end{proof}

\begin{figure}[h]
\begin{center}
\input{qq3-incr-crystal.tex}
\end{center}
\caption{Crystal graph of $\qq_3$-crystal $\ORF_3(z)$ for $z=(1,3)(2,4)\in I_\ZZ$.
In this picture we draw styled edges without labels for clarity. Solid blue and red arrows are edges 
$b \xrightarrow{\ 1\ } c$ and $b \xrightarrow{\ 2\ } c$, respectively. Dotted green and dashed blue arrows 
are edges $b \xrightarrow{\ 0\ } c$ and $b \xrightarrow{\ \bar 1\ } c$, respectively.
}
\label{incr-fig}
\end{figure}

\subsection{Coxeter-Knuth operators}\label{ck-sect}

Continue to fix an element $z \in I_\ZZ$.
In this section we prove some additional facts about the crystal operators on $\ORF_n(z)$.
These properties will be used in Section~\ref{morphisms-sect}.

A set of integers $\{j,k\}$ is a \defn{cycle} of $z$ if $j\neq z(j)=k$.
If $i \in [m]$ is a commutation for   $w=w_1w_2\cdots w_m \in \iR(z)$,
then the unordered pair
$ \gamma_i(w) := s_{w_m}\cdots s_{w_{i+2}} s_{w_{i+1}}(\{ w_i, 1+w_{i}\}) $
is a cycle of $z$,
and the map $i \mapsto \gamma_i(w)$ is a bijection from the set of commutations $i \in [m]$
for $w$ to the set of cycles of $z$.
Each $w \in \iR^+(z)$ determines a corresponding set of \defn{marked cycles} 
\[ \marked(w) := \{ \gamma_i(w) : i \in [\ell(w)]\text{ and } w_i \in \ZZ'\}
\quad\text{where $\gamma_i(w) := \gamma_i(\unprime(w))$}.\]
If $z = (1,6)(2,5)(3,4)$, then $w=5'13'243541 \in \iR^+(z)$ and $\marked(w) = \{ \{3,4\}, \{1,6\}\}$, for example.
For $a \in \Incr^+_n(z)$ we define $\marked(a) := \marked(\concat(a))$.

 \begin{lemma}\label{marked-lem0}
 If $a,b \in \Incr^+_n(z)$,
  $\unprime(a) = \unprime(b)$, and $\marked(a) = \marked(b)$, then $a=b$.
 \end{lemma}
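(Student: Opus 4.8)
The plan is to push everything down to the concatenated words $\concat(a)$ and $\concat(b)$, and to use the fact that a primed involution word is pinned down by its unprimed form together with the set of cycles of $z$ that its primed letters mark.

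First I would observe that, since $\unprime$ commutes with concatenation, the hypothesis $\unprime(a)=\unprime(b)$ produces a single word $w := \unprime(\concat(a)) = \unprime(a^1)\cdots\unprime(a^n) = \unprime(b^1)\cdots\unprime(b^n) = \unprime(\concat(b)) \in \iR(z)$, and in particular $\ell(a^i)=\ell(\unprime(a^i))=\ell(\unprime(b^i))=\ell(b^i)$ for every $i\in[n]$. Consequently, if I can show that $\concat(a)=\concat(b)$ as primed words, then $a^i$ and $b^i$ occupy exactly the same block of positions inside this common word, so $a^i=b^i$ for all $i$ and hence $a=b$. It therefore suffices to prove $\concat(a)=\concat(b)$.

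Next I would recall from Definition~\ref{inv-def2} that, because $\concat(a),\concat(b)\in\iR^+(z)$, every primed letter of either word sits at a commutation of $w$, and that the set $C$ of commutations of $w$ depends on $w$ alone. Thus $\concat(a)$ (resp.\ $\concat(b)$) is completely determined by the subset $S_a\subseteq C$ (resp.\ $S_b\subseteq C$) of its primed positions. By definition $\marked(a)=\marked(\concat(a))=\{\gamma_i(w):i\in S_a\}$ and $\marked(b)=\{\gamma_i(w):i\in S_b\}$, and since the map $i\mapsto\gamma_i(w)$ is a bijection from $C$ onto the set of cycles of $z$ (the property quoted just before the lemma), it is injective on $C$. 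Hence $\marked(a)=\marked(b)$ forces $S_a=S_b$, so $\concat(a)=\concat(b)$, which completes the argument.

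There is essentially no serious obstacle here: the only nontrivial input is the injectivity of $i\mapsto\gamma_i(w)$ on commutation indices, and this is already available as part of the stated bijection between commutations of $w$ and cycles of $z$; everything else is bookkeeping about how the factorization blocks lie inside the concatenation.
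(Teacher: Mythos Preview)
Your proof is correct and follows essentially the same approach as the paper's own proof, which simply notes that once $\unprime(a)=\unprime(b)$, equality $a=b$ is equivalent to $\concat(a)$ and $\concat(b)$ having the same primed indices, which in turn is equivalent to $\marked(a)=\marked(b)$ via the bijection $i\mapsto\gamma_i(w)$. You have merely spelled out the bookkeeping in more detail.
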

 
 \begin{proof}
If $a,b \in \Incr^+_n(z)$ and
  $\unprime(a) = \unprime(b)$, then $a=b$ if and only if the primes indices in $\concat(a)$ and $\concat(b)$
are the same, which happens precisely when $\marked(a) = \marked(b)$.
 \end{proof}
 
Let $\ck$ denote the operator that acts on 1- and 2-letter primed words 
by interchanging  
\[
X \leftrightarrow X',
\quad
XY \leftrightarrow YX,\quad 
X'Y \leftrightarrow Y'X,\quad 
XY' \leftrightarrow YX',\quand 
X'Y' \leftrightarrow Y'X'
\]
for all $X,Y \in \ZZ$. In addition, let $\ck$ act on 3-letter primed words as the involution interchanging
\[
XYX \leftrightarrow YXY,\quad 
X'YX \leftrightarrow YXY',\quad
ACB \leftrightarrow CAB,\quand 
BCA \leftrightarrow BAC
\]
for all $X,Y \in \ZZ$ and all $A,B,C \in \ZZ\sqcup \ZZ'$ with $\lceil A \rceil < \lceil B \rceil  <\lceil C \rceil $,
while fixing 
any 3-letter words not of these forms.
Now, given a primed word $w=w_1w_2w_3\cdots w_m$ and $i \in[m-2]$, we define
\[ 
\ba
\ck_{-1}(w) &:=\ck(w_1) w_2w_3\cdots w_m, \\
\ck_0(w) &:= \ck(w_1w_2)w_3\cdots w_m, \\
\ck_i(w) &:= w_1\cdots w_{i-1} \ck(w_iw_{i+1}w_{i+2}) w_{i+3}\cdots w_m,
\ea
\]
while setting $\ck_i(w):=w$ for $i \in \ZZ$ with $i+2 \notin[m]$.
For example, if $w = 45'7121'$, then
\[
\barr{rrr}
 \ck_{-1}(w) = 4'5'7121',\quad
&
\ck_{0}(w) = 54'7121',\quad
&
\ck_{1}(w) = 45'7121',
\\
\ck_{2}(w) = 45'1721',\quad
&
\ck_{3}(w) = 45'1721',\quad
&
\ck_{4}(w) = 45'72'12.
\earr \]
We call $\ck_i$ an \defn{(orthogonal) Coxeter-Knuth operator}.
This terminology comes from \cite{M2021a}, where these operators are related to a map called
\defn{orthogonal Edelman-Greene insertion}; see Section~\ref{morphisms-subsect2}.

\begin{lemma}\label{marked-lem}
If $w \in \iR^+(z)$
 and $i>0$, then $\marked(\ck_i(w)) = \marked(w)$.
 \end{lemma}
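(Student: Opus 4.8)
The plan is to show that the Coxeter--Knuth operator $\ck_i$ for $i > 0$ preserves the set of marked cycles $\marked(w)$ by reducing to a purely local computation on the underlying unprimed involution words. First I would observe that by Proposition~\ref{isim-lem} the word $\ck_i(w)$ again lies in $\iR^+(z)$ (each of the substitutions $XYX \leftrightarrow YXY$, $X'YX\leftrightarrow YXY'$, $ACB\leftrightarrow CAB$, $BCA\leftrightarrow BAC$ is a special case of the generating moves for $\isim$, so $\ck_i(w)\isim w$), so that $\marked(\ck_i(w))$ is well-defined and it makes sense to compare it with $\marked(w)$. Because $\ck_i$ only alters the three consecutive letters $w_iw_{i+1}w_{i+2}$ (and the braid-type moves never change which letters are primed, only their positions, while the commutation-type moves swap two letters of the same prime-status), the set of \emph{primed positions} is either unchanged or undergoes a single transposition within $\{i,i+1,i+2\}$; the content is a permutation of the original letters.

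The key step is to track how the cycle labels $\gamma_j(w)$ of the commutations behave. Recall $\gamma_j(w) := s_{w_m}\cdots s_{w_{j+1}}(\{w_j, 1+w_j\})$, and that $j\mapsto \gamma_j(w)$ is a bijection from commutations to cycles of $z$. For positions $j \leq i-1$ or $j \geq i+3$ the suffix $s_{w_m}\cdots s_{w_{j+1}}$ either is literally unchanged (when $j\geq i+3$) or differs only by replacing the factor $s_{w_i}s_{w_{i+1}}s_{w_{i+2}}$ by $s_{w_i'}s_{w_{i+1}'}s_{w_{i+2}'}$ where $w'=\ck(w_iw_{i+1}w_{i+2})$ — and since $\ck$ applies a braid relation or a commutation relation, these two products of simple transpositions are \emph{equal as permutations}. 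Hence $\gamma_j$ is unchanged for all such $j$, and moreover $j$ is a commutation for $\ck_i(w)$ iff it is one for $w$ with the same prime-status. So the only thing to check is that the contribution to $\marked$ coming from commutations in $\{i, i+1, i+2\}$ is the same before and after. This is a finite case analysis over the four types of $\ck$-move on three-letter words, in each case using the explicit description of which of the three positions $i, i+1, i+2$ are commutations (via the ``forbidden pattern'' list of Proposition~\ref{isim-lem2}, exactly as in the proof of Lemma~\ref{L5.6}) and computing the two-or-three values $\gamma_i, \gamma_{i+1}, \gamma_{i+2}$ directly from the definition.

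I expect the main obstacle to be the bookkeeping in this last case analysis: for the three-letter braid move $XYX \leftrightarrow YXY$ (with $|X-Y|=1$), one must verify that although the middle letter position is never a commutation, the pattern of primes on the outer two positions is consistent with the fact that the marked cycles attached to positions $i$ and $i+2$ get swapped in a way that exactly compensates, so the resulting \emph{set} is unchanged; and for the $ACB \leftrightarrow CAB$ and $BCA\leftrightarrow BAC$ commutation moves one must check that the cycle $\gamma_i$ attached to a primed letter simply moves with that letter (since the far-apart letters commute and the product of transpositions is unaffected). A clean way to organize this, which I would use, is to first prove the auxiliary statement that for a primed involution word $w$, the multiset $\{(\gamma_i(w), \epsilon_i) : i \text{ a commutation}\}$ — where $\epsilon_i \in \{0,1\}$ records whether $w_i$ is primed — is invariant under each single generating move of $\isim$ that only permutes letters (as opposed to adding/removing a prime on the first letter), and then note that $\ck_i$ for $i>0$ is a composition of such moves. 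That reduces everything to the length-two and length-three generating moves, where the verification is a short direct computation. Finally, invariance of the pair-multiset immediately gives $\marked(\ck_i(w)) = \marked(w)$, since $\marked(w)$ is precisely the set of $\gamma_i(w)$ with $\epsilon_i = 1$.
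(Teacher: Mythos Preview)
Your proposal is correct and follows essentially the same route as the paper's proof: reduce to positions $j\in\{i,i+1,i+2\}$ by noting that the product $s_{w_i}s_{w_{i+1}}s_{w_{i+2}}$ is unchanged as a permutation, then do a case analysis on the type of $\ck$-move, using Proposition~\ref{isim-lem2} to rule out certain commutation patterns in the braid case and checking directly that the $\gamma$-values at positions $i,i+1,i+2$ are permuted in a way compatible with the primes. The paper organizes the case analysis slightly differently (splitting on whether $v_i=w_i$ or $v_{i+2}=w_{i+2}$ rather than on the move type, and observing explicitly that in the braid case $i+1$ is never a commutation and $i,i+2$ are not both commutations), but your pair-multiset reformulation is equivalent and the content is the same.
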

 
 \begin{proof}
Fix $w \in \iR^+(z)$ and $i>0$, and suppose $v:=\ck_i(w) \neq w$. Set $\gamma_j(w) := \emptyset$ if $j$ is not a commutation of $\unprime(w)$.
If $v_i=w_i$, then it is easy to check that $\gamma_{i+1}(v) = \gamma_{i+2}(w)$, $\gamma_{i+2}(v) = \gamma_{i+1}(w)$,
and $\gamma_j(v) =\gamma_j(w)$ for all $j \notin \{i+1,i+2\}$, so $\marked(v) = \marked(w)$.
The identity $\marked(v) = \marked(w)$ follows by a similar argument when $v_{i+2}=w_{i+2}$.

Assume $v_i \neq w_i$ and $v_{i+2} \neq w_{i+2}$.
Then $w_iw_{i+1}w_{i+2}$ must have the form $XYX$, $X'YX$, or $YXY'$ where $X\in \ZZ$ and $Y = X\pm 1$,
so
$\ck_i$ applied to $w$ acts on this subword as the relation $XYX \leftrightarrow YXY$ or $X'YX \leftrightarrow YXY'$.
Proposition~\ref{isim-lem2} implies that $\gamma_{i+1}(v) = \gamma_{i+1}(w) = \emptyset$ 
and that  $i$ and $i+2$ are not both commutations in $v$ or $w$.
To show that $\marked(v) = \marked(w)$ it remains to check that $\gamma_i(v) = \gamma_{i+2}(w)$
and $\gamma_{i+2}(v) = \gamma_{i}(w)$,
and this is straightforward.
 \end{proof}

Lemmas~\ref{ock-key-lem} and \ref{f0-incr-lem} are key technical results that will be needed in Section~\ref{morphisms-subsect2}.

\begin{lemma}\label{ock-key-lem}
Let $w=(w^1,w^2,\dots,w^n) \in \Incr^+_n(z)$ and $k \in[n-1]$.
Suppose $f_k(w) \neq 0$. Define 
\[ M := \ell(w^1) +\ell(w^2)+ \dots +\ell(w^{k-1}) + 1\quand N := \ell(w^1) + \ell(w^2) + \dots + \ell(w^{k+1}) .\]
Then there are indices $j_1,\dots,j_l \in [M,N-2]$ with 
$
\concat(f_k(w)) =  \ck_{j_l}  \cdots \ck_{j_1}(\concat(w)).
$
\end{lemma}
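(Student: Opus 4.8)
The plan is to track what the lowering operator $f_k$ does at the level of the concatenated word, decompose the move into a sequence of elementary steps, and recognize each step as a Coxeter-Knuth operator. Recall that $f_k$ acts on $w=(w^1,\dots,w^n)$ by locating the largest unpaired letter $x$ in $w^k$, finding the smallest integer $y\ge\lceil x\rceil$ missing from $\unprime(w^{k+1})$, removing $x$ from $w^k$ and inserting $y$ (or $y'$) into $w^{k+1}$, and then performing the prime-correction swaps of case (L2). Since $f_k$ only touches the factors $w^k$ and $w^{k+1}$, and since the letters in $w^1\cdots w^{k-1}$ and $w^{k+2}\cdots w^n$ are untouched, it suffices to prove the claim when $n=2$ and $k=1$, i.e.\ to express $\concat(f_1(w)) $ in terms of $\concat(w)=w^1w^2$ by Coxeter-Knuth operators $\ck_{j}$ with $j$ in the appropriate range; the general bound $[M,N-2]$ then follows by re-indexing. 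Set $a=\unprime(w^1)$, $b=\unprime(w^2)$, so $ab$ is a reduced word, and let $q\in\NN$ be maximal with $x+q\in a$. By Lemma~\ref{L5.3}, $x+i\in a$ for $i\in[0,q]$, $x-1+i\in b$ for $i\in[1,q]$, while $x+q+1\notin a$ and $x+q\notin b$.

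First I would handle the unprimed skeleton. On $\unprime(w)$, the operator $f_1$ removes $x$ from $a$ and inserts $x+q$ into $b$, which in $ab$ replaces the consecutive subword
\[
x\,(x+1)\,(x+2)\cdots(x+q)\ \cdot\ (x+1)\,(x+2)\cdots(x+q)
\]
by
\[
(x+1)\,(x+2)\cdots(x+q)\ \cdot\ x\,(x+1)\,(x+2)\cdots(x+q)\,.
\]
This transformation is realized by a sequence of the elementary moves $UVU\mapsto VUV$ (on the "diagonal" letters) together with far commutations $UV\mapsto VU$, exactly as in the proof of Lemma~\ref{L5.6}: pushing the leading $x$ rightward past the $b$-copy of $x+1,\dots,x+q$ via $q$ braid moves $x\,(x+1)\,x\leftrightarrow (x+1)\,x\,(x+1)$ interleaved with commutations. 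Each such move is $\ck_j$ for a suitable $j$, and all the indices $j$ that occur lie in $[M,N-2]$ because the affected letters all sit strictly between position $M$ and position $N$. The key point is that the braid moves and far commutations used here are precisely the cases $XYX\leftrightarrow YXY$ and $ACB\leftrightarrow CAB$, $BCA\leftrightarrow BAC$ in the definition of $\ck$.

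Next I would promote this to the primed setting. By Lemma~\ref{marked-lem}, every $\ck_j$ with $j>0$ preserves $\marked(\cdot)$, and by Lemma~\ref{marked-lem0} a factorization in $\Incr^+_n(z)$ is determined by its unprimed form together with its set of marked cycles. So it is enough to check that the \emph{primed} word $\concat(f_1(w))$ has the same unprimed form and the same marked cycles as $\ck_{j_l}\cdots\ck_{j_1}(\concat(w))$. The unprimed forms agree by the previous paragraph (the $\ck_j$'s reduce to the elementary moves on $\unprime$, and these compose to $f_1$ on $\unprime(w)$ by Lemma~\ref{ef-ick-lem} and the fact that $\Incr_n(z)$ carries the $\q_n$-crystal of \cite{Hiroshima2018}). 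For the marked cycles: the prime-bookkeeping in case (L1)/(L2) of Definition~\ref{orf-def} is designed exactly so that a letter $v+1$ of $a^i$ and a letter $v'$ of $a^{i+1}$ get their primes swapped, which corresponds to leaving the set of marked cycles unchanged — this is the content of parts (c) and (d) of Lemma~\ref{L5.6}, which say that commutation positions (hence marked cycles) are transported correctly under the replacement $ab\leadsto\hat a\hat b$. Thus $\marked(\concat(f_1(w))) = \marked(\concat(w)) = \marked(\ck_{j_l}\cdots\ck_{j_1}(\concat(w)))$, and Lemma~\ref{marked-lem0} finishes the $n=2$ case.

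The main obstacle I expect is the bookkeeping in the last paragraph: matching, step by step, the prime-swap corrections in case (L2) of Definition~\ref{orf-def} against the precise sequence of three-letter $\ck$-moves of the forms $X'YX\leftrightarrow YXY'$, $ACB\leftrightarrow CAB$, and $BCA\leftrightarrow BAC$, and verifying that the intermediate words stay inside $\iR^+(z)$ (so that Lemmas~\ref{marked-lem} and~\ref{marked-lem0} apply at every stage). This requires carefully choosing the order $j_1,\dots,j_l$ of the moves — essentially the same order as in the proof of Lemma~\ref{L5.6}, first doing far commutations to bring the diagonal letters together into the pattern in \eqref{ssubword1}, then applying the braid moves from left to right — and checking that the prime placements produced by this composite exactly match those prescribed by (L1)/(L2). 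Once the order is fixed, each individual verification is a short finite check using Propositions~\ref{isim-lem2} and~\ref{isim-lem}.
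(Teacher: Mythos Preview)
Your strategy of lifting from the unprimed case via marked cycles has a genuine gap. The crux of your third paragraph is the claim that $\marked(\concat(f_k(w)))=\marked(\concat(w))$, which you say ``is the content of parts (c) and (d) of Lemma~\ref{L5.6}''. But those parts only tell you how commutation \emph{positions} are transported under the unprimed move $ab\mapsto\hat a\hat b$; they say nothing about the associated cycles $\gamma_i$, and they do not verify that the specific prime placements prescribed by cases (L1)/(L2) of Definition~\ref{orf-def} land on the commutations corresponding to the \emph{same} cycles of $z$. In the paper this implication actually goes the other way: the statement $\marked(f_k(a))=\marked(a)$ is Corollary~\ref{ock-key-cor}, and it is deduced \emph{from} Lemma~\ref{ock-key-lem} together with Lemma~\ref{marked-lem}. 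So your argument as written is effectively assuming what is to be proved. Your own final paragraph concedes this --- the ``obstacle'' you flag (matching the (L2) prime-swaps step by step against the primed $\ck$ moves $X'YX\leftrightarrow YXY'$, etc.) is not a bookkeeping afterthought but the entire content of the lemma.

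The paper's proof avoids the marked-cycle detour altogether and is purely constructive. It introduces $r$ (the number of letters in $a$ greater than $x+q$) and $s$ (the number of letters in $b$ less than $x$) and inducts on $r+s$. In the base case $r=s=0$ one rewrites both \eqref{word1} and \eqref{word2} by far-commutation $\ck$ moves into the interleaved forms analogous to \eqref{ssubword1}, and then a single product $\ck_{2q-1}\cdots\ck_3\ck_1$ of braid-type moves carries one to the other, directly in the primed setting. For $r>0$ (respectively $s>0$) one peels off the last letter $h$ of $w^k$ (respectively the first letter $g$ of $w^{k+1}$), manufactures an auxiliary factorization $u\in\ORF_{n+2}(z)$ with $\concat(u)\sim_{MN}\concat(w)$, observes that $f_k(u)$ relates to $f_k(w)$ the same way, and applies the inductive hypothesis. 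Your sketch of the unprimed case gestures at the $r=s=0$ argument but does not set up this induction, so even the unprimed part is incomplete as stated.
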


\begin{proof}
We set up our notation as in Lemma~\ref{L5.6} with $a:=\unprime(w^k)$ and $b :=\unprime(w^{k+1})$. Let $x$ be the last unpaired letter in $a$ and 
suppose $q\in \mathbb{N}$ is maximal such that $x+q \in a$. 
 Let $r$ be the number of letters greater than $x+q$ in $a$, and let $s$ be the number of letters less than $x$ in $b$. 
 
 Write $\sim_{MN}$ for the transitive closure  of the relation on primed words that has $v \sim_{MN} \ck_i(v)$ if $i \in [M,N-2]$.
The lemma is equivalent to the claim that $\concat(w) \sim_{MN} \concat(f_k(w))$.
We will prove this by induction on $r+s$.

Without loss of generality we may assume that $x=1$. 
As above, we  write elements of $\ZZ\sqcup\ZZ'$ in the form $h^\alpha := h - \frac{\alpha}{2}$ for $h \in \ZZ$ and $\alpha \in \{0,1\}$.
Then Lemma~\ref{L5.6}
implies that $ w^kw^{k+1}$ contains a consecutive subword of the form
\be\label{word1} 1^{\alpha_0}2 \hs 3 \cdots (q+1) (\dash r \dash )(\dash s\dash ) 1^{\alpha_1}2^{\alpha_2}3^{\alpha_3}\cdots q^{\alpha_q}\ee
where each $\alpha_i \in \{0,1\}$
and
where  $(\dash r\dash)$ and $(\dash s\dash)$ denote strictly increasing primed words of length $r$ and $s$
with all letters greater than $q+1$ and at most $0$, respectively.
Definition~\ref{orf-def} tells us that
$\concat(f_k(w))$ is formed from $\concat(w)$ by replacing the subword \eqref{word1} by 
\be\label{word2}
 2^{\alpha_1}3^{\alpha_2}4^{\alpha_3}\cdots (q+1)^{\alpha_q} (\dash r \dash )(\dash s\dash )  1\hs 2\hs 3\cdots q(q+1)^{\alpha_0}.
\ee
When $q=0$ we interpret \eqref{word1} as $1^{\alpha_0}(\dash r \dash )(\dash s\dash ) $ and \eqref{word2}
as $(\dash r \dash )(\dash s\dash ) 1^{\alpha_0}$.

First suppose $r=s=0$.   
Then it suffices to show that  \eqref{word1} and \eqref{word2} are equivalent
under the transitive closure $\sim$ of the relation on primed words that has $v \sim \ck_i(v)$ for any $i>0$.
This is trivial if $q=0$. If $q>0$, then it is easy to see
\eqref{word1} and \eqref{word2} are respectively equivalent to 
\[
1^{\alpha_0} 2 \hs 1^{\alpha_1}3\hs  2^{\alpha_2}4\hs  3^{\alpha_3}\dots (q+1)q^{\alpha_q}
\quand
 2^{\alpha_1}1 \hs 3^{\alpha_2}2 \hs 4^{\alpha_3}3 \dots (q+1)^{\alpha_q} q  (q+1)^{\alpha_0}.
\]
Applying $\ck_{2q-1}\cdots \ck_5\ck_3\ck_1$ transforms the left word to the right word,
so \eqref{word1} and \eqref{word2} belong to the same equivalence class under $\sim$ as desired.

Suppose next that $r>0$.
Let $h$ be the last letter of $w^k$.
As $x$ is the last unpaired letter in $a=\unprime(w^k)$, there is a letter $g >x+q$ in $w^{k+1}$ with $\unprime(g) < \unprime(h)$.
Let $g$ be the largest such letter.
Define $u^k$ to be the word formed by removing $h$ from $w^k$,
let $u^{k+1}$ be the subword of $w^{k+1}$ consisting of all 
letters less than $g$, and let $\tilde u^{k+1}$ be the subword of $w^{k+1}$ consisting of all letters at least $g$
so that $w^{k+1}= u^{k+1}\tilde u^{k+1}$. One can check that the tuple of increasing words
\[ u :=(w^1\dots,w^{k-1}, u^k, u^{k+1}, h, \tilde u^{k+1}, w^{k+2}, \dots, w^n )
\]
has $\concat(u) \sim_{MN} \concat(w)$ so $u \in \ORF_{n+2}(z)$.
Similarly define $v^k$ to be the word formed by removing $h$ from the $k$th term of $f_k(w)$,
and let $v^{k+1}$ and $\tilde v^{k+1}$ be the subwords of the $(k+1)$th term of $f_k(w)$ consisting of all 
letters less than $g$ and at least $g$, respectively. Then 
\[
v :=(w^1\dots,w^{k-1}, v^k, v^{k+1}, h, \tilde v^{k+1}, w^{k+1}, \dots, w^n )
 \]
 has $\concat(v) \sim_{MN} \concat(f_k(w))$ so $v \in \ORF_{n+2}(z)$.
Moreover, it is easy to see that $f_k(u) = v$,
so  by induction on $r+s$ we have $\concat(u) \sim_{M\tilde N} \concat(v)$
for $\tilde N := N - \ell(\tilde u^{k+1}) - 1 = N - \ell(\tilde v^{k+1}) - 1 $.
This means that $\concat(w) \sim_{MN} \concat(u)  \sim_{MN} \concat(v)  \sim_{MN} \concat(f_k(w))$.

Suppose instead that $s>0$. Our argument is similar to the previous case.
Let $g$ be the first letter of $w^{k+1}$.
Then there is a smallest letter $h \leq x-1$ in $w^{k}$ with $\unprime(g) < \unprime(h)$.
Define $u^{k+1}$ to be the word formed by removing $g$ from $w^{k+1}$,
and let $u^{k}$ and $\tilde u^k$ be the subwords of $w^{k}$ consisting of all 
letters at most $h$ and greater than $h$, respectively,
so that $w^{k}= u^{k}\tilde u^{k}$. Then  
\[ u :=(w^1\dots,w^{k-1}, u^k, g, \tilde u^k, u^{k+1}, w^{k+2}, \dots, w^n )
\]
has $\concat(u) \sim_{MN} \concat(w)$ so $u \in \ORF_{n+2}(z)$.
Similarly define $v^k$ to be the word formed by removing $g$ from the $(k+1)$th term of $f_k(w)$,
and let $v^{k}$ and $\tilde v^{k}$ be the subwords of the $k$th term of $f_k(w)$ consisting of all 
letters at most $h$ and greater than $h$, respectively. Then 
\[
v :=(w^1\dots,w^{k-1},  v^k, g, \tilde v^k, v^{k+1}, w^{k+1}, \dots, w^n )
 \]
likewise has $\concat(v) \sim_{MN} \concat(f_k(w))$ so $v \in \ORF_{n+2}(z)$.
We again have $f_k(u) = v$,
so   by induction  $\concat(u) \sim_{\tilde M N} \concat(v)$
for $\tilde M := M + \ell( u^{k}) + 1 = N +\ell( u^{k}) + 1$,
and this implies that $\concat(w) \sim_{MN} \concat(u)  \sim_{MN} \concat(v)  \sim_{MN} \concat(f_k(w))$.
 
 We conclude by induction on $r+s$ that $\concat(w) \sim_{MN} \concat(f_k(w))$ in all cases.
\end{proof}

The following is clear from Lemmas~\ref{marked-lem} and \ref{ock-key-lem}.
 
\begin{corollary}\label{ock-key-cor} If $a \in \Incr^+_n(z)$, $i \in [n-1]$,
and $f_i(a)\neq 0$, then $\marked(a) = \marked(f_i(a))$.
\end{corollary}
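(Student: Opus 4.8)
The plan is to chain the two preceding lemmas together. First I would apply Lemma~\ref{ock-key-lem} with $k = i$ to the factorization $a = (w^1,\dots,w^n) := a$, obtaining indices $j_1,\dots,j_l$ — all of them positive, since each lies in $[M,N-2]$ with $M = \ell(a^1) + \dots + \ell(a^{i-1}) + 1 \geq 1$ — such that
\[
\concat(f_i(a)) = \ck_{j_l} \cdots \ck_{j_1}(\concat(a)).
\]
Then I would note that for each $m \in [0,l]$ the partial composite $w^{(m)} := \ck_{j_m}\cdots\ck_{j_1}(\concat(a))$ again belongs to $\iR^+(z)$. This is the only point requiring a sentence of justification: the moves realized by $\ck_j$ for $j > 0$ (namely $ACB \leftrightarrow CAB$, $BCA \leftrightarrow BAC$ with $\lceil A\rceil < \lceil B\rceil < \lceil C\rceil$, together with $XYX \leftrightarrow YXY$ and $X'YX \leftrightarrow YXY'$ for $|X-Y|=1$) are all among the non-initial commutation and braid relations that generate $\isim$, so $\ck_j$ with $j>0$ sends an element of an $\isim$-class into the same class; since $\iR^+(z)$ is a single $\isim$-class by Proposition~\ref{isim-lem} and $w^{(0)} = \concat(a) \in \iR^+(z)$, every $w^{(m)}$ lies in $\iR^+(z)$, including $w^{(l)} = \concat(f_i(a))$.

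With this in hand, Lemma~\ref{marked-lem} applies to each successive step $w^{(m-1)} \mapsto w^{(m)} = \ck_{j_m}(w^{(m-1)})$ (as $j_m > 0$ and $w^{(m-1)} \in \iR^+(z)$), giving $\marked(w^{(m)}) = \marked(w^{(m-1)})$. Telescoping over $m = 1,\dots,l$ yields $\marked(\concat(f_i(a))) = \marked(\concat(a))$, and unwinding the definitions $\marked(a) := \marked(\concat(a))$ and $\marked(f_i(a)) := \marked(\concat(f_i(a)))$ finishes the argument.

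I do not expect any genuine obstacle here: the corollary is flagged in the excerpt as "clear from" Lemmas~\ref{marked-lem} and~\ref{ock-key-lem}, and indeed the proof is essentially a two-line citation once the stability of $\iR^+(z)$ under positively-indexed Coxeter–Knuth operators is recorded.
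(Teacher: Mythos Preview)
Your proposal is correct and follows exactly the approach the paper intends: the corollary is stated as ``clear from Lemmas~\ref{marked-lem} and \ref{ock-key-lem}'', and you have simply unpacked that citation, chaining the decomposition from Lemma~\ref{ock-key-lem} with repeated applications of Lemma~\ref{marked-lem}. Your extra paragraph verifying that the intermediate words $w^{(m)}$ remain in $\iR^+(z)$ is a welcome bit of care (it is needed for $\marked$ to even be defined on them, and is implicit in the statement of Lemma~\ref{marked-lem}), but it does not constitute a different route.
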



Recall that the subset $\Incr_n(z) \subset \Incr^+_n(z)$
defined in \eqref{incr-eq} 
 is $\q_n$-subcrystal.

\begin{corollary}\label{unprime-cor2}
The map $\unprime : \Incr^+_n(z) \to \Incr_n(z)$ is a quasi-isomorphism of $\gl_n$-crystals.
\end{corollary}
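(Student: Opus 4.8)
The plan is to assemble this statement from facts already in hand, since $\unprime$ has essentially been shown to commute with all the relevant operators; the only real work is organising the logic. Recall that a quasi-isomorphism of $\gl_n$-crystals is a map carrying each full subcrystal (i.e.\ each weakly connected component of the crystal graph, whose edges are labelled by $1,2,\dots,n-1$) of the source onto a full subcrystal of the target and restricting to a crystal isomorphism there. First I would note that $\unprime : \ORF_n(z) \to \Incr_n(z)$ is well defined and weight-preserving, since the weight of a factorization depends only on the number of letters in each factor, and that by Lemma~\ref{ef-ick-lem} it satisfies $e_i \circ \unprime = \unprime \circ e_i$ and $f_i \circ \unprime = \unprime \circ f_i$ for all $i \in [n-1]$, under the convention $\unprime(0) = 0$. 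Because $\unprime$ carries actual factorizations to actual factorizations (never to $0$), this means that $\unprime$ sends each edge $a \xrightarrow{i} c$ of $\ORF_n(z)$ to an edge $\unprime(a) \xrightarrow{i} \unprime(c)$ of $\Incr_n(z)$; in particular $\unprime$ is a morphism of crystal graphs.

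Next I would fix a full $\gl_n$-subcrystal $\cX \subseteq \ORF_n(z)$ and set $\cY := \unprime(\cX)$. Then $\cY$ is connected, being the image of the connected graph $\cX$ under the graph morphism $\unprime$. It is also closed under the crystal operators: if $b = \unprime(a)$ with $a \in \cX$ and $e_i(b) \neq 0$, then $e_i(b) = \unprime(e_i(a))$, so $e_i(a) \neq 0$, hence $e_i(a) \in \cX$ and $e_i(b) \in \cY$; symmetrically for $f_i$. A connected subset closed under every $e_i$ and $f_i$ is exactly a weakly connected component, so $\cY$ is a full subcrystal of $\Incr_n(z)$.

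It then remains to show that the surjection $\unprime : \cX \to \cY$ is injective, for once this is known it is an operator-commuting, weight-preserving bijection, hence a crystal isomorphism. By Corollary~\ref{ock-key-cor} each $f_i$ ($i\in[n-1]$) preserves the marked-cycle set $\marked$, and hence so does each $e_i$: if $e_i(a) = c \neq 0$ then $f_i(c) = a$, so $\marked(c) = \marked(a)$. Since $\cX$ is connected using only the operators $e_i, f_i$ with $i \in [n-1]$, the function $\marked$ is constant on $\cX$. Therefore, if $a, b \in \cX$ satisfy $\unprime(a) = \unprime(b)$, then also $\marked(a) = \marked(b)$, and Lemma~\ref{marked-lem0} forces $a = b$. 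This gives injectivity and finishes the proof.

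The only point requiring a little care is this last constancy argument: Corollary~\ref{ock-key-cor} is stated only for the lowering operators $f_i$, so one must pass through the equivalence $f_i(c) = a \Leftrightarrow e_i(a) = c$ to obtain invariance of $\marked$ under the raising operators, and one must use that the edges of a $\gl_n$-crystal graph are precisely those labelled $1,\dots,n-1$ — exactly the operators covered by that corollary, so that no invariance under $e_{\bar 1}$, $e_0$, etc.\ is needed. Everything else is routine bookkeeping built on Lemmas~\ref{ef-ick-lem} and \ref{marked-lem0} together with Corollary~\ref{ock-key-cor}.
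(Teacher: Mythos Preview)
Your proof is correct and follows essentially the same approach as the paper's: both rely on Lemma~\ref{ef-ick-lem} for operator commutation, Corollary~\ref{ock-key-cor} to show that $\marked$ is constant on each full $\gl_n$-subcrystal, and Lemma~\ref{marked-lem0} for injectivity. The paper packages this slightly differently by first decomposing $\Incr^+_n(z)$ into the $\gl_n$-subcrystals $\cC_S = \{a : \marked(a) = S\}$ and observing that $\unprime$ restricts to an isomorphism $\cC_S \xrightarrow{\sim} \Incr_n(z)$ on each, but the content is the same.
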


This map is not usually a quasi-isomorphism of $\q_n$-crystals.

\begin{proof}
If $S$ is any set of $2$-cycles of $z$, then $\cC_S := \{ a \in \Incr^+_n(z) : \marked(a) = S\}$
is a $\gl_n$-subcrystal by Corollary~\ref{ock-key-cor},
and $\unprime:\cC_S \to \Incr_n(z)$
is a $\gl_n$-crystal isomorphism  by Lemmas~\ref{ef-ick-lem} and \ref{marked-lem0}.
As $\Incr^+_n(z)$ is a disjoint union of such $\gl_n$-subcrystals $\cC_S$, the result follows.
\end{proof}

The \defn{descent set} of a primed word $w=w_1w_2\cdots w_m$ is $\Des(w) := \{ i \in [m-1] : w_i > w_{i+1}\}$.

\begin{lemma}\label{f0-incr-lem}
Suppose $a \in \Incr^+_n(z)$. Let $q := \weight(a)_1 = \ell(a^1)$ and 
\[w :=\begin{cases} \concat(a) &\text{if }q \leq 1 \\  \ck_{q-2}\cdots \ck_1\ck_0(\concat(a)) &\text{if }q\geq 2.\end{cases}\]
 If  $q = 0$ or if $\weight(a)_2 \neq 0$ and $q \in \Des(w)$, then $f_{\bar 1}(a) = 0$,
and otherwise   $\concat(f_{\bar 1}(a)) = w$.
\end{lemma}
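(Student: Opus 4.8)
The plan is to compare the definition of $f_{\bar 1}$ in Definition~\ref{orf-def2} directly with the effect of the operators $\ck_0,\ck_1,\dots,\ck_{q-2}$ on $\concat(a)$. Write $a^1=u_1u_2\cdots u_q$; this is a strictly increasing primed word, and since Proposition~\ref{isim-lem2} forbids $XX$, $X'X$, $XX'$, $X'X'$ as consecutive subwords of $\concat(a)\in\iR^+(z)$, we have $\lceil u_1\rceil<\lceil u_2\rceil<\cdots<\lceil u_q\rceil$. If $q=0$ then $f_{\bar 1}(a)=0$ by Definition~\ref{orf-def2}, so assume $q\geq 1$; by Proposition~\ref{qq-orf-prop}, $f_{\bar 1}$ is a well-defined map $\Incr^+_n(z)\to\Incr^+_n(z)\sqcup\{0\}$, and it suffices to read off $f_{\bar 1}(a)$ from Definition~\ref{orf-def2} and compare its concatenation with $w$. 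Below write $a^2_1$ for the first letter of $a^2$ when $a^2\neq\varnothing$.

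The crux is the computation: when $q\geq 2$,
\[
\ck_{q-2}\cdots\ck_1\ck_0(\concat(a))=v_1\hspace{1mm}u_3u_4\cdots u_q\hspace{1mm}v_2\hspace{1mm}a^2a^3\cdots a^n,
\]
where $v_1$ is the letter with $\lceil v_1\rceil=\lceil u_2\rceil$ that is primed iff $u_1$ is primed, and $v_2$ is the letter with $\lceil v_2\rceil=\lceil u_1\rceil$ that is primed iff $u_2$ is primed. First I would prove this by induction on $q$. For $q=2$ the left side is $\ck(u_1u_2)\hspace{0.5mm}a^2\cdots a^n$, and each of the two-letter rules $XY\leftrightarrow YX$, $X'Y\leftrightarrow Y'X$, $XY'\leftrightarrow YX'$, $X'Y'\leftrightarrow Y'X'$ swaps the unprimed values of the two letters while fixing which positions carry primes, so $u_1u_2\mapsto v_1v_2$. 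For $q\geq 3$, applying the inductive claim to the prefix $u_1\cdots u_{q-1}$ (with remainder $u_qa^2\cdots a^n$) gives $\ck_{q-3}\cdots\ck_0(\concat(a))=v_1\hspace{1mm}u_3\cdots u_{q-1}\hspace{1mm}v_2\hspace{1mm}u_q\hspace{1mm}a^2\cdots a^n$; here positions $q-2,q-1,q$ hold $u_{q-1},v_2,u_q$ with unprimed values $\lceil u_{q-1}\rceil,\lceil u_1\rceil,\lceil u_q\rceil$ satisfying $\lceil u_1\rceil<\lceil u_{q-1}\rceil<\lceil u_q\rceil$, so this triple has the form $BAC$ and $\ck_{q-2}$ sends it to $BCA$, sliding $v_2$ one place to the right and producing the displayed word.

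Next I would match this against Definition~\ref{orf-def2}, assuming $f_{\bar 1}(a)\neq 0$. If $q=1$, then $f_{\bar 1}$ just moves $u_1$ to the start of $a^2$, which does not change the underlying word, so $\concat(f_{\bar 1}(a))=\concat(a)=w$. If $q\geq 2$ and $u_1,u_2$ have the same prime status, then $v_1=u_2$ and $v_2=u_1$, and $f_{\bar 1}$ replaces $a^1$ by $u_2u_3\cdots u_q$ and prepends $u_1$ to $a^2$, so $\concat(f_{\bar 1}(a))=u_2u_3\cdots u_qu_1a^2\cdots a^n=w$. If $q\geq 2$ and $u_1,u_2$ have different prime status, then $f_{\bar 1}$ replaces $u_1,u_2$ by their prime-reversals $\tilde u_1,\tilde u_2$, replaces $a^1$ by $\tilde u_2u_3\cdots u_q$, and prepends $\tilde u_1$ to $a^2$; one checks $\tilde u_2=v_1$ and $\tilde u_1=v_2$, so again $\concat(f_{\bar 1}(a))=w$. (In each case the new factors are strictly increasing, using $\lceil u_2\rceil<\lceil u_3\rceil$ and, since $f_{\bar 1}(a)\neq 0$ forces $u_1$ smaller than every letter of $a^2$, the inequality $\lceil u_1\rceil<\lceil a^2_1\rceil$.)

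Finally I would determine when $f_{\bar 1}(a)$ vanishes. By Definition~\ref{orf-def2}, for $q\geq 1$ we have $f_{\bar 1}(a)=0$ iff $a^2$ is nonempty and $u_1$ is not strictly smaller than every letter of $a^2$, i.e.\ iff $\weight(a)_2\neq 0$ and $u_1\geq a^2_1$. As in the proof of Proposition~\ref{qq-orf-prop}, when $a^1$ and $a^2$ are both nonempty one has $\lceil u_1\rceil\neq\lceil a^2_1\rceil$ (otherwise $\concat(a)$ would be $\isim$-equivalent to a word beginning with a forbidden $XYX$-type subword, contradicting Propositions~\ref{isim-lem2} and \ref{isim-lem}), so $f_{\bar 1}(a)=0$ iff $\weight(a)_2\neq 0$ and $u_1>a^2_1$. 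On the other hand $w_q$ has unprimed value $\lceil u_1\rceil$—it equals $u_1$ when $q=1$ and $v_2$ when $q\geq 2$—while $w_{q+1}=a^2_1$ whenever $\weight(a)_2\neq 0$, so the same inequality $\lceil u_1\rceil\neq\lceil a^2_1\rceil$ gives $q\in\Des(w)\iff u_1>a^2_1$. Hence, when $\weight(a)_2\neq 0$ we have $f_{\bar 1}(a)=0\iff q\in\Des(w)$, and in all other cases $f_{\bar 1}(a)\neq 0$ with $\concat(f_{\bar 1}(a))=w$, which is the claim. The main obstacle is the prime bookkeeping in the inductive computation of the second paragraph—tracking which positions carry primes through the chain of Coxeter--Knuth moves and selecting the correct three-letter rule at each step—together with matching it cleanly against the three branches of Definition~\ref{orf-def2}.
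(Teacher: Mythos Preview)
Your proof is correct and follows essentially the same approach as the paper's: compute $w$ explicitly by tracking how $\ck_0$ swaps the first two letters (keeping the prime pattern in position) and how each subsequent $\ck_j$ slides the resulting letter one step to the right via the $BAC\leftrightarrow BCA$ rule, then match the result against the three branches of Definition~\ref{orf-def2} and use the $\lceil u_1\rceil\neq\lceil a^2_1\rceil$ observation to handle the descent/vanishing condition. The only cosmetic difference is that the paper describes the Coxeter--Knuth computation directly as a sequence of transpositions rather than framing it as an induction on $q$.
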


\begin{proof}
If $q=   0$, then $f_{\bar 1}(a) = 0$ since $a^1$ is empty. 
Suppose  $q=1$ so that $w = \concat(a)$. If $\weight(a)_2 \neq 0$ and $1 \in \Des(w)$, then the first and only letter in $a^1$ is strictly larger than the first letter of $a^2$, 
so $f_{\bar 1}(a) = 0$. If $\weight(a)_2 =0$ or $1 \notin \Des(w)$, then $f_{\bar 1} (a)$ is formed from $a$ by moving the only letter in $a^1$ to the beginning of $a^2$, 
so $\concat(f_{\bar 1}(a)) = \concat(a) = w$ as claimed.

Now assume $q \geq 2$ so that $w=  \ck_{q-2}\cdots \ck_1\ck_0(\concat(a))$. 
Write $v_i$ for the $i$th letter of $a^1$.
If $v_1$ and $v_2$ are both primed or both unprimed, then let $\tilde v_1 :=v_1$ and $\tilde v_2:=v_2$,
and otherwise form $\tilde v_1$ and $\tilde v_2$ by reversing the primes on $v_1$ and $v_2$, respectively.
Applying $\ck_0$ to $\concat(a)$ replaces the first two letters  $v_1v_2$  by $\tilde v_2 \tilde v_1$. Each successive application of $\ck_j$ for $j= 1,2,\dots,q-2$ 
then transposes $\tilde v_1$ and the letter to its right. Thus $w$ is formed from $\concat(a)$ by changing
the first $q$ letters from $v_1v_2v_3\cdots v_q$ to $\tilde v_2v_3\cdots v_q \tilde v_1$.

Since $w$ is a primed involution word,
it follows from Proposition~\ref{isim-lem2} that $\lceil  v_1\rceil=\lceil \tilde v_1\rceil $ cannot be equal to the first letter of $\unprime(a^2a^3\cdots a^n)$.
Thus if $a^2$ is nonempty, then $q $ belongs to $ \Des(w)$
precisely when the first letter of $a^1$ is not strictly smaller than every letter in $a^2$.
Therefore if $\weight(a)_2 \neq 0$ and $q \in \Des(w)$, then $f_{\bar 1}(a) = 0$ as claimed.
If instead we have $\weight(a)_2 =0$ or $q \notin \Des(w)$,
then $f_{\bar 1}(a)$ is formed from $a$ by changing $a^1 =v_1v_2v_3\cdots v_q$ to $\tilde v_2v_3\cdots v_q$
and then adding $\tilde v_1$ to the start of $a^2$.
 Comparing this to the description of $w$ above shows that $\concat(f_{\bar 1}(a)) = w$.
\end{proof}

\section{Crystal operators on shifted tableaux}\label{crystal-tab-sect}

Continue to let $n$ be a positive integer.
Recall that if $\lambda$ is a strict partition, then $\SShTab_n(\lambda)$ is
the set of semistandard shifted tableaux of shape $\lambda$ with all entries at most $n$,
and $\ShTab_n(\lambda)$  is the subset of such tableaux with no primed diagonal entries.
Results in  \cite{AssafOguz,HPS,Hiroshima2018}
describe a $\q_n$-crystal structure on   $\ShTab_n(\lambda)$.
Here, we extend this to a $\qq_n$-crystal on the larger set $\SShTab_n(\lambda)$.


\subsection{Skew shifted tableaux}

If $\lambda$ and $\mu$ are strict partitions with $\SD_\mu \subset \SD_\lambda$,
then we write $\mu \subset \lambda$
and set $\SD_{\lambda/\mu} := \SD_\lambda \setminus \SD_\mu$.
A \defn{(semistandard) skew shifted tableau}
of shape $\lambda/\mu$ is a map $ \SD_{\lambda/\mu} \to \{ 1' < 1 < 2' < 2 < \dots\}$
such that entries are weakly increasing along rows and columns,
with no unprimed entries repeated in a column and no primed entries repeated in a row.

If $T$ is a skew shifted tableau and $i\leq j$ are positive integers,
then the set of positions 
$T^{-1}(\{ i' < i < \dots <j' <j\})$ is equal to $\SD_{\lambda/\mu}$
for some strict partitions $\mu \subset \lambda$.
We write $T|_{[i,j]}$ for the skew shifted tableau obtained by restricting $T$ to this subdomain.

A skew shifted tableau is a \defn{rim} if its domain  has no positions 
$(i_1,j_1), (i_2,j_2)$ 
with $i_1 < i_2 $ and $j_1 < j_2$. 
A rim whose domain is connected is a \defn{ribbon};
in French notation, this appears as
\[
\ytab{ \ & \ &  \
   \\ \none &  \none  & \ & \ 
      \\ \none &  \none  & \none & \ & \ & \ & \  }
\]
or some analogous shape.
If $T$ is a skew shifted tableau, then
the subtableau $T|_{[i,i]}$
is always a rim, and therefore a disjoint union of ribbons, which we call the \defn{$i$-ribbons} of $T$.


Let $T$ be a skew shifted tableau whose domain fits inside $[k]\times [k]$ for some positive integer $k$.
If $C_i$ is the sequence of primed entries in column $i$ of $T$, read in order,
and  $R_i$ is the sequence of unprimed entries in row $i$ of $T$, read in order,
then the \defn{shifted reading word} of $T$ 
is 
\[\shword(T) := \unprime(C_kR_k\cdots C_2R_2C_1R_1).\]
This does not depend on $k$. For example, if 
\[ T= \ytab{ \none & 3 & 5' & 7 \\ 1' & 2' & 4' & 6 & 8'&9},\]
then $\shword(T) = 845237169$.
The order of the letters in $\shword(T)$ defines  a total order on the boxes of $T$ which
we call the \defn{shifted reading word order}.
In the preceding example, this order is $(1,5) < (1,3) < (2,3) < (1,2) <(2,2)< (2,4) < (1,1) < (1,4) < (1,6)$.

The shifted reading word is the same as the \defn{hook reading word} appearing in \cite[Def. 3.4.]{AssafOguz}
and in earlier literature,
but different from the reading wording of a shifted tableau defined in \cite[\S4]{HPS}.
Note that toggling the primes on the diagonal of $T$ 
has no effect on $\shword(T)$.

 \subsection{Formulas for crystal operators}\label{tab-op-sect}
 
This section defines raising and lowering operators on skew shifted tableaux.
This will involve another pairing procedure, now on the boxes in a shifted tableau.
 Fix an integer $i>0$ and let $T$ be a skew shifted tableau.
Assume the domain of $T|_{[i,i+1]}$ has size $N$. Let $\alpha_1,\alpha_2,\dots,\alpha_N$
be the positions in this domain, ordered such that $\alpha_j$ contributes the $j$th letter of $\shword(T|_{[i,i+1]})$.

\begin{definition}\label{tab-pair-def}
Consider the word formed by replacing each $i$ in $\shword(T|_{[i,i+1]})$ by a right parenthesis ``)''
and each $i+1$ in $\shword(T|_{[i,i+1]})$ by a left parenthesis ``(''.
If $j$ and $k$ are the indices of a matching set of parentheses in this word, then we say that $\alpha_j$ and $\alpha_k$ 
are paired. Remove all paired positions from the sequence $(\alpha_1,\alpha_2,\dots,\alpha_N)$
and let 
$ \unpaired_i(T) $ 
denote the resulting subsequence.
\end{definition}

For example, suppose $i=1$ and $T|_{[1,2]}$ is the skew shifted tableau
\[ 
\ytab{ \none & \none  & 1 & 2 & 2 \\
\none & \none[\cdot] & \none[\cdot] & 1' & 1 & 2' & 2  \\ 
\none[\cdot] & \none[\cdot] & \none[\cdot] & \none[\cdot] & 1' & 1  & 1 & 2' & 2}.
\]
Then $\shword(T|_{[1,2]}) = 2 2 1 1 122 12 112$ and the corresponding ordering of the boxes in $T|_{[1,2]}$ is
\[ 
\ytab{ \none & \none  & 5 & 6 & 7 \\
\none & \none[\cdot] & \none[\cdot] & 4 & 8 & 2 & 9  \\ 
\none[\cdot] & \none[\cdot] & \none[\cdot] & \none[\cdot] & 3 & 10  & 11 & 1 & 12}.
\]
The paired positions are $
(\alpha_2,\alpha_3), $ $ (\alpha_1,\alpha_4),$ $ (\alpha_6,\alpha_{11}),$
$(\alpha_7,\alpha_{8}),$ and
$  (\alpha_9,\alpha_{10})$,
  and so we have \[\unpaired_i(T) = (\alpha_5,\alpha_{12}) = ((3,3), (1,9)).\]

Our descriptions of $f_i(T)$ and $e_i(T)$ for $i>0$
are closely modeled on \cite[Defs. 3.5 and 3.9]{AssafOguz}.
Below, when we refer to ``interchanging the primes''  on two elements of $  \ZZ\sqcup \ZZ' = \frac{1}{2}\ZZ$,
we mean the operation   that adds a prime to one number while removing the prime from the other if the two are not both primed or both unprimed,
and otherwise leaves the numbers unchanged.

\begin{definition}\label{shtab-f}
Consider the positions  $(x,y)$ in $\unpaired_i(T)$ with $ T_{xy}\in \{i',i\}$.
If there are no such positions, then set $f_i(T) = 0$.
Otherwise, let $(x,y)$ be the last such position
and construct a skew shifted tableau tableau $f_i(T)$ with the same shape as $T$
by the following procedure:
\ben
\item[(L1)] First assume $T_{xy} = i$. Then we form $f_i(T)$ from $T$
 as follows:
\item[(a)] If $T_{x,y+1} = i+1'$, then form $f_i(T)$ by changing $T_{xy}$ to $i+1'$ and $T_{x,y+1} $ to $i+1$:
\[ {\scriptsize
\ytableausetup{boxsize = 1.0cm,aligntableaux=center}
\begin{ytableau} 
\color{black}T_{x+1,y} & \none \\
\color{red}T_{xy} & \color{red} T_{x,y+1}
\end{ytableau} = \begin{ytableau} 
\color{black} ? & \none\\
\color{red} i & \color{red} i+1'
\end{ytableau}
\ \mapsto\ 
 \begin{ytableau} 
\color{black} ? & \none\\
\color{red} i+1' & \color{red} i+1
\end{ytableau}}.
\]

\item[(b)] If $T_{x,y+1} \neq i+1'$ and $ T_{x+1,y} \notin \{i+1',i+1\}$, then form $f_i(T)$ by changing $T_{xy}$ to $i+1$:
\[ {\scriptsize
\begin{ytableau} 
\color{black} T_{x+1,y} & \none \\
\color{red}T_{xy} & \color{black} T_{x,y+1}
\end{ytableau}  =
 \begin{ytableau} 
\color{black}\substack{\text{not}\\i+1 \\ \text{nor} \\ i+1'} & \none \\
\color{red}i & \color{black}\substack{\text{not}\\i+1'}
\end{ytableau}
\ \mapsto\ 
 \begin{ytableau} 
\color{black} \substack{\text{not}\\i+1 \\ \text{nor} \\ i+1'} & \none \\
\color{red}i+1 & \color{black}\substack{\text{not}\\i+1'}
\end{ytableau}}.\]
 
\item[(c)] If $T_{x,y+1} \neq i+1'$,  $ T_{x+1,y} \in \{i+1',i+1\}$, and the position $(\tilde x, \tilde y)$  farthest northwest in the
 $(i+1)$-ribbon containing $(x+1,y)$ 
 has $\tilde x \neq \tilde y$,
then form $f_i(T)$ by
changing $T_{xy}$ to $i+1'$ and $T_{\tilde x\tilde y}$ to $i+1$:
%
      \[ {\scriptsize
\begin{ytableau} 
\color{red}T_{\tilde x\tilde y}  &\none &  \none & \none \\
\color{black}\ddots&\color{black}\ddots &\none & \none \\
\none &\color{black}\ddots & \color{black} T_{x+1,y} & \none \\
 \none &\none  &\color{red} T_{xy} & \color{black} T_{x,y+1} 
\end{ytableau}  =
 \begin{ytableau} 
\color{red}i+1'  &  \none & \none & \none \\
\color{black}\ddots & \color{black}\ddots& \none & \none \\
\none  & \color{black}\ddots& \color{black}\substack{i+1 \\ \text{or} \\ i+1'} & \none \\
 \none & \none & \color{red}i & \color{black}\substack{\text{not}\\ i+1'}
\end{ytableau} 
\ \mapsto\ 
 \begin{ytableau} 
\color{red}i+1  &  \none & \none & \none \\
\color{black}\ddots & \color{black}\ddots& \none & \none \\
\none & \color{black}\ddots& \color{black}\substack{i+1 \\ \text{or} \\ i+1'} & \none \\
 \none & \none & \color{red} i+1' & \color{black}\substack{\text{not}\\ i+1'}
\end{ytableau} }.
\]
%

\item[(d)] If $T_{x,y+1} \neq i+1'$,  $ T_{x+1,y} \in \{i+1',i+1\}$,
and
 the position $(\tilde x, \tilde y)$  farthest northwest in the
 $(i+1)$-ribbon containing $(x+1,y)$ has $\tilde x = \tilde y$,
then form $f_i(T)$ by first changing $T_{xy}$ to $i+1'$
and then interchanging the primes on   $T_{\tilde x \tilde x}$ and $T_{\tilde x-1,\tilde x-1}$.
Thus we would have
 \[
\ba 
f_1\(\ytab{
\none & \color{red}{ 2} & 2 \\
\color{red}1' & 1 & \color{red}{ 1} &2 }\) &= \ytab{
\none &\color{red} 2' & 2 \\
\color{red}1& 1 & \color{red}2' & 2 }
\quad\text{while}\quad
f_1\(\ytab{
\none & { 2'} & 2 \\
1' & 1 & \color{red}{ 1} &2 }\) = \ytab{
\none & 2' & 2 \\
1'& 1 & \color{red}2' & 2 }
\ea
\]
where in both examples $(x,y)=(1,3)$ and $(\tilde x, \tilde y)=(2,2)$.
\een
The \defn{conjugate}  \cite[\S4]{HPS} of a skew shifted tableau 
  is given by transposing the locations of all boxes and then adding $\frac{1}{2}$ to all entries.
  If $T_{xy} = i'$ then $f_i(T)$  is formed by first conjugating $T$, then following the rules 
in cases L1(a)(b)(c) above (note that case L1(d) will not apply), and then applying the inverse of conjugation to the result.
This corresponds explicitly to the following operations:
\ben
\item[(L2)] Suppose $T_{xy} = i'$. Then we form $f_i(T)$ from $T$ as follows:
\item[(a)] If $T_{x+1,y} = i$, then form $f_i(T)$   by changing $T_{xy}$ to $i$ and $T_{x+1,y} $ to $i+1'$:
\[ {\scriptsize
\ytableausetup{boxsize = 1.0cm,aligntableaux=center}
\begin{ytableau} 
\color{red}T_{x+1,y} & \none \\
\color{red}T_{xy}  &\color{black} T_{x,y+1} 
\end{ytableau} = \begin{ytableau} 
\color{red}i & \none \\
\color{red}i' & \color{black}?
\end{ytableau}
\ \mapsto\ 
 \begin{ytableau} 
\color{red}i+1' & \none \\
\color{red}i & \color{black}?
\end{ytableau}}.
\]

 \item[(b)] If $T_{x+1,y} \neq i$ and $T_{x,y+1} \notin \{i, i+1'\}$, then form $f_i(T)$ by changing $T_{xy}$ to $i+1'$:
\[ 
{\scriptsize
\begin{ytableau} 
\color{black}T_{x+1,y} & \none \\
\color{red}T_{xy} &\color{black} T_{x,y+1}
\end{ytableau} = \begin{ytableau} 
\color{black}\substack{\text{not}\ i} & \none\\
\color{red}i' &\color{black} \substack{\text{not}\ i \\ \text{nor} \\ i+1'}
\end{ytableau}
\ \mapsto\ 
 \begin{ytableau} 
\color{black}\substack{\text{not}\ i}  & \none\\
\color{red}i+1' & \color{black}\substack{\text{not}\ i \\ \text{nor} \\ i+1'}
\end{ytableau}}.
\]

 \item[(c)] If $T_{x+1,y} \neq i$ and $T_{x,y+1} \in \{i, i+1'\}$,
 then form $f_i(T)$ by
 changing $T_{xy}$ to $i$ and $T_{\tilde x \tilde y}$ to $i+1'$,
  where $(\tilde x, \tilde y)$ 
  is  the first position
 in the $i$-ribbon containing $(x,y)$ 
that is southeast of $(x,y)$ with $T_{\tilde x \tilde y} = i$ and
 $T_{\tilde x, \tilde y+1} \notin \{i, i+1'\}$:
 \[ {\scriptsize
\begin{ytableau} 
\color{black}T_{x+1,y} & \none \\
\color{red}T_{xy} & \color{black}T_{x,y+1}  \\
 \color{black}\ddots & \color{black}\ddots & \none \\
\none &\color{black}\ddots& \color{red}T_{\tilde x \tilde y}
\end{ytableau} =\begin{ytableau} 
\color{black}\substack{\text{not}\ i}  & \none \\
\color{red}i' & \color{black}\substack{i \\ \text{or} \\ i+1'}  \\
\color{black}\ddots & \color{black}\ddots & \none \\
\none  &\color{black}\ddots & \color{red}i
\end{ytableau}
\ \mapsto\ 
\begin{ytableau} 
\color{black}\substack{\text{not}\ i}  & \none \\
\color{red}i & \color{black}\substack{i \\ \text{or} \\ i+1'}  \\
\color{black}\ddots & \color{black}\ddots \\
\none &\color{black}\ddots& \color{red}i+1'
\end{ytableau}}.
\]

\een
\end{definition}

\begin{remark}\label{shtab-f-remark}
When $T$ has no primed diagonal entries, the preceding definition coincides with the formula for $f_i(T)$ in 
\cite[Def. 3.5]{AssafOguz}. The latter looks somewhat different from the operator $f_i$ defined in \cite[\S4]{HPS}
but gives the same result by \cite[Prop. 3.19]{AssafOguz}.
For $T$ with no primed diagonal entries, Assaf and Oguz 
show in the proofs of \cite[Thms. 3.8 and 3.10]{AssafOguz}
that 
\begin{itemize}
\item[(i)] in case L1(a) it always holds that $x\neq y$, 

\item[(ii)] in case L1(c) it always holds that $T_{\tilde x \tilde y} = i+1'$, 

\item[(iii)] in case L2(a) it always holds that $x+1\neq y$,  and

\item[(iv)]  in case L2(c) there always exists a position $(\tilde x, \tilde y)$ as described.
\end{itemize}
Properties (i), (ii), and (iii) must also hold when $T$ has primed diagonal entries, 
since any counterexample would remain so on removing all primes from the diagonal.
The same reasoning applies to property (iv) since 
\begin{itemize}
\item[(v)] in case L2(c) it always holds that  $x \neq y$.
\end{itemize}
This follows since if $x=y$ in case L2(c), then 
we must have $T_{x,y+1} =i+1'$ for  $(x,y)$ to be the last position in $\unpaired_i(T)$ with $T_{xy} \in \{i',i\}$,
but then removing the prime from $T_{xy}$  would produce a counterexample to (i).
Finally, it is easy to check that
\begin{itemize}
\item[(vi)] in case L1(d) it must hold that $T_{\tilde x-1, \tilde x-1} \in \{i',i\}$ for $(x,y)$ to be in $\unpaired_i(T)$.
\end{itemize}
\end{remark}

Let $\primes(T)$ denote the total number of primed entries in $T$ 
and let $\primes_\diag(T)$ denote the number of primed diagonal entries in $T$.
Remark~\ref{shtab-f-remark} implies the following lemma.

\begin{lemma}\label{shtab-primed-lem}
Suppose $f_i(T) \neq 0$. 
Then $\primes_\diag(T) = \primes_\diag(f_i(T))$. If case L1(d) applies in Definition~\ref{shtab-f}, then 
$ \primes(f_i(T))=\primes(T)+1$. In all other 
cases $ \primes(f_i(T))=\primes(T)$ and the sets of primed diagonal positions in $f_i(T)$ and $T$ coincide.
\end{lemma}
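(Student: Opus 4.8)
The plan is to verify Lemma~\ref{shtab-primed-lem} by going through the cases of Definition~\ref{shtab-f} one at a time and tracking exactly which boxes change their ``primed/unprimed'' status, relying throughout on the structural facts (i)--(vi) collected in Remark~\ref{shtab-f-remark}. The claim has three parts: $\primes_\diag$ is always preserved; in case L1(d) the total count $\primes$ increases by one; and in every other case $\primes$ is preserved and the set of primed diagonal positions is unchanged. Since only the boxes mentioned explicitly in each case of the definition are altered, checking the claim amounts to inspecting those boxes.

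First I would dispose of the easy cases. In L1(a) the entries $i$ and $i+1'$ become $i+1'$ and $i+1$: one unprimed entry becomes primed and one primed entry becomes unprimed, so $\primes$ is unchanged; by (i), $x\neq y$, so neither box is on the diagonal and $\primes_\diag$ and the diagonal primed set are untouched. In L1(b) a single unprimed entry $i$ becomes unprimed $i+1$, so nothing primed changes at all. In L1(c) the entries $i$ and $T_{\tilde x\tilde y}$ become $i+1'$ and $i+1$; by (ii) $T_{\tilde x\tilde y}=i+1'$, so again one unprimed becomes primed and one primed becomes unprimed, preserving $\primes$, and by (i)/(ii) together with the hypothesis $\tilde x\neq\tilde y$ in L1(c) neither affected box is diagonal. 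Cases L2(a), L2(b), L2(c) are handled symmetrically: in L2(a) the entries $i'$ and $i$ become $i$ and $i+1'$, swapping one prime; by (iii) $x+1\neq y$ and by (v) $x\neq y$ so no diagonal box is touched. L2(b) turns a primed $i'$ into a primed $i+1'$, changing no prime status. L2(c) turns $i'$ and $i$ into $i$ and $i+1'$ (using (iv) for existence of $(\tilde x,\tilde y)$, and (v) for $x\neq y$; one should also note $\tilde x\neq\tilde y$ since $\tilde y+1$ lies in the domain), again a single prime swap with no diagonal box involved.

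The remaining case, L1(d), is where the count genuinely changes and is the one requiring care. Here one first changes $T_{xy}=i$ to $i+1'$ --- that adds one primed entry, off the diagonal by (i) applied after noting the hypothesis $\tilde x=\tilde y$ is irrelevant to the box $(x,y)$ --- and then interchanges the primes on the two diagonal boxes $T_{\tilde x\tilde x}$ and $T_{\tilde x-1,\tilde x-1}$. By (vi), $T_{\tilde x-1,\tilde x-1}\in\{i',i\}$ and by construction $T_{\tilde x\tilde x}=i+1'$, so these two diagonal entries are one primed and one unprimed (or possibly both already in matching prime-states, in which case the ``interchange'' does nothing): in every sub-possibility the number of primed entries among $\{T_{\tilde x\tilde x},T_{\tilde x-1,\tilde x-1}\}$ is unchanged, so $\primes_\diag$ is preserved, while the single change at $(x,y)$ gives $\primes(f_i(T))=\primes(T)+1$. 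I would write this out carefully, distinguishing the sub-case where $T_{\tilde x-1,\tilde x-1}=i'$ (then the interchange swaps the primes of the two diagonal boxes, fixing $\primes_\diag$) from the sub-case $T_{\tilde x-1,\tilde x-1}=i$ (then by the interchange convention nothing happens, since $i$ and $i+1'$ are not ``both primed or both unprimed''---wait, they are not, so the convention does interchange them; in that event $\primes_\diag$ still counts one primed box before and after). Assembling these observations yields all three assertions. The main obstacle is purely bookkeeping: making sure that in L1(d) the diagonal-entry interchange is analyzed under the precise ``interchange the primes'' convention stated just before Definition~\ref{shtab-f}, and confirming via (vi) that the box $T_{\tilde x-1,\tilde x-1}$ indeed carries an entry in $\{i',i\}$ so that the combinatorics of that swap are the expected ones; no deep idea is needed, only disciplined case analysis.
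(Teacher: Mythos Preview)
Your approach is exactly the paper's: the paper states only that the lemma follows from Remark~\ref{shtab-f-remark}, and you have written out the case analysis that remark encodes. The conclusions in every case are correct.

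A few small cleanups are needed. First, your citations of items (i) and (v) from the remark are occasionally misapplied: (i) concerns L1(a) and (v) concerns L2(c), so in L1(c), L1(d), and L2(a) you should instead argue that $(x,y)$ is off the diagonal because the adjacent position $(x+1,y)$ lies in the shifted domain, forcing $y\geq x+1$. Second, in L1(d) your claim that $T_{\tilde x\tilde x}=i+1'$ ``by construction'' is false: since $(\tilde x,\tilde x)$ is a diagonal position it may contain either $i+1'$ or $i+1$ (as the paper's own examples for L1(d) illustrate). Your subsequent hedge covers this, and the key observation---that the interchange-primes operation on the pair $T_{\tilde x\tilde x},T_{\tilde x-1,\tilde x-1}$ never changes the number of primes between them---is what actually proves the case; you should state that cleanly and drop the self-correcting parenthetical.
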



Next, we define raising operators $e_i$. Recall that $i>0$  and  $T$ is a skew shifted tableau.

\begin{definition}\label{shtab-e}
Consider the positions  $(x,y)$ in $\unpaired_i(T)$ with $ T_{xy}  \in \{i+1',i+1\}$.
If there are no such positions, then set $e_i(T) = 0$.
Otherwise, let $(x,y)$ be the first such position
and construct a skew shifted tableau tableau $e_i(T)$ with the same shape as $T$
by the following procedure:
\ben
\item[(R1)] First assume $T_{xy} = i+1$. Then we form $e_i(T)$ from $T$
 as follows:
 \item[(a)] If $T_{x,y-1} = i+1'$, then form $e_i(T)$ by changing $T_{xy}$ to $i+1'$ and $T_{x,y-1} $ to $i$:
\[ {\scriptsize
\ytableausetup{boxsize = 1.0cm,aligntableaux=center}
\begin{ytableau} 
\color{red}T_{x,y-1} &\color{red} T_{xy} \\
\none & T_{x-1,y}
\end{ytableau} = \begin{ytableau} 
\color{red}i+1' &\color{red} i+1\\
\none & ?
\end{ytableau}
\ \mapsto\ 
 \begin{ytableau} 
\color{red}i&\color{red} i+1'\\
\none & ?
\end{ytableau}}.
\] 

\item[(b)] If $T_{x,y-1} \neq i+1'$ and $ T_{x-1,y} \notin\{i,i+1'\}$, then form $e_i(T)$ by changing $T_{xy}$ to $i$:
\[ {\scriptsize
\begin{ytableau} 
T_{x,y-1} & \color{red} T_{xy} \\
\none & T_{x-1,y}
\end{ytableau} = \begin{ytableau} 
\substack{\text{not} \\ i+1'} & \color{red} i+1\\
\none &  \substack{\text{not }i \\\text{nor} \\ i+1'} 
\end{ytableau}
\ \mapsto\ 
 \begin{ytableau} 
\substack{\text{not} \\ i+1'} & \color{red} i\\
\none &  \substack{\text{not }i \\\text{nor} \\ i+1'} 
\end{ytableau}}.
\]

\item[(c)] If $T_{x,y-1} \neq i+1'$ and $ T_{x-1,y} \in \{i,i+1'\}$,
then form $e_i(T)$ by 
changing $T_{xy}$ to $i+1'$ and $T_{\tilde x\tilde y}$ to $i$,
where $(\tilde x, \tilde y)$ is the first position  in the
 $(i+1)$-ribbon containing $(x,y)$
 that is southeast of $(x,y)$ with $T_{\tilde x \tilde y} = i+1'$ and
$T_{\tilde x-1, \tilde y} \notin \{i, i+1'\}$:
 \[ {\scriptsize
\begin{ytableau} 
T_{x,y-1} & \color{red} T_{xy} & \ddots\\
\none & T_{x-1,y}  & \ddots &\ddots \\
\none &\none  & \none & \color{red} T_{\tilde x \tilde y}
\end{ytableau} =\begin{ytableau} 
\substack{\text{not} \\ i+1'} & \color{red}i+1& \ddots \\
\none& \substack{i \\\text{or} \\ i+1'}  & \ddots &\ddots \\
\none&\none  & \none &\color{red} i+1'
\end{ytableau}
\ \mapsto\ 
\begin{ytableau} 
\substack{\text{not} \\ i+1'} &\color{red} i+1' & \ddots \\
\none & \substack{i \\\text{or} \\ i+1'}  & \ddots  & \ddots \\
\none & \none  & \none  &\color{red} i
\end{ytableau}}.
\]

\item[(R2)] Alternatively suppose $T_{xy} = i+1'$. Then we form $e_i(T)$ from $T$ as follows:
 \item[(a)] If $T_{x-1,y} = i$, then form $e_i(T)$   by changing $T_{xy}$ to $i$ and $T_{x-1,y} $ to $i'$:
\[ {\scriptsize
\ytableausetup{boxsize = 1.0cm,aligntableaux=center}
\begin{ytableau} 
T_{x,y-1} &\color{red} T_{xy} \\
\none & \color{red}T_{x-1,y}
\end{ytableau} = \begin{ytableau} 
? &\color{red} i+1'\\
\none & \color{red}i
\end{ytableau}
\ \mapsto\ 
 \begin{ytableau} 
?&\color{red} i\\
\none & \color{red}i'
\end{ytableau}}.
\] 

 \item[(b)] If $T_{x-1,y} \neq i$ and $T_{x,y-1} \notin \{i', i\}$, then form $e_i(T)$ by changing $T_{xy}$ to $i+1'$:
\[ {\scriptsize
\ytableausetup{boxsize = 1.0cm,aligntableaux=center}
\begin{ytableau} 
T_{x,y-1} &\color{red} T_{xy} \\
\none &T_{x-1,y}
\end{ytableau} = \begin{ytableau} 
\substack{\text{not }i' \\ \text{nor }i} &\color{red} i+1'\\
\none &  \substack{\text{not }i}
\end{ytableau}
\ \mapsto\ 
 \begin{ytableau} 
\substack{\text{not }i' \\ \text{nor }i} &\color{red} i'\\
\none & \substack{\text{not }i} 
\end{ytableau}}.
\] 

 \item[(c)] If $T_{x-1,y} \neq i$, $T_{x,y-1} \in \{i', i\}$,
 and 
 the position $(\tilde x, \tilde y)$ farthest northwest in the
 $i$-ribbon containing $(x,y-1)$ has $\tilde x \neq \tilde y$,
 then 
form $e_i(T)$ by
changing $T_{xy}$ to $i$ and $T_{\tilde x\tilde y}$ to $i'$:
      \[ {\scriptsize
\begin{ytableau} 
T_{\tilde x\tilde y}  &  \ddots & \none \\
\none & \ddots & \ddots\\
\none &\none &  T_{x,y-1} & T_{xy} \\
\none & \none & \none & T_{x-1,y} 
\end{ytableau}  =
 \begin{ytableau} 
\color{red}i &  \ddots & \none \\
\none & \ddots & \ddots\\
\none &\none &   \substack{i' \ \text{or} \ i} & \color{red}i+1' \\
\none & \none & \none & \substack{\text{not} \ i}
\end{ytableau} 
\ \mapsto\ 
 \begin{ytableau} 
\color{red}i'  &  \ddots & \none \\
\none & \ddots & \ddots\\
\none &\none &  \substack{i' \ \text{or} \ i} & \color{red}i \\
\none & \none &\none &  \substack{\text{not} \ i}
\end{ytableau} }.
\]

\item[(d)] If $T_{x-1,y} \neq i$, $T_{x,y-1} \in \{i', i\}$,
 and 
 the position $(\tilde x, \tilde y)$ farthest northwest in the
 $i$-ribbon containing $(x,y-1)$ has $\tilde x = \tilde y$,
 then form $e_i(T)$ by first changing $T_{xy}$ to $i$
and then
interchanging the primes on  $T_{\tilde x \tilde x}$ and $T_{\tilde x+1,\tilde x+1}$.
Thus we would have
 \[
\ba 
e_1\(
\ytab{
\none &\color{red} 2' & 2 \\
\color{red}1& 1 & \color{red}2' & 2 }
\) &= \ytab{
\none & \color{red}{ 2} & 2 \\
\color{red}1' & 1 & \color{red}{ 1} &2 }
\quad\text{while}\quad
e_1\( \ytab{
\none & 2 & 2 \\
1& 1 & \color{red}2' & 2 }\) =\ytab{
\none &  2 & 2 \\
1 & 1 & \color{red}{ 1} &2 }
\ea
\]
where in both examples $(x,y)=(1,3)$ and $(\tilde x, \tilde y)=(1,1)$.

\een
\end{definition}

\begin{remark}\label{shtab-e-remark}
When $T$ has no primed diagonal entries, the preceding definition coincides with the formula for $e_i(T)$ in 
\cite[Def. 3.9]{AssafOguz} (and also in \cite[\S4]{HPS} via \cite[Prop. 3.19]{AssafOguz}).
%
There are versions of the properties in Remark~\ref{shtab-f-remark} for the raising operators.
First, we have:
\begin{itemize}
\item[(i)] in case R2(a)  it always holds that $x \neq y$, 
\item[(ii)] in case R2(c) it always holds that $T_{\tilde x \tilde y} = i$, and
\item[(iii)] in case R1(a)  it always holds that $x \neq y-1$.
\end{itemize}
These hold since it is a straightforward exercise to show that any counterexample
leads to contradiction of the fact that $(x,y)$ is the first unpaired position with $T_{xy} \in \{i+1',i+1\}$. Next:
\begin{itemize}
\item[(iv)] in case R1(c)  it always holds that $x \neq y$.
\end{itemize}
This follows since if $x=y$ in case R1(c), then 
  $T_{x,y+1} =i$ as  $(x,y)$ is the last position in $\unpaired_i(T)$ with $T_{xy} \in \{i+1',i+1\}$,
but then adding a prime to $T_{xy}$  would give a counterexample to (i). Also:
\begin{itemize}
\item[(v)]  in case R1(c) there always exists a position $(\tilde x, \tilde y)$ as described.
\end{itemize}
This holds since any counterexample would remain so on removing all primes from the diagonal,
contradicting the fact that the raising operators in \cite[Def. 3.9]{AssafOguz} are well-defined.
Alternatively, (v) can be shown by mimicking the last paragraph of the proof of \cite[Thm. 3.8]{AssafOguz}.
Finally:
\begin{itemize}
\item[(vi)] in case R2(d) it must hold that $T_{\tilde x+1, \tilde x+1} \in \{i+1',i+1\}$ for $(x,y)$ to be in $\unpaired_i(T)$.
\end{itemize}
\end{remark}

An analogue of Lemma~\ref{shtab-primed-lem}  follows from this remark.

\begin{lemma}\label{shtab-primed-lem2}
Suppose $e_i(T) \neq 0$. 
Then $\primes_\diag(T) = \primes_\diag(e_i(T))$. If case R2(d) applies in Definition~\ref{shtab-e}, then 
$ \primes(e_i(T))=\primes(T)-1$. In all other 
cases $ \primes(e_i(T))=\primes(T)$ and the sets of the primed diagonal positions in $e_i(T)$ and $T$ coincide.
\end{lemma}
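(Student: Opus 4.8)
The statement to prove is Lemma~\ref{shtab-primed-lem2}, which records how the raising operator $e_i$ (for $i>0$) on skew shifted tableaux affects the number of primed entries and the set of primed diagonal positions. This is the raising-operator mirror of Lemma~\ref{shtab-primed-lem}, and just as the latter follows from Remark~\ref{shtab-f-remark}, this one should follow directly from the analogous facts collected in Remark~\ref{shtab-e-remark}.

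The plan is to go case by case through the six subcases of Definition~\ref{shtab-e}, namely R1(a), R1(b), R1(c), R2(a), R2(b), R2(c), R2(d), and check in each the effect on $\primes(T)$ and on the diagonal. First I would handle all the cases that do not touch the diagonal. In cases R1(a), R1(b), R2(a), R2(b) the operator only rewrites entries at $(x,y)$ and possibly an adjacent non-diagonal box: in R1(a) the pair $i+1',i+1$ at $(x,y-1),(x,y)$ becomes $i,i+1'$, so one unprimed entry is created and one destroyed, leaving $\primes$ unchanged; in R1(b) an unprimed $i+1$ becomes an unprimed $i$, so $\primes$ is unchanged; in R2(a) we use property (i) of Remark~\ref{shtab-e-remark} that $x\neq y$, so $(x,y)$ is off-diagonal, and the entries $i+1'$ at $(x,y)$ and $i$ at $(x-1,y)$ become $i$ and $i'$; here $(x-1,y)$ is also off-diagonal since $(x,y)$ is and columns of diagonal boxes have only the diagonal entry on or below the diagonal, so one prime is created and one destroyed — net zero; in R2(b) a primed $i+1'$ becomes a primed $i'$, so $\primes$ is unchanged and the box $(x,y)$ is off-diagonal by property (i). In R1(c), by property (iv) of the remark $x\neq y$, and by (ii) $T_{\tilde x\tilde y}=i$; the move sends $i+1$ at $(x,y)$ to $i+1'$ and $i$ at $(\tilde x,\tilde y)$ to $i$... wait, it sends $T_{\tilde x\tilde y}$ to $i$, but it was $i+1'$, so primed $i+1'\to$ unprimed $i$ while unprimed $i+1\to$ primed $i+1'$: one prime destroyed, one created, net zero; and one checks $(\tilde x,\tilde y)$ lies strictly below-right of $(x,y)$ hence is off-diagonal. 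Similarly R2(c): by property (i) (applied after noting the symmetric argument, or directly) the relevant boxes are off-diagonal, $\tilde x\neq\tilde y$ by hypothesis, and the move interchanges a prime between $i+1'$ at $(x,y)$ becoming $i$, and $i$ or $i'$ at $(\tilde x,\tilde y)$ becoming $i'$ — again net zero prime count and no diagonal change.

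The one genuinely different case is R2(d). Here, by hypothesis the northwest-most box $(\tilde x,\tilde y)$ of the relevant $i$-ribbon satisfies $\tilde x=\tilde y$, i.e. it is on the diagonal, and by property (vi) of Remark~\ref{shtab-e-remark} we have $T_{\tilde x+1,\tilde x+1}\in\{i+1',i+1\}$ — that is, $(\tilde x+1,\tilde x+1)$ is also a diagonal box. The operation first changes $T_{xy}$ from $i+1'$ to $i$ (destroying one prime, with $(x,y)$ off-diagonal by an argument as in R2(a)/(c)), and then ``interchanges the primes'' on the two diagonal boxes $T_{\tilde x\tilde x}$ and $T_{\tilde x+1,\tilde x+1}$. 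The subtle point is exactly which of these two diagonal boxes is primed before and after. Before the move $(\tilde x,\tilde y)$ carries an entry in $\{i',i\}$ (it is the start of an $i$-ribbon so its value is $i'$ or $i$) and $(\tilde x+1,\tilde x+1)$ carries $i+1'$ or $i+1$; one should check using the semistandardness and ribbon structure — as in the analogous property (vi) for $f_i$ in Remark~\ref{shtab-f-remark}(vi) — that in fact exactly one of these two diagonal boxes is primed before the move, so the ``interchange of primes'' genuinely moves a single prime from one diagonal box to the other, keeping $\primes_\diag$ constant but changing which diagonal position is primed, while also not changing $\primes$. Combined with the loss of one prime from $T_{xy}$, this gives $\primes(e_i(T))=\primes(T)-1$ and $\primes_\diag(e_i(T))=\primes_\diag(T)$, and the set of primed diagonal positions does change (so the final clause of the lemma, which asserts coincidence of primed diagonal sets only ``in all other cases,'' correctly excludes R2(d)). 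The main obstacle is precisely this bookkeeping in R2(d): verifying that the pre-move primed status of the two diagonal boxes is ``exactly one primed'' so that the interchange is a genuine transfer rather than a no-op, and confirming that the equality $\primes_\diag(T)=\primes_\diag(e_i(T))$ still holds even though the primed diagonal \emph{set} moves. I would dispatch this by observing — exactly parallel to Remark~\ref{shtab-f-remark}(vi) and its justification — that if both diagonal boxes were primed, or both unprimed, then $(x,y)$ would fail to lie in $\unpaired_i(T)$, contradicting the hypothesis. Once all seven subcases are verified, the lemma follows, and I would simply write ``This is immediate from Definition~\ref{shtab-e} and Remark~\ref{shtab-e-remark}, arguing exactly as for Lemma~\ref{shtab-primed-lem}.''

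Since Lemma~\ref{shtab-primed-lem2} is explicitly flagged in the text as following from Remark~\ref{shtab-e-remark} (``An analogue of Lemma~\ref{shtab-primed-lem} follows from this remark''), the expected proof is short: one sentence pointing to the case analysis, perhaps with the single sentence of justification for R2(d) spelled out as above. I would not grind through all the diagrammatic subcases in print; instead I would state that cases R1(a)--R1(c) and R2(a)--R2(c) manifestly preserve both $\primes$ and the primed diagonal set (using properties (i)--(v) of Remark~\ref{shtab-e-remark} to see that the boxes involved are off-diagonal), and that case R2(d) removes one prime from the off-diagonal box $(x,y)$ and transfers a prime between the diagonal boxes $(\tilde x,\tilde x)$ and $(\tilde x+1,\tilde x+1)$ — which by property (vi) are both in the domain and of which exactly one is primed beforehand — so that $\primes$ drops by one while $\primes_\diag$ is unchanged.
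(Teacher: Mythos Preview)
Your approach is exactly what the paper intends: the text states ``An analogue of Lemma~\ref{shtab-primed-lem} follows from this remark'' and gives no further proof, so the intended argument is precisely the case-by-case check against Definition~\ref{shtab-e} using Remark~\ref{shtab-e-remark}, just as Lemma~\ref{shtab-primed-lem} follows from Remark~\ref{shtab-f-remark}.

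One point in your R2(d) analysis is wrong and should be dropped. You assert that exactly one of $T_{\tilde x,\tilde x}$ and $T_{\tilde x+1,\tilde x+1}$ is primed, arguing that otherwise $(x,y)$ would not lie in $\unpaired_i(T)$. This is false: the paper's own second example under case R2(d) has $T_{11}=1$ and $T_{22}=2$, both unprimed, yet $(x,y)=(1,3)$ is unpaired. Fortunately the claim is also unnecessary. By the definition of ``interchanging the primes'' given just before Definition~\ref{shtab-f}, the operation on the two diagonal entries is a no-op when both are primed or both are unprimed, and otherwise swaps one prime for another; in every case it preserves both $\primes$ and $\primes_\diag$. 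Since $(x,y)$ is off-diagonal (as $T_{x,y-1}$ exists, forcing $y>x$), the change $T_{xy}\colon i+1'\mapsto i$ removes exactly one off-diagonal prime. This already gives $\primes(e_i(T))=\primes(T)-1$ and $\primes_\diag(e_i(T))=\primes_\diag(T)$, which is all the lemma asserts for R2(d); whether the primed diagonal \emph{set} changes is irrelevant, since the final clause of the lemma explicitly excludes this case.
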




To define the remaining operators $f_{\bar 1}$, $e_{\bar 1}$, $f_0$, and $e_0$, we require that $T $ be a semistandard shifted tableau
(rather than a skew shifted tableau). 
When $T$ has no primed diagonal entries, the next two definitions reduce to the formulas
in \cite[Defs. 4.4 and 4.5]{AssafOguz} and \cite[Lems. 3.1 and 3.2]{Hiroshima2018}.

\begin{definition}\label{tab-qf-def}
If $T$ has a $2'$ in its first row, or no entries equal to $1'$ or $1$,
then set $f_{\bar 1}(T):=0$:
\[
f_{\bar 1}\(\ytab{\none& 3 \\ 1 & 2' & 2}\) = f_{\bar 1}\(\ytab{\none &3 \\ 2 & 2}\)=0.
\]
Otherwise form $f_{\bar 1}(T)$  by changing the last entry 
equal to $1'$ or $1$ in the first row of $T$ to $2'$, unless this entry is unprimed and on the diagonal,
in which case it changes to $2$:
\[
f_{\bar 1}\(\ytab{\none& 3 \\ 1' & {\color{red}1} & 2}\) = \ytab{\none& 3 \\ 1' & {\color{red}2'} & 2},
\quad
f_{\bar 1}\(\ytab{\none& 3 \\ {\color{red}1} & 2 }\) = \ytab{\none& 3 \\ {\color{red}2} & 2 },
\quand
f_{\bar 1}\(\ytab{\none& 3 \\ {\color{red}1'} & 2 }\) = \ytab{\none& 3 \\ {\color{red}2'} & 2 }.
\]
\end{definition}

\begin{definition}
If $T_{11} \in \{ 2',2\}$, then form $e_{\bar 1}(T)$ from $T$ by subtracting one from this entry: 
\[
e_{\bar 1}\(\ytab{\none& 3 \\ {\color{red}2'} & 2}\) = \ytab{\none &3 \\ {\color{red}1'} & 2}
\quand
e_{\bar 1}\(\ytab{\none &3 \\ {\color{red}2} & 2}\) = \ytab{\none &3 \\ {\color{red}1} & 2}.
\]
If $T_{11} \notin \{2', 2\}$ but the first row of $T$ has a (necessarily unique) entry $2'$,
then form $e_{\bar 1}(T)$ from $T$ by changing this entry to $1$;
otherwise  set $e_{\bar 1}(T):=0$:
\[
e_{\bar 1}\(\ytab{\none& 3 \\ 1 & {\color{red}2'} & 2}\) = \ytab{\none &3 \\ 1 & {\color{red}1} & 2}
\quand
e_{\bar 1}\(\ytab{\none &3 \\ 1 & 2 & 2}\) = 0.
\]
\end{definition}


\begin{definition}\label{shtab-f0}
 If $T_{11} \neq 1$ then set $f_0(T):=0$, and otherwise
form $f_0(T)$ from $T$ by changing $T_{11}$  to $1'$. 
If $T_{11} \neq 1'$ then set $e_0(T):=0$, and otherwise
form $e_0(T)$ from $T$ by changing $T_{11}$  to $1$. 
\end{definition}



Given a skew shifted tableau $T$, form $\unprime_\diag(T)$ by removing the primes
from all diagonal entries.
This operation commutes with the maps $e_i$ and $f_i$ in the following sense.

\begin{lemma}\label{tab-unprime-lem}
Let $T$ be a skew shifted tableau of shape $\lambda/\mu$ and suppose $i$ is a positive integer.
Then 
$e_i ( \unprime_\diag(T)) = \unprime_\diag ( e_i(T))$ and $ f_i ( \unprime_\diag(T))= \unprime_\diag ( f_i(T))$
on setting $\unprime_\diag(0) := 0$. 
When $\mu = \emptyset$ the same identities hold 
for $i=\bar 1$.
\end{lemma}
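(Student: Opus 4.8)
The plan is to prove Lemma~\ref{tab-unprime-lem} by a careful case-by-case comparison of the definitions of $e_i$, $f_i$ (for $i > 0$) and $f_{\bar 1}$, $e_{\bar 1}$ with the operation $\unprime_\diag$. The essential point, which I would isolate first as a preliminary observation, is that the pairing procedure in Definition~\ref{tab-pair-def} depends only on $\shword(T|_{[i,i+1]})$, and this reading word is unaffected by toggling the primes on diagonal entries (as already noted in the text just after the definition of $\shword$). Hence $\unpaired_i(T)$ and $\unpaired_i(\unprime_\diag(T))$ are literally the same sequence of positions, and in particular the distinguished position $(x,y)$ selected by $f_i$ (the last unpaired position with entry in $\{i',i\}$) or by $e_i$ (the first unpaired position with entry in $\{i+1',i+1\}$) is the same for $T$ and for $\unprime_\diag(T)$ — with one caveat I address below.

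First I would handle the operators $f_i$ and $e_i$ for $i > 0$. Write $U := \unprime_\diag(T)$. Since the entries of $U$ and $T$ agree off the diagonal and an unprimed diagonal entry is obtained from a primed one only by removing the prime, the two tableaux have the same value at every position after we remove diagonal primes; moreover $U$ is semistandard (or a semistandard skew shifted tableau) exactly when $T$ is. I would then run through cases (L1) and (L2) of Definition~\ref{shtab-f} and check in each that the cell modifications commute with $\unprime_\diag$. The only subtlety is case L1(d) and case L2(c)/(a), where the modification can involve diagonal cells $(\tilde x,\tilde x)$: in L1(d) the rule \emph{interchanges the primes} on $T_{\tilde x\tilde x}$ and $T_{\tilde x - 1,\tilde x -1}$, but $\unprime_\diag$ has already removed the prime from $T_{\tilde x\tilde x}$, so on $U$ this interchange is forced to do nothing, which is precisely $\unprime_\diag$ applied to the result on $T$. (Here Remark~\ref{shtab-f-remark}(vi) guarantees $T_{\tilde x-1,\tilde x-1}\in\{i',i\}$, so the interchange on $T$ adds a prime to the diagonal entry $T_{\tilde x \tilde x}$ — consistent with Lemma~\ref{shtab-primed-lem}.) In L2(a) the changed entries are $T_{xy}$ and $T_{x+1,y}$ with $x\ne x+1=y$ impossible by Remark~\ref{shtab-f-remark}(iii), so no diagonal cell is touched; similarly L2(c) touches $(x,y)$ and $(\tilde x,\tilde y)$, both off-diagonal by Remark~\ref{shtab-f-remark}(v). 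So in every branch, applying $f_i$ then $\unprime_\diag$ equals applying $\unprime_\diag$ then $f_i$, and the case $f_i(T)=0 \iff f_i(U)=0$ is immediate since the selecting condition ``exists an unpaired position with entry in $\{i',i\}$'' is insensitive to diagonal primes. The argument for $e_i$ is identical, using Definition~\ref{shtab-e} and Remark~\ref{shtab-e-remark} in place of their L-analogues.

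Next I would handle $i = \bar 1$ under the hypothesis $\mu = \emptyset$, so $T$ is an honest semistandard shifted tableau. Here one simply inspects Definitions~\ref{tab-qf-def} and the definition of $e_{\bar 1}$. For $f_{\bar 1}$: the vanishing condition (a $2'$ in row $1$, or no $1'$ or $1$ anywhere) is unchanged by $\unprime_\diag$; otherwise $f_{\bar 1}$ changes the last row-$1$ entry equal to $1'$ or $1$ to $2'$, \emph{except} if that entry is unprimed and on the diagonal (i.e.\ it is $T_{11}=1$), in which case it changes to $2$. If the modified cell is not $(1,1)$, it is off-diagonal and $\unprime_\diag$ commutes trivially. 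If the modified cell is $(1,1)$: on $U$ we have $U_{11}=1$ and $f_{\bar 1}$ sends it to $2$; on $T$ either $T_{11}=1$ (sent to $2$) or $T_{11}=1'$ (sent to $2'$), and $\unprime_\diag(2)=2=\unprime_\diag(2')$ — and in the latter case one must check the selected cell is still $(1,1)$ for $U$, which holds because $T_{11}=1'$ forces, by semistandardness, that $(1,1)$ is the last row-$1$ cell with value in $\{1',1\}$ for both tableaux. A symmetric inspection of $e_{\bar 1}$ (three branches: $T_{11}\in\{2',2\}$, a $2'$ elsewhere in row $1$, or neither) finishes the claim; the potentially diagonal branch is $T_{11}\in\{2',2\}$, where $U_{11}=2$, and subtracting one gives $1$ on $U$ versus $1'$ or $1$ on $T$, again matched by $\unprime_\diag$. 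I expect this last, $\bar 1$-branch to be the least routine part, because it is where the ``diagonal exception'' in the definitions interacts most delicately with $\unprime_\diag$, and where one must invoke semistandardness to see the selected cell does not move; everything else is a mechanical but lengthy verification facilitated entirely by the reading-word observation and by Remarks~\ref{shtab-f-remark} and \ref{shtab-e-remark}.
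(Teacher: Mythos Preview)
Your approach is the same as the paper's: both rest on the observation that $\shword(T|_{[i,i+1]})$ is unchanged by $\unprime_\diag$, so $\unpaired_i(T)=\unpaired_i(U)$ where $U:=\unprime_\diag(T)$, and then proceed by case analysis on Definitions~\ref{shtab-f} and \ref{shtab-e}.

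However, your sketch has a gap. You write that ``the only subtlety is case L1(d) and case L2(c)/(a), where the modification can involve diagonal cells,'' and you then verify that in those cases the \emph{output} agrees after unpriming. But you never address the possibility that the \emph{case branch itself} differs for $T$ and $U$. This happens exactly when the selected position $(x,y)$ is on the diagonal and $T_{xy}=i'$: then $T$ falls under case L2 while $U$ (with $U_{xy}=i$) falls under case L1. Since $(x+1,y)$ and $(x,y-1)$ lie below the diagonal and hence do not exist, and since Remark~\ref{shtab-f-remark}(v) rules out L2(c) when $x=y$, one checks that $T$ is forced into L2(b) (sending $i'\mapsto i+1'$) while $U$ is in L1(b) (sending $i\mapsto i+1$); these agree after $\unprime_\diag$. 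The analogous transition for $e_i$ is R2(b) versus R1(b). This is precisely the single exception the paper's proof isolates and verifies. Your promised ``caveat I address below'' is never actually stated; it should be this case-branch transition, not the output-touches-diagonal cases you list.

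Your treatment of the $i=\bar 1$ case is more detailed than the paper's (which simply says it is ``easily checked''), and is correct.
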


\begin{proof}
The desired identities are easily checked when $\mu = \emptyset$ and $i=\bar 1$. Assume $i \in [n-1]$.
Since $T$ and $\unprime_\diag(T)$ have the same shifted reading word, 
if follows that $f_i(T) =0$ if and only if $f_i(\unprime_\diag(T)) = 0$.
Assume this does not occur, and 
let $(x,y)$ be the unpaired position that arises in Definition~\ref{shtab-f} when applying $f_i$ to $T$.
This $(x,y)$ must also be the unpaired position that arises in Definition~\ref{shtab-f} when evaluating $f_i(\unprime_\diag(T))$.

The properties in Remark~\ref{shtab-f-remark} ensure that 
whichever case of Definition~\ref{shtab-f} applies when evaluating $f_i(T)$,
the same case applies when evaluating $f_i(\unprime_\diag(T))$, with one exception.
Outside this exception, it is evident that $f_i(\unprime_\diag(T)) = \unprime_\diag(f_i(T))$.
The exception is that if $x=y$ and
case L2(b) of Definition~\ref{shtab-f} is invoked when applying $f_i$ to $T$,
then case L1(b) applies when evaluating $f_i(\unprime_\diag(T))$.
However, in this situation it is clear from Definition~\ref{shtab-f} that we again have
 $f_i(\unprime_\diag(T))=\unprime_\diag(f_i(T))$.

Checking that $e_i ( \unprime_\diag(T)) = \unprime_\diag ( e_i(T))$ follows by a similar argument.
\end{proof}

Choose strict partitions $\mu\subset \lambda$ and let 
$\SShTab_n(\lambda/\mu)$  be the set of (semistandard) skew  shifted tableaux of shape $\lambda/\mu$ 
with all entries at most $n$.
Let $\ShTab_n(\lambda/\mu)$ be the subset of tableaux in $\SShTab_n(\lambda/\mu)$
with no primed diagonal entries.
The \defn{weight} of  $T\in \SShTab_n(\lambda/\mu)$ 
is the vector $\weight(T) := (a_1,a_2,\dots,a_n)$
where $a_i$ is the number of entries in $T$ equal to $i'$ or $i$.

Restricted to $\ShTab_n(\lambda/\mu)$,
the operators $e_i$ and $f_i$ for $i \in \{\bar 1,1,2,\dots,n-1\}$
coincide with the ones in \cite{AssafOguz,HPS,Hiroshima2018}.
From those papers (see in particular \cite[Thm 4.8]{AssafOguz}), we  know that
if $\lambda \in \NN^n$ is any strict partition, then these operators 
make $\ShTab_n(\lambda)$ into a connected normal $\q_n$-crystal.

\begin{corollary}\label{skew-norm-cor}
When nonempty, the set $\ShTab_n(\lambda/\mu)$ 
is a normal (but not necessarily connected) $\gl_n$-crystal relative  to the operators $e_i$ and $f_i$  defined above. 
\end{corollary}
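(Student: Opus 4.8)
The plan is to reduce the skew case to the known straight-shape case via a standard ``sliding'' or ``restriction'' argument. Since $\ShTab_n(\lambda)$ is a connected normal $\q_n$-crystal for every strict partition $\lambda \in \NN^n$ (hence in particular a normal $\gl_n$-crystal), and since the operators $e_i,f_i$ for $i \in [n-1]$ act on a skew shifted tableau $T$ only through the subtableau $T|_{[i,i+1]}$ and the pairing procedure on its reading word $\shword(T|_{[i,i+1]})$, the $\gl_n$-crystal structure on $\ShTab_n(\lambda/\mu)$ depends on $T$ in a very local way. First I would check that $\ShTab_n(\lambda/\mu)$ is a $\gl_n$-crystal at all, i.e. that axioms (S1) and (S2) of Definition~\ref{crystal-def} hold; this is essentially contained in \cite{AssafOguz}, whose operators agree with ours on this set, but one should confirm that the arguments there do not use $\mu = \emptyset$.

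The real content is normality. The approach I would take is to exhibit each full $\gl_n$-subcrystal of $\ShTab_n(\lambda/\mu)$ as a full subcrystal of $\BB_n^{\otimes m}$ for suitable $m$. The cleanest route is via reading words: the map $T \mapsto \shword(T)$ sends $\ShTab_n(\lambda/\mu)$ into words of length $m := |\SD_{\lambda/\mu}|$ in the alphabet $[n]$, and one verifies that it intertwines the operators $e_i,f_i$ for $i \in [n-1]$ with the standard $\gl_n$-word operators (the ``signature rule'' on $\cW_n(m)$, which is the unprimed analogue of the operators in Section~\ref{words-sect}). Indeed, Definitions~\ref{shtab-f} and \ref{shtab-e} are designed precisely so that the pairing of boxes matches the bracketing of the reading word, so $\shword$ is a morphism of $\gl_n$-crystals from $\ShTab_n(\lambda/\mu)$ to $\cW_n(m) \cong \BB_n^{\otimes m}$. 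It remains to see that the image is a union of full subcrystals, equivalently that $\shword$ is injective on each connected component (or even globally injective on $\ShTab_n(\lambda/\mu)$, which is true since the shape $\lambda/\mu$ together with the reading word recovers $T$). Since any full subcrystal of $\BB_n^{\otimes m}$ is normal, this gives normality of $\ShTab_n(\lambda/\mu)$.

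Alternatively, and perhaps more robustly, I would argue by a jeu-de-taquin-type embedding $\ShTab_n(\lambda/\mu) \hookrightarrow \ShTab_n(\nu)$ for a strict partition $\nu$ containing a translate of $\lambda/\mu$: one shows that rectification (or simply the reading-word morphism composed with the known identification of $\BB_n^{\otimes m}$-subcrystals with straight-shape tableaux crystals) commutes with $e_i,f_i$. Either way the key input is that the operators on skew shifted tableaux for $i \in [n-1]$ are genuinely ``word-local'' and agree with the Assaf--Oguz operators, which have already been shown to realize the standard $\gl_n$ signature rule on the reading word. I expect the main obstacle to be bookkeeping: verifying carefully that the box-pairing of Definition~\ref{tab-pair-def} is literally the parenthesis-matching of $\shword(T|_{[i,i+1]})$ and that each of the cases L1(a)--L2(c), R1(a)--R2(d) corresponds to incrementing/decrementing exactly the letter at the selected unpaired position in the reading word (so that primed/unprimed toggles do not disturb the word). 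Granting that, the normality conclusion is immediate; note also that connectedness genuinely can fail for skew shapes, which is why the statement only claims ``not necessarily connected''.
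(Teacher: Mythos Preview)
Your route via the reading word is genuinely different from the paper's. The paper never touches $\shword$; instead it fixes $k=\ell(\mu)$, defines the Levi restriction functor $\sF$ from $\gl_{k+n}$-crystals to $\gl_n$-crystals that keeps only the operators $e_{k+i},f_{k+i}$ and the last $n$ weight coordinates, checks that $\sF$ preserves normality (since $\sF(\BB_{k+n}) \cong \One^{\sqcup k}\sqcup\BB_n$ and $\sF$ respects $\otimes$), and then identifies $\ShTab_n(\lambda/\mu)$ with the union of full subcrystals of $\sF(\ShTab_{k+n}(\lambda))$ consisting of tableaux carrying entry $i$ throughout row $i$ of $\SD_\mu$ and only entries $>k$ on $\SD_{\lambda/\mu}$. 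This reduces everything to the known straight-shape normality without any reading-word case analysis.

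There is also a concrete error in your argument. The parenthetical claim that $\shword$ is globally injective on $\ShTab_n(\lambda/\mu)$ is false: take $\lambda=(3,2)$, $\mu=(2,1)$, so $\SD_{\lambda/\mu}=\{(1,3),(2,3)\}$; the tableaux with $(1,3)=1$ and $(2,3)\in\{2',2\}$ are distinct elements of $\ShTab_2(\lambda/\mu)$ yet both have $\shword=21$. The shape together with the reading word does \emph{not} recover the primed/unprimed status of off-diagonal entries. Your weaker claim (injectivity on each connected component) is what you actually need, but you have supplied no independent argument for it, and establishing it directly is essentially as hard as proving normality itself. Finally, be aware that in cases such as L1(a) and L1(c) two boxes change and one of them migrates between a row block $R_x$ and a column block $C_y$ of the reading word, so letters shift position as well as value; the intertwining you assert does hold, but it is considerably more than the one-line ``bookkeeping'' you suggest.
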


\begin{proof}
Let $k := \ell(\mu)$. If $\cB$ is a $\gl_{k+n}$-crystal with weight map $\tilde \weight$
and crystal operators $\tilde e_i$ and $\tilde f_i$
then $\cB$ may be regarded as a $\gl_n$-crystal with weight map $\weight (b) := ( \tilde \weight(b)_{k+1},\dots, \tilde \weight(b)_{k+n} ) \in \ZZ^n$
and crystal operators $e_i := \tilde e_{k+i}$ and $f_i := \tilde f_{k+i}$ for $i \in [n-1]$.
This gives a functor $\sF$ from $\gl_{k+n}$-crystals to $\gl_n$-crystals.
It is straightforward to see that $\sF(\cB \otimes \cC) \cong \sF(\cB) \otimes \sF(\cC)$
and $\sF(\BB_{k+n}) \cong \One   \sqcup \cdots \sqcup \One \sqcup \BB_n$, so this functor sends normal $\gl_{k+n}$-crystals
to normal $\gl_n$-crystals.

Now consider the subset of $T\in \ShTab_{k+n}(\lambda)$ that have $i$ in all boxes in row $i$ of $\SD_\mu \subset \SD_\lambda$
and only entries greater than $k$ in $\SD_{\lambda/\mu}$.
This is a union of full subcrystals of the normal $\gl_n$-crystal
$\sF(\ShTab_{k+n}(\lambda))$, and is isomorphic to the 
 prospective crystal structure on $\ShTab_n(\lambda/\mu)$.
\end{proof}

 Theorem~\ref{sshtab-upgrade} and Corollary~\ref{sshtab-upgrade-cor}
 will give $\qq_n$-analogues of the above facts for sets 
 of shifted tableaux with  
primes allowed on the diagonal.
Here, we only prove this easier statement: 

\begin{proposition}
When nonempty, the set $\SShTab_n(\lambda)$ (respectively, $\SShTab_n(\lambda/\mu)$) is a $\qq_n$-crystal 
(respectively, $\gl_n$-crystal)
relative to the operators $e_i$ and $f_i$ defined above.
\end{proposition}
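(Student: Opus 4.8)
The plan is to reduce the claim to results already established. For the skew case the work is essentially done: Corollary~\ref{skew-norm-cor} shows that $\SShTab_n(\lambda/\mu)$ is a normal $\gl_n$-crystal under $e_i,f_i$ for $i\in[n-1]$, and there is nothing more to verify for the assertion that $\SShTab_n(\lambda/\mu)$ is a $\gl_n$-crystal. So I would state this half as an immediate consequence of Corollary~\ref{skew-norm-cor}, with one sentence of justification. The substantive content is the $\qq_n$-crystal claim for the straight-shape set $\SShTab_n(\lambda)$.

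For that, first I would record that $\ShTab_n(\lambda)$ is a $\q_n$-crystal by the cited results of \cite{AssafOguz,HPS,Hiroshima2018}, and then transfer this to $\SShTab_n(\lambda)$ via the map $\unprime_\diag:\SShTab_n(\lambda)\to\ShTab_n(\lambda)$. The key input is Lemma~\ref{tab-unprime-lem}, which says $\unprime_\diag$ intertwines $e_i,f_i$ for all $i\in\{\bar1,1,\dots,n-1\}$; this immediately gives axiom (S2) for all $i$, axiom (P3)/(P2) for the queer operators (since the $\varepsilon_i,\varphi_i$ are determined through $\unprime_\diag$), and the $\gl_n$-crystal axioms (S1) once one checks that whenever $\unprime_\diag(f_i(T))=\unprime_\diag(S)$ with $S$ the actual output on the $\unprime_\diag$ side, the primed-diagonal positions of $f_i(T)$ are forced, so that $f_i(T)=c \Leftrightarrow e_i(c)=T$ lifts correctly. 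Here Lemmas~\ref{shtab-primed-lem} and \ref{shtab-primed-lem2} are exactly what pins down the diagonal primes: outside cases L1(d)/R2(d) the primed diagonal set is unchanged, and in those two cases the change is a single explicit prime-interchange on the diagonal, which is invertible. So (S1) and the $\q_n$ axioms (P1)--(P3) follow.

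It then remains to verify the extra axioms (Q1), (Q2), (Q3) of Definition~\ref{qq-def} for the operators $e_0,f_0$ from Definition~\ref{shtab-f0}. These are essentially local at the corner box $T_{11}$: $f_0$ toggles $T_{11}=1\mapsto 1'$ and $e_0$ toggles $1'\mapsto 1$, so $e_0(b)=c\Leftrightarrow f_0(c)=b$ with $\weight$ unchanged, giving the first part of (Q1); and (Q3) is immediate since $\weight(T)_1=0$ means no entry equals $1$ or $1'$, hence $T_{11}\notin\{1,1'\}$ and $\varepsilon_0=\varphi_0=0$, whereas $\weight(T)_1>0$ forces exactly one of $T_{11}=1$ or $T_{11}=1'$, giving $(\varepsilon_0+\varphi_0)(T)=1$. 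The commutation statements (Q2) and the ``string-length preservation'' clause in (Q1)—that $e_0,f_0$ leave $\varepsilon_i,\varphi_i$ unchanged for $i\in\{\bar1,1,\dots,n-1\}$—are the one genuinely fiddly point: they hold because toggling the prime on the diagonal entry $T_{11}$ does not change $\shword(T)$ (as noted after the definition of $\shword$), hence does not affect the pairing procedure of Definition~\ref{tab-pair-def} or the operators $f_i,e_i$ for $i\ge1$; and for $i=\bar1$ one checks directly from Definition~\ref{tab-qf-def} that whether $T_{11}$ carries a prime is irrelevant to the action of $f_{\bar1},e_{\bar1}$ on a straight-shape tableau.

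I expect the main obstacle to be the bookkeeping in lifting (S1) (the $e_i\leftrightarrow f_i$ reciprocity for $i\in[n-1]$) from $\ShTab_n(\lambda)$ to $\SShTab_n(\lambda)$ through $\unprime_\diag$, precisely because $f_i$ can move a prime onto or off the diagonal via cases L1(d) and L2(c)--(v): one must argue that the primed-diagonal data of the output is uniquely determined by the input and the case applied, so that the well-known reciprocity downstairs forces reciprocity upstairs. The cleanest route is to invoke Lemmas~\ref{shtab-primed-lem} and \ref{shtab-primed-lem2} together with Remarks~\ref{shtab-f-remark} and \ref{shtab-e-remark}: these show the two exceptional cases are mutually inverse operations on a single diagonal pair of boxes, after which the lift is formal. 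Everything else—(S2), (P1)--(P3), (Q1)--(Q3)—then follows either from Lemma~\ref{tab-unprime-lem} or from the locality of $e_0,f_0$ at $T_{11}$, so the proof is short once this point is handled.
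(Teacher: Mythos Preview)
Your approach is essentially the paper's, but there is one genuine error at the start. Corollary~\ref{skew-norm-cor} is about $\ShTab_n(\lambda/\mu)$, the set with \emph{no} primed diagonal entries, not about $\SShTab_n(\lambda/\mu)$. So the skew $\gl_n$-crystal claim for $\SShTab_n(\lambda/\mu)$ is not already done; it is precisely what must be proved. The lifting argument you sketch for the straight case---use Lemma~\ref{tab-unprime-lem} to get $\unprime_\diag(e_i f_i(T))=\unprime_\diag(T)$ from the known $\gl_n$-crystal structure on $\ShTab_n(\lambda/\mu)$, then invoke Lemmas~\ref{shtab-primed-lem} and \ref{shtab-primed-lem2} to recover the primed diagonal positions---is exactly what establishes (S1) for $\SShTab_n(\lambda/\mu)$, and the paper in fact runs this argument in the skew setting first and then specializes to $\mu=\emptyset$. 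So you should reorganize: prove the $\gl_n$-crystal axioms for $\SShTab_n(\lambda/\mu)$ via your lifting argument, then take $\mu=\emptyset$ and verify the remaining $\q_n$- and $\qq_n$-axioms as you describe.

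The rest of your outline matches the paper's proof closely. One small refinement on your handling of the exceptional cases: the paper argues that since $\primes(e_i f_i(T))=\primes(T)$ always (by combining Lemmas~\ref{shtab-primed-lem} and \ref{shtab-primed-lem2}), either neither of L1(d)/R2(d) occurs (and the primed diagonal set is unchanged throughout), or both occur, in which case the unpaired position $(x,y)$ must coincide for the two steps since $\unprime_\diag(e_i f_i(T))=\unprime_\diag(T)$, forcing $e_i f_i(T)=T$. This is slightly cleaner than arguing the two cases are ``mutually inverse'' directly.
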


The set $\SShTab_n(\lambda)$ is empty if and only if $\ell(\lambda)> n$.
For an example of this crystal see Figure~\ref{tab-fig}.

\begin{proof}
Suppose  $T \in \SShTab_n(\lambda/\mu)$ and $i \in [n-1]$.
First assume $ f_i(T)\neq 0 $.
Then   
\be\label{uun-eq}\unprime_\diag(e_i(f_i(T))) = e_i(f_i(\unprime_\diag(T))) = \unprime_\diag(T)\ee by Lemma~\ref{tab-unprime-lem} and Corollary~\ref{skew-norm-cor}.
It follows that we have $\primes(e_i(f_i(T))) = \primes(T)$ by Lemmas~\ref{shtab-primed-lem} and \ref{shtab-primed-lem2}.

If $\primes(f_i(T)) = \primes(T)$, then Lemma~\ref{shtab-primed-lem} tells us
that
applying $f_i$ to $T$ must not invoke case L1(d) in Definition~\ref{shtab-f}
while 
Lemma~\ref{shtab-primed-lem2} tells us
that
 applying $e_i$ to $f_i(T)$ must not invoke case R2(d) in Definition~\ref{shtab-e}.
But this means that the sets of primed diagonal positions in $T$, $f_i(T)$, and $e_i(f_i(T))$ all coincide, so we have $e_i(f_i(T)) = T$ in view of \eqref{uun-eq}.

If $\primes(f_i(T)) \neq \primes(T)$, on the other hand, then Lemmas~\ref{shtab-primed-lem} and \ref{shtab-primed-lem2}
imply that we must be in case L1(d) of Definition~\ref{shtab-f} when applying $f_i$ to $T$
and in case L2(d) of Definition~\ref{shtab-e} when applying $e_i$ to $f_i(T)$.
The only way that \eqref{uun-eq} can hold in this situation is if the unpaired position $(x,y)$ arising in both definitions is the same, but then we again have $e_i(f_i(T)) = T$.

When $e_i(T) \neq 0$ a symmetric argument shows that $f_i(e_i(T)) = T$.
Since $\ShTab_n(\lambda/\mu)$ is a $\gl_n$-crystal and $\unprime_\diag$ is a weight-preserving map,
this suffices by Lemma~\ref{tab-unprime-lem}
to show that $\SShTab_n(\lambda/\mu)$ is a $\gl_n$-crystal.
Taking $\mu =\emptyset$, we conclude that $\SShTab_n(\lambda)$  is also a $\gl_n$-crystal.
Checking the remaining axioms to show that $\SShTab_n(\lambda)$ is a $\qq_n$-crystal is straightforward.
\end{proof}

The characters of 
$\SShTab_n(\lambda/\mu)$ and $\ShTab_n(\lambda/\mu)$ 
are the \defn{skew Schur $Q$- and $P$-polynomials}
$\ch(\SShTab_n(\lambda/\mu)) = Q_{\lambda/\mu}(x_1,x_2,\dots,x_n)$
and
$\ch(\ShTab_n(\lambda/\mu)) = P_{\lambda/\mu}(x_1,x_2,\dots,x_n)$,
defined by setting $x_{n+1}=x_{n+2}=\dots=0$ in the relevant power series discussed in \cite[\S8]{Stembridge1989}.
%
%

\begin{figure}[h]
\begin{center}
\input{qq3-shifted-tableau-crystal.tex}
\end{center}
\caption{Crystal graph of $\qq_3$-crystal $\SShTab_3(\lambda)$ for $\lambda=(2,1)$.
In this picture we draw styled edges without labels for clarity. Solid blue and red arrows are edges 
$b \xrightarrow{\ 1\ } c$ and $b \xrightarrow{\ 2\ } c$, respectively. Dotted green and dashed blue arrows 
are edges $b \xrightarrow{\ 0\ } c$ and $b \xrightarrow{\ \bar 1\ } c$, respectively.
}
\label{tab-fig}
\end{figure}

\subsection{Highest and lowest weights} \label{tab-highest-sect}

Let $\lambda \in \NN^n$ be a strict partition.
Results in \cite{Hiroshima2018} identify the 
unique $\q_n$-highest and $\q_n$-lowest weight elements in the connected normal $\q_n$-crystal $\ShTab_n(\lambda)$.
Define
$\Thighest_\lambda$
to be the shifted tableau of shape $\lambda$ whose entries in row $i$ are all $i$. Define $\Tlowest_\lambda$
to be the unique shifted tableau of shape $\lambda$ with no primed diagonal positions
whose entries along
the ribbon 
$\SD_{(\lambda_{i+1},\lambda_{i+2},\lambda_{i+3},\dots) / (\lambda_{i+2},\lambda_{i+3},\dots)}$
are each $(n-i)'$ or $n-i$, for $i=0,1,2,\dots,n-1$. 
The latter construction depends on $n$, although we suppress this in our notation.
If $n=5$, then 
\[
\Thighest_{(7,4,2)} = \ytab{ \none & \none & 3 &3 \\ \none & 2 & 2 & 2  & 2 \\ 1 & 1 & 1 & 1 & 1 &1 & 1 }
\quand
\Tlowest_{(7,4,2)} = \ytab{ \none & \none & 5 &5 \\ \none & 4 & 4 & 5' & 5 \\ 3 & 3 & 4' & 4 & 5' &5 & 5 }.
\]

\begin{theorem}[{\cite[Thm. 3.3]{Hiroshima2018}}]
\label{thigh-thm}
The $\q_n$-crystal $\ShTab_n(\lambda)$ is connected with unique $\q_n$-highest weight element $\Thighest_\lambda$
and unique $\q_n$-lowest weight element $\Tlowest_\lambda$.
\end{theorem}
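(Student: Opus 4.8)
The plan is to derive the statement from Theorem~\ref{main-q-thm}, Proposition~\ref{low-prop}, and the facts recalled just above: that $\ShTab_n(\lambda)$ is a connected normal $\q_n$-crystal with $\ch(\ShTab_n(\lambda)) = P_\lambda(x_1,\dots,x_n)$. If one prefers not to take connectedness as an input, it can be recovered first: $\ShTab_n(\lambda)$ is normal, hence a disjoint union of connected normal $\q_n$-crystals, each of whose characters is a Schur $P$-polynomial by Theorem~\ref{main-q-thm}, and $\sum_C \ch(C) = P_\lambda$ then forces a single summand by linear independence of the $P_\nu$.

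Since $\ShTab_n(\lambda)$ is connected and normal, Theorem~\ref{main-q-thm} provides a unique $\q_n$-highest weight element $H$, whose weight is a strict partition $\mu \in \NN^n$ with $\ch(\ShTab_n(\lambda)) = P_\mu(x_1,\dots,x_n)$; comparing with $\ch(\ShTab_n(\lambda)) = P_\lambda$ and using that the $P_\nu$ for strict $\nu \in \NN^n$ are linearly independent gives $\mu = \lambda$. Next I would observe that the monomial $x_1^{\lambda_1}\cdots x_n^{\lambda_n}$ occurs in $P_\lambda$ with coefficient $1$: a shifted semistandard tableau of shape $\lambda$ and content $\lambda$ must, by column-strictness of unprimed entries and the minimality of the letter $1$, place all $\lambda_1$ copies of $1$ or $1'$ in the first row, and then an easy induction forces each row $i$ to be filled entirely with unprimed $i$'s, the ``no primed diagonal'' hypothesis pinning down the diagonal cell of each row. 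Hence $\Thighest_\lambda$ is the unique element of $\ShTab_n(\lambda)$ of weight $\lambda$, so $H = \Thighest_\lambda$.

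For the lowest weight element, $\ShTab_n(\lambda)$ is normal as a $\gl_n$-crystal (being normal as a $\q_n$-crystal) and is also a $\q_n$-crystal, so Proposition~\ref{low-prop} applies: $b$ is a $\q_n$-lowest weight element if and only if $\sigma_{w_0}(b)$ is a $\q_n$-highest weight element. Because $H = \Thighest_\lambda$ is the unique $\q_n$-highest weight element and $\sigma_{w_0}$ is an involution on a normal $\gl_n$-crystal, there is a unique $\q_n$-lowest weight element $L = \sigma_{w_0}(\Thighest_\lambda)$, and $\weight(L)$ is the reversal $(\lambda_n,\dots,\lambda_2,\lambda_1)$. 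By the symmetry of $P_\lambda$ this weight again occurs with multiplicity $1$ in $\ShTab_n(\lambda)$, so it remains only to check that $\Tlowest_\lambda$ is a well-defined element of $\ShTab_n(\lambda)$ of weight $(\lambda_n,\dots,\lambda_1)$; then $L = \Tlowest_\lambda$. The weight count is immediate, since the $i$-th prescribed ribbon $\SD_{(\lambda_{i+1},\lambda_{i+2},\dots)/(\lambda_{i+2},\dots)}$ has exactly $\lambda_{i+1}$ cells. For well-definedness I would argue that each nonempty prescribed ribbon meets the main diagonal, that the ``no primed diagonal'' condition forces the diagonal cell of that ribbon to be unprimed, and that reading outward from this cell the semistandardness constraints (no repeated unprimed entry in a column, no repeated primed entry in a row) determine the remaining entries of the ribbon uniquely.

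I expect the last paragraph to contain the only genuine work: confirming that the skew shapes $\SD_{(\lambda_{i+1},\dots)/(\lambda_{i+2},\dots)}$ tile $\SD_\lambda$, that each nonempty one is anchored on the main diagonal, and that the forced propagation of the filling along a (possibly bent) ribbon is consistent with the fillings of adjacent ribbons along shared columns. All of this is elementary but must be handled carefully; everything else reduces to linear independence and the triangularity of Schur $P$-polynomials together with the formal properties of $\sigma_{w_0}$ already established in Proposition~\ref{low-prop}.
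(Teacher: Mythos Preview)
The paper does not supply its own proof of this theorem; it is quoted without proof from \cite[Thm.~3.3]{Hiroshima2018}. Your argument is therefore an alternative derivation from the surrounding material, and it is essentially correct. The ingredients you use are all available without circularity: the paper records just before the theorem (citing \cite[Thm~4.8]{AssafOguz}) that $\ShTab_n(\lambda)$ is a connected normal $\q_n$-crystal; Theorem~\ref{main-q-thm} is quoted from \cite{GJKKK} and does not rely on Theorem~\ref{thigh-thm}; and Proposition~\ref{low-prop} uses only general facts about normal $\gl_n$-crystals. Your identification of the highest weight element via the multiplicity-one occurrence of $x^\lambda$ in $P_\lambda$ is standard, and the transfer to the lowest weight element via $\sigma_{w_0}$ together with the multiplicity-one occurrence of $x^{w_0\lambda}$ is likewise valid.

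Hiroshima's original proof is presumably more direct, working explicitly with the tableau crystal operators rather than invoking the abstract highest-weight theory of normal $\q_n$-crystals. Your route is shorter given the machinery already assembled in the paper, at the cost of relying on the black box of Theorem~\ref{main-q-thm}; the direct approach would give a more self-contained combinatorial verification. Note that the well-definedness of $\Tlowest_\lambda$ that you flag as ``the only genuine work'' is not strictly needed for your argument: once you know the reversed weight occurs with multiplicity one and you have exhibited any tableau of that weight, you are done; the detailed ribbon analysis only serves to confirm that the explicit description of $\Tlowest_\lambda$ stated before the theorem is the one your argument produces.
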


Lemma~\ref{tab-unprime-lem} shows that $\unprime_\diag:\SShTab_n(\lambda)\to \ShTab_n(\lambda)$
is a weight-preserving map that commutes with all crystal operators, excluding $e_0$ and $f_0$.
It follows that $\unprime_\diag$ commutes with the involutions $\sigma_i$ for all $i \in [n-1]$ in \eqref{weyl-action-eq},
and hence also with the operators $e_{\bar i}$, $f_{\bar i}$, $e_{\bar i'}$, and $f_{\bar i'}$ for all $i \in [n-1]$
used in Definition~\ref{q-highest-def}.

Since  $\unprime_\diag(T) \neq 0$ if $T\neq 0$, the map
$\unprime_\diag$ must send $\qq_n$-highest and  $\qq_n$-lowest weight elements in $\SShTab_n(\lambda)$
to $\q_n$-highest and $\q_n$-lowest weight elements in $\ShTab_n(\lambda)$.
Consulting Definition~\ref{qq-highest-def}, we deduce that
$T \in \SShTab_n(\lambda)$
 is a $\qq_n$-highest  weight element if and only if 
 \be\label{useful-eq1}
 \unprime_\diag(T) = \Thighest_\lambda\quand e_{0}^{[i]}(T)  = 0\text{ for all $i \in [n]$},
 \ee
 and that $T \in \SShTab_n(\lambda)$ is a $\qq_n$-lowest  weight element
  if and only if 
 \be\label{useful-eq2}
 \unprime_\diag(T) =\Tlowest_\lambda\quand f_{0}^{[i]}(T)  = 0\text{ for all $i \in [n]$}.
 \ee
When $\unprime_\diag(T)$ is $ \Thighest_\lambda$ or $ \Tlowest_\lambda$
there are simple formulas for  $e_{0}^{[i]}(T)$ and $f_{0}^{[i]}(T)$.

\begin{lemma}\label{i,inotinT}
Let $i \in [n-1]$,  $U \in  \SShTab_n(\lambda)$, and $\alpha = \weight(U)$.
\ben
\item[(a)] If $\alpha_i = 0$, then $\sigma_i(U) $ is formed by subtracting $1$
from every entry of $U$ equal to $i+1'$ or $i+1$.

\item[(b)] If $ \alpha_{i+1}=0$, then $\sigma_i(U) $ is formed by adding $1$
to every entry of $U$ equal to $i'$ or $i$.
\een
\end{lemma}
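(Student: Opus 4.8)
The plan is to prove part (a) first and then obtain part (b) from it by a symmetry argument, rather than analyzing the two cases independently.

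For part (a): since $\weight(U)_i=0$, the tableau $U$ has no entry equal to $i'$ or $i$, so Definition~\ref{shtab-f} gives $f_i(U)=0$; hence $\varphi_i(U)=0$, and axiom (S2) forces $\varepsilon_i(U)=\weight(U)_{i+1}=:m$, so $\sigma_i(U)=e_i^m(U)$. Write $\tau$ for the operation of subtracting $1$ from every entry equal to $i+1'$ or $i+1$; note that $\tau$ preserves the primedness of every box, so it maps $\SShTab_n(\lambda)$ into itself. The goal is to show $e_i^m(U)=\tau(U)$. List the boxes of $U|_{[i,i+1]}$ as $\alpha_1,\dots,\alpha_m$ in the order in which they contribute letters to $\shword(U|_{[i,i+1]})$; since $\weight(U)_i=0$, every $\alpha_\ell$ carries the value $i+1'$ or $i+1$. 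The main claim, proved by induction on $j$, is that $e_i^j(U)$ is obtained from $U$ by replacing the value at each $\alpha_\ell$ with $\ell\le j$ by $i$ (if it was $i+1$) or $i'$ (if it was $i+1'$), and that the $j$-th application of $e_i$ uses only case R1(b) or R2(b) of Definition~\ref{shtab-e}.

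For the inductive step I would argue as follows. Demotion never changes which boxes are primed, so the reading order $\alpha_1,\dots,\alpha_m$ is the same for $e_i^j(U)$ as for $U$; in the parenthesization of $\shword(e_i^j(U)|_{[i,i+1]})$ the first $j$ boxes contribute right parentheses and the remaining $m-j$ contribute left parentheses, so nothing is paired and the first unpaired box whose value lies in $\{i+1',i+1\}$ is $\alpha_{j+1}$. Writing $(x,y):=\alpha_{j+1}$, I then rule out every case of Definition~\ref{shtab-e} except R1(b)/R2(b). Cases R1(a) and R1(c) would require the box immediately below or immediately to the left of $(x,y)$ to carry the value $i+1'$; but such a box has a not-yet-demoted value, hence equals some $\alpha_\ell$ with $\ell>j$, and an inspection of the hook reading word shows it is read strictly before $(x,y)$ (it lies in a column block preceding the block containing $(x,y)$, or, when it sits on the diagonal, in the $C$-part preceding the $R$-part of the same block), contradicting $(x,y)=\alpha_{j+1}$. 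The remaining offending configurations all require a neighbour with value $i$ or $i'$, i.e. a demoted box, and this forces, back in $U$, two equal unprimed entries in some column, or two equal primed entries in some row, or a strict descent along a row or column of $U$ — all impossible. Hence only R1(b)/R2(b) occurs, $e_i$ demotes precisely $\alpha_{j+1}$, and the induction closes; taking $j=m$ yields $e_i^m(U)=\tau(U)$, so $\sigma_i(U)=\tau(U)$. For part (b), if instead $\weight(U)_{i+1}=0$, let $V$ be the tableau obtained from $U$ by changing every entry $i'$ to $i+1'$ and every entry $i$ to $i+1$; since $U$ has no entries in $\{i+1',i+1\}$, one checks directly that $V\in\SShTab_n(\lambda)$ and $\weight(V)_i=0$. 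Part (a) applies to $V$ and gives $\sigma_i(V)=\tau(V)$, and $\tau(V)=U$ because the boxes of $V$ carrying $i+1'$ or $i+1$ are exactly the ones just promoted; since $\sigma_i$ is an involution, $\sigma_i(U)=V$, which is $U$ with $1$ added to every entry equal to $i'$ or $i$, as claimed.

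I expect the main obstacle to be the bookkeeping inside the inductive step: making the ``read strictly before'' comparisons in the hook reading word precise for the box directly below and the box directly to the left of $\alpha_{j+1}$, handling the diagonal subcases, and confirming case by case that none of the ribbon-type cases of Definition~\ref{shtab-e} can be triggered.
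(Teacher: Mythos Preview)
Your proof is correct and follows essentially the same approach as the paper's: reduce to part (a) via the involution property of $\sigma_i$, compute $\sigma_i(U)=e_i^m(U)$, and show by induction that each application of $e_i$ demotes the next changeable box via case R1(b) or R2(b). The only minor difference is in how the bad cases are ruled out: the paper argues uniformly via reading-word order (a not-yet-demoted neighbor read before $(x,y)$, or a demoted neighbor read after $(x,y)$, each contradicts the hypothesis that all $i$'s precede all $i+1$'s in $\shword(T)$), whereas you split the analysis, handling not-yet-demoted neighbors with the reading-word argument and demoted neighbors by pulling back to $U$ and finding a semistandardness violation. Both arguments are valid; the paper's is slightly more uniform. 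One small phrasing issue: you say ``Cases R1(a) and R1(c) would require the box immediately below or immediately to the left of $(x,y)$ to carry the value $i+1'$'', but R1(c) also allows $T_{x-1,y}=i$; you do handle that sub-case in your next sentence, so the argument is complete, just slightly imprecise in its initial statement.
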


\begin{proof}
Because $\sigma_i$ is an involution, it suffices to prove (a).
Assume $\alpha_i=0$.
 If $\alpha_{i+1} = 0$, then the desired identity is $\sigma_i(U) = U$, which holds since $f_i(U) = e_i(U) = 0$.
Further assume $\alpha_{i+1} > 0$. Then $\varphi_i(b) - \varepsilon_i(b) = \alpha_i - \alpha_{i+1} = -\alpha_{i+1}<0$
so by \eqref{weyl-action-eq}
 we have  $\sigma_i(U) = e_i^{\alpha_{i+1}}(U)$.

The shifted tableau $U$ has no boxes containing $i$ or $i'$ and a positive number of boxes 
containing $i+1$ or $i+1'$, which we call \defn{changeable boxes}. Consider the shifted reading word order restricted to the changeable boxes of $U$.
We claim that for each integer $0\leq j \leq \alpha_{i+1}$ the tableau $e_i^{j}(U)$
is formed from  $U$ by subtracting $1$ from the entries in the first $j$ changeable boxes in this order.

This certainly holds when $j=0$. Now suppose the claim is true for some integer $0 \leq j < \alpha_{i+1}$ and let $T := e_i^j(U)$. By  hypothesis all letters equal to $i+1$ in $\shword(T)$ occur after all letters equal to $i$ so the pairing in Definition~\ref{tab-pair-def} is trivial. Therefore when applying $e_i$ to $T$, the unpaired position $(x,y)$ in Definition~\ref{shtab-e} is the $(j+1)$th changeable box and we have $T_{xy} = U_{xy}  \in \{i+1',i+1\}$.

Consider the case of  Definition~\ref{shtab-e} that applies to compute $e_i(T) = e_i^{j+1}(U)$.
If $T_{xy}=i+1'$, then it is impossible to be in case R2(a) or R2(c), since then an $i$ would occur after $i+1$ in $\shword(T)$. 
Similarly if $T_{xy}=i+1$, then it is impossible to be in case R1(a) or R1(c), since by hypothesis $(x,y)$ 
contributes the first letter equal to $i+1$ or $i+1'$ in $\shword(T)$.
Thus we 
 must be in case R1(b) or R2(b), so $e_i(T)$ is formed from $T$ by subtracting one from  $T_{xy}$.
 This means that $e_i^{j+1}(U) = e_i(T)$ is formed from $U$ by subtracting one from the entries in the first $j+1$ changeable boxes,
which proves our claim by induction. Invoking the claim with $j= \alpha_{i+1}$ 
proves part (a) of the lemma.
\end{proof}

\begin{lemma} \label{s_i-highest}
Let  
$T$ be a semistandard shifted tableau with
$\unprime_\diag(T|_{[i,i+1]}) = \Thighest_\lambda|_{[i,i+1]}$ for some  $i \in [\ell(\lambda)-1]$.
Then $\sigma_i(T)$ is formed from $T$ by interchanging the primes on the entries in boxes $(i,i)$ and $(i+1,i+1)$
and changing the last $\lambda_i - \lambda_{i+1}$ boxes containing $i$ in row $i$ 
 from 
  \[\ytableausetup{boxsize = 1.1cm,aligntableaux=center}
\begin{ytableau} i & i& \dots & i
\end{ytableau}
\quad\text{to}\quad
\ytableausetup{boxsize = 1.1cm,aligntableaux=center}
\begin{ytableau} i+1' & i + 1& \dots & i+1
\end{ytableau}.\]
\end{lemma}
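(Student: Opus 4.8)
The plan is to compute $\sigma_i(T)$ by iterating $f_i$. Since $\unprime_\diag(T|_{[i,i+1]}) = \Thighest_\lambda|_{[i,i+1]}$, every entry of $T$ equal to $i'$ or $i$ occupies a box of row $i$ and every entry equal to $(i+1)'$ or $i+1$ occupies a box of row $i+1$; hence $\weight(T)_i = \lambda_i$ and $\weight(T)_{i+1} = \lambda_{i+1}$. Because $\lambda$ is strict and $i \le \ell(\lambda) - 1$ we have $m := \lambda_i - \lambda_{i+1} > 0$, so axiom (S2) in Definition~\ref{crystal-def} gives $\varphi_i(T) - \varepsilon_i(T) = m > 0$, and \eqref{weyl-action-eq} yields $\sigma_i(T) = f_i^m(T)$. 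It then remains to trace the chain $T = T^{(0)}$, $T^{(k)} := f_i(T^{(k-1)})$ for $1 \le k \le m$.

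First I would prove by induction on $k$ that for $0 \le k < m$ the tableau $T^{(k)}$ is obtained from $T$ by changing the entries in columns $i+\lambda_i - k, \dots, i+\lambda_i - 1$ of row $i$ (the last $k$ of its boxes) from $i$ to $i+1$, leaving everything else, including the diagonal, unchanged. For the inductive step: the word $\shword(T^{(k)}|_{[i,i+1]})$, which is unaffected by primes on the diagonal, equals $(i+1)^{\lambda_{i+1}}\, i^{\lambda_i - k}\, (i+1)^{k}$, since the reading word lists the boxes of row $i+1$ before those of row $i$ and, within each row, the diagonal box first. Its parenthesis matching (Definition~\ref{tab-pair-def}) leaves unpaired exactly the $i$-entries of row $i$ in columns $i+\lambda_{i+1}, \dots, i+\lambda_i - 1 - k$, the last of which lies in the off-diagonal box $(x,y) := (i, i+\lambda_i - 1 - k)$ with $T^{(k)}_{xy} = i$. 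When $k < m-1$ the box $(x+1,y)$ is not in $\SD_\lambda$ and $(x,y+1)$, if present, holds $i+1$ (changed at a previous step), so case L1(b) of Definition~\ref{shtab-f} applies and changes that entry from $i$ to $i+1$, establishing the assertion for $k+1$.

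Next I would analyze the last application, $k = m-1$. Here $(x,y) = (i, i+\lambda_{i+1})$ with $T^{(m-1)}_{xy} = i$, so we are in case L1; the box $(x+1,y) = (i+1, i+\lambda_{i+1})$ is the last box of row $i+1$ and carries an entry in $\{i+1',i+1\}$, while $(x,y+1)$, if present, carries $i+1$ by the previous step; hence case L1(c) or L1(d) applies. The $(i+1)$-ribbons of $T^{(m-1)}$ are the connected pieces of the set of boxes holding $i+1$ or $(i+1)'$, which are exactly those of row $i+1$ together with the $i+1$'s created in row $i$ at columns $\ge i+\lambda_{i+1}+1$; the latter hang off the northeast edge and are not edge-adjacent to row $i+1$, so the ribbon through $(i+1, i+\lambda_{i+1})$ is all of row $i+1$, whose farthest-northwest (leftmost) box is the diagonal box $(i+1,i+1)$. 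Thus case L1(d) applies: it changes the entry of $(i, i+\lambda_{i+1})$ from $i$ to $i+1'$ and interchanges the primes on $T_{i+1,i+1}$ and $T_{i,i}$, which are still the diagonal entries of the original $T$ since no earlier step altered the diagonal. Combining all $m$ steps, $\sigma_i(T) = T^{(m)}$ is obtained from $T$ by interchanging the primes on $(i,i)$ and $(i+1,i+1)$ and changing the entries of columns $i+\lambda_{i+1}, \dots, i+\lambda_i - 1$ of row $i$ — these columns are off-diagonal, hence equal $i$ in $T$, and they are the last $\lambda_i - \lambda_{i+1}$ boxes of row $i$ holding $i$ — to $i+1', i+1, \dots, i+1$. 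This is exactly the asserted description.

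The main obstacle is the final step: one must confirm that the pertinent $(i+1)$-ribbon is precisely row $i+1$ ending at the diagonal, so that case L1(d) — and not L1(c), nor (since $T^{(m-1)}_{xy} = i$) any L2 case — is invoked; this is the unique place where the "highest-weight" hypothesis on $T|_{[i,i+1]}$ is used in an essential, non-bookkeeping way, and it is what produces the prime-interchange along the diagonal. I would also check separately the degenerate shapes $\lambda_i = \lambda_{i+1}+1$ (where $m=1$, so the first step is already the final one) and $\lambda_{i+1} = 1$ (where row $i+1$ is the single box $(i+1,i+1)$ and its ribbon is that box), verifying that neither alters the conclusion. As a consistency check, $\sigma_i$ commutes with $\unprime_\diag$ — both equal $f_i^m$ and $f_i$ commutes with $\unprime_\diag$ by Lemma~\ref{tab-unprime-lem} — which independently pins down everything but the diagonal of $\sigma_i(T)$.
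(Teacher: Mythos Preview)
Your proof is correct and follows essentially the same approach as the paper's: compute $\sigma_i(T)=f_i^{\lambda_i-\lambda_{i+1}}(T)$, show that the first $\lambda_i-\lambda_{i+1}-1$ applications of $f_i$ fall under case L1(b) of Definition~\ref{shtab-f}, and that the final application falls under case L1(d). Your version is more explicit than the paper's (you write out the reading word, verify the ribbon structure, and check the degenerate cases $m=1$ and $\lambda_{i+1}=1$), but the argument is the same.
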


For example, 
$ \sigma_1 \( \ytab{   \none & 2' &2 \\   1 & 1 & 1  & 1  }\) = \ytab{  \none & 2 &2 \\   1' & 1 & 2'  & 2 }  $
and
$ \sigma_1 \( \ytab{   \none & 2' &2 \\   1' & 1 & 1  & 1  }\) = \ytab{  \none & 2' &2 \\   1' & 1 & 2'  & 2 }  $.

\begin{proof}
By hypothesis the integer $k := \varphi_i(T) - \varepsilon_i(T)  = \weight(T)_i - \weight(T)_{i+1}$ is equal to $\lambda_i-\lambda_{i+1}> 0$
so $\sigma_i(T) = f_i^k(T)$.
The skew tableau
$T|_{[i,i+1]}$ has the form
\[\ytableausetup{boxsize = 1.1cm,aligntableaux=center}
\begin{ytableau} \none &i+1^\ast& \dots &i+1 \\ i^\ast & i& \dots & i & i & \dots & i
\end{ytableau}\]
where there are two possibilities for each of the diagonal entries $i^\ast \in \{i',i\}$ and $i+1^\ast \in \{i+1',i+1\}$.
Therefore the sequence $\unpaired_i(T)$ from Definition~\ref{tab-pair-def} consists of the last $k$ boxes containing $i$ in row $i$.
Each time we apply $f_i$ to $f_i^{j-1}(T)$ for $j=1,2,\dots,k-1$,
case L1(b) in Definition~\ref{shtab-f} changes the $j$th of these boxes, ordered right to left, from $i$ to $i+1$.
When we finally apply $f_i$ to $f_i^{k-1}(T)$,
 case L1(d) in Definition~\ref{shtab-f} results in the next box changing from $i$ to $i+1'$ and 
the primes on the entries in boxes $(i,i)$ and $(i+1,i+1)$ being interchanged.
 Thus $\sigma_i(T) $ is as described.
\end{proof}

\begin{lemma}\label{lem-qn+highestwt}
Let $i \in [\ell(\lambda)]$ and 
suppose $T \in  \SShTab_n(\lambda)$ has $\unprime_\diag(T) = \Thighest_\lambda$.
\ben
\item[(a)] If $T_{ii} \in \ZZ'$, then $e_{0}^{[i]}(T)$ is formed  from $T$ by removing the prime from this entry. 

\item[(b)] If $T_{ii} \in \ZZ$, then $f_{0}^{[i]}(T)$ is formed  from $T$ by adding a prime to this entry. 

\item[(c)] If $j \in [n]\setminus[\ell(\lambda)]$, then $e_{0}^{[j]}(T)=f_{0}^{[j]}(T)=0$.
\een
\end{lemma}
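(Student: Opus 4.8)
The plan is to reduce everything to understanding the operator $\pi_i := \sigma_1\circ\sigma_2\circ\cdots\circ\sigma_{i-1}$ acting on $\SShTab_n(\lambda)$ (with $\sigma_{i-1}$ applied first, and $\pi_1 := \mathrm{id}$). Since each $\sigma_j$ is an involution, $\pi_i^{-1} = \sigma_{i-1}\circ\cdots\circ\sigma_1$, and \eqref{ef0-eq} says precisely that $e_{0}^{[i]} = \pi_i^{-1}\circ e_0\circ\pi_i$ and $f_{0}^{[i]} = \pi_i^{-1}\circ f_0\circ\pi_i$. Because $\sigma_j$ interchanges the $j$th and $(j+1)$st coordinates of the weight, $\pi_i$ carries the $i$th weight coordinate into position $1$, so $\weight(\pi_i(T))_1 = \weight(T)_i$. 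This already gives part (c): if $j\in[n]\setminus[\ell(\lambda)]$ then $\weight(\pi_j(T))_1 = \lambda_j = 0$, so $\pi_j(T)$ has no entry equal to $1$ or $1'$ and in particular $\pi_j(T)_{11}\notin\{1,1'\}$ (the case $\lambda=\varnothing$ being vacuous); hence $e_0(\pi_j(T)) = f_0(\pi_j(T)) = 0$, and therefore $e_{0}^{[j]}(T) = f_{0}^{[j]}(T) = 0$.

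For parts (a) and (b) I would first establish the following description of $\pi_i(T)$ whenever $\unprime_\diag(T) = \Thighest_\lambda$ and $i\in[\ell(\lambda)]$: the tableaux $\pi_i(T)$ and $T$ agree in every row $r > i$; one has $\pi_i(T)_{11}\in\{1,1'\}$; and the set $P(\pi_i(T))$ of diagonal boxes of $\pi_i(T)$ containing a primed entry equals $\tau_i(P(T))$, where $\tau_i$ is the bijection of diagonal boxes of $\SD_\lambda$ given by $\tau_i(j,j)=(j+1,j+1)$ for $j<i$, $\tau_i(i,i)=(1,1)$, and $\tau_i(j,j)=(j,j)$ for $j>i$. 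I would prove this by induction on $i$, the case $i=1$ being trivial. For the inductive step, apply $\sigma_{i-1},\sigma_{i-2},\dots,\sigma_1$ to $T$ one at a time; the first application is Lemma~\ref{s_i-highest}, and one checks that just before $\sigma_r$ is applied the entries equal to $r$ or $r'$ still form the length-$\lambda_r$ prefix of row $r$ inherited from $T$, while the entries equal to $r+1$ or $(r+1)'$ form the length-$\lambda_i$ prefix of row $r+1$ (the ``active'' value always keeps exactly $\lambda_i$ copies). Since $\lambda_r>\lambda_i$ for $r<i$, a mild generalization of Lemma~\ref{s_i-highest}---obtained by rerunning its analysis of Definition~\ref{shtab-f}, where case L1(d) is reached at the last step because the relevant $(r+1)$-ribbon is the horizontal strip at the start of row $r+1$, whose box farthest to the northwest is the diagonal box $(r+1,r+1)$---shows that $\sigma_r$ interchanges the primes on $(r,r)$ and $(r+1,r+1)$ and changes the last $\lambda_r-\lambda_i$ copies of $r$ in row $r$ to $(r+1)',r+1,\dots,r+1$, altering row $r+1$ only in its corner prime. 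Composing these $i-1$ steps gives the asserted form and, by tracking the corner primes through the $i-1$ transpositions, the cycle $\tau_i$.

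Parts (a) and (b) then follow quickly. Fix $i\in[\ell(\lambda)]$. Since $\unprime_\diag$ commutes with every $\sigma_j$ (Lemma~\ref{tab-unprime-lem}), for any two tableaux $U,U'$ with $\unprime_\diag(U)=\unprime_\diag(U')=\Thighest_\lambda$ the tableaux $\pi_i(U)$ and $\pi_i(U')$ have the same underlying tableau after deleting diagonal primes, so they are determined by $\tau_i(P(U))$ and $\tau_i(P(U'))$. Now suppose $T_{ii}\in\ZZ$ and let $T^\sharp$ be $T$ with a prime added at $(i,i)$, so $P(T^\sharp)=P(T)\cup\{(i,i)\}$ and $(i,i)\notin P(T)$. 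Then $P(\pi_i(T^\sharp))=\tau_i(P(T))\cup\{(1,1)\}$ with $(1,1)\notin P(\pi_i(T))$; since $\pi_i(T)_{11}\in\{1,1'\}$ is then unprimed, i.e.\ equal to $1$, the tableau $f_0(\pi_i(T))$ is $\pi_i(T)$ with its corner changed to $1'$, which is exactly $\pi_i(T^\sharp)$. Hence $f_{0}^{[i]}(T)=\pi_i^{-1}(\pi_i(T^\sharp))=T^\sharp$, proving (b). For (a), either repeat the argument with $f_0$ replaced by $e_0$ and ``prime added'' by ``prime removed'', or observe that conjugating axiom (Q1) by $\pi_i$ gives $e_{0}^{[i]}(b)=c\iff f_{0}^{[i]}(c)=b$ and deduce (a) from (b).

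The one genuinely laborious point is the inductive step in the second paragraph: one must verify that the ``prefix'' hypotheses that let us invoke the generalized form of Lemma~\ref{s_i-highest} are preserved through all $i-1$ applications of the $\sigma_r$'s, and carefully bookkeep how the diagonal primes migrate. This rests on the case analysis already carried out for Lemma~\ref{s_i-highest} together with the ancillary properties in Remark~\ref{shtab-f-remark}, but it is where all the work lies.
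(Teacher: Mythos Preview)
Your approach is correct and is essentially the same as the paper's. Both arguments reduce parts~(a) and~(b) to computing $\pi_i(T)=\sigma_1\sigma_2\cdots\sigma_{i-1}(T)$ via repeated use of Lemma~\ref{s_i-highest}; the paper compresses this to the sentence ``This is straightforward using Lemma~\ref{s_i-highest}'' while you spell out the inductive bookkeeping and the cyclic permutation $\tau_i$ of diagonal primes.

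One simplification: the ``mild generalization of Lemma~\ref{s_i-highest}'' you flag is not actually required. The hypothesis of that lemma constrains only $\unprime_\diag(T|_{[r,r+1]})$, not the global shape of $T$, so at the step where $\sigma_r$ is applied you may invoke the lemma verbatim with the partition $\lambda$ replaced by any strict $\mu$ having $\mu_r=\lambda_r$ and $\mu_{r+1}=\lambda_i$ (valid since $\lambda_r>\lambda_i$). No rerunning of the Definition~\ref{shtab-f} case analysis is needed. For part~(c), your weight argument is a slight shortcut relative to the paper, which instead appeals to Lemma~\ref{i,inotinT} to show explicitly that $\sigma_1\cdots\sigma_{j-1}(T)$ is $T$ with every entry increased by~$1$.
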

 
\begin{proof}
Let $U$ be the shifted tableau formed from $T$ 
by reversing the prime on entry $T_{ii}$.
Recall the definitions of 
$e_{0}^{[i]}$
and 
$f_{0}^{[i]} $
from \eqref{ef0-eq}.
To prove parts (a) and (b) it suffices to check that $e_0$ (respectively, $f_0$) transforms
$\sigma_1\sigma_2\cdots \sigma_{i-1}(T) $ to $\sigma_1\sigma_2\cdots \sigma_{i-1}(U)$
when $T_{ii}$ is primed (respectively, unprimed).
This is straightforward using Lemma~\ref{s_i-highest}. 
In part (c) we have $(j,j)\notin T$, so Lemma~\ref{i,inotinT} implies
 that $\sigma_1\sigma_2\cdots \sigma_{j-1}(T) $ is formed from $T$ by adding $1$ to all of its entries,
 so $\sigma_1\sigma_2\cdots \sigma_{j-1}(T) $  has no boxes containing $1'$ or $1$ and therefore $e_{0}^{[j]}(T)=f_{0}^{[j]}(T)=0$.
\end{proof}

\begin{lemma} \label{s_i-lowest}
Let  
$T$ be a semistandard shifted tableau with
$\unprime_\diag(T|_{[i,i+1]}) = \Tlowest_\lambda|_{[i,i+1]}$ for some   $i \in [n-1] \setminus [n-\ell(\lambda)]$.
%
Let  $j=i + \ell(\lambda)   -n $.
Then $\sigma_i(T)$ is formed from $T$ by interchanging the primes on the entries in boxes $(j,j)$ and $(j+1,j+1)$
and changing all entries  equal to $i+1'$ or $i+1$ in the first row  to $i$.
\end{lemma}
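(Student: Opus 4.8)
The plan is to run the proof of Lemma~\ref{s_i-highest} in mirror image: compute $k:=\varphi_i(T)-\varepsilon_i(T)$, express $\sigma_i(T)$ as an iterated power of a single crystal operator, and then track the effect of each application through Definition~\ref{shtab-e}. Set $m:=\ell(\lambda)$ and $d:=\lambda_{n-i}-\lambda_{n-i+1}$; since $i\in[n-1]\setminus[n-m]$ one has $j=i+m-n$ with $1\le j\le m-1$ and $n-i=m-j$, and $\lambda_{n-i},\lambda_{n-i+1}$ are positive parts of $\lambda$. Because $\unprime_\diag$ changes only diagonal primes, it does not affect which entries equal $i$ or $i'$ (as opposed to $i+1$ or $i+1'$), so the hypothesis gives that $\weight(T)_i$ and $\weight(T)_{i+1}$ equal the numbers of $i$- and $(i+1)$-entries of $\Tlowest_\lambda$. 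From the explicit form of $\Tlowest_\lambda$ these fill two border strips: the $i$-entries occupy $R:=\SD_{(\lambda_{n-i+1},\dots,\lambda_m)/(\lambda_{n-i+2},\dots,\lambda_m)}$, placed from the first row, whose northwest (diagonal) box is $(j,j)$, and the $(i+1)$-entries occupy $R':=\SD_{(\lambda_{n-i},\dots,\lambda_m)/(\lambda_{n-i+1},\dots,\lambda_m)}$, whose northwest box is $(j+1,j+1)$; a telescoping count gives $\weight(T)_i=\lambda_{n-i+1}$ and $\weight(T)_{i+1}=\lambda_{n-i}$. Hence $k=\weight(T)_i-\weight(T)_{i+1}=-d<0$ because $\lambda$ is strict, so \eqref{weyl-action-eq} yields $\sigma_i(T)=e_i^{\,d}(T)$.

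Next I would record the local picture of $T|_{[i,i+1]}$ along the bottom row: the first row of $R$ lies in columns $\lambda_{n-i+2}+1,\dots,\lambda_{n-i+1}$ and the first row of $R'$ lies in the $d$ columns $\lambda_{n-i+1}+1,\dots,\lambda_{n-i}$, immediately to its right. Using that in $\Tlowest_\lambda$ a strip box carries the primed value exactly when the box directly above it lies in the same strip, the box $(1,\lambda_{n-i+1}+1)$ has value $i+1'$ (the box above it lies in $R'$, by strictness of $\lambda$), while the remaining $d-1$ first-row boxes of $R'$ have the unprimed value $i+1$; none of this depends on diagonal primes, so it holds for $T$.

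I would then prove by induction that for $j'=1,\dots,d$, applying $e_i$ to $e_i^{\,j'-1}(T)$ acts on the box $(1,\lambda_{n-i+1}+j')$, changing its entry to $i$, and that this happens through case R2(d) of Definition~\ref{shtab-e} with $\tilde x=\tilde y=j$ when $j'=1$ (so that it additionally interchanges the primes on $(j,j)$ and $(j+1,j+1)$) and through case R1(b), altering no other box, when $j'\ge 2$. In the inductive step one checks that after $j'-1$ steps the only changes to $T|_{[i,i+1]}$ are that the first-row boxes of $R'$ in columns $\lambda_{n-i+1}+1,\dots,\lambda_{n-i+1}+j'-1$ now carry the unprimed value $i$ and (once) the primes on $(j,j),(j+1,j+1)$ have been swapped; then, since $\varphi_i(e_i^{\,j'-1}(T))=j'-1$ and $\varepsilon_i(e_i^{\,j'-1}(T))=d-j'+1$, an analysis of the shifted reading word and the pairing of Definition~\ref{tab-pair-def} shows that $\unpaired_i$ of this tableau consists exactly of the first-row boxes of $R'$ in columns $\lambda_{n-i+1}+1,\dots,\lambda_{n-i}$, the rightmost $d-j'+1$ of which still have value $i+1$ and (once $j'\ge 2$) are unprimed; hence $e_i$ selects $(1,\lambda_{n-i+1}+j')$, and for $j'=1$ this box is primed with the $R$-box to its left lying in a ribbon whose northwest-most box is $(j,j)$, forcing case R2(d), while for $j'\ge 2$ it is unprimed with no relevant neighbour above or to its left, forcing case R1(b). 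Assembling the $d$ steps recovers the description of $\sigma_i(T)=e_i^{\,d}(T)$ in the statement, and since only step $j'=1$ touches a diagonal box, and only $(j,j)$ and $(j+1,j+1)$, this accounts for every change on the diagonal. (Alternatively, one could first prove the formula for $\unprime_\diag(T)\in\ShTab_n(\lambda)$ using Theorem~\ref{thigh-thm} and the established crystal structure there, and then recover the diagonal entries of $\sigma_i(T)$ via Lemma~\ref{tab-unprime-lem} together with Lemmas~\ref{shtab-primed-lem} and~\ref{shtab-primed-lem2}.)

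The step I expect to be the main obstacle is the pairing bookkeeping inside the induction: one must verify that, for the nested border-strip configuration of $\Tlowest_\lambda|_{[i,i+1]}$ and its successive images under $e_i$, the hook-type shifted reading word makes the pairing of Definition~\ref{tab-pair-def} leave precisely the first-row boxes of $R'$ (together with the $i$-boxes already produced from them) unpaired, so that $e_i$ always picks the expected box and the expected clause of Definition~\ref{shtab-e} fires. The remaining ingredients — the weight count, the identification of $R$ and $R'$, the primed-entry pattern of $\Tlowest_\lambda$, and the assembly of the individual steps — are routine and closely parallel the proof of Lemma~\ref{s_i-highest}.
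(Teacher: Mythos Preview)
Your proposal is correct and follows essentially the same route as the paper's proof: compute $k=\weight(T)_i-\weight(T)_{i+1}=\lambda_{n-i+1}-\lambda_{n-i}<0$ so that $\sigma_i(T)=e_i^{-k}(T)$, identify $\unpaired_i(T)$ as the first-row boxes with entries in $\{i+1',i+1\}$, and then track the successive applications of $e_i$ --- the first via case R2(d) (swapping the diagonal primes at $(j,j)$ and $(j+1,j+1)$) and the remaining via case R1(b). The paper is terser, calling the unpaired-box computation ``less trivial but still straightforward'' without the inductive bookkeeping you lay out, but your identification of that pairing verification as the main obstacle is exactly right.
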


For example, 
$ \sigma_4 \( \ytab{ \none & \none & 5' &5 \\ \none & 4 & 4 & 5'  & 5 \\ 3& 3 & 4' & 4 & 5' &5 & 5 }\) = \ytab{ \none & \none & 5 &5 \\ \none & 4' & 4 & 5'  & 5 \\ 3& 3 & 4' & 4 & 4 &4& 4 } . $

\begin{proof}
Recall that the domain of $\Tlowest_\lambda|_{[i,i+1]}$ consists of the two ribbons 
\[\SD_{(\lambda_{n-i+1},\lambda_{n-i+2},\dots) / (\lambda_{n-i+2},\lambda_{n-i+3},\dots)}
\quand
 \SD_{(\lambda_{n-i},\lambda_{n-i+1},\dots) / (\lambda_{n-i+1},\lambda_{n-i+2},\dots)}, \] 
which have all entries in
  $\{i',i\}$ and  $\{i+1' , i+1\}$, respectively.
  The integer $k := \varphi_i(T) - \varepsilon_i(T)  = \weight(T)_i - \weight(T)_{i+1}$ is therefore equal to $\lambda_{n-i+1}-\lambda_{n-i}< 0$
so $\sigma_i(T) = e_i^{-k}(T)$.

Compared to the proof of Lemma~\ref{s_i-highest},
it is less trivial but still straightforward to see that the sequence $\unpaired_i(T)$ from 
Definition~\ref{tab-pair-def} consists of all of the boxes with entries in $\{i+1',i+1\}$ in the first row of $T$.
There are $-k$ such boxes, only the first of which
 (going left to right) has a primed entry.
Applying $e_i$ to $T$ invokes case R2(d) in Definition~\ref{shtab-e},
causing this primed entry to change to $i$ and the primes on the entries in boxes $(j,j)$ and $(j+1,j+1)$ to be interchanged.
Successively applying $e_i$ to $e_i(T)$, $1-k$ additional times, changes the entries in the remaining unpaired boxes one by one from $i+1$ to $i$
via case R1(b) in Definition~\ref{shtab-e}. Thus $\sigma_i(T)$ is as described.
\end{proof}

\begin{lemma}\label{lem-qn+lowestwt}
Let $i \in [\ell(\lambda)]$ and 
suppose $T \in  \SShTab_n(\lambda)$ has $\unprime_\diag(T) = \Tlowest_\lambda$.
\ben

\item[(a)] If $T_{ii} \in \ZZ'$, then $e_{0}^{[i+n-\ell(\lambda)]}(T)$ is formed from $T$ by removing the prime from this entry. 

\item[(b)] If $T_{ii} \in \ZZ$, then $f_{0}^{[i+n-\ell(\lambda)]}(T)$ is formed from $T$ by adding a prime to this entry. 

\item[(c)] If $j \in [n-\ell(\lambda)]$, then $e_{0}^{[j]}(T)=f_{0}^{[j]}(T)=0$.
\een
\end{lemma}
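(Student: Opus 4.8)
The strategy mirrors the proof of Lemma~\ref{lem-qn+highestwt} but uses the lowest-weight analogue of $\sigma_i$ furnished by Lemma~\ref{s_i-lowest} instead of Lemma~\ref{s_i-highest}. Recall from \eqref{ef0-eq} that $e_0^{[m]} = \sigma_{m-1}\cdots\sigma_1 e_0 \sigma_1\cdots\sigma_{m-1}$ and similarly for $f_0^{[m]}$. So for each assertion I would first compute $\sigma_1\sigma_2\cdots\sigma_{m-1}(T)$ for the relevant index $m$, then apply $e_0$ or $f_0$ (which toggles the prime on the $(1,1)$-entry when that entry is $1'$ or $1$, and returns $0$ otherwise, by Definition~\ref{shtab-f0}), and finally apply $\sigma_{m-1}\cdots\sigma_1$ to return. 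The key input is that $T$ has $\unprime_\diag(T) = \Tlowest_\lambda$, so for each $i$ the restriction $T|_{[i,i+1]}$ has the special ribbon form treated in Lemma~\ref{s_i-lowest}, and $T|_{[i]}$ for small $i$ has the form treated in Lemma~\ref{i,inotinT}(b).

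\textbf{Part (c).} Here $j \in [n-\ell(\lambda)]$, so rows $1,2,\dots,j$ of $\Tlowest_\lambda$ are entirely outside the shifted diagram: more precisely, every entry of $\Tlowest_\lambda$ is at least $n-\ell(\lambda)+1 > j$, because the smallest entry appearing in $\Tlowest_\lambda$ is $n-\ell(\lambda)+1$. Thus $T$ has no entry equal to $k'$ or $k$ for any $k\le j$. By repeated application of Lemma~\ref{i,inotinT}(b) (valid since $\weight(T)_k = 0$ for $k \le j$, in particular $\weight(T)_{k+1}=0$ fails but $\weight(T)_k=0$ is what we need for part~(b)), $\sigma_1\sigma_2\cdots\sigma_{j-1}(T)$ is formed from $T$ by adding $1$ to every entry equal to $k'$ or $k$ for $k=1,\dots,j-1$ in turn; since $T$ has no such entries for any $k \le j$, we get $\sigma_1\cdots\sigma_{j-1}(T) = T$ and this tableau still has no box containing $1'$ or $1$. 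Hence $e_0^{[j]}(T) = f_0^{[j]}(T) = 0$ by Definition~\ref{shtab-f0}.

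\textbf{Parts (a) and (b).} Fix $i \in [\ell(\lambda)]$ and set $m := i + n - \ell(\lambda)$, so $j := m - (n-\ell(\lambda)) = i$ in the notation of Lemma~\ref{s_i-lowest}. The plan is to show by downward induction on $r$ that $\sigma_r\sigma_{r+1}\cdots\sigma_{m-1}(T)$, for $r$ ranging from $m-1$ down to $1$, is the tableau obtained from $T$ by (i) interchanging the primes on the diagonal entries in boxes $(i,i)$ and $(i+1,i+1)$, and then (ii) repeatedly collapsing the top-row entries: at stage $r$ the first row is filled with the value $r$ in all the positions that in $\Tlowest_\lambda$ carried values in $\{r,r{+}1,\dots\}\cap[1,m]$, with the original primes preserved appropriately. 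Concretely, each application of $\sigma_r$ for $r = m-1, m-2, \dots, 1$ is an instance of Lemma~\ref{s_i-lowest} for the index $r$ (whose hypothesis $\unprime_\diag(T|_{[r,r+1]}) = \Tlowest_\lambda|_{[r,r+1]}$ is preserved along the way, because $\sigma_{r}$ only affects rows and columns indexed by labels $\le r+1$, and the first-row collapse does not change $\unprime_\diag$ restricted to any $[s,s+1]$ with $s<r$), except that only the first such application ($r = m-1$) actually touches the diagonal boxes — after that the diagonal entries in rows $i, i+1$ are fixed. The upshot is that $V := \sigma_1\sigma_2\cdots\sigma_{m-1}(T)$ is the tableau obtained from $T$ by interchanging the primes on $T_{ii}$ and $T_{i+1,i+1}$ and setting the entire first row equal to $1$ or $1'$ (the first entry carrying whatever prime landed there, the rest unprimed), in particular $V_{11} \in \{1',1\}$ and $V_{11}$ is primed exactly when $T_{ii}$ was \emph{not} primed (since the prime was swapped off row $i$'s diagonal into the first row during the chain of $\sigma$'s — here I must track carefully which of $T_{ii}, T_{i+1,i+1}$ is the diagonal box of the ribbon carrying value $1'$). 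Applying $e_0$ (in case (a), when $T_{ii}\in\ZZ'$ so $V_{11}=1'$) removes that prime, and then reversing the $\sigma$-chain, $\sigma_{m-1}\cdots\sigma_1$, undoes everything except that the prime has been stripped from $T_{ii}$; this gives $e_0^{[m]}(T) = \unprime$ applied to $T$ at box $(i,i)$. Case (b) is symmetric with $f_0$ adding a prime.

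\textbf{Main obstacle.} The delicate point is bookkeeping the primes through the chain of $\sigma_i$'s in parts (a) and (b): Lemma~\ref{s_i-lowest} tells us $\sigma_r$ interchanges the primes on boxes $(j_r, j_r)$ and $(j_r+1, j_r+1)$ where $j_r = r + \ell(\lambda) - n$, and one must verify that as $r$ decreases from $m-1$ to $1$ these swaps compose so that the net effect on the diagonal is precisely a single transposition of the primes on $(i,i)$ and $(i+1,i+1)$, and that the ``displaced'' prime migrates to the $(1,1)$ box (where $e_0$ or $f_0$ can act) rather than getting stuck elsewhere. One must also confirm that the hypothesis of Lemma~\ref{s_i-lowest} genuinely persists at each step and that for $r \le n - \ell(\lambda)$ we instead fall into the regime of Lemma~\ref{i,inotinT}(b) (pure shift, no diagonal activity), which is why the index $m = i + n - \ell(\lambda)$ and not $i$ appears. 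This is the same kind of careful-but-routine verification carried out for Lemma~\ref{lem-qn+highestwt}, so I expect no conceptual difficulty, only the need to state the inductive claim precisely enough that the prime-tracking is transparent.
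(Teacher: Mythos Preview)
Your overall strategy matches the paper's, which simply says the proof is analogous to Lemma~\ref{lem-qn+highestwt} using Lemmas~\ref{i,inotinT} and~\ref{s_i-lowest}. However, several of your concrete inductive claims for parts (a) and (b) are wrong and would need to be corrected before the argument goes through.

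First, the hypothesis of Lemma~\ref{s_i-lowest} is \emph{not} preserved after the first application. Once $\sigma_{m-1}$ has collapsed the first-row entries equal to $m'$ or $m$ down to $m-1$, the restriction of the new tableau to $[m-2,m-1]$ has additional $m-1$'s in row~1 and is no longer equal to $\Tlowest_\lambda|_{[m-2,m-1]}$. What actually persists is the \emph{two-nested-ribbon shape} that the proof of Lemma~\ref{s_i-lowest} uses, so that proof still applies verbatim; but you must say this rather than asserting the literal hypothesis is preserved.

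Second, your diagonal bookkeeping is off. For $r=m-1$ Lemma~\ref{s_i-lowest} gives $j=r+\ell(\lambda)-n=i-1$, so the first swap is between boxes $(i-1,i-1)$ and $(i,i)$, not $(i,i)$ and $(i+1,i+1)$. Moreover it is \emph{not} true that only the first application touches the diagonal: each subsequent $\sigma_r$ with $r>n-\ell(\lambda)$ swaps $(j_r,j_r)$ with $(j_r{+}1,j_r{+}1)$ for $j_r=r+\ell(\lambda)-n$, so the swaps occur at $(i{-}1,i{-}1)\leftrightarrow(i,i)$, then $(i{-}2,i{-}2)\leftrightarrow(i{-}1,i{-}1)$, and so on down to $(1,1)\leftrightarrow(2,2)$. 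The net effect is that the prime originally on $(i,i)$ \emph{migrates} to $(1,1)$; it is not a single transposition. After that, the remaining $\sigma_r$ with $r\le n-\ell(\lambda)$ fall under Lemma~\ref{i,inotinT}(a) (subtracting~1 from the row-1 entries), and no further diagonal activity occurs.

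A cleaner way to organize this, mirroring the paper's proof of Lemma~\ref{lem-qn+highestwt}, is to let $U$ be $T$ with the prime on $T_{ii}$ toggled, compute $\sigma_1\cdots\sigma_{m-1}(T)$ and $\sigma_1\cdots\sigma_{m-1}(U)$ in parallel, observe that they differ only in whether the $(1,1)$ entry is primed, and then note that $e_0$ (or $f_0$) sends one to the other. Your part~(c) is essentially correct, though the invocation of Lemma~\ref{i,inotinT}(b) is unnecessary: since $\weight(T)_k=0$ for all $k\le j$, each $\sigma_k$ with $k<j$ simply fixes $T$.
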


\begin{proof}
The proof is  similar to the one given for Lemma~\ref{lem-qn+highestwt},
 just using Lemmas~\ref{i,inotinT} and \ref{s_i-lowest} in place of Lemma~\ref{s_i-highest}. We omit the details.
\end{proof}

Form $\TTlowest_\lambda$
 by adding a prime to each diagonal entry in $\Tlowest_\lambda$.
 
\begin{theorem}\label{highest-wt-thm}
The $\qq_n$-crystal $\SShTab_n(\lambda)$ is connected with unique $\qq_n$-highest 
and $\qq_n$-lowest weight elements given by 
 $\Thighest_\lambda$ and $\TTlowest_\lambda$ respectively.

\end{theorem}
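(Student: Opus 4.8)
\textbf{Proof proposal for Theorem~\ref{highest-wt-thm}.}
The plan is to deduce connectedness and the identification of the extreme elements of $\SShTab_n(\lambda)$ from the known structure of the $\q_n$-crystal $\ShTab_n(\lambda)$ (Theorem~\ref{thigh-thm}) together with the equivalences \eqref{useful-eq1} and \eqref{useful-eq2} and the computations in Lemmas~\ref{lem-qn+highestwt} and \ref{lem-qn+lowestwt}. First I would show that $\Thighest_\lambda$ satisfies \eqref{useful-eq1}: by construction $\unprime_\diag(\Thighest_\lambda) = \Thighest_\lambda$, and since every diagonal entry $(\Thighest_\lambda)_{ii}=i$ is unprimed, Lemma~\ref{lem-qn+highestwt}(a) gives $e_0^{[i]}(\Thighest_\lambda)=0$ for $i\in[\ell(\lambda)]$ (there is no primed entry to unprime), while part (c) handles $i\in[n]\setminus[\ell(\lambda)]$. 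Hence $\Thighest_\lambda$ is a $\qq_n$-highest weight element. Dually, $\TTlowest_\lambda$ has all diagonal entries primed, so $\unprime_\diag(\TTlowest_\lambda)=\Tlowest_\lambda$ and Lemma~\ref{lem-qn+lowestwt}(b),(c) give $f_0^{[i+n-\ell(\lambda)]}(\TTlowest_\lambda)=0$ for $i\in[\ell(\lambda)]$ and $f_0^{[j]}(\TTlowest_\lambda)=0$ for $j\in[n-\ell(\lambda)]$, so $\TTlowest_\lambda$ is a $\qq_n$-lowest weight element via \eqref{useful-eq2}.

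Next I would prove connectedness, which also yields uniqueness. Consider the map $\unprime_\diag : \SShTab_n(\lambda)\to\ShTab_n(\lambda)$; by Lemma~\ref{tab-unprime-lem} it is weight-preserving and intertwines all operators $e_i,f_i$ for $i\in\{\bar 1,1,\dots,n-1\}$. Given any $T\in\SShTab_n(\lambda)$, Theorem~\ref{thigh-thm} provides a sequence of operators $e_i$ (with $i\in\{\bar 1,1,\dots,n-1\}$) carrying $\unprime_\diag(T)$ to $\Thighest_\lambda$; applying the same sequence to $T$ itself (legitimate since $\unprime_\diag$ commutes with these operators and is nonzero on nonzero tableaux, so none of the intermediate operators acts as zero on the $T$-side — here I would note that an operator acting nonzero downstairs must act nonzero upstairs because $\unprime_\diag$ commutes with it) produces some $T'\in\SShTab_n(\lambda)$ with $\unprime_\diag(T')=\Thighest_\lambda$. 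So it suffices to connect every $T'$ with $\unprime_\diag(T')=\Thighest_\lambda$ to $\Thighest_\lambda$ using $e_0,f_0$ (equivalently $\sigma_0$). But by Lemma~\ref{lem-qn+highestwt}, on such $T'$ the operator $e_0^{[i]}$ simply unprimes the diagonal box $(i,i)$ when it is primed; since the $\sigma_j$ generate the $\W_n$-action and the diagonal entries of $T'$ are otherwise unconstrained, repeatedly conjugating $e_0$ by the appropriate $\sigma_j$'s toggles each primed diagonal box independently, and finitely many such moves reach $\Thighest_\lambda$. (Concretely: $e_0^{[i]}(T')$ changes exactly one diagonal entry and fixes the reading word and all off-diagonal entries, by Lemma~\ref{lem-qn+highestwt}, so iterating over $i\in[\ell(\lambda)]$ removes all diagonal primes.) This shows $\SShTab_n(\lambda)$ is connected. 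Since a connected $\qq_n$-crystal can have at most one highest and at most one lowest weight element once we know one exists at each end — which we just exhibited — uniqueness follows, but I would double-check uniqueness directly: any $\qq_n$-highest weight element $T$ satisfies \eqref{useful-eq1}, so $\unprime_\diag(T)=\Thighest_\lambda$, and then Lemma~\ref{lem-qn+highestwt}(a) forces all diagonal entries of $T$ to be unprimed (a primed one would have $e_0^{[i]}(T)\ne 0$), so $T=\Thighest_\lambda$; dually for the lowest weight element using Lemma~\ref{lem-qn+lowestwt}(a) and \eqref{useful-eq2}, giving $T=\TTlowest_\lambda$.

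I expect the main obstacle to be the careful bookkeeping in the connectedness argument: specifically, verifying that the string of $\q_n$-crystal operators coming from Theorem~\ref{thigh-thm} never acts as zero when transported from $\ShTab_n(\lambda)$ up to $\SShTab_n(\lambda)$, and that the operators $e_0^{[i]}$ for distinct $i$ genuinely act independently on the diagonal of a tableau $T'$ with $\unprime_\diag(T')=\Thighest_\lambda$ — i.e.\ that toggling one diagonal prime does not re-prime another or otherwise interfere. The first point follows because $\unprime_\diag$ commutes with all $e_i,f_i$ for $i\ne 0$ and sends $0$ only to $0$, so $e_i(T)\ne 0$ whenever $e_i(\unprime_\diag(T))\ne 0$; the second follows because, by Lemma~\ref{lem-qn+highestwt}, each $e_0^{[i]}$ acting on such a tableau changes only the single entry $(i,i)$ and leaves the rest (hence the hypothesis $\unprime_\diag(T')=\Thighest_\lambda$) intact, so the toggles commute. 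Once these two routine but fiddly verifications are in place, the theorem follows from \eqref{useful-eq1}, \eqref{useful-eq2}, and the two diagonal-prime lemmas.
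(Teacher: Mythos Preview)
Your proposal is correct and follows essentially the same strategy as the paper: both combine \eqref{useful-eq1}--\eqref{useful-eq2} with Lemmas~\ref{lem-qn+highestwt} and \ref{lem-qn+lowestwt} to pin down the extreme elements, and use Theorem~\ref{thigh-thm} plus the diagonal-prime toggles $e_0^{[i]}$ for connectedness. Your write-up is in fact more explicit than the paper's, which compresses the connectedness step into a single clause.

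One small imprecision worth fixing: you cite Lemma~\ref{lem-qn+highestwt}(a) for the claim $e_0^{[i]}(\Thighest_\lambda)=0$ when $i\in[\ell(\lambda)]$, but part (a) only treats the primed-diagonal case and says nothing when $T_{ii}\in\ZZ$. The paper closes this by observing that $e_0^{[i]}e_0^{[i]}=0$ (immediate from $e_0e_0=0$ after conjugation) and that $\Thighest_\lambda = e_0^{[i]}f_0^{[i]}(\Thighest_\lambda)$ by parts (a) and (b), so $\Thighest_\lambda$ lies in the image of $e_0^{[i]}$. Equivalently, you could note that part (b) gives $f_0^{[i]}(\Thighest_\lambda)\neq 0$, and axiom~(Q3) (transported through the $\sigma_j$'s) forbids both $e_0^{[i]}$ and $f_0^{[i]}$ from being nonzero on the same element. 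The dual fix applies to your verification that $f_0^{[i]}(\TTlowest_\lambda)=0$. Also, in your connectedness paragraph, the path in $\ShTab_n(\lambda)$ supplied by Theorem~\ref{thigh-thm} will in general use both $e_i$ and $f_i$, not only raising operators; your later phrasing (``the string of $\q_n$-crystal operators'') makes clear you understand this, but the first sentence should say so.
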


\begin{proof}
It follows from \eqref{useful-eq1}  and  Lemma~\ref{lem-qn+highestwt}
that  $\Thighest_\lambda$ is the only element of $\SShTab_n(\lambda)$ that could be a $\qq_n$-highest weight. Since $e_0^{[i]}e_0^{[i]}=0$ for all $i \in [n]$ and since 
$\Thighest_\lambda = e_0^{[i]}f_0^{[i]}(\Thighest_\lambda)$ when $i \in [\ell(\lambda)]$,
it follows from Lemma~\ref{lem-qn+highestwt} that $e_0^{[i]}(\Thighest_\lambda) = 0$
for all $i \in [n]$, so $\Thighest_\lambda$ is the unique $\qq_n$-highest weight element and $\SShTab_n(\lambda)$ is connected.
A similar argument using \eqref{useful-eq2} and  Lemma~\ref{lem-qn+lowestwt} shows that 
$\TTlowest_\lambda$ is the unique $\qq_n$-lowest weight vector in $\SShTab_n(\lambda)$.
\end{proof}

\subsection{Dual equivalence operators}

Suppose $T$ is standard shifted tableau with $n$ boxes. For each $i \in [n]$, 
write $\square_i$ for the unique position 
of $T$ containing $i$ or $i'$, and define $\fks_i ( T)$ 
to be the shifted tableau formed from $T$ as follows:
\begin{itemize}

\item If $\square_i$ and $\square_{i+1}$ are in the same row or  column,
then reverse the primes on the entries of whichever of  these positions is off the diagonal;
then, if both $\square_{i-1}$ and $\square_{i+1}$ (respectively, $\square_i$ and $\square_{i+2}$) are on the diagonal when $i-1 \in [n]$
(respectively, $i+2 \in [n]$),
and their entries are not both primed or both unprimed, also 
reverse the primes on these entries.

\item Otherwise, swap $i$ with $i+1$ and  $i'$ with $i'+1$.
\end{itemize}
Thus we would have 
\[
\fks_6\(\ \ytab{
\none & \ & 6 & 7'  \\ 
\ & \ & \ & \
}\ \)
=
\ytab{
\none & \ & 6' & 7  \\ 
\ & \ & \ & \
}
\quand
\fks_5\(\ \ytab{
\none & \ & 5 &  \ \\ 
\ & \ & \ & 6'
}\ \)
=
\ytab{
\none & \ & 6 &  \ \\ 
\ & \ & \ & 5'
}
\]
as well as 
\[
\fks_4\(\ \ytab{
\none & \none &6' & \ \\ 
\none & 4' & 5 & \ \\ 
\ & \ & \ & \
}\ \)
=
\ytab{
\none & \none &6' & \ \\ 
\none & 4' & 5' & \ \\ 
\ & \ & \ & \
}
\quand
\fks_5\(\ \ytab{
\none & \none &6 & \ \\ 
\none & 4' & 5' & \ \\ 
\ & \ & \ & \
}\ \)
=
\ytab{
\none & \none &6' & \ \\ 
\none & 4 & 5 & \ \\ 
\ & \ & \ & \
}.
\]
Next, for each $i \in \ZZ$,  we construct a shifted tableau $\fkd_i(T)$ from $T$ as follows.
If $i\in\{-1,0\}$ and $i+2\in[n]$, then form
$ \fkd_i(T) $ from $T$ by swapping  $i+2$ with $i+2'$. If $i \in [n-2]$, then  set
\[\fkd_i(T) :=  \begin{cases}
\fks_{i} ( T) &\text{if $i+2$ is between $i$ and $i+1$ in $\shword(T)$}
\\
\fks_{i+1} ( T) &\text{if $i$ is between $i+1$ and $i+2$ in $\shword(T)$}
\\T &\text{if $i+1$ is between $i$ and $i+2$ in $\shword(T)$.}
\end{cases}\]
For integers $i$ with $i+2 \notin [n]$ define $\fkd_i(T) := T$. 
Here are a few properties of these operators:

\begin{proposition}[{See \cite[\S3.5]{M2021a}}] \label{primes-prop}
Let $T$ be a standard shifted tableau with $n$ boxes. 
For $j \in [n]$ let $\square_j$ be the unique box of $T$ containing $j$ or $j'$.
Fix $i \in [n-2]$. Then the following holds:
\ben
\item[(a)] $\fkd_i(\fkd_i(T)) =\fkd_{-1}(\fkd_{-1}(T)) =\fkd_0(\fkd_0(T)) =T$ and $\unprime_{\diag}(\fkd_i(T)) = \fkd_i(\unprime_{\diag}(T))$.

\item[(b)] $\fkd_i(T)$ only differs from $T$ in its entries in positions $\square_i$, $\square_{i+1}$, and $\square_{i+2}$.


\item[(c)] If $\square_i$ and $\square_{i+2}$ are not both on the diagonal, then $ \fkd_i(T) $ and $T$ have the same number of primed entries,
and the diagonal positions that are primed in $\fkd_i(T)$ are the same as in $T$.

\item[(d)] If $\square_i$ and $\square_{i+2}$ are both on the diagonal, then 
the number of primed entries in $ \fkd_i(T) $ and $T$ differ by one.
In this case, if the entries in $\square_i$ and $\square_{i+2}$ are both primed or both unprimed, then
$\fkd_i(T) $ is formed from $T$ by reversing the prime on the entry in just $\square_{i+1}$,
and otherwise 
$\fkd_i(T) $ is formed from $T$ 
by reversing the primes on the entries in $\square_i$, $\square_{i+1}$, and $\square_{i+2}$.

\een
\end{proposition}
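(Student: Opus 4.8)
The plan is to prove all four statements by unwinding the definitions of $\fks_j$ and $\fkd_j$ and then running a finite case analysis organized by the relative positions of $\square_i,\square_{i+1},\square_{i+2}$ (and, when they exist, $\square_{i-1}$ and $\square_{i+3}$) and by which of these boxes lie on the main diagonal. The claims about $\fkd_{-1}$ and $\fkd_0$ in part~(a) are immediate, since each of these operators merely toggles the prime on one fixed box and therefore squares to the identity. So it suffices to treat a fixed $i\in[n-2]$, for which $\fkd_i(T)$ is one of $\fks_i(T)$, $\fks_{i+1}(T)$, or $T$ according to which of $i,i+1,i+2$ occupies the middle position of $\shword(T)$.

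First I would record a structural description of each $\fks_j$: it is the composite of a \emph{positional move}, which either interchanges the contents of $\square_j$ and $\square_{j+1}$ (when these boxes do not share a row or column) or leaves every box fixed, followed by a \emph{prime adjustment} that toggles primes only on certain boxes among $\square_{j-1},\square_j,\square_{j+1},\square_{j+2}$. This yields the commutation with $\unprime_\diag$ in part~(a): the positional move visibly commutes with $\unprime_\diag$, every box whose prime is toggled is either off the diagonal (where $\unprime_\diag$ acts trivially) or on it (where the toggle becomes invisible after $\unprime_\diag$), and the criterion separating the two cases in the definition of $\fks_j$ is insensitive to $\unprime_\diag$. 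The same description shows $\fks_j$ is an involution in the swap case (a dual–equivalence–type exchange of two incomparable boxes), while in the remaining case involutivity reduces to checking that the prescribed prime toggles are self-inverse. To pass from there to involutivity of $\fkd_i$: the middle-value case is trivial, and otherwise, say $i+2$ lies between $i$ and $i+1$ so $\fkd_i(T)=\fks_i(T)$; since the positional move of $\fks_i$ only permutes the contents of $\square_i$ and $\square_{i+1}$ and the prime toggles do not change which of $i,i+1,i+2$ is middle in the hook reading word, $\fks_i(T)$ again has $i+2$ between $i$ and $i+1$, so $\fkd_i(\fks_i(T))=\fks_i(\fks_i(T))=T$. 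The case where $i$ is the middle value is symmetric, using $\fks_{i+1}$.

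Finally, parts~(b), (c), (d) are read off from the explicit list of prime toggles performed by $\fks_i$ or $\fks_{i+1}$ in the applicable case. The crucial point, which I expect to be the main obstacle, is to show that the betweenness hypothesis on $\shword(T)$ constrains the geometry of $\square_{i-1},\dots,\square_{i+3}$ near the diagonal tightly enough that the ``out of range'' diagonal corrections are dormant: when $\fkd_i=\fks_i$ the correction attached to the pair $(\square_{i-1},\square_{i+1})$ cannot fire, and when $\fkd_i=\fks_{i+1}$ the one attached to $(\square_{i+1},\square_{i+3})$ cannot fire. Granting this, $\fkd_i(T)$ differs from $T$ only in $\square_i,\square_{i+1},\square_{i+2}$, which is~(b); and inspecting the remaining toggles shows that the number of primed entries and the set of primed diagonal positions are unchanged unless $\square_i$ and $\square_{i+2}$ are both on the diagonal, in which case exactly one further diagonal toggle of $\square_{i+1}$ (together with a possible reversal of primes on $\square_i,\square_{i+1},\square_{i+2}$) occurs with the precise effect stated in~(d), giving both~(c) and~(d). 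Verifying the dormancy claims is exactly where one must argue carefully with the hook reading word, using that its columns and rows are processed in a fixed order to pin down where $\square_{i-1}$ and $\square_{i+3}$ can sit relative to whichever of $\square_i,\square_{i+1},\square_{i+2}$ are diagonal; I would handle this by enumerating the few admissible local shapes near the diagonal and checking each against the betweenness condition.
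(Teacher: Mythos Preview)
The paper does not supply its own proof of this proposition: it is quoted verbatim as \cite[Prop.~3.15]{M2021a} and used as a black box. So there is no in-paper argument to compare your proposal against.

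Your outline is the natural approach and matches in spirit what one would expect the cited proof to do: reduce to $\fks_j$, separate the positional swap from the prime adjustments, and then run a finite case analysis on the local configuration of $\square_i,\square_{i+1},\square_{i+2}$ relative to the diagonal. The main step you flag as unfinished---showing that the ``out of range'' diagonal corrections attached to $(\square_{i-1},\square_{i+1})$ or $(\square_{i+1},\square_{i+3})$ cannot fire under the relevant betweenness hypothesis---is indeed the crux, and you are right that it requires a careful local enumeration using the hook reading order. One point to be cautious about in your involutivity argument for part~(a): in the same-row-or-column case, toggling a prime on an off-diagonal box changes whether that entry contributes to a column segment $C_j$ or a row segment $R_j$ of $\shword$, so the assertion that ``the prime toggles do not change which of $i,i+1,i+2$ is middle'' is not automatic and should be folded into the same case analysis rather than asserted separately.
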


The \defn{standardization}  of a semistandard shifted tableau $T$ is 
given as follows. List the boxes of $T$ in the order such that
one box comes before another if its entry is weakly smaller and the letter it contributes to $\shword(T)$
appears first going left to right. Then form $\std(T)$ from $T$ by changing the entry in the 
$i$th box to $i'$ if   primed and to $i$
otherwise. For example, \[
\std\(
\ytab{\none & \none & \none & 9' & 9  \\
\none & \none & 5 & 8 & 8 & 9 \\
\none & 4'  & 5' & 6 & 6 & 8 & 9' \\
2 &  4'  & 4 & 4 & 5 & 6' & 6}
\)
=
\ytab{\none & \none & \none & 17' & 18  \\
\none & \none & 7 & 13 & 14 & 19 \\
\none & 3'  & 6' & 10 & 11 & 15 & 16' \\
1 &  2'  & 4 & 5 & 8 & 9' & 12}.
\]
The operations  $\unprime_\diag$ and $\std$ commute.

When $T$ is standard with $n$ boxes,
a number $i \in [n-1]$ is a \defn{descent} of $T$ 
if $i+1$ is before $i$ in $\shword(T)$. 
Let $\Des(T)$ be the set of descents of $T$.
One can check that
 $i \in[n-1]$ is in $\Des(T)$ if and only if  
 (a) $i$ and $i+1$ both appear in $T$ with $i+1$ in a row strictly after $i$,
(b) $i'$ and $i+1'$ both appear in $T$ with $i+1'$ in a column strictly after $i'$,
or (c) $i$ and $i+1'$ both appear in $T$.

Below is another technical result to be used in Section~\ref{morphisms-subsect2};
compare with Lemma~\ref{f0-incr-lem}.

\begin{lemma}\label{f0-shtab-lem}
Suppose $T$ is a semistandard shifted tableau. Let $q := \weight(T)_1$ 
and 
\[U :=\begin{cases} \std(T) &\text{if }q \leq 1 \\  \fkd_{q-2}\cdots \fkd_1\fkd_0(\std(T)) &\text{if }q\geq 2.\end{cases}\]
 If  $q = 0$ or if $\weight(T)_2 \neq 0$ and $q \in \Des(U)$, then $f_{\bar 1}(T) = 0$,
and otherwise   $\std(f_{\bar 1}(T)) = U$.
\end{lemma}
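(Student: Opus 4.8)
The statement is the tableau-side analogue of Lemma~\ref{f0-incr-lem}, and the natural strategy is to mimic that proof verbatim, translating each step through the standardization map. First I would dispose of the small cases $q=0$ and $q=1$ exactly as in Lemma~\ref{f0-incr-lem}: if $q=0$ there are no entries equal to $1'$ or $1$ in the first row, so $f_{\bar 1}(T)=0$ by Definition~\ref{tab-qf-def}; if $q=1$ then $U=\std(T)$, the unique entry equal to $1'$ or $1$ sits in box $(1,1)$ or $(1,2)$, and one checks directly from the descent criterion (a)--(c) recalled just before the lemma that $1\in\Des(U)$ precisely when that entry is not strictly smaller than every entry in row $2$; the formula for $f_{\bar 1}$ in Definition~\ref{tab-qf-def} then gives $\std(f_{\bar 1}(T))=\std(T)=U$ in the surviving case.

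For the main case $q\ge 2$, I would argue as follows. Let the boxes of the first row of $T$ holding entries $\le 1$ be $(1,1),(1,2),\dots,(1,q)$ (these are consecutive at the left end of the row, since entries weakly increase along rows), and call their standardized labels $v_1<v_2<\dots<v_q$, which are necessarily $\{1,2,\dots,q\}$ after standardization except possibly for primes. Under $\std$, box $(1,1)$ receives label $1$ or $1'$, box $(1,2)$ receives label $2$ or $2'$, and so on, because the reading word $\shword(T)$ reads the first row last and from left to right within that row. Now I would track the effect of $\fkd_0,\fkd_1,\dots,\fkd_{q-2}$ on $\std(T)$. The key computation is that $\fkd_0$ acts on boxes $\square_1,\square_2,\square_3=(1,1),(1,2),(1,3)$ — here $\square_1$ and $\square_3$ lie on the first row and are \emph{not} both on the diagonal when $q\ge 3$ (and when $q=2$ one uses the $i=0$ clause $\fkd_0=$ swap $2\leftrightarrow 2'$), so by Proposition~\ref{primes-prop}(b),(c),(d) and the explicit formula for $\fks_0$ hidden in $\fkd_0$ it interchanges the primes on $\square_1$ and $\square_2$ and transposes their labels, i.e.\ it sends the first two labels $v_1v_2$ to $\tilde v_2\tilde v_1$ where $\tilde v_1,\tilde v_2$ is the prime-reversed pair when $v_1,v_2$ differ in primed-ness and is $v_1,v_2$ otherwise. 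Then each successive $\fkd_j$ for $j=1,\dots,q-2$ acts on $\square_j,\square_{j+1},\square_{j+2}$, and since $\square_j$ now carries the original first-box entry $\tilde v_1$ while $\square_{j+1},\square_{j+2}$ carry the larger entries, the relevant clause is ``$j+1$ is between $j$ and $j+2$ in $\shword(T)$'' versus the transposition clause — a short check shows $\fkd_j$ transposes $\tilde v_1$ past the box to its right. The upshot, exactly as in Lemma~\ref{f0-incr-lem}, is that $U$ is obtained from $\std(T)$ by replacing the first-row segment $v_1v_2v_3\cdots v_q$ by $\tilde v_2 v_3\cdots v_q\tilde v_1$.

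With $U$ so described, I would finish as in Lemma~\ref{f0-incr-lem}. Since $U$ is again a (standard) shifted tableau reachable from $\std(T)$ by dual-equivalence moves, the value $\lceil v_1\rceil=\lceil\tilde v_1\rceil$ at the end of the first-row segment cannot equal the smallest entry appearing in the rest of the tableau — here I would invoke the analogue for shifted tableaux of Proposition~\ref{isim-lem2} in the form used in \cite{M2021a}, or argue directly from semistandardness that the entry $1$ in $(1,1)$ forces the entry of $(2,2)$ to be at least $2$. Hence if $\weight(T)_2\ne 0$, then $q\in\Des(U)$ holds exactly when the first entry of row~$1$ of $T$ is not strictly smaller than every entry of row~$2$, which by Definition~\ref{tab-qf-def} is exactly the condition $f_{\bar 1}(T)=0$. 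In the complementary case ($\weight(T)_2=0$ or $q\notin\Des(U)$), $f_{\bar 1}(T)$ changes the last entry equal to $1'$ or $1$ in row~$1$ of $T$ to $2'$ (or $2$ if that box is the diagonal box $(1,1)$, which forces $q=1$, already handled), i.e.\ it replaces $T$'s first-row segment by $v_1v_2\cdots v_{q-1}$ followed by a new entry $\tilde v_1$ shifted one step right in standardized labels; comparing standardizations box by box with the description of $U$ gives $\std(f_{\bar 1}(T))=U$.

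\textbf{Main obstacle.} The delicate point is the bookkeeping in the case $q\ge 2$: verifying that the composite $\fkd_{q-2}\cdots\fkd_1\fkd_0$ really does ``bubble'' the first label to the right without touching anything else, including checking which clause of the definition of $\fkd_j$ is active at each stage and handling the diagonal-box subtleties (the boxes $(1,1),(2,2)$ being on the diagonal triggers the prime-reversal clause of Proposition~\ref{primes-prop}(d), which must match the prime-reversal built into Definition~\ref{tab-qf-def}). I expect the rest to be routine once this combinatorial identity between the iterated dual-equivalence action and the action of $f_{\bar 1}$ is pinned down; it is the exact mirror of the argument in Lemma~\ref{f0-incr-lem} via the Coxeter--Knuth operators $\ck_j$, with $\fkd_j$ playing the role of $\ck_j$ and $\std$ the role of $\concat$.
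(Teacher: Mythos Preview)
Your high-level strategy---mimic Lemma~\ref{f0-incr-lem} with $\fkd_j$ playing the role of $\ck_j$ and $\std$ the role of $\concat$---is exactly the paper's approach. But your execution of the $q\ge 2$ step rests on a misreading of the operators $\fkd_j$, and this derails the argument.

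You write that $\fkd_0$ ``interchanges the primes on $\square_1$ and $\square_2$ and transposes their labels'' via some ``$\fks_0$ hidden in $\fkd_0$,'' and that each later $\fkd_j$ ``transposes $\tilde v_1$ past the box to its right.'' None of this is correct. By definition $\fkd_0$ simply toggles the prime on the entry $2$; there is no $\fks_0$, and only one box is affected. For $j\ge 1$ the boxes $\square_j,\square_{j+1},\square_{j+2}=(1,j),(1,j+1),(1,j+2)$ all lie in row~$1$, so whenever $\fkd_j$ acts as $\fks_{j}$ or $\fks_{j+1}$ it reverses primes on entries in the same row---it never swaps entries between boxes. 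Your claimed description of $U$ (first-row entries $\tilde v_2,v_3,\dots,v_q,\tilde v_1$, for instance $2,3,\dots,q,1$) is not even weakly increasing along the row, so cannot be a shifted tableau.

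The actual computation is simpler than you anticipate. Since the boxes $(1,1),\dots,(1,q)$ carry entries $1^\ast,2,3,\dots,q$ in $\std(T)$, the subword $12\cdots q$ appears consecutively in $\shword(\std(T))$. Applying $\fkd_0$ primes the $2$, so now $2$ precedes $1$ in the reading word; hence $\fkd_1$ acts as $\fks_2$ (the case ``$i$ is between $i+1$ and $i+2$''), which unprimes $2$ and primes $3$; inductively $\fkd_j$ acts as $\fks_{j+1}$, unpriming $j+1$ and priming $j+2$. The net effect is that $U$ equals $\std(T)$ with the single entry $q$ in box $(1,q)$ replaced by $q'$. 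From this the conclusion is immediate: $f_{\bar 1}$ changes $T_{1,q}=1$ to $2'$, and standardizing gives exactly the same tableau.

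A second recurring slip: you phrase the vanishing condition for $f_{\bar 1}(T)$ in terms of ``row~$2$,'' but Definition~\ref{tab-qf-def} says $f_{\bar 1}(T)=0$ exactly when the \emph{first} row of $T$ contains a $2'$ (or has no $1',1$). The paper checks that, with $U$ as above, $q\in\Des(U)$ holds precisely when $T_{1,q+1}=2'$; your ``row~$2$'' formulation comes from pushing the $a^1$-versus-$a^2$ analogy of Lemma~\ref{f0-incr-lem} one step too far.
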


\begin{proof}
If $q=0$, then there are no entries equal to $1'$ or $1$ in $T$, so $f_{\bar 1}(T) = 0$ by Definition~\ref{tab-qf-def}.
Suppose $q=1$. Then 
$T_{11}=U_{11} \in \{1',1\}$, and if $\weight(T)_2 \neq 0$, then 
  $T_{12} =U_{12} \in \{2',2\}$.
Thus if $\weight(T)_2 \neq 0$, then we can only have $q \in \Des(U)$ if $T_{12}  =U_{12}= 2'$
in which case $f_{\bar 1}(T) = 0$.
If $\weight(T)_2= 0$ or if $q \notin\Des(U)$, in which case $T_{12} = U_{12}=2$,
 then $f_{\bar 1}(T)$ is formed from $T$ by replacing entry $T_{11}$ with $1+T_{11}$,
 and this tableau also has standardization $U$ as claimed.

Suppose $q\geq 2$ so that $U =  \fkd_{q-2}\cdots \fkd_1\fkd_0(\std(T))$. The shifted tableau $T$ 
contains at most one entry equal to $1'$, which can only appear in the $(1,1)$ position, and all $1$'s in $T$ appear in the first row. Therefore  $\shword(\std(T))$ 
contains
$  12345\dots q$ as a consecutive subword. Applying $\fkd_0$ to $\std(T)$  changes the unique entry $2$ to $2'$, so the shifted reading word of $\fkd_0(\std(T))$
contains $  2 1345\dots q  $ as a (not necessarily consecutive) subword.
It follows that $\fkd_1$ applied to $\fkd_0(\std(T))$ acts as $\fks_{2}$, which reverses the primes on entries $2'$  and $3$. The shifted reading word of $\fkd_1\fkd_0(\std(T))$ therefore contains 
$  3 1245\dots q  $ as a subword, so $\fkd_2$ applied to $\fkd_1\fkd_0(\std(T))$ acts as $\fks_{3}$, which reverses the primes on  entries $3'$ and $4$.
Continuing in this way, we deduce that $U$ is formed from $\std(T)$ by simply adding a prime to entry $q$,
which is contained in the off-diagonal position $(1,q)$. 

Now we are ready to prove the last part of the lemma. 
If $\weight(T)_2 = 0$, then $f_{\bar 1}(T)$ is formed from $T$ by changing entry $T_{1q}$ from $1$ to $2'$
in which case we have $ \std(f_{\bar 1}(T)) = U$ as claimed.
Assume
$\weight(T)_2 \neq 0$.
Then 
$q \in \Des(U)$ if and only if $q+1$ appears before $q$ in $\shword(U)$,
which occurs if and only if the first row of $T$ contains an entry equal to $2'$, which would have to occur in position $(1,q+1)$.
Thus if $q \in \Des(U)$, then  $f_{\bar 1}(T) = 0$ by Definition~\ref{tab-qf-def},
while 
if $q \notin \Des(U)$, then 
applying $f_{\bar 1}$ to $T$ again
changes 
entry $T_{1q}$ from $1$ to $2'$, in which case $\std(f_{\bar 1}(T)) = U$.
\end{proof}

\section{Crystal morphisms}\label{morphisms-sect}

Continue to fix a positive integer $n$.
Below, we describe several morphisms between the families of $\qq_n$-crystals introduced above.
Specifically, we explain how  each crystal of words $\cW^+_n(m) \cong (\BB_n^+)^{\otimes m} $ may be embedded in a crystal of factorizations $\ORF_n(z)$
and how each crystal $\ORF_n(z)$ may be embedded in a union of shifted tableau crystals $\SShTab_n(\lambda)$.
This will allow us to prove Theorem~\ref{main-thm} from the introduction and to
 show that $\ORF_n(z)$ and $\SShTab_n(\lambda)$ are always normal $\qq_n$-crystals.
 
\subsection{From words to increasing factorizations}\label{morphisms-subsect1}

Let $p \in \ZZ$. Then $\iR^+(s_p) = \{p',p\}$ where $s_p = (p,p+1) \in S_\ZZ$.
The following is an easy exercise:

\begin{proposition}\label{word-factor-prop1}
The standard $\qq_n$-crystal $\BB_n^+ $ is isomorphic to $\ORF_n(s_p)$
via the map 
that sends 
$\boxed{i} \mapsto (\emptyset,\dots,\emptyset,p,\emptyset,\dots,\emptyset)$
and 
$\boxed{i'} \mapsto (\emptyset,\dots,\emptyset,p',\emptyset,\dots,\emptyset)$
where in both $n$-tuples all 
but the $i$th terms are empty words.
\end{proposition}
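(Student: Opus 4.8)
The claim is that the explicit map $\psi:\BB_n^+\to\ORF_n(s_p)$ sending $\boxed{i}$ to the $n$-tuple whose $i$th entry is the one-letter word $p$ (and whose other entries are $\emptyset$) and sending $\boxed{i'}$ to the analogous tuple with entry $p'$ is a $\qq_n$-crystal isomorphism. First I would observe that $\psi$ is a bijection: by Definition~\ref{inv-def2} and the computation $\iR^+(s_p)=\{p',p\}$ given just above the proposition, every element of $\ORF_n(s_p)$ is a tuple whose concatenation is $p$ or $p'$, hence has exactly one nonempty entry consisting of a single letter in $\{p',p\}$; so the $2n$ elements of $\ORF_n(s_p)$ are precisely the images $\psi(\boxed{i})$ and $\psi(\boxed{i'})$ for $i\in[n]$. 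It is immediate from \eqref{incr-wt-eq} that $\weight(\psi(\boxed{i}))=\weight(\psi(\boxed{i'}))=\e_i$, matching the weights in the standard $\qq_n$-crystal $\BB_n^+$ from \eqref{standard-qq-eq}. Since $\ORF_n(s_p)$ is a $\qq_n$-crystal by Proposition~\ref{qq-orf-prop} (note $\mu(s_p)=(1)$, so $\ell(\mu(s_p))=1\le n$), it remains only to check that $\psi$ intertwines the crystal operators $e_i,f_i$ for $i\in\{\bar 1,0,1,2,\dots,n-1\}$ with those of $\BB_n^+$.

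The second step is this operator check, which is a short finite verification because each factorization in $\ORF_n(s_p)$ has total length one. For $i\in[n-1]$: applying $f_i$ to $\psi(\boxed{i})=(\dots,p,\dots)$, the word $a^i=p$ is unpaired (since $a^{i+1}=\emptyset$), so $f_i$ moves it to position $i+1$ via case (L2), giving $\psi(\boxed{i+1})$; on $\psi(\boxed{i'})$, case (L1) moves $p'$ to position $i+1$ giving $\psi(\boxed{(i+1)'})$; on every other basis element $f_i$ acts as $0$ because either position $i$ is empty or its letter is already paired. This matches the $i$-arrows $\boxed{i}\to\boxed{i+1}$ and $\boxed{i'}\to\boxed{(i+1)'}$ of \eqref{standard-qq-eq}, and similarly for $e_i$. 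For $f_0,e_0$: by Definition~\ref{orf-def3}, $f_0$ toggles a prime onto the first letter of $a^1$, so $f_0(\psi(\boxed{1}))=\psi(\boxed{1'})$ and $f_0$ is $0$ elsewhere; $e_0$ is the inverse; this matches the $0$-arrow $\boxed{1}\to\boxed{1'}$. For $f_{\bar 1},e_{\bar 1}$ (when $n\ge 2$): by Definitions~\ref{orf-def2} and \ref{ore-def2}, $f_{\bar 1}$ acts nontrivially only when $a^1$ is nonempty with its first (and only) letter smaller than every letter of $a^2$; since $a^2=\emptyset$ here, $f_{\bar 1}$ simply moves the one letter of $a^1$ to the start of $a^2$, so $f_{\bar 1}(\psi(\boxed{1}))=\psi(\boxed{2})$ and $f_{\bar 1}(\psi(\boxed{1'}))=\psi(\boxed{2'})$ (the prime-reversal subcase of Definition~\ref{orf-def2} is not triggered since $a^1$ has only one letter), matching the $\bar 1$-arrows $\boxed{1}\to\boxed{2}$ and $\boxed{1'}\to\boxed{2'}$ of \eqref{standard-qq-eq}; and $e_{\bar 1}$ is the reverse.

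There is essentially no serious obstacle here — the content is entirely bookkeeping against the definitions in Section~\ref{incr-f-sect}. The only point that warrants a moment's care is confirming that the prime-reversal clauses in the definitions of $f_i$ (case (L2)), $f_{\bar 1}$, and $e_{\bar 1}$ never fire, which holds automatically because every factorization involved has at most one letter in a single factor and all other factors empty. Once all six operator families are checked, $\psi$ is a weight-preserving bijection commuting with every crystal operator, hence the desired $\qq_n$-crystal isomorphism.
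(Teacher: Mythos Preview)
Your proposal is correct and matches the paper's approach: the paper states the proposition as ``an easy exercise'' with no further proof, and your verification---checking that $\psi$ is a weight-preserving bijection and then running through Definitions~\ref{orf-def}--\ref{orf-def3} on the $2n$ length-one factorizations---is exactly the routine check being left to the reader. Your observation that the prime-reversal clauses in (L2), Definition~\ref{orf-def2}, and Definition~\ref{ore-def2} are never triggered (because each factor has at most one letter) is the only point requiring any care, and you handle it correctly.
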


Fix positive integers $M$ and $N$, define $I_N := \{z \in I_\ZZ : z(i) = i\text{ for all }i \in \ZZ \setminus[N]\}$,
and choose involutions $y \in I_M$ and $z \in I_N$.
Let $y \oplus  z\in I_{M+N}$
 be the permutation mapping $i\mapsto y(i)$ for $i \in [M]$ and $i+M \mapsto z(i) + M$ for $i \in [N]$.
 In this setup, the following holds:
 
\begin{proposition}\label{word-factor-prop2}
The 
$\qq_n$-crystal $\ORF_n(y) \otimes \ORF_n(z)$ is isomorphic to 
$\ORF_n(y\oplus z)$
via the map $a \otimes b \mapsto (a^1\underline b^1,a^2\underline b^2,\cdots a^n\underline b^n)$
where $\underline b^i$ is the word formed by adding $M$ to every letter of $b^i$.
\end{proposition}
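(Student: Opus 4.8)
The plan is to prove this by reducing, as in the proofs of the earlier tensor-product isomorphism results (Proposition~\ref{word-factor-prop1} and the axioms verified in Proposition~\ref{qq-orf-prop}), to a purely combinatorial statement about primed involution words, and then checking that the displayed bijection intertwines each crystal operator. First I would verify that the map $\Phi : a\otimes b\mapsto (a^1\underline b^1,\dots,a^n\underline b^n)$ is a well-defined bijection $\ORF_n(y)\otimes \ORF_n(z)\to\ORF_n(y\oplus z)$. Well-definedness amounts to two things: that each $a^i\underline b^i$ is strictly increasing, which is immediate since every letter of $a^i$ is at most $M$ (as $a\in\iR^+(y)$ and $y\in I_M$ forces all letters of $\concat(a)$ into $[1,M-1]$, hence into $\ZZ\sqcup\ZZ'$ below $M$) while every letter of $\underline b^i$ is at least $M+1$; and that $\concat(\Phi(a\otimes b)) = \concat(a)\,\underline{\concat(b)}\in\iR^+(y\oplus z)$. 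For the latter I would argue that a primed word lies in $\iR^+(y\oplus z)$ if and only if it factors as $u\,\underline v$ with $u\in\iR^+(y)$ and $v\in\iR^+(z)$: the ``if'' direction follows because $y$ acts only on $[M]$ and $z$ only on $[N]$, so the Demazure-product computation defining involution words (Definition~\ref{inv-def}) splits as a concatenation, and commutations are preserved under this splitting; the ``only if'' direction follows from $\ell_{\mathsf{abs}}(y\oplus z) = \ell_{\mathsf{abs}}(y)+\ell_{\mathsf{abs}}(z)$ together with the fact that any involution word for $y\oplus z$ can have no letter equal to $M$ (since $(M,M+1)$ is neither a cycle of $y\oplus z$ nor creates one), so it literally breaks into a $[1,M-1]$-part and a $[M+1,\,M+N-1]$-part. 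Bijectivity of $\Phi$ is then clear, with inverse splitting a factorization of $\ORF_n(y\oplus z)$ at the value $M$ in each row and subtracting $M$ from the high part.

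Next I would check that $\Phi$ preserves the weight map—immediate from \eqref{incr-wt-eq}, since $\ell(a^i\underline b^i)=\ell(a^i)+\ell(b^i)$ and $\weight(a\otimes b) = \weight(a)+\weight(b)$—and that $\Phi$ commutes with all crystal operators. For $e_i,f_i$ with $i\in[n-1]$ (Definitions~\ref{orf-def} and \ref{ore-def}), the key observation is that the pairing $\pair(a^i\underline b^i,\,a^{i+1}\underline b^{i+1})$ from Definition~\ref{pair-def} decomposes: since every letter of $\underline b^\bullet$ exceeds every letter of $a^\bullet$, iterating from largest to smallest first pairs the $\underline b^{i+1}$-letters against the $\underline b^i$-letters exactly as in $\pair(b^i,b^{i+1})$, and only afterwards pairs remaining $a^{i+1}$-letters against $a^i$-letters exactly as in $\pair(a^i,a^{i+1})$. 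Consequently the ``last unpaired letter in $a^i\underline b^i$'' is the last unpaired letter of $\underline b^i$ if one exists and otherwise the last unpaired letter of $a^i$; this is precisely the case distinction ``$\varepsilon_i(b)\le\varphi_i(\cdot)$'' governing the $\gl_n$-tensor product (Theorem~\ref{thmdef1}), so $\Phi$ commutes with $f_i$ (and with $e_i$ by the analogous argument, or by (S1)). One must also check that the local modifications in steps (L1)/(L2) and (R1)/(R2) affect only letters within the relevant half of the factorization, which is clear because the integers $v\in[x,y-1]$ being adjusted all lie on the same side of $M$ as $x$.

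Then I would treat $e_{\bar 1},f_{\bar 1}$ (Definitions~\ref{orf-def2}, \ref{ore-def2}) and $e_0,f_0$ (Definition~\ref{orf-def3}). These operators only inspect and modify the first one or two letters of $a^1\underline b^1$ (and the front of $a^2\underline b^2$). If $a^1$ is nonempty, those leading letters are leading letters of $a^1$ (and $a^2$), so $f_{\bar 1},e_0,f_0$ act on the $a$-side exactly matching the first branch of the $\qq_n$-tensor product formulas in Theorem~\ref{qq-thmdef} and in the $\q_n$-tensor product (Theorem~\ref{q-tensor-thm}); note in particular that $\weight(a)_1 = \ell(a^1) \ne 0$ here, and $\weight(a)_1=\weight(a)_2=0$ exactly when $a^1=a^2=\emptyset$. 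If $a^1$ is empty then $\weight(a)_1 = 0$ and these operators pass to the $b$-side, again matching the tensor product definition; for $f_{\bar1}$ when $\weight(a)_1=0$ but $\weight(a)_2\ne 0$ one needs the branches $f_{\overline 1}f_0(b)\otimes e_0(c)$ etc., and here I would use Lemma~\ref{f0-incr-lem} to identify $f_{\bar 1}$ on $\ORF_n(\cdot)$ with the composite of Coxeter–Knuth operators it describes, exactly as the tensor-product formula prescribes—this is the one place the argument is genuinely intricate rather than routine, and I expect it to be the main obstacle. Everything else is bookkeeping: associativity and the claim that $(b\otimes c)\otimes d\mapsto b\otimes(c\otimes d)$ is an isomorphism come for free from Theorem~\ref{qq-thmdef} once the single-step isomorphism is established. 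I would close by remarking that, combined with Propositions~\ref{word-factor-prop1}, this shows every $(\BB_n^+)^{\otimes m}$ embeds in some $\ORF_n(z)$, which is what Section~\ref{morphisms-subsect1} needs.
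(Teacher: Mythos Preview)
Your overall approach matches the paper's: verify bijectivity (which the paper leaves as a standard exercise), then check compatibility with each crystal operator by inspection. However, your description of $\pair(a^i\underline b^i,\,a^{i+1}\underline b^{i+1})$ has a gap. The pairing does \emph{not} decompose cleanly: after the $\underline b^{i+1}$-letters pair against $\underline b^i$-letters as you say, the $a^{i+1}$-letters that were unpaired in $\pair(a^i,a^{i+1})$ do not remain unpaired---each now pairs with the smallest remaining unpaired $\underline b^i$-letter (all of which exceed $M$). The paper makes exactly this point: the previously unpaired letters in $a^{i+1}$ get matched with shifts of unpaired letters in $b^i$. Consequently the condition ``there exists an unpaired $\underline b^i$-letter in the combined pairing'' is not ``$\varphi_i(b)>0$'' but ``$\varphi_i(b) > \varepsilon_i(a)$'', which is precisely the tensor-product branch in Theorem~\ref{thmdef1} (and the inequality should involve $\varepsilon_i(a)$ and $\varphi_i(b)$, not ``$\varepsilon_i(b)\le\varphi_i(\cdot)$'' as you wrote). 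With this correction your $e_i,f_i$ argument goes through.

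For $f_{\bar 1}$ you do not need Lemma~\ref{f0-incr-lem}; a direct comparison of Definitions~\ref{orf-def2} and \ref{orf-def3} with Theorem~\ref{qq-thmdef} suffices, and this is all the paper does. When $\weight(a)_1=0$ and $\weight(a)_2\ne0$, the first letter of $\underline b^1$ exceeds $M$ while $a^2$ has a letter below $M$, so $f_{\bar1}(\Phi(a\otimes b))=0$; on the tensor side one lands in the ``otherwise'' branch, giving $f_{\bar1}(a)\otimes b=0$ as well. When $\weight(a)_1=1$, the four sub-cases (according to whether the single letter of $a^1$ and the first letter of $b^1$ are primed) match the prime-swapping rule in Definition~\ref{orf-def2} directly against the $e_0,f_0$ branches of Theorem~\ref{qq-thmdef}.
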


\begin{proof}
Denote the given map $\ORF_n(y) \otimes \ORF_n(z) \to \ORF_n(y\oplus z)$ by $\Phi$. Verifying  that $\Phi$ is a weight-preserving bijection is a standard exercise using
the discussion in Section~\ref{iw-sect}.
One can check that $\Phi$ commutes with the operators $e_{\bar 1}$, $f_{\bar 1}$, $e_0$, and $f_0$ by inspecting the relevant formulas in 
Theorem~\ref{qq-thmdef} and Section~\ref{incr-f-sect}.

Fix $i \in [n-1]$.
It remains to show that $\Phi$ commutes with $e_i$ and $f_i$.
We will just demonstrate that $\Phi \circ f_i = f_i\circ \Phi$ since the argument for $e_i$ is similar. 
Suppose $a \in \ORF_n(y)$ and $b \in \ORF_n(z)$.
First consider the unpaired letters in $a^i$ and $a^{i+1}$ 
relative to $\pair(a^i,a^{i+1})$ as well as the unpaired letters in
$b^i$ and $ b^{i+1}$ relative to $\pair(b^i,b^{i+1})$ as given in Definition~\ref{pair-def}. Notice that  the value of $\varphi_i(a) $ is the number of unpaired letters in $a^i$ while the value of $\varepsilon_i(a) $ is the number of unpaired letters in $a^{i+1}$. A similar description applies to $\varphi_i(b) $ and $\varepsilon_i(b) $.

We now turn to the unpaired letters in  $a^i\underline b^i$ and $a^{i+1}\underline b^{i+1}$ relative to $ \pair( a^i\underline b^i, a^{i+1}\underline b^{i+1})$.
Any letters in $a^{i+1}$ that were unpaired in the $(a^i,a^{i+1})$-pairing are 
now matched with letters in $\underline b^i$ that arise as shifts of unpaired letters in the $(b^i,b^{i+1})$-pairing.

It follows that if $\varepsilon_i(a) < \varphi_i(b)$ so that $f_i(a\otimes b)=a \otimes f_i(b)$, then the last unpaired letter $x \in a^i\underline b^i$ is just $M$ plus the last unpaired letter in $ b^i$.
If there is no such unpaired letter, then $\Phi(f_i(a\otimes b)) = f_i(\Phi(a\otimes b)) =0$.
Otherwise, applying $f_i$ to $\Phi(a\otimes b)$ 
removes $x$ from $a^i\underline b^i$ and adds a new letter $y\geq x$ to $a^{i+1} \underline b^{i+1}$, and it is easy to see that 
the result is the same as applying $\Phi$ to $a \otimes f_i(b) = f_i(a\otimes b)$.
Similarly, if $\varepsilon_i(a) \geq \varphi_i(b)$,
 then the last unpaired letter in $a^i\underline b^i$ is the same as the last unpaired letter in $a^i$,
and applying $f_i$ to $\Phi(a\otimes b)$ gives the same result as applying $\Phi$ to $f_i(a) \otimes b = f_i(a\otimes b) $.
Thus $\Phi \circ f_i = f_i\circ \Phi$ as needed.
 \end{proof}

\begin{corollary}\label{word-factor-cor}
There is a $\qq_n$-crystal isomorphism $\cW^+_n(m)  \cong \ORF_n(s_2s_4s_6\cdots s_{2m})$.
Thus each connected normal $\qq_n$-crystal is isomorphic to a full subcrystal of $\ORF_n(z)$ for some $z \in I_\ZZ$.
\end{corollary}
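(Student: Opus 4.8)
The plan is to deduce Corollary~\ref{word-factor-cor} directly from Propositions~\ref{word-factor-prop1} and \ref{word-factor-prop2} together with the definition of a normal $\qq_n$-crystal. First I would observe that for $p\in\ZZ$ the permutation $s_p=(p,p+1)$ satisfies $s_p\in I_M$ for any $M\geq p+1$, and that the direct sum $s_2\oplus s_2\oplus\cdots\oplus s_2$ ($m$ copies), formed as in the paragraph preceding Proposition~\ref{word-factor-prop2} with each summand living in $I_2$, is precisely the involution $s_2s_4s_6\cdots s_{2m}=(1,2)(3,4)\cdots(2m-1,2m)\in I_{2m}$. Applying Proposition~\ref{word-factor-prop1} with $n$ fixed and each $p=2$ (after the appropriate shift) gives $\BB_n^+\cong\ORF_n(s_2)$, and then iterating Proposition~\ref{word-factor-prop2} $m-1$ times yields
\[
(\BB_n^+)^{\otimes m}\cong \ORF_n(s_2)\otimes\cdots\otimes\ORF_n(s_2)\cong \ORF_n\bigl(s_2s_4\cdots s_{2m}\bigr).
\]
Since $\cW^+_n(m)\cong(\BB_n^+)^{\otimes m}$ by Proposition~\ref{qq-words-prop}, this proves the first sentence. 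For the induction I would spell out that Proposition~\ref{word-factor-prop2} applied with $M=2$, $N=2(m-1)$, $y=s_2\in I_2$, and $z=s_2s_4\cdots s_{2(m-1)}\in I_{2(m-1)}$ gives $\ORF_n(y)\otimes\ORF_n(z)\cong\ORF_n(y\oplus z)$, and that $y\oplus z=s_2s_4\cdots s_{2m}$ by the description of $\oplus$; associativity of $\otimes$ up to isomorphism (the last clause of Theorem~\ref{qq-thmdef}) lets the iterated tensor product be grouped however is convenient.

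For the second sentence, let $\cB$ be a connected normal $\qq_n$-crystal. By definition of normality, $\cB$ is isomorphic to a full subcrystal of $(\BB_n^+)^{\otimes m}$ for some $m\in\NN$ (with $(\BB_n^+)^0=\One$, which is the trivial case $\ORF_n(1)$ for the identity involution). Composing this embedding with the isomorphism $(\BB_n^+)^{\otimes m}\cong\ORF_n(s_2s_4\cdots s_{2m})$ just established, and noting that the image of a full subcrystal under a crystal isomorphism is again a full subcrystal, shows that $\cB$ is isomorphic to a full subcrystal of $\ORF_n(z)$ for $z=s_2s_4\cdots s_{2m}\in I_\ZZ$. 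I would also remark that $\ell(\mu(z))\leq n$ automatically holds here since $\ORF_n(z)$ is nonempty (it contains the image of $(\BB_n^+)^{\otimes m}$, which is nonempty), consistent with Proposition~\ref{mu-prop}; alternatively one checks $\mu(s_2s_4\cdots s_{2m})=(1,1,\dots,1)$ directly, which has a single nonzero part in transpose, hence at most $n$ nonzero parts as soon as $n\geq 1$.

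This argument is essentially bookkeeping once Propositions~\ref{word-factor-prop1} and \ref{word-factor-prop2} are in hand, so there is no serious obstacle; the only point requiring a little care is making the identification $s_2\oplus s_2\oplus\cdots\oplus s_2 = s_2s_4\cdots s_{2m}$ precise, i.e.\ checking that the shift-by-$M$ convention in the definition of $y\oplus z$ converts each successive copy of $s_2\in I_2$ into the transposition $(2k-1,2k)$ for $k=1,\dots,m$, so that the product of these disjoint transpositions is the claimed involution. Once that bijection of involutions is fixed, the crystal isomorphism is immediate from the cited propositions.
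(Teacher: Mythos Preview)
Your approach is the same as the paper's: deduce the first claim from Propositions~\ref{word-factor-prop1} and~\ref{word-factor-prop2} by induction, and then read off the second claim from the definition of normality together with $\cW^+_n(m)\cong(\BB_n^+)^{\otimes m}$. Your second paragraph is fine as written.

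There is, however, an indexing slip in the first paragraph. In this paper $s_p=(p,p+1)$, so
\[
s_2 s_4\cdots s_{2m}=(2,3)(4,5)\cdots(2m,2m+1),
\]
not $(1,2)(3,4)\cdots(2m-1,2m)$. In particular $s_2\notin I_2$ (the smallest $M$ with $s_2\in I_M$ is $M=3$), so your proposed step with $M=2$ and $y=s_2\in I_2$ is not legitimate. If you try to repair it by inducting from the left with $y=s_1\in I_2$, you obtain $s_1\oplus(s_2 s_4\cdots s_{2m-2})=s_1 s_4 s_6\cdots s_{2m}$, which is not the target involution. The paper avoids this by inducting from the right: take $y=s_2 s_4\cdots s_{2m-2}\in I_{2m-1}$, $z=s_1\in I_2$, $M=2m-1$, $N=2$; then $y\oplus z=s_2 s_4\cdots s_{2m}$ on the nose. (Your aside about $\mu(s_2 s_4\cdots s_{2m})$ is also based on the incorrect identification of the involution.) With this bookkeeping corrected, your argument coincides with the paper's.
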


\begin{proof}
The first claim follows by induction from 
Proposition~\ref{word-factor-prop2}, taking $y =s_2s_4\cdots s_{2m-2}$, $z=s_1$, $M=2m-1$, and $N=2$,
with the $m=1$ base case provided by Proposition~\ref{word-factor-prop1}.
The second claim follows from the first claim
since
 $(\BB_n^+)^{\otimes m} \cong \cW^+_n(m)$ for all $m \in \NN$.
\end{proof}

\subsection{From increasing factorizations to shifted tableaux}\label{morphisms-subsect2}

Fix an involution $z \in I_\ZZ$.
We now wish to relate the $\qq_n$-crystals $\ORF_n(z)$ and $\SShTab_n(\lambda)$.
We will do this by making use of a correspondence between increasing factorizations
and pairs of shifted tableaux described in \cite[\S3]{M2021a}.
Recall that if $i \in \ZZ$, then $i' := i-\frac{1}{2}$ so $\lceil i'\rceil =\lceil i\rceil= i$.

\begin{definition}[{See \cite[\S3]{M2021a}}]
\label{oeg-def}
Suppose  $a \in  \Incr_n^+(z)$ and $w=w_1w_2\cdots w_m =\concat(a)$.
Let $\emptyset = T_0,T_1,\dots,T_m$ be the sequence of shifted tableaux
in which
 $T_i$ for $i \in [m]$ is formed by inserting $w_i$ into $T_{i-1}$ according to the following procedure:
\ben
\item Start by inserting $w_i$ into the first row.
At each stage, an entry $x$ is inserted into a row or column.
Let $y$ and $\tilde y$ be 
the first entries in the row or column with $\lceil x\rceil \leq \lceil y\rceil $
and $\lceil x\rceil < \lceil \tilde y \rceil $.

\item If no such entries $y$ and $\tilde y$  exist, then $ x $ is added to the end of the row or column,
with the exception that if $x$ is added to the main diagonal, then its value is changed to $\lceil x \rceil$.
We say the process to form $T_i$ \defn{ends in column insertion} if we are inserting into a column at this stage or 
if $ \lceil x \rceil \neq x$  is added to the main diagonal. Otherwise, the process \defn{ends in row insertion}.
 
\item If $y $ and $ \tilde y$ are distinct, then the primes on these entries are interchanged
and $x+1$ is inserted into the next row (if we were inserting into a row and $y$ is not on the main diagonal) or the next column (if we were  inserting into a column or $y$ is on the main diagonal).

\item If $y=\tilde y$ is off the main diagonal, then $x$ replaces $y$ and we 
insert $y$ into
the next row (if we were inserting into a row) or the next column (if we were inserting into a column).
 If $y=\tilde y$ is on the main diagonal, then $\lceil x \rceil $ replaces $y$ and
we insert $y - (\lceil x \rceil  - x)$ into the next column.
\een
Finally define $\PO(a) := T_m$ and construct
$\QO(a)$ as the shifted tableau with the same shape as $\PO(a)$
that contains $j$ (respectively, $j'$) in the box added to $T_{i-1}$ to form $T_i$
if $w_i$ is in the $j$th factor of $a$ and the insertion process ends in row insertion (respectively, column insertion).
\end{definition}

\begin{example}\label{oeg-ex1}
If $a = (4,1'35,\emptyset,4',\emptyset, 2)$, then $\PO(a)$ and $\QO(a)$ are computed as follows:
\[
\ba 
\emptyset \ \overset{4}{\leadsto}\  \ytab{4} \ \overset{1'}{\leadsto}\   \ytab{1 & 4'}   \ \overset{3}{\leadsto}\  \ytab{ \none & 4 \\ 1 & 3} \ \overset{5}{\leadsto}\  \ytab{ \none & 4 \\ 1 & 3 & 5}
\ \overset{4'}{\leadsto}\  \ytab{ \none & 4 & 5 \\ 1 & 3 & 4'} \ \overset{2}{\leadsto}\  \ytab{ \none & 3 & 5' \\ 1 & 2 & 4 & 5} &= \PO(a), \\
\emptyset \ \overset{\phantom 1}{\leadsto}\  \ytab{1} \ \overset{\phantom {2'}}{\leadsto}\  \ytab{1 & 2'}   \ \overset{\phantom {2'}}{\leadsto}\   \ytab{ \none & 2' \\ 1 & 2'} \ \overset{\phantom 2}{\leadsto}\   \ytab{  \none & 2' \\ 1 & 2' & 2}
\ \overset{\phantom 4}{\leadsto}\   \ytab{   \none & 2' & 4 \\ 1 & 2' & 2} \ \overset{\phantom {6'}}{\leadsto}\  \ytab{  \none & 2' & 4 \\ 1 & 2' & 2 & 6'} &= \QO(a).
\ea
\]
We make no distinction between 
$w_1w_2\cdots w_n$ and the  sequence of $1$-letter words $(w_1, w_2, \dots, w_n)$.
This lets us view each $w \in \iR^+(z)$  as an element of $\ORF_n(z)$ for $n=\ell(w)$
so we can evaluate $\PO(w)$ and $\QO(w)$. If $w =41'354'2$, then $\PO(w) = \PO(a)$ but 
$ \QO(w) =  \ytab{  \none & 3' & 5 \\ 1 & 2' & 4 & 6'} \neq \QO(a)$.
\end{example}

The map $a \mapsto (\PO(a), \QO(a))$ is called \defn{orthogonal Edelman-Greene insertion} in \cite{M2021a}.
It is a shifted version of the \defn{Edelman-Greene correspondence} from \cite{EG},
as well as the ``orthogonal'' counterpart
to a ``symplectic'' insertion algorithm
studied in \cite{Hiroshima,Marberg2019a,Marberg2019b}.
Restricted to the subset $\iR(z) \subsetneq \iR^+(z)$,
the map  is
a special case of \defn{shifted Hecke insertion} from 
\cite{PatPyl2014}. 

A (shifted) tableau is \defn{increasing} if its rows and columns are strictly increasing.
The \defn{row reading word} $\row(T)$ of a (shifted) tableau $T$ is formed by reading its rows from left to right,
but starting with the top row in French notation.

One important feature of 
the map
$a \mapsto (\PO(a), \QO(a))$
is that it is a bijection from
$ \Incr_n^+(z)$ to the set of
pairs $(P,Q)$ of shifted tableaux of the same shape, in which $Q$ is semistandard with all entries at most $n$, 
and $P$ is an increasing 
with no primes on the main diagonal and
$\row(P) \in \iR^+(z)$ \cite[Thm.~3.15]{M2021a}.
%
%
For our applications, we need a few other technical properties from \cite{M2021a}:

\begin{lemma}[{See \cite[\S3]{M2021a}}] \label{qo-lems}
The following holds for all $a \in \ORF_n(z)$ and $w \in \iR^+(z)$:
\ben
\item[(a)] $\weight(a) = \weight(\QO(a))$ and $\Des(w) = \Des(\QO(w))$.

\item[(b)] Box $(1,1)$ of $\QO(a)$ is primed if and only if the first letter of $\concat(a)$ is primed.

\item[(c)] Each $T \in \bigsqcup_{\text{strict partitions } \lambda} \SShTab_n(\lambda)$ occurs as 
$\QO(a)$ for some $z \in I_\ZZ$ and $ a \in \ORF_n(z)$.

\item[(d)] $\PO(\unprime(a)) = \unprime(\PO(a))$ and $ \QO(\unprime(a)) = \unprime_\diag(\QO(a)).$

\item[(e)] $\PO(\concat(a)) = \PO(a)$ and $ \QO(\concat(a)) = \std(\QO(a)).$

\item[(f)] $\PO(\ck_i(w)) = \PO(w)$ and $ \QO(\ck_i(w)) = \fkd_i(\QO(w))$ for all $i \in \ZZ$.
\een
\end{lemma}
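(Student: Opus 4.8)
The statement to prove is Lemma~\ref{qo-lems}, which collects six technical properties (a)--(f) of orthogonal Edelman--Greene insertion. Since the lemma is explicitly cited as ``See \cite[\S3.1]{M2021a}'', the proof plan is essentially to extract each item from the corresponding statement in that reference and, where the correspondence is not verbatim, to supply the short bridging argument. Here is how I would organize it.

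\begin{proof}[Proof plan]
The plan is to verify each part by tracing the relevant result in \cite{M2021a} and, in a few cases, reducing to it via the combinatorial definitions already recalled above.

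For part~(a), the identity $\weight(a)=\weight(\QO(a))$ is immediate from the construction in Definition~\ref{oeg-def}: the box recording $w_i$ in $\QO(a)$ receives entry $j$ or $j'$ precisely when $w_i$ lies in the $j$th factor of $a$, so for each $j$ the number of entries of $\QO(a)$ equal to $j$ or $j'$ equals $\ell(a^j)=\weight(a)_j$. For $\Des(w)=\Des(\QO(w))$, I would recall that for the one-row factorization associated to a primed word $w$, the recording box of $w_i$ ends in column insertion (and hence gets a primed entry $i'$ or lies below a later entry) exactly in the situations characterized by the descent criterion for standard shifted tableaux stated just before Lemma~\ref{f0-shtab-lem}; this matches the corresponding statement in \cite{M2021a}. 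Part~(b) is the special case of the recording rule for the very first inserted letter: $w_1$ always creates box $(1,1)$, the insertion ``ends in column insertion'' iff $w_1$ is primed (since inserting a $1$-letter word into the empty tableau adds $\lceil w_1\rceil$ to the diagonal, and the process ends in column insertion precisely when $\lceil w_1\rceil \neq w_1$), so box $(1,1)$ of $\QO(a)$ is primed iff $w_1$ is primed.

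For part~(c), I would invoke the bijectivity of $a\mapsto(\PO(a),\QO(a))$ from $\Incr_n^+(z)$ onto pairs $(P,Q)$ with $Q$ semistandard of the same shape as an increasing $P$ with diagonal-unprimed entries and $\row(P)\in\iR^+(z)$, as quoted above from \cite[Cor. 3.9]{M2021a}: given any $T\in\SShTab_n(\lambda)$, choose any increasing shifted tableau $P$ of shape $\lambda$ with no primed diagonal entries, let $w:=\row(P)$ and $z$ be the involution with $w\in\iR^+(z)$ (which exists since $\row(P)$ is a reduced word for an element of $S_\ZZ$ whose insertion tableau has no repeated structure), and then the preimage $a$ of $(P,T)$ lies in $\ORF_n(z)$ with $\QO(a)=T$. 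Part~(d) is the compatibility of the insertion with the operation $\unprime$: removing all primes from the input word turns every ``column insertion'' step into the corresponding unprimed step, so $\PO(\unprime(a))=\unprime(\PO(a))$, while the recording tableau changes only on the diagonal, giving $\QO(\unprime(a))=\unprime_\diag(\QO(a))$; this is precisely the statement relating orthogonal EG insertion to its unprimed restriction in \cite{M2021a}.

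Part~(e) records that replacing the factorization $a$ by the single-row factorization $\concat(a)$ leaves the insertion tableau unchanged (insertion depends only on the underlying word) and standardizes the recording tableau (the recording entries, ordered by the standardization order, become $1,2,\dots,m$ with primes preserved), which again is one of the properties in \cite[\S3.1]{M2021a}. Finally, part~(f) is the statement that the Coxeter--Knuth operators $\ck_i$ are exactly the operators that $a\mapsto(\PO(a),\QO(a))$ intertwines with $\fkd_i$: $\PO$ is invariant because $\ck_i$ generates the Coxeter--Knuth equivalence classes on which orthogonal EG insertion is constant, and $\QO(\ck_i(w))=\fkd_i(\QO(w))$ is the defining compatibility of $\fkd_i$ with insertion in \cite[Prop. 3.15]{M2021a} and the surrounding discussion. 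The main obstacle, such as it is, is purely bookkeeping: matching the notational conventions of the present paper (French notation, the $i'=i-\tfrac12$ convention, the precise statement of the insertion rules in Definition~\ref{oeg-def}) with those of \cite{M2021a}, and checking that each of the six claims is literally one of the results proved there rather than a slight variant requiring a separate argument; no genuinely new combinatorics is needed.
\end{proof}
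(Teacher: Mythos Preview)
Your proposal is correct and follows essentially the same approach as the paper: the paper's proof simply notes that (b), (e), and the weight identity in (a) are clear from Definition~\ref{oeg-def}, then cites \cite[Prop.~3.7]{M2021a} for the descent identity, \cite[Eq.~(3.1)]{M2021a} and \cite[Thm.~3.16]{M2021a} for (d) and (f), and \cite[Cor.~3.9 and Lem.~3.10]{M2021a} for (c). The only small point to tighten is your argument for (c): you need to know that for each strict partition $\lambda$ there exists an increasing shifted tableau $P$ of shape $\lambda$ with $\row(P)\in\iR^+(z)$ for some $z$, which is exactly what \cite[Lem.~3.10]{M2021a} supplies, and your citation for (f) should point to the theorem in \cite{M2021a} (the paper uses Thm.~3.16) rather than Prop.~3.15.
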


\begin{proof}
   Properties (b) and (e) and the identity $\weight(a) = \weight(\QO(a))$ are clear from Definition~\ref{oeg-def}.
 The claim that $\Des(w) = \Des(\QO(w))$ is \cite[Prop.~3.13]{M2021a}.
Properties (d) and (f) are  \cite[Prop.~3.8]{M2021a} and \cite[Thm.~3.24]{M2021a},
while property (c) follows from \cite[Thm.~3.15 and Lem.~3.17]{M2021a}.
\end{proof}


\begin{example}\label{oeg-ex}
Let $a = (4,1'35, \emptyset, 4',\emptyset, 2)$ and $w =  \concat(a) = 41'354'2$ as in Example~\ref{oeg-ex1}.
Then $a \in \ORF_n(z)$ and $w \in \iR^+(z)$ for $z = (1,3)(2,6)(4,5) \in I_\ZZ$ and $n=6$.
\ben
\item[(a)] One has $\weight(\QO(a)) = (1,3,0,1,0,1) = \weight(a)$ and $\Des(\QO(w)) = \{1,4,5\} = \Des(w)$.
\item[(b)] Box $(1,1)$ of $\QO(a)$ is not primed since the first letter of $w$ is not primed.
\item[(d)] One has $\PO( ( 4,135, \emptyset, 4,\emptyset, 2)) =  \ytab{ \none & 3 & 5 \\ 1 & 2 &4 &5}$ and $\QO((4,135, \emptyset, 4,\emptyset, 2)) = \ytab{\none & 2 & 4 \\ 1 & 2' & 2& 6'} $.
\item[(e)] It holds that $\PO(w) =\PO(a)$ and $\QO(w) = \std(\QO(a))$.
\item[(f)] If $u = 14'354'2 = \ck_0(w)$ and $v = 1'4354'2 = \ck_1(w)$ then $\PO(u) =\PO(v) =\PO(w)$ while 
\[\QO(u) = \ytab{\none & 3' & 5 \\ 1 & 2 & 4& 6'} =\fkd_0(\QO(w))
\quand \QO(v) = \ytab{\none & 3 & 5 \\ 1' & 2 & 4& 6'} =\fkd_1(\QO(w)).\]
\een
\end{example}

Most of the subtlety of orthogonal Edelman-Greene insertion has to do with the distribution of primed entries in the output tableaux.
When primes are disallowed, 
we already have some strong results that relate this algorithm to 
the relevant $\q_n$-crystal structures:

\begin{theorem}[{\cite[Thm. 3.32]{Marberg2019b}}]
\label{qo-morphism-thm}
The map 
$a\mapsto \QO(a)$ is  a quasi-isomorphism of $\q_n$-crystals
$
\bigsqcup_{z \in I_\ZZ} \Incr_n(z) \to \bigsqcup_{\text{strict partitions }\lambda \in \NN^n}  \ShTab_n(\lambda),$ 
and the full $\q_n$-subcrystals of $\bigsqcup_{z \in I_\ZZ} \Incr_n(z) $ are the subsets on which $a \mapsto \PO(a)$ is constant.
\end{theorem}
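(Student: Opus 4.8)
The statement to prove is Theorem~\ref{qo-morphism-thm}, the unprimed prototype for what we want to do in the primed setting: that $a \mapsto \QO(a)$ is a quasi-isomorphism of $\q_n$-crystals from $\bigsqcup_{z\in I_\ZZ}\Incr_n(z)$ to $\bigsqcup_\lambda \ShTab_n(\lambda)$, and that the full $\q_n$-subcrystals are exactly the fibers of $a\mapsto \PO(a)$. Since this is cited from \cite[Thm. 3.32]{Marberg2019b}, the natural proof here is a short one that extracts the needed facts from the earlier discussion; but if one wanted to reconstruct it, the plan is as follows. First I would recall that by \cite[Cor. 3.9]{M2021a} (in its unprimed specialization, which is shifted Hecke insertion from \cite{PatPyl2014}), the map $a\mapsto(\PO(a),\QO(a))$ is a bijection from $\Incr_n(z)$ onto the set of pairs $(P,Q)$ of shifted tableaux of the same strict shape with $P$ increasing, $\row(P)\in\iR(z)$, and $Q\in\ShTab_n(\lambda)$. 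Consequently, fixing $P$ and letting $a$ range over its fiber gives a weight-preserving bijection onto $\ShTab_n(\sh(P))$ (using Lemma~\ref{qo-lems}(a), $\weight(a)=\weight(\QO(a))$). This already shows the fibers of $\PO$ biject with the connected normal $\q_n$-crystals $\ShTab_n(\lambda)$ of Theorem~\ref{thigh-thm}; it remains to see this bijection intertwines the crystal operators.

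\textbf{Key steps.} The crux is to show that orthogonal Edelman-Greene insertion commutes with the $\q_n$-crystal operators $e_i,f_i$ for $i\in\{\bar1,1,\dots,n-1\}$ in the sense that $\PO(e_i(a))=\PO(a)$ and $\QO(e_i(a))=e_i(\QO(a))$ (and likewise for $f_i$), interpreting both sides as $0$ when appropriate. For $i\in[n-1]$ this is the statement that insertion is compatible with the type-$\mathsf{A}$ (Morse--Schilling / Edelman--Greene) crystal structure on increasing factorizations; the standard way to prove it is to reduce, via the associativity of the crystal tensor product (Theorem~\ref{q-tensor-thm}) and the factorization $\Incr_n(z)\hookrightarrow \Incr_1(z_1)\otimes\cdots$, to checking commutativity on two adjacent factors, and then to analyze how the pairing procedure of Definition~\ref{pair-def} matches the parenthesization underlying $f_i$ on the reading word of the inserted tableau. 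Alternatively, and more cleanly given what is available in the excerpt, one invokes Lemma~\ref{qo-lems}(f): $\QO(\ck_i(w))=\fkd_i(\QO(w))$ and $\PO(\ck_i(w))=\PO(w)$. Combined with Lemma~\ref{ock-key-lem}, which expresses $\concat(f_k(a))$ as a composite of Coxeter--Knuth moves $\ck_{j_l}\cdots\ck_{j_1}$ applied to $\concat(a)$ with all indices confined to the block $[M,N-2]$ belonging to factors $k,k+1$, one gets that $\PO$ is constant on $f_k$-strings and that $\QO(f_k(a))$ is obtained from $\QO(a)$ by the corresponding composite of $\fkd$ operators. Since the $\fkd$-operators restricted to the standardization realize precisely the type-$\mathsf{A}$ dual equivalence action, and since the $\q_n$-crystal operator $e_i$ on $\ShTab_n(\lambda)$ is defined (in \cite{Hiroshima2018, AssafOguz}) to be compatible with this action, one concludes $\QO(f_k(a))=f_k(\QO(a))$. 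The $i=\bar1$ case is handled the same way using Lemma~\ref{f0-incr-lem} on the factorization side and Lemma~\ref{f0-shtab-lem} on the tableau side, which give matching descriptions of $f_{\bar1}(a)$ and $f_{\bar1}(T)$ in terms of $\ck_{q-2}\cdots\ck_0$ and $\fkd_{q-2}\cdots\fkd_0$ respectively, together with Lemma~\ref{qo-lems}(b) identifying when the output is $0$.

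\textbf{Main obstacle.} The hard part is the bookkeeping in the $i\in[n-1]$ case: one must verify that the unpaired letter selected by Definition~\ref{orf-def}/\ref{ore-def} in the factorization maps, under insertion, to the unpaired box selected by Definition~\ref{shtab-f}/\ref{shtab-e} in the tableau, and that the (sometimes delicate) prime-redistribution steps L1(d), L2(c), R1(c), R2(d) correspond correctly. Routing everything through Coxeter--Knuth moves (Lemma~\ref{ock-key-lem} and Lemma~\ref{qo-lems}(f)) is what makes this tractable, since it replaces a direct insertion analysis by a statement about how $\fkd$-operators act on standardized tableaux --- but one still has to confirm that the composite $\fkd_{j_l}\cdots\fkd_{j_1}$ acts on $\QO(a)$ exactly as the $\q_n$-crystal operator does, which requires knowing that the $\q_n$-crystal structure on $\ShTab_n(\lambda)$ from \cite{Hiroshima2018} is the unique normal one compatible with standardization and the $\fkd$-action (this is where Theorem~\ref{thigh-thm} and normality, via \cite[Cor. 3.33]{Marberg2019b}, enter). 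Once commutativity with all operators is established, the fiber bijection $\PO^{-1}(P)\to\ShTab_n(\sh P)$ is automatically a $\q_n$-crystal isomorphism, the source decomposes as a disjoint union of these fibers, and the quasi-isomorphism claim follows; the ``full subcrystals = fibers of $\PO$'' claim is then immediate since $\ShTab_n(\lambda)$ is connected by Theorem~\ref{thigh-thm}.
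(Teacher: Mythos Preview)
The paper does not prove Theorem~\ref{qo-morphism-thm}; it simply cites it as \cite[Thm.~3.32]{Marberg2019b} and uses it as a black box. So there is no paper-proof to match, and your proposal is an attempted reconstruction rather than a comparison.

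Your reconstruction has a real gap. The route you sketch---pass from $f_k$ on factorizations to a composite of $\ck_j$ moves via Lemma~\ref{ock-key-lem}, then transport to a composite of $\fkd_j$ moves on $\std(\QO(a))$ via Lemma~\ref{qo-lems}(e,f)---does show $\PO(f_k(a))=\PO(a)$ and $\std(\QO(f_k(a)))=\fkd_{j_l}\cdots\fkd_{j_1}(\std(\QO(a)))$. What it does \emph{not} show is that this same $\fkd$-composite equals $\std(f_k(\QO(a)))$, i.e., that the dual-equivalence composite realizes the crystal operator on the tableau side. That is precisely the content of Lemma~\ref{key-lem}, and the paper's proof of Lemma~\ref{key-lem} invokes Theorem~\ref{qo-morphism-thm} itself (to get $\QO(f_k(\underline a))=f_k(\QO(\underline a))$ in the unprimed case). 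So within the paper's logical structure your argument is circular. Your fallback---``the $\q_n$-crystal operator on $\ShTab_n(\lambda)$ is defined to be compatible with the $\fkd$-action''---is not correct: in \cite{AssafOguz,Hiroshima2018} the operators are defined directly by the pairing rule on the reading word (Definitions~\ref{shtab-f}--\ref{shtab-e} here), not via dual equivalence, so compatibility with $\fkd$ is a theorem, not a definition. Likewise, appealing to normality of $\Incr_n(z)$ via \cite[Cor.~3.33]{Marberg2019b} is delicate, since in that reference the corollary sits downstream of the theorem you are trying to prove.

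In short: the Coxeter--Knuth/$\fkd$ strategy you outline is exactly the mechanism the paper uses to \emph{extend} Theorem~\ref{qo-morphism-thm} to the primed setting (Theorem~\ref{qqo-morphism-thm}), but that extension bootstraps off Theorem~\ref{qo-morphism-thm} as input. To make your plan self-contained you would need an independent, direct proof of the unprimed case of Lemma~\ref{key-lem}---or else follow whatever argument \cite{Marberg2019b} actually gives, which the present paper does not reproduce.
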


We will show that this statement extends to $\qq_n$-crystals. The proof requires a lemma.

\begin{lemma}\label{key-lem}
Let $T\in \ShTab^+_n(\lambda)$.
Suppose $k \in [n-1]$ and $f_k(T) \neq 0$. Define 
\[ M := \weight(T)_1 + \weight(T)_2 + \dots + \weight(T)_{k-1} + 1
\quand N := \weight(T)_1 + \weight(T)_2 + \dots + \weight(T)_{k+1} .\]
Then there are indices $j_1,\dots,j_l \in [M,N-2]$ with 
$ \std(f_k(T)) = \fkd_{j_l} \cdots  \fkd_{j_1}(\std(T))$.
\end{lemma}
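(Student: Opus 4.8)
The statement is the shifted-tableau analogue of Lemma~\ref{ock-key-lem}, and the natural strategy is to transport that result across orthogonal Edelman--Greene insertion using Theorem~\ref{qo-morphism-thm} and Lemma~\ref{qo-lems}. First I would choose, via Lemma~\ref{qo-lems}(c), an involution $z \in I_\ZZ$ and a factorization $a \in \Incr^+_n(z)$ with $\QO(a) = T$; since $a$ and $T$ have the same weight by Lemma~\ref{qo-lems}(a), the integers $M$ and $N$ defined from $T$ coincide with those defined from $a$ in the statement of Lemma~\ref{ock-key-lem}. The key point is then to match up the crystal operators on both sides: since $a \mapsto \QO(a)$ is a quasi-isomorphism of $\q_n$-crystals by Theorem~\ref{qo-morphism-thm}, and since (by Lemma~\ref{ef-ick-lem}, Lemma~\ref{tab-unprime-lem}, Lemma~\ref{qo-lems}(d), Corollary~\ref{ock-key-cor}, and the fact noted after Corollary~\ref{ock-key-cor} that $\unprime$ is a $\gl_n$-crystal quasi-isomorphism) the operators $f_k$ on $\Incr^+_n(z)$ and on $\ShTab^+_n(\lambda)$ are intertwined by $\QO$ for $k \in [n-1]$, the hypothesis $f_k(T) \neq 0$ gives $f_k(a) \neq 0$ and $\QO(f_k(a)) = f_k(T)$.

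\textbf{Main step.} Applying Lemma~\ref{ock-key-lem} to $a$ produces indices $j_1,\dots,j_l \in [M,N-2]$ with
\[ \concat(f_k(a)) = \ck_{j_l}\cdots \ck_{j_1}(\concat(a)). \]
Now I apply $\QO$ to both sides. By Lemma~\ref{qo-lems}(e), $\QO(\concat(a)) = \std(\QO(a)) = \std(T)$ and $\QO(\concat(f_k(a))) = \std(\QO(f_k(a))) = \std(f_k(T))$. By Lemma~\ref{qo-lems}(f), $\QO(\ck_i(w)) = \fkd_i(\QO(w))$ for every primed involution word $w \in \iR^+(z)$ and every $i \in \ZZ$; iterating this along the chain $\concat(a) = w^{(0)}, \ck_{j_1}(w^{(0)}) = w^{(1)}, \dots, w^{(l)} = \concat(f_k(a))$ (each of which lies in $\iR^+(z)$ by Proposition~\ref{isim-lem}, since the $\ck_i$ for $i > 0$ preserve the $\isim$-class), I obtain
\[ \std(f_k(T)) = \QO(\concat(f_k(a))) = \fkd_{j_l}\cdots \fkd_{j_1}\bigl(\QO(\concat(a))\bigr) = \fkd_{j_l}\cdots \fkd_{j_1}(\std(T)), \]
which is exactly the desired identity with the same index set $\{j_1,\dots,j_l\} \subset [M,N-2]$.

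\textbf{Anticipated obstacle.} Most of the work is bookkeeping; the one genuinely nontrivial input is that $\QO$ intertwines the lowering operators $f_k$ ($k \in [n-1]$) on the \emph{primed} crystals $\Incr^+_n(z)$ and $\SShTab_n(\lambda)$, not merely on their unprimed subcrystals $\Incr_n(z)$ and $\ShTab_n(\lambda)$ where Theorem~\ref{qo-morphism-thm} is stated directly. To bridge this gap I would argue that $\QO$ restricts to a $\gl_n$-crystal isomorphism on each ``marked-cycle'' subcrystal $\cC_S = \{a : \marked(a) = S\}$ of $\Incr^+_n(z)$ — these are $\gl_n$-subcrystals by Corollary~\ref{ock-key-cor}, and $\unprime$ identifies $\cC_S$ with $\Incr_n(z)$ by Corollary~\ref{unprime-cor2} — together with the compatibility $\QO(\unprime(a)) = \unprime_\diag(\QO(a))$ from Lemma~\ref{qo-lems}(d) and the analogous statement for $\SShTab_n(\lambda)$ via $\unprime_\diag$ (Lemma~\ref{tab-unprime-lem}); Lemma~\ref{shtab-primed-lem} controls how $f_k$ affects the primed-entry count so that the marking data is carried consistently. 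Once this compatibility is in hand, the proof above goes through verbatim. An alternative, slightly cleaner route is to avoid invoking the $f_k$-equivariance of $\QO$ altogether: apply $\QO$ purely as the concatenation-level identity, using only Lemma~\ref{qo-lems}(e) and (f), and deduce $f_k(T) \neq 0 \Leftrightarrow f_k(a) \neq 0$ directly from Lemma~\ref{qo-lems}(a) (equal weights give equal $\varphi_k$) — I would check which of these is shorter when writing the final version.
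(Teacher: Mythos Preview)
Your plan has a circularity problem. The identity $\QO(f_k(a)) = f_k(\QO(a))$ for primed factorizations $a \in \Incr^+_n(z)$ is precisely the content of Theorem~\ref{qqo-morphism-thm}, and that theorem is proved \emph{using} Lemma~\ref{key-lem}. So you cannot invoke it here. Your attempt to bridge the gap via the marked-cycle subcrystals $\cC_S$ only yields $\unprime_\diag(\QO(f_k(a))) = \unprime_\diag(f_k(T))$: the commutative square $\QO \circ \unprime = \unprime_\diag \circ \QO$ together with Theorem~\ref{qo-morphism-thm} and Lemma~\ref{tab-unprime-lem} shows the unprimed-diagonal versions agree, but gives no control over the primed diagonal positions. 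Your alternative route has the same defect, since it still needs $\QO(f_k(a)) = f_k(T)$ to pass from $\std(\QO(f_k(a)))$ to $\std(f_k(T))$.

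The paper avoids this by never claiming $\QO(f_k(a)) = f_k(T)$. Instead it applies Lemma~\ref{ock-key-lem} only to the unprimed factorization $\underline a = \unprime(a)$, uses Theorem~\ref{qo-morphism-thm} (which is already available at the unprimed level) to obtain $\unprime_\diag(\std(f_k(T))) = \unprime_\diag(U)$ where $U := \fkd_{j_l}\cdots\fkd_{j_1}(\std(T))$, and then proves directly that $\std(f_k(T))$ and $U$ have the same set of primed diagonal positions. This last step is the real content: it is a case analysis on whether zero, one, or two of the boxes $\square_M,\dots,\square_N$ lie on the diagonal, using Proposition~\ref{primes-prop}(c)(d) to track how each $\fkd_{j}$ changes the diagonal primes and the parity of $\primes(\cdot)$, and matching this against Lemma~\ref{shtab-primed-lem}'s description of how $f_k$ changes the same data. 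Your sketch gestures at Lemma~\ref{shtab-primed-lem} but does not carry out this comparison, and without it the argument does not close.
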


\begin{proof}
Let $\underline T := \unprime_\diag(T)$. 
Then $f_k(\underline T) = \unprime_\diag(f_k(T)) \neq 0$ by Lemma~\ref{tab-unprime-lem}.
By properties  (c) and (d) in Lemma~\ref{qo-lems},
there exists  $z \in \I_\ZZ$ and $ a \in \Incr_n^+(z)$ with $\QO(a) = T$, and if $\underline a := \unprime(a)$,
then  $\QO(\underline a) = \underline T$. We must have  $f_k(\underline a) \neq 0$ since $f_k(\underline T) \neq 0$
by Theorem~\ref{qo-morphism-thm},
so Lemma~\ref{ock-key-lem} implies that there are integers $j_1,j_2,\dots,j_l \in [M,N-2]$ with
\[ \concat(f_k(\underline a)) = \ck_{j_l} \cdots \ck_{j_2} \ck_{j_1}(\concat(\underline a)).\]
 Define $U:= \fkd_{j_l} \cdots \fkd_{j_2} \fkd_{j_1}(\std(T))$.
We argue that $ \std(f_k(T)) =U$. Applying $\QO$ to the left side of the previous 
displayed equation gives
\[\ba \QO(\concat(f_k(\underline a))) &= \std(\QO(f_k(\underline a))) 		&\quad\text{by Lemma~\ref{qo-lems}(e),}
\\&=\std(f_k(\underline T)) 										&\quad\text{by Theorem~\ref{qo-morphism-thm},}
\\&= \std(\unprime_\diag(f_k(T)))									&\quad\text{by Lemma~\ref{tab-unprime-lem},}
\\&= \unprime_\diag(\std(f_k(T)))									&\quad\text{by definition},\ea
\]
while applying $\QO$ to the right side gives
\[\ba 
\QO(\ck_{j_l} \cdots \ck_{j_2} \ck_{j_1}(\concat(\underline a)))
 &= \fkd_{j_l} \cdots \fkd_{j_2} \fkd_{j_1}(\std(\underline T)) 
 =  \unprime_\diag(U)
 \ea\]
 by parts (e) and (f) of Lemma~\ref{qo-lems} and part (a) of Proposition~\ref{primes-prop}.
 Thus
\[
 \unprime_\diag(\std(f_k(T))) =  \unprime_\diag( U),\]
  so to prove that $ \std(f_k(T))=U$ it suffices to show that $\std(f_k(T))$ and $U$ share the same set of primed diagonal positions.
 Since 
 the same positions in $f_k(T)$ and $\std(f_k(T))$ have primed entries, it is enough to check
$f_k(T)$ and $U$ have the same primed diagonal positions.

For each $j \in [n]$ let $\square_j$ be the unique position containing $j$ or $j'$ in $\std(T)$.
 Then the domain of 
 the skew shifted tableau $T|_{[i,i+1]}$, which is a union of two rims,
 consists of precisely the boxes $\square_M,\square_{M+1},\dots,\square_N$.
 These boxes therefore contain at most two diagonal positions,
which must occur in consecutive rows.

Suppose there are less than two diagonal positions among  $\square_M,\square_{M+1},\dots,\square_N$.
Then it is clear from Definition~\ref{shtab-f}
that $
 f_k(T)$
has the same set of primed diagonal positions as $T$,
and it follows from Proposition~\ref{primes-prop}
that  $ U$
 has the same set of primed diagonal positions as  $\std(T)$.
As $\std(T)$ and $T $ have identical sets of primed positions,
the same diagonal positions in $f_k(T)$ and $U$ are primed as desired.
 This reasoning also applies if 
 $\square_M,\square_{M+1},\dots,\square_N$ include two diagonal positions
 but the entries of $T$ in these positions are both primed or both unprimed.
 
 The case left to consider is the following:
 assume  $\square_M,\square_{M+1},\dots,\square_N$ involve exactly two diagonal positions,
 say in rows $r-1$ and $r$, and exactly one of these positions has a primed entry in $T$.
 Let $\cD(T) := \{(i,i) \in T : T_{ii} \in \ZZ'\}$ be the set of primed diagonal positions in $T$ and set $\mathcal{S} := \{(r-1,r-1),(r,r)\}$.  
 Lemma~\ref{shtab-primed-lem} implies that
 \[ 
 \cD(f_k(T)) = \begin{cases} \cD(T) \mathbin{\triangle} \mathcal{S} &\text{if }\primes(f_k(T)) \not\equiv \primes(T) \modu 2), \\
 \cD(T) &\text{otherwise},
 \end{cases}
 \]
 where $\triangle$ denotes the symmetric set difference.
 On the other hand, it follows from Proposition~\ref{primes-prop} 
 that as we apply   $\fkd_{j}$ for $j=j_1,j_2,\dots,j_l$ successively
 to $\std(T)$, only the entries in positions $\square_M,\square_{M+1},\dots,\square_N$ vary.
In particular, the parity of the number of primed positions changes precisely when 
 $j$ or $j'$ appears in box $(r-1,r-1)$  
 and $j+2$ or $j+2'$
 appears in box $(r,r)$.
Proposition~\ref{primes-prop}(e)
tells us that if this happens, then applying $\fkd_j$ interchanges the primes on these diagonal boxes,
and otherwise  the set of primed diagonal positions is unchanged by $\fkd_j$. Thus
 \[ 
 \cD(U) = \begin{cases} \cD(\std(T)) \mathbin{\triangle}\mathcal{S} &\text{if }\primes( U) \not\equiv \primes(\std(T)) \modu 2), \\
 \cD(\std(T)) &\text{otherwise}.
 \end{cases}
 \]
 As $\cD(T) = \cD(\std(T))$ and $\primes(T) = \primes(\std(T))$, these formulas for $\cD(f_k(T))$ and $ \cD(U)$ give the same result,
  so $\cD(f_k(T)) = \cD(U)$ as needed.
 This lets us conclude that
$ \std(f_k(T)) =  U = \fkd_{j_l} \cdots  \fkd_{j_2} \fkd_{j_1}(\std(T))$.
 \end{proof}


\begin{theorem}\label{qqo-morphism-thm}
The map $a\mapsto \QO(a)$ is  a quasi-isomorphism of $\qq_n$-crystals
$
\bigsqcup_{z \in I_\ZZ} \Incr^+_n(z) \to \bigsqcup_{\text{strict partitions }\lambda \in \NN^n}  \SShTab_n(\lambda),$
and the full $\qq_n$-subcrystals of $\bigsqcup_{z \in I_\ZZ} \Incr^+_n(z) $ are the subsets on which $a \mapsto \PO(a)$ is constant.
\end{theorem}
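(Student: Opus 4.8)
The plan is to bootstrap this statement from the already-established $\q_n$-case (Theorem~\ref{qo-morphism-thm}) together with the new technical lemmas. First I would record the structural facts that make the $\qq_n$ operators compatible with $a\mapsto \QO(a)$: the maps $e_i,f_i$ for $i\in[n-1]$ and the queer operators $e_{\bar 1},f_{\bar 1}$ on $\Incr^+_n(z)$ restrict on $\Incr_n(z)$ to the $\q_n$-operators, and likewise for $\SShTab_n(\lambda)$; moreover $\unprime$ and $\unprime_\diag$ intertwine everything via Lemmas~\ref{ef-ick-lem}, \ref{tab-unprime-lem}, and $\QO$ intertwines the ``unpriming'' maps by Lemma~\ref{qo-lems}(d). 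So on each $\gl_n$-subcrystal $\cC_S=\{a:\marked(a)=S\}$ (which is a $\gl_n$-subcrystal by Corollary~\ref{ock-key-cor}), $\unprime:\cC_S\to\Incr_n(z)$ is a $\gl_n$-isomorphism, and similarly $\unprime_\diag$ identifies the fibers of $\SShTab_n(\lambda)$ over $\ShTab_n(\lambda)$; combined with the $\q_n$-quasi-isomorphism of Theorem~\ref{qo-morphism-thm}, this already gives that $\QO$ is a bijection onto its image on each such fiber, so $\QO$ is a weight-preserving bijection compatible with all of the $\gl_n$-operators. The only remaining operators to handle are $f_k,e_k$ for $k\in[n-1]$ (to upgrade from $\gl_n$- to $\q_n$-compatibility at the primed level), $e_{\bar 1},f_{\bar 1}$, and $e_0,f_0$.

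Next I would treat the $f_k,e_k$ operators. By Lemma~\ref{ock-key-lem} applied to $a$ and Lemma~\ref{key-lem} applied to $T=\QO(a)$, both $\concat(f_k(a))$ and $\std(f_k(T))$ are obtained by the \emph{same} string of Coxeter--Knuth moves $\ck_{j_l}\cdots\ck_{j_1}$ (indices in the same interval $[M,N-2]$, determined by the pairing, which by Lemma~\ref{ef-ick-lem} depends only on $\unprime$ and hence is the same on both sides via Theorem~\ref{qo-morphism-thm}); applying Lemma~\ref{qo-lems}(f) then gives $\std(\QO(f_k(a)))=\fkd_{j_l}\cdots\fkd_{j_1}(\std(\QO(a)))=\std(f_k(\QO(a)))$, and since $\std$ is injective and $\QO(f_k(a))$, $f_k(\QO(a))$ have the same shape and weight, they coincide. (One must also check $f_k(a)=0\iff f_k(\QO(a))=0$, which follows from the $\q_n$-case plus Lemma~\ref{shtab-primed-lem} / the weight compatibility.) For $e_0,f_0$: by Definition~\ref{orf-def3} and Definition~\ref{shtab-f0}, $f_0$ on $\ORF_n(z)$ toggles the prime on the first letter of $\concat(a)$ when that letter is unprimed, and $f_0$ on $\SShTab_n(\lambda)$ toggles the prime on $T_{11}$ when $T_{11}=1$; Lemma~\ref{qo-lems}(b) says box $(1,1)$ of $\QO(a)$ is primed iff the first letter of $\concat(a)$ is primed, and toggling that prime corresponds under $\QO$ exactly to toggling the prime of $T_{11}$ (the insertion tableau $\PO$ and the rest of $\QO$ are unaffected), so $\QO$ commutes with $e_0,f_0$.

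Then for $e_{\bar 1},f_{\bar 1}$ I would use Lemma~\ref{f0-incr-lem} and Lemma~\ref{f0-shtab-lem}, which are designed precisely to be parallel. Setting $q:=\weight(a)_1=\weight(\QO(a))_1$, Lemma~\ref{f0-incr-lem} expresses $\concat(f_{\bar 1}(a))$ (when nonzero) as $\ck_{q-2}\cdots\ck_1\ck_0(\concat(a))$ and vanishes exactly when $q=0$ or ($\weight(a)_2\neq 0$ and $q\in\Des$ of that word); Lemma~\ref{f0-shtab-lem} gives the identical description with $\fkd$ in place of $\ck$, $\std(\QO(a))$ in place of $\concat(a)$, and $\weight(\QO(a))_2$ in place of $\weight(a)_2$. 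Since $\weight(a)=\weight(\QO(a))$ by Lemma~\ref{qo-lems}(a) and $\Des(w)=\Des(\QO(w))$ by Lemma~\ref{qo-lems}(a), the vanishing conditions match; and applying $\QO$ together with Lemma~\ref{qo-lems}(e),(f) and Proposition~\ref{primes-prop}(a) converts the $\ck$-description into the $\fkd$-description, so $\QO(f_{\bar 1}(a))=f_{\bar 1}(\QO(a))$, and symmetrically for $e_{\bar 1}$. This proves $\QO$ commutes with every $\qq_n$-operator, hence is a quasi-isomorphism onto its image, and by Lemma~\ref{qo-lems}(c) the image is all of $\bigsqcup_\lambda\SShTab_n(\lambda)$; the characterization of full subcrystals as the level sets of $\PO$ follows from the $\q_n$-statement in Theorem~\ref{qo-morphism-thm} together with the fact that connectedness is detected after applying $\unprime$ (Lemma~\ref{ef-ick-lem}) and that $\PO(a)=\PO(\unprime_\diag\text{-lift})$ is constant on $\gl_n$-subcrystals by Corollary~\ref{ock-key-cor}.

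I expect the main obstacle to be the $f_{\bar 1}$/$e_{\bar 1}$ step: it is the one place where the primed structure genuinely interacts with the queer operator in a nontrivial way (the ``$f_0 e_{\bar 1}$'' and ``$e_0 e_{\bar 1}$'' branches in Theorem~\ref{qq-thmdef} are what force the $\ck_0\ck_1\cdots$ conjugation), and checking that Lemmas~\ref{f0-incr-lem} and~\ref{f0-shtab-lem} really do align case-by-case — especially the diagonal-entry subtleties where $\ck_0$ toggles primes on $v_1,v_2$ and the parity of primed positions can shift — requires care. Everything else is bookkeeping: weight-preservation, the bijection statement, and the $\PO$-level-set characterization all reduce cleanly to the cited results once the operator-compatibility is in hand.
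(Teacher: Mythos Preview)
Your treatment of $f_0,e_0$ and $f_{\bar 1},e_{\bar 1}$ matches the paper's, but there are two genuine gaps elsewhere.

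For $f_k$ with $k\in[n-1]$, you assert that Lemmas~\ref{ock-key-lem} and~\ref{key-lem} produce the \emph{same} index sequence $j_1,\dots,j_l$, but neither statement says this: each only guarantees the existence of \emph{some} sequence in $[M,N-2]$. Your justification (``determined by the pairing, which depends only on $\unprime$'') would require inspecting the proofs of both lemmas and verifying that their constructions coincide, which you do not carry out. The paper avoids this comparison entirely. It defines $b$ to be the unique factorization with $\PO(b)=\PO(a)$ and $\QO(b)=f_k(T)$, shows $\unprime(f_k(a))=\unprime(b)$ via Theorem~\ref{qo-morphism-thm}, and then uses Lemma~\ref{key-lem} (applied only to $T$) together with Lemma~\ref{qo-lems}(e),(f) to deduce that $\concat(b)=\ck_{j_l}\cdots\ck_{j_1}(\concat(a))$ for indices $j_i>0$; hence $\marked(b)=\marked(a)=\marked(f_k(a))$ by Lemma~\ref{marked-lem} and Corollary~\ref{ock-key-cor}, and Lemma~\ref{marked-lem0} forces $f_k(a)=b$. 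No matching of two independently produced sequences is needed.

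Your final step, characterizing the full $\qq_n$-subcrystals as the $\PO$-level sets, is incorrectly argued. The claim that ``connectedness is detected after applying $\unprime$'' is false for $\qq_n$-crystals: the operators $e_0,f_0$ and the modified $e_{\bar 1},f_{\bar 1}$ do not descend through $\unprime$, and Example~\ref{abnormal-remark} exhibits a connected normal $\qq_n$-crystal that is not even a normal $\q_n$-crystal after unpriming. The paper instead uses Theorem~\ref{highest-wt-thm}: once $\QO$ is known to commute with all $\qq_n$-operators, it restricts to a bijection from each $\PO$-level set onto some $\SShTab_n(\lambda)$, and the connectedness of $\SShTab_n(\lambda)$ established in Theorem~\ref{highest-wt-thm} then forces each $\PO$-level set to be a single full $\qq_n$-subcrystal. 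You need to invoke Theorem~\ref{highest-wt-thm} here, not Theorem~\ref{qo-morphism-thm}.
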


This statement is a strict generalization of Theorem~\ref{qo-morphism-thm}.
Although  $ \Incr^+_n(z)$ and $ \SShTab_n(\lambda)$, viewed as $\q_n$-crystals, contain  $\Incr_n(z)$ and  $\ShTab_n(\lambda)$ as subcrystals,
the sets $ \Incr^+_n(z) \setminus  \Incr_n(z)$ 
and
$ \SShTab_n(\lambda) \setminus  \ShTab_n(\lambda)$
are not normal as $\q_n$-crystals.
Therefore even the weaker statement that $\QO$ gives a $\q_n$-crystal morphism in Theorem~\ref{qqo-morphism-thm}
cannot be deduced from Theorem~\ref{qo-morphism-thm}.

\begin{proof}
Let $z \in I_\ZZ$ and $a =(a^1,a^2,\dots,a^n) \in \Incr_n^+(z)$.
Property (b) in Lemma~\ref{qo-lems} implies that position $(1,1)$ of $\QO(a)$ contains $1$ 
(respectively, $1'$) if and only if the word $a^1$ is nonempty and begins with an unprimed (respectively, primed) letter.
Comparing Definitions~\ref{orf-def3} and \ref{shtab-f0},
we conclude that   $\QO(f_{0}(a)) = f_{0}(\QO(a)),$ interpreting $\QO(0) := 0$ and $f_i(0):=0$.

Properties (a) and (f) in Lemma~\ref{qo-lems} tell us that $\QO$ preserves weights and descent sets  and 
that $\QO\circ \ck_j = \fkd_j\circ \QO$.
Comparing Lemmas~\ref{f0-incr-lem} and \ref{f0-shtab-lem}, we see that
  $f_{\bar 1}(a) \neq 0$ if and only if $f_{\bar 1}(\QO(a)) \neq 0$,
in which case $\QO(f_{\bar 1}(a)) = f_{\bar 1}(\QO(a))$ since both tableaux have the same weight and same standardization.

Now suppose $i \in [n-1]$ and let $T = \QO(a)$.
As above, 
we abbreviate by setting $\underline a := \unprime(a)$ and $\underline T := \unprime_\diag(T)$
so
that $\QO(\underline a) = \underline T$
by Lemma~\ref{qo-lems}(d).
We have 
$f_i(a) = 0$ if and only if $f_i(T) = 0$
since 
Lemma~\ref{ef-ick-lem}, Theorem~\ref{qo-morphism-thm}, and Lemma~\ref{tab-unprime-lem}
imply the respective equivalences
$f_i(a) = 0$ $\Leftrightarrow$ $f_i(\underline a) = 0$ $\Leftrightarrow$ $ f_i(\underline T) = 0$ $\Leftrightarrow$
 $f_i(T) = 0$.

Assume $ f_i(a) \neq 0$ so that $ f_i(T) \neq 0$. 
Since orthogonal Edelman-Greene insertion is a weight-preserving bijection,
there is a unique $b \in  \Incr^+_n(z)$ with $\PO(a) = \PO(b)$ and $f_i(T) = \QO(b)$.
To show that $\QO(f_i(a)) = f_i(T)= f_i(\QO(a)) $ it suffices to prove that $f_i(a) = b$.
Lemma~\ref{ock-key-lem} and properties (e) and (f) of Lemma~\ref{qo-lems} imply
that $\PO(f_i(a)) = \PO(a)=\PO(b)$, so Lemma~\ref{qo-lems}(d) gives
$\PO(\unprime(f_i(a)))  = \PO(\unprime(b)).$ Likewise, we have
\[
\ba \QO(\unprime(f_i(a))) 
&= \QO(f_i(\underline a)) 			&\quad\text{by Lemma~\ref{ef-ick-lem},}
\\&= f_i (\QO(\underline a)) 				
								&\quad\text{by Theorem~\ref{qo-morphism-thm},}
\\&= f_i(\underline T) 			&\quad\text{since $\QO(\underline a)=\underline T$,}
\\&= \unprime_\diag(f_i(T) )  				&\quad\text{by Lemma~\ref{tab-unprime-lem},}
\\&= \QO(\unprime(b))  				&\quad\text{by Lemma~\ref{qo-lems}(d).}
\ea\]
We conclude that   $\unprime(f_i(a)) = \unprime(b)$ since $\bullet \mapsto (\PO(\bullet),\QO(\bullet))$ is a bijection. Now to prove that $f_i(a)=b$,
it is enough by Lemma~\ref{marked-lem0} to show that $\marked(f_i(a)) = \marked(b)$, i.e.,
that $f_i(a)$ and $b$ have the same set of marked cycles as defined at the start of Section~\ref{ck-sect}.

We know that $\marked(f_i(a)) = \marked(a)$ by Corollary~\ref{ock-key-cor}.
Let $v := \concat(a)$ and $w: = \concat(b)$. By Lemma~\ref{qo-lems}(e) we have
$  \QO(v) = \std(T)$ and $ \QO(w) = \std(f_i(T)).$
By Lemma~\ref{key-lem}, there are indices $j_1,j_2,\dots,j_l>0$ such that 
 \[\QO(w) = \std(f_i(T)) =  \fkd_{j_l} \cdots \fkd_{j_2} \fkd_{j_1}(\std(T))= \fkd_{j_l} \cdots \fkd_{j_2} \fkd_{j_1}(\QO(v)).\]
By Lemma~\ref{qo-lems}(f), it follows that 
 $ \QO(w) = \QO(\ck_{j_l} \cdots \ck_{j_2} \ck_{j_1}(v))$ and 
 \[  \PO(w)  = \PO(b) = \PO(a) = \PO(v) = \PO(\ck_{j_l} \cdots \ck_{j_2} \ck_{j_1}(v)).\]
 Thus $w=\ck_{j_l} \cdots \ck_{j_2} \ck_{j_1}(v) $, so by Lemma~\ref{marked-lem} and Corollary~\ref{ock-key-cor} we have
  $ \marked(f_i(a)) =\marked(a) = \marked(v) = \marked(w) = \marked(b).$
We conclude that  $f_i(a) = b$, so we have $\QO(f_i(a)) = f_i(\QO(a))$ as desired.

Thus  $\QO(f_i(a)) = f_i(\QO(a))$ for all $i \in \{ \bar 1, 0,1,\dots,n-1\}$ and $a \in \ORF_n(z)$.
It follows that if $e_i(a)\neq 0$, then $f_i(\QO(e_i(a))) = \QO(a)$ so $\QO(e_i(a)) = e_i(\QO(a))$.
Likewise, if $e_i(\QO(a)) \neq 0$, then  $e_i(\QO(a)) = \QO(b) $ for a unique $b \in \ORF_n(z)$ with $\PO(a) =\PO(b)$,
and then we have  \[\QO(a) = f_i(e_i(\QO(a))) = f_i(\QO(b)) = \QO(f_i(b)). \] 
As $\PO(f_i(b)) = \PO(b)$,
this can only hold if $a=f_i(b)$, in which case $e_i(a) = b \neq 0$.
Taking contrapositives, we deduce that if $e_i(a) = 0$, then  $e_i(\QO(a)) = 0$.
Hence $\QO(e_i(a)) = e_i(\QO(a))$ for all $i$ and $a$, interpreting $e_i(0):=0$,
so $a \mapsto \QO(a)$ is at least a $\qq_n$-crystal morphism.

Now 
let $P = \PO(a)$.
 If $\lambda$ is the shape of $P$,  
then $\QO$ defines a weight-preserving bijection $\{ b \in \ORF_n(z) : \PO(b) =P\} \to \SShTab_n(\lambda)$
that commutes with all crystal operators.
Since $\SShTab_n(\lambda)$ is a connected $\qq_n$-crystal by Theorem~\ref{highest-wt-thm},
all full $\q_n$-subcrystals of $\ORF_n(z)$ must be analogous subsets on which $\PO$ is constant,
so 
$\QO$ is a quasi-isomorphism of $\qq_n$-crystals.
 \end{proof}

\begin{corollary}
Let $\mu \subset \lambda$ be strict partitions such that $\SShTab_n(\lambda/\mu)$ is nonempty. Then $\unprime_\diag : \SShTab_n(\lambda/\mu) \to \ShTab_n(\lambda/\mu)$ is a quasi-isomorphism of $\gl_n$-crystals. 
\end{corollary}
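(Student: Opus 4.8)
The plan is to reduce to the straight--shape case $\mu=\emptyset$ and then exploit orthogonal Edelman--Greene insertion. By Lemma~\ref{tab-unprime-lem} the map $\unprime_\diag$ preserves weights and commutes with $e_i$ and $f_i$ for all $i\in[n-1]$ (interpreting $\unprime_\diag(0):=0$). A formal argument then shows it is enough to prove that for every full $\gl_n$-subcrystal $\cX\subseteq\SShTab_n(\lambda/\mu)$ the restriction $\unprime_\diag|_\cX$ is injective: given injectivity, $\cY:=\unprime_\diag(\cX)$ is weakly connected (image of a weakly connected graph) and closed under all $e_i$ and $f_i$ (since $\cX$ is and $\unprime_\diag$ intertwines the operators), hence $\cY$ is a full $\gl_n$-subcrystal of $\ShTab_n(\lambda/\mu)$; and an injective, weight-preserving, operator-intertwining surjection between $\gl_n$-crystals reflects edges and is therefore a crystal isomorphism $\cX\to\cY$.

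For $\mu=\emptyset$ I would route through increasing factorizations. Fix a full $\gl_n$-subcrystal $\cX\subseteq\SShTab_n(\lambda)$, choose $T_0\in\cX$, and use Lemma~\ref{qo-lems}(c) to write $T_0=\QO(a_0)$ for some $z\in I_\ZZ$ and $a_0\in\Incr^+_n(z)$; set $P:=\PO(a_0)$, a tableau of the same shape $\lambda$. By Theorem~\ref{qqo-morphism-thm} the fibre $\cF_P:=\{a\in\Incr^+_n(z):\PO(a)=P\}$ is a full $\qq_n$-subcrystal, and $\QO$ restricts to a $\qq_n$-crystal isomorphism $\cF_P\xrightarrow{\ \sim\ }\SShTab_n(\lambda)$; hence $\widetilde\cX:=(\QO|_{\cF_P})^{-1}(\cX)$ is a full $\gl_n$-subcrystal of $\cF_P$, and therefore of $\Incr^+_n(z)$ because $\cF_P$ is closed under all crystal operators. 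By Corollary~\ref{unprime-cor2}, $\unprime$ is injective on $\widetilde\cX$. Now if $T,T'\in\cX$ satisfy $\unprime_\diag(T)=\unprime_\diag(T')$, put $a:=(\QO|_{\cF_P})^{-1}(T)$ and $a':=(\QO|_{\cF_P})^{-1}(T')$. Using $\QO\circ\unprime=\unprime_\diag\circ\QO$ and $\PO\circ\unprime=\unprime\circ\PO$ from Lemma~\ref{qo-lems}(d), we get $\PO(\unprime(a))=\unprime(P)=\PO(\unprime(a'))$ and $\QO(\unprime(a))=\QO(\unprime(a'))$; since $b\mapsto(\PO(b),\QO(b))$ is injective on $\Incr^+_n(z)$, this forces $\unprime(a)=\unprime(a')$, whence $a=a'$ and $T=T'$.

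For general $\mu$ I would reduce to the previous case via the embedding used in the proof of Corollary~\ref{skew-norm-cor}. Let $k:=\ell(\mu)$ and let $\iota^+:\SShTab_n(\lambda/\mu)\hookrightarrow\SShTab_{k+n}(\lambda)$ send $T$ to the straight-shape tableau $\widehat T$ whose restriction to $\SD_\mu$ has entry $i$ throughout row $i$ and whose restriction to $\SD_{\lambda/\mu}$ equals $T$ with all entries increased by $k$. Its image $\cV^+$ is a union of full $\gl_n$-subcrystals of $\SShTab_{k+n}(\lambda)$ for the restriction functor $\sF$ from $\gl_{k+n}$- to $\gl_n$-crystals of Corollary~\ref{skew-norm-cor}, and $\iota^+$ intertwines the $\gl_n$-operators of $\SShTab_n(\lambda/\mu)$ with $\widetilde e_{k+i},\widetilde f_{k+i}$ for $i\in[n-1]$; moreover $\iota^+$ carries $\ShTab_n(\lambda/\mu)$ onto $\cV:=\cV^+\cap\ShTab_{k+n}(\lambda)$, and since the $\SD_\mu$-diagonal of every $\widehat T$ is unprimed one has $\unprime_\diag\circ\iota^+=\iota\circ\unprime_\diag$ on $\cV^+$. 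Applying the $\mu=\emptyset$ case (with $k+n$ in place of $n$) to $\SShTab_{k+n}(\lambda)$, and noting that a full $\gl_n$-subcrystal is contained in a full $\gl_{k+n}$-subcrystal on which $\unprime_\diag$ is already known injective, $\unprime_\diag$ is injective on $\iota^+(\cX)$; transporting back along $\iota$ gives injectivity of $\unprime_\diag$ on $\cX$, which by the first paragraph finishes the proof.

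The main obstacle I anticipate is the bookkeeping in the last step: verifying that the skew-to-straight embedding truly intertwines the crystal operators---one must check that the pairing procedure of Definition~\ref{tab-pair-def} and the ribbon/diagonal rules of Definitions~\ref{shtab-f} and~\ref{shtab-e} act on an $(k+i)$-ribbon of $\widehat T$, which lies entirely in $\SD_{\lambda/\mu}$, exactly as on the corresponding $i$-ribbon of $T$, and that $\cV^+$ is genuinely a union of full $\sF$-subcrystals. These checks are routine but delicate; everything else reduces to bookkeeping with the quasi-isomorphisms established earlier.
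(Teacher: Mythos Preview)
Your proposal is correct and follows essentially the same route as the paper: deduce the straight-shape case from the compatibility of $\QO$ with $\unprime$/$\unprime_\diag$ (Lemma~\ref{qo-lems}(d)) together with Corollary~\ref{unprime-cor2} and Theorems~\ref{qo-morphism-thm} and~\ref{qqo-morphism-thm}, then reduce general $\mu$ to this via the embedding of Corollary~\ref{skew-norm-cor}. The paper phrases the straight-shape step more tersely as a commutative square in which three of the four arrows are already known to be quasi-isomorphisms of $\gl_n$-crystals, forcing the fourth ($\unprime_\diag$) to be one as well; your explicit injectivity check is a correct unwinding of that diagram chase.
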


\begin{proof}
Let $\Lambda_n$ be the set of strict partitions in $\NN^n$.
 The diagram
\[
\begin{tikzcd}
 \bigsqcup_{z \in I_\ZZ} \Incr^+_n(z) \arrow[d, "\QO"'] \arrow[rr, "\unprime"] &&  \bigsqcup_{z\in I_\ZZ} \Incr_n(z)  \arrow[d, "\QO"] \\
\bigsqcup_{\lambda\in\Lambda_n}  \SShTab_n(\lambda) \arrow[rr, "\unprime_\diag"] && \bigsqcup_{\lambda\in\Lambda_n}  \ShTab_n(\lambda)
\end{tikzcd}
\]
commutes and all four arrows are surjective maps. Since the top horizontal arrow and both vertical arrows 
are quasi-isomorphisms of $\gl_n$-crystals  by Corollary~\ref{unprime-cor2} and Theorems~\ref{qo-morphism-thm} and \ref{qqo-morphism-thm},
the bottom horizontal arrow must also be a quasi-isomorphism of $\gl_n$-crystals.

This prove the result when $\mu =\emptyset$. This suffices for the general case, since applying the functor $\sF$ from Corollary~\ref{skew-norm-cor} to a map
preserves the property of being a quasi-isomorphism,
and $\SShTab_n(\lambda/\mu)$ and $\ShTab_n(\lambda/\mu)$ may be identified with $\gl_n$-subcrystals 
of $\sF(\SShTab_{k+n}(\lambda))$ and $\sF(\ShTab_{k+n}(\lambda))$ for $k=\ell(\mu)$ with the former mapped onto the latter by $\unprime_\diag$.
\end{proof}

It was easy to describe a coarse decomposition of $\cW^+_n(m)$ and $\ORF_n(z)$ into $\gl_n$-subcrystals
on which the unpriming maps $\cW^+_n(m)\to \cW_n(m)$ and $\ORF_n(z)\to \Incr_n(z)$
are injective.
It does not seem
to be straightforward to do the same for $\unprime_\diag:\SShTab_n(\lambda)\to \ShTab_n(\lambda)$.
A harder open problem is to describe the full $\gl_n$-subcrystals of $\SShTab_n(\lambda)$.\footnote{The \defn{rectification} process in \cite[\S5]{AssafOguzAbstract}
would characterize 
the full $\gl_n$-subcrystals of the smaller object $\ShTab_n(\lambda)$. 
}

\subsection{From words to shifted tableaux}\label{last-sect}

Combining Sections~\ref{morphisms-subsect1} and \ref{morphisms-subsect2}
shows how to embed each crystal of words $\cW^+_n(m)$ in a union of 
shifted tableau crystals $\SShTab_n(\lambda)$. This will lead to a proof of Theorem~\ref{main-thm}.

Fix  $m,n \in \NN$ and
consider a word   $w=w_1w_2\cdots w_m \in \cW^+_n(m)$.
 Define $w^\top$ to be the $n$-tuple of strictly increasing primed words $a=(a^1,a^2,\dots,a^n)$
in which 
the unprimed letters in $a^i$ are the indices $j \in [m]$ with $w_j =i$,
and
 the primed letters in $a^i$ are 
given by adding primes to each $j \in [m]$  with $w_j =i'$. 
For example, if $n=3$ and $w=2'211'2'$, then $w^\top = (34',1'25',\emptyset)$.

Form $\double(w) $ by applying the map with $i\mapsto 2i$ and $i' \mapsto (2i)'  $ for $i \in \ZZ$ to the letters of $w$. 
 For a  tableau $T$, define $\double(T)$ by applying the same map to every entry.
For an $n$-tuple of primed words $a=(a^1,\dots,a^n)$,
let $\double(a) := (\double(a^1),\dots,\double(a^n)).$

One always has $\double(w^\top)\in \ORF_n(s_2s_4\cdots s_{2m})$.
In fact, $w \mapsto \double(w^\top)$ is precisely the isomorphism 
$\cW^+_n(m) \xrightarrow{\sim} \ORF_n(s_2s_4\cdots s_{2m})$
 in the proof of Corollary~\ref{word-factor-cor}. Hence if we define 
\be 
\PHM(w) := \QO(\double(w^\top)),
\ee
then $w  \mapsto \QO(\double(w^\top))$ is  
a quasi-isomorphism of $\qq_n$-crystals. 
We can make a more precise statement.
Each entry of $\PO(\double(w^\top))$ is in $\{ 2' < 2 < 4' < 4 < \dots < 2m' < 2m\}$
so there exists a unique shifted tableau $\QHM(w)$ such that 
\be
\double ( \QHM(w) )=  \PO( \double(w^\top)).
\ee
\begin{example} If $n=3$ and $w = 3'311'3$, then
 $\double(w^\top)= (6\ 8',\emptyset, 2'\ 4\ 10')$ and
  \[
  \PO( \double(w^\top)) = \ytab{\none & 6 \\ 2 & 4 & 8' & 10'}
  \quand
 \QO( \double(w^\top))= \ytab{\none & 3' \\ 1 & 1  & 3' &3} = \PHM(w).\]
 \end{example}
 
 The correspondence $w \mapsto (\PHM(w), \QHM(w))$
 extends Haiman's \defn{shifted mixed insertion algorithm} from  \cite{HaimanMixed}
and  is called \defn{orthogonal mixed insertion} in \cite{M2021a}.
It can be 
defined in a self-contained way via a certain insertion process (see \cite[Def.~5.17]{M2021a})
and gives a bijection from
$\cW^+_n(m)$
 to the set of
pairs $(P,Q)$ of shifted tableaux of size $m$ with the same shape, in which $P$ is semistandard with all entries at most $n$
and $Q$ is standard
with no primes on the diagonal \cite[Prop.~5.4 and Thm.~5.21]{M2021a}.
The following is clear from 
the observations above and Theorem~\ref{qqo-morphism-thm}: 

\begin{corollary}\label{qq-o-morphism-cor}
The map $w\mapsto \PHM(w)$ is  a quasi-isomorphism of $\qq_n$-crystals
$
\bigsqcup_{m \in \NN} \cW^+_n(m) \to \bigsqcup_{\text{strict partitions }\lambda \in \NN^n} \SShTab_n(\lambda),$
and the full $\qq_n$-subcrystals of $\bigsqcup_{m \in \NN} \cW^+_n(m) $ are the subsets on which $w \mapsto \QHM(w)$ is constant.
\end{corollary}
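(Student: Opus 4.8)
The plan is to deduce Corollary~\ref{qq-o-morphism-cor} by composing the two quasi-isomorphisms established in Sections~\ref{morphisms-subsect1} and \ref{morphisms-subsect2}, taking care to track how the intermediate tableaux $\PO$ and $\QO$ translate into the outputs $\PHM$ and $\QHM$. First I would recall that by Corollary~\ref{word-factor-cor} (and its proof), the map $w\mapsto \double(w^\top)$ is a $\qq_n$-crystal \emph{isomorphism} $\cW^+_n(m)\xrightarrow{\sim}\ORF_n(s_2s_4\cdots s_{2m})$ for each $m$. Assembling these over all $m$ gives a $\qq_n$-crystal isomorphism
\[
\Delta : \bigsqcup_{m\in\NN}\cW^+_n(m)\ \xrightarrow{\sim}\ \bigsqcup_{z\in I_\ZZ}\Incr^+_n(z),\qquad \Delta(w) = \double(w^\top),
\]
whose image is the subset of those $\ORF_n(z)$ with $z$ of the form $s_2s_4\cdots s_{2m}$; since this image is a union of full subcrystals (indeed a union of entire connected crystals $\ORF_n(z)$), $\Delta$ is in particular a quasi-isomorphism onto its image.

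Next I would invoke Theorem~\ref{qqo-morphism-thm}: the map $a\mapsto\QO(a)$ is a quasi-isomorphism of $\qq_n$-crystals $\bigsqcup_{z\in I_\ZZ}\Incr^+_n(z)\to\bigsqcup_{\lambda}\SShTab_n(\lambda)$, with full $\qq_n$-subcrystals on the source being exactly the fibers of $a\mapsto\PO(a)$. The composite $w\mapsto\QO(\double(w^\top))=\PHM(w)$ is therefore a quasi-isomorphism of $\qq_n$-crystals from $\bigsqcup_m\cW^+_n(m)$ to its image. To see that the image is all of $\bigsqcup_\lambda\SShTab_n(\lambda)$, I would use Lemma~\ref{qo-lems}(c): every $T\in\SShTab_n(\lambda)$ arises as $\QO(a)$ for some $a\in\Incr^+_n(z)$; but every such $a$ lies in the same connected $\qq_n$-crystal $\ORF_n(z)$ as some factorization of the form $\double(w^\top)$ (because by Corollary~\ref{word-factor-cor} each connected normal $\qq_n$-crystal, and in particular each $\ORF_n(z)$ with $\ell(\mu(z))\le n$, is isomorphic to some $\cW^+_n(m)$, hence occurs as a full subcrystal of some $\ORF_n(s_2s_4\cdots s_{2m})$). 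Since $\QO$ restricted to a connected component is surjective onto the corresponding $\SShTab_n(\lambda)$ (which is connected by Theorem~\ref{highest-wt-thm}), surjectivity of $w\mapsto\PHM(w)$ follows.

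Finally, for the statement about full subcrystals I would argue as follows. Under the isomorphism $\Delta$, the full $\qq_n$-subcrystals of $\bigsqcup_m\cW^+_n(m)$ correspond bijectively to the full $\qq_n$-subcrystals of $\Delta\bigl(\bigsqcup_m\cW^+_n(m)\bigr)\subseteq\bigsqcup_z\Incr^+_n(z)$, which by Theorem~\ref{qqo-morphism-thm} are exactly the (nonempty) fibers of $a\mapsto\PO(a)$ meeting that image. It then remains to check that for $w,w'$ in the source, $\PO(\double(w^\top))=\PO(\double(w'^\top))$ if and only if $\QHM(w)=\QHM(w')$; but this is immediate from the defining relation $\double(\QHM(w))=\PO(\double(w^\top))$ and injectivity of $\double$ on tableaux. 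Hence the full $\qq_n$-subcrystals are precisely the fibers of $w\mapsto\QHM(w)$, as claimed.

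The routine obstacle to watch is the surjectivity/image bookkeeping: one must be careful that the image of $\Delta$ really is a \emph{union of full (connected) $\qq_n$-subcrystals} of $\bigsqcup_z\Incr^+_n(z)$ and that composing quasi-isomorphisms with this inclusion still yields a quasi-isomorphism onto the full target. This is where I would spend the most care, using Corollary~\ref{word-factor-cor} to identify every $\ORF_n(z)$ appearing in Lemma~\ref{qo-lems}(c) with some $\cW^+_n(m)$ and Theorem~\ref{highest-wt-thm} to guarantee connectedness of the tableau crystals; everything else is formal diagram-chasing once the definition $\double(\QHM(w))=\PO(\double(w^\top))$ is unpacked.
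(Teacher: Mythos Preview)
Your core approach is exactly the paper's: compose the $\qq_n$-crystal isomorphism $w\mapsto\double(w^\top)$ from Corollary~\ref{word-factor-cor} with the quasi-isomorphism $\QO$ from Theorem~\ref{qqo-morphism-thm}, and use the definition $\double(\QHM(w))=\PO(\double(w^\top))$ to translate the fiber description. The paper's proof is literally one sentence pointing to these same ingredients.

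There is one genuine gap, however, in your surjectivity paragraph. You write that ``by Corollary~\ref{word-factor-cor} each connected normal $\qq_n$-crystal, and in particular each $\ORF_n(z)$ with $\ell(\mu(z))\le n$, is isomorphic to some $\cW^+_n(m)$''. This is circular: at this point in the paper we do not yet know that $\ORF_n(z)$ is normal---that is Corollary~\ref{orf-upgrade}, which is deduced \emph{from} the present corollary via Corollary~\ref{any-normal-cor} and Theorem~\ref{sshtab-upgrade}. Moreover, Corollary~\ref{word-factor-cor} says only that every connected normal $\qq_n$-crystal embeds in some $\ORF_n(z)$, not the reverse.

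Fortunately the surjectivity discussion is unnecessary: the paper's definition of quasi-isomorphism requires only that the image of each full subcrystal be a full subcrystal of the target, not that the map be surjective. Since the image of $\Delta$ is a union of entire crystals $\ORF_n(s_2s_4\cdots s_{2m})$, restricting $\QO$ to it is still a quasi-isomorphism, and that is all the corollary asserts. If you do want surjectivity, the clean non-circular argument is the bijection property of $w\mapsto(\PHM(w),\QHM(w))$ stated in the text just before the corollary: for any $P\in\SShTab_n(\lambda)$, pair it with any standard $Q$ of the same shape having no primed diagonal entries, and there is a unique $w$ mapping to $(P,Q)$.
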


The map $w\mapsto \PHM(w)$ restricts to a quasi-isomorphism 
 $\bigsqcup_{m \in \NN} \cW_n(m) \to \bigsqcup_{\lambda} \ShTab_n(\lambda)$ of $\q_n$-crystals
 by results in \cite{AssafOguz,HPS,Hiroshima2018}; this is explained in the proof sketch for \cite[Thm.-Def. 2.12]{Marberg2019b}.

\begin{corollary}\label{any-normal-cor}
A connected normal $\qq_n$-crystal 
has a unique $\qq_n$-highest weight element. The weight of this element is a strict 
partition $\lambda \in \NN^n$ and  the crystal 
is isomorphic to $\SShTab_n(\lambda)$.
\end{corollary}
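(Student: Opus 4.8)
The plan is to deduce Corollary~\ref{any-normal-cor} from Corollary~\ref{qq-o-morphism-cor} together with Theorem~\ref{highest-wt-thm}. Let $\cB$ be a connected normal $\qq_n$-crystal. By definition of normality (as stated in Section~\ref{ext-sect}), $\cB$ is isomorphic to a full $\qq_n$-subcrystal of $(\BB^+_n)^{\otimes m}$ for some $m \in \NN$, and by Proposition~\ref{qq-words-prop} we have $(\BB^+_n)^{\otimes m}\cong \cW^+_n(m)$, so without loss of generality $\cB$ is a full subcrystal of $\cW^+_n(m)$. First I would invoke Corollary~\ref{qq-o-morphism-cor}: the map $w \mapsto \PHM(w)$ is a quasi-isomorphism of $\qq_n$-crystals from $\bigsqcup_{m}\cW^+_n(m)$ onto $\bigsqcup_{\lambda}\SShTab_n(\lambda)$, and the full $\qq_n$-subcrystals of the source are precisely the fibers of $w\mapsto \QHM(w)$. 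Since $\cB$ is a full subcrystal of $\cW^+_n(m)$, it is one of these fibers, hence by the quasi-isomorphism property its image $\cY := \PHM(\cB)$ is a full $\qq_n$-subcrystal of $\bigsqcup_\lambda \SShTab_n(\lambda)$, and $\PHM$ restricts to a crystal isomorphism $\cB \xrightarrow{\sim} \cY$.

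Next I would pin down $\cY$. A full subcrystal of a disjoint union lies entirely in one summand, so $\cY \subseteq \SShTab_n(\lambda)$ for a single strict partition $\lambda \in \NN^n$. By Theorem~\ref{highest-wt-thm}, $\SShTab_n(\lambda)$ is a connected $\qq_n$-crystal; since $\cY$ is a full subcrystal of a connected crystal and $\cB$ (hence $\cY$) is nonempty, we get $\cY = \SShTab_n(\lambda)$. Therefore $\cB \cong \SShTab_n(\lambda)$ as $\qq_n$-crystals. Finally, Theorem~\ref{highest-wt-thm} also tells us that $\SShTab_n(\lambda)$ has a unique $\qq_n$-highest weight element, namely $\Thighest_\lambda$, whose weight is $\lambda$ itself (its row $i$ consists entirely of $i$'s, so $\weight(\Thighest_\lambda) = (\lambda_1,\lambda_2,\dots,\lambda_n) = \lambda$); since $\lambda$ is a strict partition in $\NN^n$, this gives every claim in the statement. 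Transporting along the isomorphism $\cB \cong \SShTab_n(\lambda)$, the crystal $\cB$ likewise has a unique $\qq_n$-highest weight element of weight $\lambda$.

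One point that needs care, rather than being a genuine obstacle, is the bookkeeping around the word ``quasi-isomorphism'': I should spell out explicitly that a quasi-isomorphism $\psi$ sends a full subcrystal $\cX$ to a full subcrystal $\psi(\cX)$ with $\psi|_\cX$ an isomorphism, which is exactly the definition given just before Proposition in Section~\ref{words-sect}, and that being a $\qq_n$-highest weight element (Definition~\ref{qq-highest-def}, which involves the operators $e_i$ for $i \in I\sqcup\overline I$ and the maps $e_0^{[i]}$) is preserved under crystal isomorphism. The latter holds because the operators $\sigma_i$, $e_{\bar i}$, $e_{\bar i'}$, and $e_0^{[i]}$ are all defined purely in terms of the weight map and the operators $e_i, f_i$ for $i \in \{\bar 1, 0, 1,\dots,n-1\}$, so any weight-preserving crystal isomorphism intertwines them. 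The only mild subtlety is confirming that $\cB$ being a full subcrystal of $\cW^+_n(m)$ (rather than merely isomorphic to one) is what licenses identifying $\cB$ with a fiber of $\QHM$; but the definition of normality together with Proposition~\ref{qq-words-prop} handles this, since a full subcrystal of $\cW^+_n(m)$ is by Corollary~\ref{qq-o-morphism-cor} exactly a set of words with constant $\QHM$-value.
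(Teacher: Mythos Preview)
Your proposal is correct and follows essentially the same approach as the paper's proof, which simply says that Corollary~\ref{qq-o-morphism-cor} implies any connected normal $\qq_n$-crystal is isomorphic to a full $\qq_n$-subcrystal of some $\SShTab_n(\lambda)$, and then invokes Theorem~\ref{highest-wt-thm}. You have merely unpacked the implicit steps (identifying $\cB$ with a full subcrystal of $\cW^+_n(m)$ via Proposition~\ref{qq-words-prop}, and noting that a full subcrystal of a connected crystal equals the whole crystal) more carefully than the paper does.
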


\begin{proof}
Corollary~\ref{qq-o-morphism-cor} tells us that any connected normal $\qq_n$-crystal
is isomorphic to a full $\qq_n$-subcrystal of $\SShTab_n(\lambda)$ for some $\lambda \in \NN^n$,
so the result follows from Theorem~\ref{highest-wt-thm}.
\end{proof}

Suppose $\cB$ is a $\qq_n$-crystal. Recall the formula for $\sigma_{w_0}$ from \eqref{w0-eq} and
define $\sigma_{w^+_0} : \cB \to \cB$ by
\be \sigma_{w^+_0} :=  (\sigma_0) (\sigma_1\sigma_0)(\sigma_2\sigma_1\sigma_0) \cdots  (\sigma_{n-1} \cdots \sigma_1\sigma_0).\ee
Like $\sigma_{w_0}$, this operator is weight-reversing.
If $\cB$ is normal, then Theorem~\ref{bc-action-prop} implies that $\sigma_{w^+_0}$ is an involution which gives the action on $\cB$ of the
 element $w^+_0 \in W^{\mathsf{BC}}_n$ sending $i \in [n]$ to $i-1-n$.

\begin{proposition}\label{wplus-prop}
Suppose $\cB$ is a normal $\qq_n$-crystal. Then 
$b\in\cB$ is a $\qq_n$-lowest weight element if and only if $\sigma_{w^+_0}(b) \in \cB$ is a $\qq_n$-highest weight element.
\end{proposition}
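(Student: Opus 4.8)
Since $\cB$ is normal, by Corollary~\ref{any-normal-cor} every connected component of $\cB$ is isomorphic to some $\SShTab_n(\lambda)$, and because highest/lowest weight elements are detected componentwise, it suffices to prove the statement when $\cB = \SShTab_n(\lambda)$ for a strict partition $\lambda \in \NN^n$. By Theorem~\ref{highest-wt-thm} this crystal is connected with unique $\qq_n$-highest weight element $\Thighest_\lambda$ and unique $\qq_n$-lowest weight element $\TTlowest_\lambda$, so the proposition reduces to the single claim $\sigma_{w_0^+}(\TTlowest_\lambda) = \Thighest_\lambda$ (equivalently, applying the self-inverse $\sigma_{w_0^+}$, that $\sigma_{w_0^+}(\Thighest_\lambda) = \TTlowest_\lambda$); once one of these is verified, the other follows since $\sigma_{w_0^+}$ is an involution on normal $\qq_n$-crystals and sends the unique highest weight element to the unique lowest weight element.

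\textbf{Key steps.}
First I would recall the factorization $\sigma_{w_0^+} = (\sigma_0)(\sigma_1\sigma_0)\cdots(\sigma_{n-1}\cdots\sigma_1\sigma_0)$ and observe that, by comparing this with $\sigma_{w_0} = (\sigma_1)(\sigma_2\sigma_1)\cdots(\sigma_{n-1}\cdots\sigma_2\sigma_1)$ and the definition $e_0^{[i]} = \sigma_{i-1}\cdots\sigma_1 e_0 \sigma_1\cdots\sigma_{i-1}$, one can rewrite $\sigma_{w_0^+}$ so that the contributions of the $\sigma_0$'s become, up to conjugation, the $\sigma$-analogues $\sigma_0^{[i]} := \sigma_{i-1}\cdots\sigma_1\sigma_0\sigma_1\cdots\sigma_{i-1}$ of the maps in \eqref{ef0-eq}. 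Concretely I expect a clean identity of the form $\sigma_{w_0^+} = \sigma_{w_0}\circ \sigma_0^{[n]}\sigma_0^{[n-1]}\cdots\sigma_0^{[1]}$ (or the reverse order), which can be proved by a short induction using the braid relations for type $\mathsf{BC}_n$ guaranteed by Theorem~\ref{bc-action-prop} (the $\sigma_0$'s only need to be moved past the $\sigma_j$ with $j\geq 2$, which commute with $\sigma_0$). Second, I would start from $\TTlowest_\lambda$, whose diagonal entries are all primed and which has $\unprime_\diag(\TTlowest_\lambda) = \Tlowest_\lambda$. Applying $\sigma_0^{[1]}\sigma_0^{[2]}\cdots\sigma_0^{[n]}$ to $\TTlowest_\lambda$: by Lemma~\ref{lem-qn+lowestwt}, for $i\in[\ell(\lambda)]$ the operator $\sigma_0^{[i+n-\ell(\lambda)]}$ toggles the prime on the diagonal box $(i,i)$ (since $e_0^{[i+n-\ell(\lambda)]}$ removes it and $f_0^{[i+n-\ell(\lambda)]}$ would re-add it, $\sigma_0^{[i+n-\ell(\lambda)]}$ acts as the prime-removal), while for $j\in[n-\ell(\lambda)]$ we have $e_0^{[j]} = f_0^{[j]} = 0$ so $\sigma_0^{[j]}$ acts as the identity. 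Hence $\sigma_0^{[1]}\cdots\sigma_0^{[n]}(\TTlowest_\lambda) = \Tlowest_\lambda$ (all diagonal primes removed). Third, I would invoke Proposition~\ref{low-prop}, which states that $\sigma_{w_0}$ interchanges $\q_n$-highest and $\q_n$-lowest weight elements of a normal $\q_n$-crystal; applying $\sigma_{w_0}$ to $\Tlowest_\lambda$ — the unique $\q_n$-lowest weight element of $\ShTab_n(\lambda)$ by Theorem~\ref{thigh-thm} — yields $\Thighest_\lambda$. (One must check that $\sigma_{w_0}$ and $\sigma_0^{[i]}$ genuinely compose in the required order; but since $\sigma_{w_0}$ is built from $\sigma_1,\dots,\sigma_{n-1}$ only, and we have the factorization from step one, this is exactly what the braid-relation rewriting delivers.) Chaining these gives $\sigma_{w_0^+}(\TTlowest_\lambda) = \sigma_{w_0}(\sigma_0^{[n]}\cdots\sigma_0^{[1]}(\TTlowest_\lambda))$; noting $\sigma_0^{[i]}$ is an involution and the $\sigma_0^{[i]}$ with $i$ in the relevant ranges commute (they touch disjoint diagonal boxes, or act trivially), the order $\sigma_0^{[n]}\cdots\sigma_0^{[1]}$ vs.\ $\sigma_0^{[1]}\cdots\sigma_0^{[n]}$ does not matter, and we conclude $\sigma_{w_0^+}(\TTlowest_\lambda)=\Thighest_\lambda$.

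\textbf{Main obstacle.}
The delicate point is establishing the factorization $\sigma_{w_0^+} = \sigma_{w_0}\circ\big(\sigma_0^{[1]}\sigma_0^{[2]}\cdots\sigma_0^{[n]}\big)$ as operators on a normal $\qq_n$-crystal, and pinning down the exact order of the conjugated $\sigma_0^{[i]}$'s. This is a purely Coxeter-combinatorial manipulation — rewriting the reduced word $(s_0)(s_1s_0)(s_2s_1s_0)\cdots$ for $w_0^+\in\W_n$ in the form (reduced word for $w_0$)$\cdot$(product of conjugates of $s_0$) — and it is legitimate at the level of the $\sigma$-operators precisely because Theorem~\ref{bc-action-prop} gives a genuine $\W_n$-action on normal $\qq_n$-crystals, so any two reduced-word expressions for the same group element act identically. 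I would carry this out by induction on $n$, at each stage using only that $\sigma_0$ commutes with $\sigma_2,\dots,\sigma_{n-1}$ and the type $\mathsf{A}$ braid relations among $\sigma_1,\dots,\sigma_{n-1}$; this keeps the argument short. Everything else — the action of the $\sigma_0^{[i]}$ on $\TTlowest_\lambda$ and the behavior of $\sigma_{w_0}$ — is supplied directly by Lemma~\ref{lem-qn+lowestwt}, Lemma~\ref{i,inotinT}, Proposition~\ref{low-prop}, and Theorem~\ref{thigh-thm}.
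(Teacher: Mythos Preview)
Your proposal is correct and follows essentially the same route as the paper. The paper's proof also reduces to $\SShTab_n(\lambda)$, establishes the factorization $\sigma_{w_0^+}=\sigma_{w_0}\,\sigma_{w_0^{\mathsf{BC}}}$ where $\sigma_{w_0^{\mathsf{BC}}}=\sigma_0^{[1]}\sigma_0^{[2]}\cdots\sigma_0^{[n]}$ (your conjugated $\sigma_0$'s), applies Lemma~\ref{lem-qn+lowestwt} to get $\sigma_{w_0^{\mathsf{BC}}}(\TTlowest_\lambda)=\Tlowest_\lambda$, and then invokes Proposition~\ref{low-prop} on the normal $\q_n$-crystal $\ShTab_n(\lambda)$ to conclude $\sigma_{w_0}(\Tlowest_\lambda)=\Thighest_\lambda$; the only cosmetic difference is that the paper justifies the factorization directly from the group identity $w_0^+=w_0\,w_0^{\mathsf{BC}}$ in $\W_n$ (via Theorem~\ref{bc-action-prop}) rather than by an inductive braid-relation computation, which spares you the order-of-factors bookkeeping.
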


\begin{proof}
By Corollary~\ref{any-normal-cor} we may assume that $\cB = \SShTab_n(\lambda)$ for some strict partition $\lambda \in \NN^n$.
Because $\sigma_{w^+_0}$ is an involution and because 
$ \SShTab_n(\lambda)$ has  unique $\qq_n$-highest and $\qq_n$-lowest weight elements $\Thighest_\lambda$ and
$\TTlowest_\lambda$ by Theorem~\ref{highest-wt-thm},
it suffices to show that $\sigma_{w^+_0}(\TTlowest_\lambda) = \Thighest_\lambda$.

Let
$ \sigma_{w^\mathsf{BC}_0} := (\sigma_0) (\sigma_1\sigma_0\sigma_1)(\sigma_2\sigma_1\sigma_0\sigma_1\sigma_2) \cdots  (\sigma_{n-1} \cdots \sigma_1\sigma_0\sigma_1\cdots \sigma_{n-1}).$ This operator gives the action of the longest element $w^{\mathsf{BC}}_0 \in W^{\mathsf{BC}}_n$ on $\cB$, 
and $\sigma_{w^+_0} = \sigma_{w_0} \sigma_{w^\mathsf{BC}_0}$.
Lemma~\ref{lem-qn+lowestwt} implies that 
$ \sigma_{w^\mathsf{BC}_0}(\TTlowest_\lambda) = e_0^{[n-\ell(\lambda)+1]}\cdots e_0^{[n-2]}e_0^{[n-1]}e_0^{[n]}(\TTlowest_\lambda)
= \Tlowest_\lambda \in  \ShTab_n(\lambda).$

Recall from Sections~\ref{tab-op-sect} and \ref{tab-highest-sect} that 
the raising and lowering operators of $ \SShTab_n(\lambda)$ give a normal $\q_n$-crystal structure on $ \ShTab_n(\lambda)$
with unique $\q_n$-highest weight element $ \Thighest_\lambda$. Thus, it follows from 
Proposition~\ref{low-prop} that $\sigma_{w^+_0}(\TTlowest_\lambda) = \sigma_{w_0}(\Tlowest_\lambda) = \Thighest_\lambda$ as needed.
\end{proof}


The only claim in Theorem~\ref{main-thm} left to prove is that for each strict partition $\lambda \in \NN^n$
there is a connected normal $\qq_n$-crystal with highest weight $\lambda$.
Thus it suffices to check the following:

 \begin{theorem}\label{sshtab-upgrade}
If $\lambda \in \NN^n$ is a strict partition, then $\SShTab_n(\lambda)$ is a connected normal $\qq_n$-crystal.
\end{theorem}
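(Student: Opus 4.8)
The plan is to show that each $\SShTab_n(\lambda)$ embeds as a full $\qq_n$-subcrystal of a tensor product of one-row shifted tableau crystals, and that the latter are normal. I would proceed in three steps.

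\textbf{Step 1: One-row crystals are normal.}
First I would establish that $\SShTab_n(\lambda)$ is normal when $\lambda = (r)$ is a single row. This should follow by a direct argument: the set of semistandard one-row shifted tableaux with entries at most $n$ is in an explicit weight-preserving bijection with a set of words in $\cW^+_n(r)$ that forms a full $\qq_n$-subcrystal. Concretely, a one-row shifted tableau of shape $(r)$ is determined by a weakly increasing row $1' \le T_{11} \le T_{12} \le \cdots \le T_{1r}$ with no repeated primed entry, and such rows correspond bijectively to certain weights in $\cW^+_n(r)$; using Corollary~\ref{qq-o-morphism-cor} (the map $w \mapsto \PHM(w)$) one checks that $\SShTab_n((r))$ is precisely $\PHM(\cW^+_n(r)\text{-subset})$ and hence normal. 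Alternatively, one can verify directly from the tableau-crystal formulas in Section~\ref{tab-op-sect} that the operators on $\SShTab_n((r))$ match those on a full subcrystal of $(\BB^+_n)^{\otimes r}$.

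\textbf{Step 2: Each $Q_\lambda$ is a constituent of a product of one-row Schur $Q$-polynomials.}
Next I would invoke the classical fact that for a strict partition $\lambda = (\lambda_1 > \lambda_2 > \cdots > \lambda_\ell)$, the product $Q_{(\lambda_1)} Q_{(\lambda_2)} \cdots Q_{(\lambda_\ell)}$ expands in the Schur $Q$-basis with $Q_\lambda$ appearing with nonzero (indeed positive) coefficient; this is standard from the theory of Schur $Q$-functions (see \cite[Chapter III, \S8]{Macdonald}), since the $Q$-functions have a Pieri-type multiplication rule and $\lambda$ is the unique dominant term arising from multiplying the one-row pieces in decreasing order.

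\textbf{Step 3: Extract $\SShTab_n(\lambda)$ as a full subcrystal.}
By Step 1 and Theorem~\ref{qq-thmdef}, the tensor product $\cC := \SShTab_n((\lambda_1)) \otimes \cdots \otimes \SShTab_n((\lambda_\ell))$ is a normal $\qq_n$-crystal, and its character is $\prod_i Q_{(\lambda_i)}(x_1,\dots,x_n)$. By Corollary~\ref{qq-o-morphism-cor}, $\cC$ is quasi-isomorphic under $w \mapsto \PHM(w)$ (after identifying $\cC$ with a subset of some $\cW^+_n(m)$ via Corollary~\ref{word-factor-cor} and Proposition~\ref{word-factor-prop2}) to a disjoint union of crystals $\SShTab_n(\mu)$; so every connected component of $\cC$ is isomorphic to some $\SShTab_n(\mu)$, and by Theorem~\ref{highest-wt-thm} each such component has character $Q_\mu$. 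Since by Step 2 the multiplicity of $Q_\lambda$ in $\ch(\cC)$ is nonzero, at least one component of $\cC$ is isomorphic to $\SShTab_n(\lambda)$. As a full subcrystal of the normal crystal $\cC$, that component is normal, and therefore $\SShTab_n(\lambda)$ is normal.

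\textbf{Main obstacle.}
The delicate point is Step 1: proving directly that the one-row crystal $\SShTab_n((r))$ is normal, i.e.\ embeds into $(\BB^+_n)^{\otimes r}$ as a full subcrystal. The tableau-crystal operators $f_i, e_i, f_{\bar 1}, e_{\bar 1}, f_0, e_0$ from Section~\ref{crystal-tab-sect} are defined by somewhat intricate case analyses, and one must match them against the word operators of Section~\ref{words-sect} under an explicit bijection (reading word, or the $\PHM$ map). The cleanest route is probably to use $\PHM$: show that $\PHM$ restricted to an appropriate subset of $\cW^+_n(r)$ surjects onto $\SShTab_n((r))$ and that this subset is a full $\qq_n$-subcrystal of $\cW^+_n(r) \cong (\BB^+_n)^{\otimes r}$; normality of $\SShTab_n((r))$ is then immediate from Corollary~\ref{qq-o-morphism-cor}. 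Everything after Step 1 is essentially bookkeeping with characters and the already-established quasi-isomorphism theorems.
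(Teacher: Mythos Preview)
Your proposal is correct and follows essentially the same three-step argument as the paper. The only notable difference is in Step~1: rather than routing through $\PHM$, the paper gives an explicit bijection between $\SShTab_n((m))$ and the set of words $w \in \cW^+_n(m)$ that are strictly decreasing then weakly increasing with all letters unprimed except possibly the valley; this bijection (add primes to the initial strictly decreasing part, then sort) is simple enough that normality of the one-row crystal is immediate, sidestepping the ``main obstacle'' you identified.
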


\begin{proof}
Theorem~\ref{highest-wt-thm} shows that  the $\qq_n$-crystal $\SShTab_n(\lambda)$ is connected.
Let $\cB_m := \SShTab_n((m))$ by the $\qq_n$-crystal of semistandard shifted tableaux with $m$ boxes all in the first row.
Consider the set of  words $w \in \cW^+_n(m)$
for which there exists  $i \in [m]$
such that
 $w_1  > \dots > w_i   \leq  \dots \leq w_{m}$
and
$w_j \in \ZZ$ for all $j \in [m]\setminus\{i\}$.
This set is a $\qq_n$-subcrystal 
isomorphic to $\cB_m$ via the map that produces a one-row tableau from $w$ by adding primes to the letters $w_1,w_2,\dots,w_{i-1}$
and then sorting the modified word, so that $532'2234 \mapsto \ytab{2' & 2 & 2& 3' & 3 & 4 & 5'}$
for example.
Thus $\cB_m$ is normal.

The character of $\cB_m$ is $q_m := Q_{(m)}(x_1,\dots,x_n)$,
so for any strict partition $\lambda \in \NN^n$
the character of  $\cB_\lambda :=\cB_{\lambda_1}\otimes \cB_{\lambda_2}\otimes \cdots \otimes \cB_{\lambda_n}$
is $q_{\lambda} := q_{\lambda_1} q_{\lambda_2}\cdots q_{\lambda_n}$. 
Each full $\q_n$-subcrystal of the normal $\qq_n$-crystal
$\cB_\lambda $
is isomorphic to  $\SShTab_n(\mu)$ for some strict partition $\mu \in \NN^n$ by Corollary~\ref{any-normal-cor}.
For a given $\mu$, consider the number $g_{\lambda\mu}$ of full subcrystals of $\cB_\lambda $ isomorphic to  $\SShTab_n(\mu)$.
To prove that $\SShTab_n(\lambda)$ is a normal $\qq_n$-crystal, it suffices to show that $g_{\lambda\lambda} > 0$.

As $\ch(\SShTab_n(\mu))=Q_\mu(x_1,\dots,x_n)$ and since the Schur $Q$-polynomials 
are a $\ZZ$-basis 
for $\SymQ(x_1,\dots,x_n)$,
the values of $g_{\lambda\mu}$ are the unique integers such that $q_\lambda = \sum_\mu g_{\lambda\mu} Q_\mu(x_1,\dots,x_n)$.
But it follows from 
 basic properties of Schur $Q$-polynomials
 that $ Q_\lambda(x_1,\dots,x_n)$ appears with coefficient $1$ in $q_\lambda$ (see the equations directly after \cite[Chapter III, (8.8$'$)]{Macdonald})
so  $g_{\lambda\lambda} =1$.
\end{proof}

\begin{corollary}\label{sshtab-upgrade-cor}
Let $\mu \subset \lambda$ be strict partitions.
When nonempty, the set $\SShTab_n(\lambda/\mu)$ 
is a normal (but not necessarily connected) $\gl_n$-crystal relative  to the operators defined in Section~\ref{tab-op-sect}.
\end{corollary}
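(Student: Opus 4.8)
\textbf{Proof plan for Corollary~\ref{sshtab-upgrade-cor}.}
The plan is to reduce the skew case to the straight case via the forgetful functor $\sF$ from the proof of Corollary~\ref{skew-norm-cor}, exactly as was done there for $\gl_n$-crystals of the form $\ShTab_n(\lambda/\mu)$. First I would set $k := \ell(\mu)$ and recall that $\sF$ sends a $\gl_{k+n}$-crystal to a $\gl_n$-crystal by restricting the weight map to its last $n$ coordinates and relabelling the operators $e_i,f_i$ as $\tilde e_{k+i},\tilde f_{k+i}$ for $i\in[n-1]$. As noted in the proof of Corollary~\ref{skew-norm-cor}, one has $\sF(\cB\otimes\cC)\cong\sF(\cB)\otimes\sF(\cC)$ and $\sF(\BB_{k+n})\cong\One\sqcup\cdots\sqcup\One\sqcup\BB_n$, so $\sF$ carries normal $\gl_{k+n}$-crystals to normal $\gl_n$-crystals.

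Next I would identify $\SShTab_n(\lambda/\mu)$ with a union of full $\gl_n$-subcrystals of $\sF(\SShTab_{k+n}(\lambda))$. Concretely, consider the subset of $T\in\SShTab_{k+n}(\lambda)$ that have the constant entry $i$ in every box of row $i$ of $\SD_\mu$ for $i\in[k]$, and only entries greater than $k$ in $\SD_{\lambda/\mu}$. Because the shifted reading word of such a $T$ begins with the fixed block of entries $1,2,\dots,k$ coming from $\SD_\mu$, the crystal operators $\tilde e_{k+i},\tilde f_{k+i}$ (for $i\in[n-1]$) never disturb this block; hence this subset is closed under $\sF$'s operators and forms a union of full subcrystals of $\sF(\SShTab_{k+n}(\lambda))$. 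Restricting each such $T$ to $\SD_{\lambda/\mu}$ and subtracting $k$ from every entry gives a weight-preserving bijection onto $\SShTab_n(\lambda/\mu)$ that intertwines the operators of Section~\ref{tab-op-sect}; this is the same identification used in the proof of Corollary~\ref{skew-norm-cor}, now without deleting primes on the diagonal. Since $\SShTab_{k+n}(\lambda)$ is a normal $\qq_{k+n}$-crystal by Theorem~\ref{sshtab-upgrade}, it is in particular a normal $\gl_{k+n}$-crystal, so $\sF(\SShTab_{k+n}(\lambda))$ is a normal $\gl_n$-crystal, and therefore so is the full subcrystal corresponding to $\SShTab_n(\lambda/\mu)$.

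I expect the only genuinely delicate point to be verifying that the identification of $\SShTab_n(\lambda/\mu)$ with the described subset of $\sF(\SShTab_{k+n}(\lambda))$ really does commute with the operators $e_i,f_i$ of Definitions~\ref{shtab-f} and \ref{shtab-e} — that is, that the pairing procedure of Definition~\ref{tab-pair-def} and the ribbon-moving cases of the lowering/raising operators behave identically on $\SD_{\lambda/\mu}$ whether one works inside the larger shape $\SD_\lambda$ (with the frozen $\SD_\mu$ part) or directly on the skew shape. This follows because the frozen block contributes only letters $1,\dots,k$ at the very start of the shifted reading word, which lie strictly below every relevant letter $i',i,i+1',i+1$ for $i\geq k+1$ and hence cannot be paired with or altered by $e_i,f_i$; and the boxes of $\SD_\mu$, having constant small entries, never lie on any $i$-ribbon for $i\geq k+1$. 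This is routine to check from the definitions but should be stated explicitly. Everything else — normality of $\sF$ on normal crystals, and the application of Theorem~\ref{sshtab-upgrade} — is immediate from results already established.
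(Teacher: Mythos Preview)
Your proposal is correct and follows essentially the same approach as the paper: both use the forgetful functor $\sF$ from the proof of Corollary~\ref{skew-norm-cor}, embed $\SShTab_n(\lambda/\mu)$ as a union of full subcrystals of $\sF(\SShTab_{k+n}(\lambda))$, and invoke Theorem~\ref{sshtab-upgrade} to conclude normality. Your write-up is more detailed than the paper's two-line proof, and the extra discussion of why the identification commutes with the operators is a welcome elaboration; one small imprecision is the claim that the frozen $\SD_\mu$ block appears ``at the very start'' of the shifted reading word (the reading order goes from high rows/columns to low), but this is irrelevant to the argument since what actually matters is that those entries lie in $[k]$ and hence never appear in $T|_{[k+i,k+i+1]}$ for $i\in[n-1]$.
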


\begin{proof}
If $\sF$ is the functor defined in the proof of Corollary~\ref{skew-norm-cor}, then
 $\SShTab_n(\lambda/\mu)$ is isomorphic to a subcrystal of 
$\sF(\SShTab_{k+n}(\lambda))$, which is a normal $\gl_{n}$-crystal by Theorem~\ref{sshtab-upgrade}.
\end{proof}

We finally obtain a stronger form of Proposition~\ref{qq-orf-prop}.

 \begin{corollary}\label{orf-upgrade}
If $z \in I_\ZZ$ and $\ORF_n(z)$ nonempty, then $\ORF_n(z)$  is a normal $\qq_n$-crystal.
\end{corollary}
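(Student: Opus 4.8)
The plan is to deduce this from the machinery established for shifted tableaux, exactly as the preceding corollaries have done. Recall from Theorem~\ref{qqo-morphism-thm} that the map $a \mapsto \QO(a)$ is a quasi-isomorphism of $\qq_n$-crystals
\[
\bigsqcup_{y \in I_\ZZ} \Incr^+_n(y) \;\longrightarrow\; \bigsqcup_{\text{strict partitions }\nu \in \NN^n} \SShTab_n(\nu),
\]
and that the full $\qq_n$-subcrystals of the source are precisely the fibers of $a \mapsto \PO(a)$. The first step is to note that $\ORF_n(z)$ is, by Corollary~\ref{word-factor-cor}'s construction, a subset of $\Incr^+_N(z)$ for suitable $N$; more directly, $\ORF_n(z)$ with $\ell(\mu(z)) \leq n$ is literally the set $\Incr^+_n(z)$, so it is a union of full $\qq_n$-subcrystals of $\bigsqcup_{y} \Incr^+_n(y)$, one for each value of $\PO$ on $\ORF_n(z)$.

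Next I would invoke the quasi-isomorphism: each such full subcrystal $\{ a \in \ORF_n(z) : \PO(a) = P\}$ maps isomorphically (as a $\qq_n$-crystal) onto some $\SShTab_n(\nu)$, where $\nu$ is the shape of $P$ and $\ell(\nu)\leq n$ so that $\nu \in \NN^n$. By Theorem~\ref{sshtab-upgrade}, each $\SShTab_n(\nu)$ is a connected normal $\qq_n$-crystal. Since normality of a crystal means each of its full $\qq_n$-subcrystals is isomorphic to a full $\qq_n$-subcrystal of $(\BB^+_n)^{\otimes m}$ for some $m$, and since $\ORF_n(z)$ is a disjoint union of full $\qq_n$-subcrystals each isomorphic to some normal $\SShTab_n(\nu)$, it follows immediately that every full $\qq_n$-subcrystal of $\ORF_n(z)$ is isomorphic to a full $\qq_n$-subcrystal of some $(\BB^+_n)^{\otimes m}$. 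Hence $\ORF_n(z)$ is normal.

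There is essentially no obstacle here: the theorem is a formal consequence of Theorem~\ref{qqo-morphism-thm} (which identifies $\ORF_n(z) = \Incr^+_n(z)$ with a union of shifted-tableau crystals via $\QO$) together with Theorem~\ref{sshtab-upgrade} (which establishes normality of those tableau crystals). The only point requiring a word of care is confirming that the shapes $\nu$ arising as $\PO(a)$ for $a \in \ORF_n(z)$ satisfy $\ell(\nu) \leq n$, which holds because $\PO(a)$ has at most $n$ rows whenever $a$ is an $n$-tuple; thus $\nu \in \NN^n$ and Theorem~\ref{sshtab-upgrade} applies. Here is the argument written out.

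\begin{proof}
By Proposition~\ref{qq-orf-prop} the set $\ORF_n(z)$ is a $\qq_n$-crystal; we must show it is normal, i.e.\ that each of its full $\qq_n$-subcrystals is isomorphic to a full $\qq_n$-subcrystal of $(\BB^+_n)^{\otimes m}$ for some $m \in \NN$. Since $\ell(\mu(z)) \leq n$, we have $\ORF_n(z) = \Incr^+_n(z)$, so by Theorem~\ref{qqo-morphism-thm} the map $a \mapsto \QO(a)$ restricts to a quasi-isomorphism of $\qq_n$-crystals from $\ORF_n(z)$ to $\bigsqcup_\nu \SShTab_n(\nu)$, where $\nu$ ranges over the shapes of the tableaux $\PO(a)$ for $a \in \ORF_n(z)$. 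Each such $\PO(a)$ is an increasing shifted tableau with at most $n$ rows, so every such $\nu$ is a strict partition in $\NN^n$. Moreover, again by Theorem~\ref{qqo-morphism-thm}, the full $\qq_n$-subcrystals of $\ORF_n(z)$ are exactly the fibers $\{a \in \ORF_n(z) : \PO(a) = P\}$, and $\QO$ carries such a fiber isomorphically onto $\SShTab_n(\nu)$ where $\nu$ is the shape of $P$.

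Let $\cX$ be a full $\qq_n$-subcrystal of $\ORF_n(z)$; then $\cX$ is isomorphic as a $\qq_n$-crystal to $\SShTab_n(\nu)$ for some strict partition $\nu \in \NN^n$. By Theorem~\ref{sshtab-upgrade}, $\SShTab_n(\nu)$ is a connected normal $\qq_n$-crystal, so it is isomorphic to a full $\qq_n$-subcrystal of $(\BB^+_n)^{\otimes m}$ for some $m \in \NN$. Composing isomorphisms shows that $\cX$ is likewise isomorphic to a full $\qq_n$-subcrystal of $(\BB^+_n)^{\otimes m}$. As $\cX$ was arbitrary, $\ORF_n(z)$ is a normal $\qq_n$-crystal.
\end{proof}
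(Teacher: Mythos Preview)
Your proof is correct and follows exactly the same approach as the paper, which simply states that the result follows from Theorems~\ref{qqo-morphism-thm} and~\ref{sshtab-upgrade}. You have just written out in detail the one-line deduction: each full $\qq_n$-subcrystal of $\ORF_n(z)$ is carried by $\QO$ isomorphically onto some $\SShTab_n(\nu)$, which is normal by Theorem~\ref{sshtab-upgrade}.
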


Recall that $\ORF_n(z)\neq \varnothing$ if and only if the partition  $\mu(z)$  in
Proposition~\ref{mu-prop} has $\ell(\mu(z)) \leq n$.

\begin{proof}
This follows from Theorems~\ref{qqo-morphism-thm} and \ref{sshtab-upgrade}.
\end{proof}

The character of $\ORF_n(z)$ for $z \in I_\ZZ$ is the polynomial $ \hat G_z(x_1,x_2,\dots,x_n)$
where $\hat G_z $ is the (rescaled) \defn{involution Stanley symmetric function} studied in \cite[\S4.5]{HMP4}.
The definition of $\hat G_z$ in \cite[\S4.5]{HMP4} is as a generating function for certain decreasing rather than increasing factorizations,
so this identification is not obvious, but follows from \cite[Remark 3.10]{Marberg2019b}.

Each $\hat G_z$ is a finite linear combination of Schur $Q$-functions with nonnegative integer coefficients \cite[Cor. 4.62]{HMP4}.
Corollary~\ref{orf-upgrade} leads to another interpretation of these coefficients via  Theorem~\ref{main-thm}.

\begin{corollary}\label{q-pos-cor}
Suppose $z \in I_\ZZ$ and $n$ is the common length of all words in $\iR(z)$. 
Then $\hat G_z = \sum_\lambda c_{z\lambda} Q_\lambda$ where $c_{z\lambda}$ is the number of $\qq_n$-highest weight
elements in $\ORF_n(z)$ with weight $\lambda$. 
\end{corollary}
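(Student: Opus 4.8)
The plan is to deduce Corollary~\ref{q-pos-cor} directly from the crystal-theoretic results already established, using that $\ORF_n(z)$ is a normal $\qq_n$-crystal and that its character is $\hat G_z$. First I would recall from the discussion after Corollary~\ref{orf-upgrade} that $\ch(\ORF_n(z)) = \hat G_z(x_1,\dots,x_n)$; since $n$ is the common length of the words in $\iR(z)$, Proposition~\ref{mu-prop} (with the observation that $\ell(\mu(z))$ equals $\max_i \hat c_i(z)$, which is at most this common length) guarantees $\ORF_n(z)\neq\varnothing$, so Corollary~\ref{orf-upgrade} applies and $\ORF_n(z)$ is a normal $\qq_n$-crystal.

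Next I would invoke the main theorem. By Corollary~\ref{any-normal-cor}, every connected component of $\ORF_n(z)$ is isomorphic to $\SShTab_n(\lambda)$ for a unique strict partition $\lambda\in\NN^n$, and by Theorem~\ref{highest-wt-thm} each such component contains exactly one $\qq_n$-highest weight element, whose weight is $\lambda$. Therefore the number $c_{z\lambda}$ of $\qq_n$-highest weight elements of $\ORF_n(z)$ of weight $\lambda$ equals the number of connected components isomorphic to $\SShTab_n(\lambda)$. Writing $\ORF_n(z)$ as the disjoint union of its components and using that $\ch$ is additive over disjoint unions and that $\ch(\SShTab_n(\lambda)) = Q_\lambda(x_1,\dots,x_n)$ (as recorded in Section~\ref{crystal-tab-sect}), we get
\[
\hat G_z(x_1,\dots,x_n) = \ch(\ORF_n(z)) = \sum_\lambda c_{z\lambda}\, Q_\lambda(x_1,\dots,x_n).
\]
Finally, to pass from the polynomial identity to the identity $\hat G_z = \sum_\lambda c_{z\lambda} Q_\lambda$ of symmetric functions, I would note that $\hat G_z$ is a finite $\ZZ$-linear combination of Schur $Q$-functions by \cite[Cor. 4.62]{HMP4}, say $\hat G_z = \sum_\lambda d_{z\lambda} Q_\lambda$; stabilizing in enough variables (the Schur $Q$-polynomials $Q_\lambda(x_1,\dots,x_n)$ for strict $\lambda\in\NN^n$ are linearly independent, being a $\ZZ$-basis of $\SymQ(x_1,\dots,x_n)$, and $Q_\lambda = \lim_{n\to\infty} Q_\lambda(x_1,\dots,x_n)$) forces $c_{z\lambda} = d_{z\lambda}$ for all strict partitions $\lambda$ with at most $n$ parts, and for longer $\lambda$ both sides vanish since $\ORF_n(z)$ has no element of such weight. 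This gives the claimed formula with nonnegative integer coefficients.

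There is essentially no hard calculation here; the whole argument is an assembly of Corollary~\ref{orf-upgrade}, Corollary~\ref{any-normal-cor}, Theorem~\ref{highest-wt-thm}, and the character identity $\ch(\ORF_n(z)) = \hat G_z$. The one point requiring a little care—and the closest thing to an obstacle—is the bookkeeping of variables: one must be sure that the hypothesis ``$n$ is the common length of all words in $\iR(z)$'' is exactly what is needed both to guarantee $\ORF_n(z)\neq\varnothing$ and to ensure no strict partition $\lambda$ with more than $n$ parts contributes, so that the finite-variable identity upgrades cleanly to the symmetric-function identity. I would state this linear-independence/stabilization step explicitly rather than leave it implicit.
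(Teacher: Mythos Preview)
Your argument is correct and is exactly the intended one: the paper states Corollary~\ref{q-pos-cor} without proof because it follows immediately from Corollary~\ref{orf-upgrade} together with Theorem~\ref{main-thm} (equivalently Corollary~\ref{any-normal-cor} and Theorem~\ref{highest-wt-thm}) and the character identity $\ch(\ORF_n(z))=\hat G_z(x_1,\dots,x_n)$. Your explicit handling of the passage from the polynomial identity to the symmetric-function identity is a useful clarification; note that it can be shortened by observing that $\hat G_z$ is homogeneous of degree $n$, so every strict $\lambda$ in its Schur $Q$-expansion has $|\lambda|=n$ and hence $\ell(\lambda)\le n$, making the specialization to $n$ variables injective on the relevant span.
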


One says that $\pi \in S_\ZZ$ is \defn{vexillary} if it is $2143$-avoiding in the sense that  $\pi(i_2) < \pi(i_1) < \pi(i_4) < \pi(i_3)$ 
never holds for $i_1<i_2<i_3<i_4$.
The symmetric function $\hat G_z$ is equal to a Schur $Q$-function if and only $z \in I_\ZZ$
is vexillary \cite[Thm. 1.15]{HMP4}. 
In this case $\hat G_z =  Q_{\mu(z)}$  \cite[Thm. 1.13]{HMP4}.

 \begin{corollary}
 Let $z \in I_\ZZ$.
The normal $\qq_n$-crystal $\ORF_n(z)$ is connected for all $n \geq \ell(\mu(z))$ if and only if $z$ is vexillary,
in which case $\ORF_n(z) \cong \SShTab_n( \mu(z))$.
\end{corollary}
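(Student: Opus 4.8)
The statement to prove is a biconditional: $\ORF_n(z)$ is connected for all $n\geq \ell(\mu(z))$ if and only if $z\in I_\ZZ$ is vexillary; and in the vexillary case $\ORF_n(z)\cong \SShTab_n(\mu(z))$. The plan is to leverage the character identity $\ch(\ORF_n(z)) = \hat G_z(x_1,\dots,x_n)$ (from the discussion after Corollary~\ref{orf-upgrade}) together with the fact that $\ORF_n(z)$ is a normal $\qq_n$-crystal (Corollary~\ref{orf-upgrade}) and the classification of connected normal $\qq_n$-crystals in Corollary~\ref{any-normal-cor}.

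\textbf{The forward direction (vexillary $\Rightarrow$ connected).}
First I would suppose $z$ is vexillary. By \cite[Thm.\ 1.13]{HMP4} (quoted in the excerpt) we have $\hat G_z = Q_{\mu(z)}$, hence $\ch(\ORF_n(z)) = Q_{\mu(z)}(x_1,\dots,x_n)$ for every $n\geq \ell(\mu(z))$. Since $\ORF_n(z)$ is a normal $\qq_n$-crystal, each of its connected components is, by Corollary~\ref{any-normal-cor}, isomorphic to some $\SShTab_n(\lambda)$ with $\lambda\in\NN^n$ a strict partition, contributing $Q_\lambda(x_1,\dots,x_n)$ to the character. Because the Schur $Q$-polynomials $Q_\lambda(x_1,\dots,x_n)$ for strict $\lambda\in\NN^n$ are linearly independent (they are a $\ZZ$-basis of $\SymQ(x_1,\dots,x_n)$, as recalled in Section~\ref{sym-sect}) and all coefficients $g_{\lambda}$ counting components of each isomorphism type are nonnegative integers, the only way the total character can equal the single basis element $Q_{\mu(z)}(x_1,\dots,x_n)$ is if there is exactly one component and it is isomorphic to $\SShTab_n(\mu(z))$. (One should be slightly careful at the boundary $n = \ell(\mu(z))$: if $n<\ell(\mu(z))$ then $\ORF_n(z)=\varnothing$ by Proposition~\ref{mu-prop}, but the claim is only asserted for $n\geq \ell(\mu(z))$, so this is fine.) This proves connectedness for all such $n$ and simultaneously establishes the isomorphism $\ORF_n(z)\cong\SShTab_n(\mu(z))$.

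\textbf{The reverse direction (non-vexillary $\Rightarrow$ disconnected for some $n$).}
For the contrapositive, suppose $z$ is not vexillary. Then by \cite[Thm.\ 1.15]{HMP4} $\hat G_z$ is \emph{not} equal to a single Schur $Q$-function. Since $\hat G_z$ is a nonnegative integer combination of Schur $Q$-functions \cite[Cor.\ 4.62]{HMP4}, not being a single $Q_\lambda$ means $\hat G_z = \sum_\lambda c_\lambda Q_\lambda$ with $\sum_\lambda c_\lambda \geq 2$. Choose $n$ large enough (in particular $n\geq \ell(\mu(z))$ and $n$ at least the maximum number of parts among all $\lambda$ with $c_\lambda\neq 0$, so that the specializations $Q_\lambda(x_1,\dots,x_n)$ remain linearly independent and the relation $\hat G_z(x_1,\dots,x_n) = \sum_\lambda c_\lambda Q_\lambda(x_1,\dots,x_n)$ still records all the coefficients faithfully). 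Then by Corollary~\ref{q-pos-cor} the total number of $\qq_n$-highest weight elements in $\ORF_n(z)$ equals $\sum_\lambda c_\lambda \geq 2$. A connected normal $\qq_n$-crystal has a unique $\qq_n$-highest weight element (Corollary~\ref{any-normal-cor}), so $\ORF_n(z)$ has at least two connected components for this $n$, hence is not connected for all $n\geq\ell(\mu(z))$. Finally, one observes that the hypothesis ``$n$ is the common length of all words in $\iR(z)$'' in Corollary~\ref{q-pos-cor} is harmless: either one invokes it at that specific $n$, or one notes (via Proposition~\ref{word-factor-prop1} and Corollary~\ref{word-factor-cor}, or more simply via the character argument of the forward direction run in reverse) that disconnectedness at one admissible value of $n\geq\ell(\mu(z))$ already contradicts ``connected for all $n\geq\ell(\mu(z))$.''

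\textbf{Main obstacle.}
The conceptual content is entirely imported from \cite{HMP4} and the earlier sections, so the only real care needed is bookkeeping about the number of variables: one must confirm that the specialization map $\SymQ \to \SymQ(x_1,\dots,x_n)$ sending $Q_\lambda \mapsto Q_\lambda(x_1,\dots,x_n)$ is injective on the (finite) set of $\lambda$ appearing in $\hat G_z$ once $n$ is chosen large enough, and that $\ORF_n(z)$ is nonempty exactly when $n\geq\ell(\mu(z))$ (Proposition~\ref{mu-prop}). I expect the most delicate—though still routine—point to be phrasing the biconditional cleanly given that ``connected'' is quantified over all $n\geq\ell(\mu(z))$ while Corollary~\ref{q-pos-cor} is stated at one particular $n$; the resolution is simply that the character/basis argument in the forward direction works uniformly in $n$, so it suffices to produce disconnectedness at a single admissible $n$ in the reverse direction.
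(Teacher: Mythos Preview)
Your proposal is correct and follows essentially the same approach as the paper: the paper's proof simply observes that a normal $\qq_n$-crystal is connected if and only if its character is a single Schur $Q$-polynomial, and that this holds for all $n$ precisely when $\hat G_z$ equals a Schur $Q$-function, which by \cite[Thm.~1.15]{HMP4} characterizes vexillary $z$. Your version spells out the same argument with more care about the number of variables and the basis property of Schur $Q$-polynomials, but the underlying logic is identical.
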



\begin{proof}
 When nonempty, $\ORF_n(z)$ is a connected $\qq_n$-crystal if and only if its character is a
 single Schur $Q$-polynomial, 
 which occurs for all $n$ precisely when $\hat G_z$ is equal to a Schur $Q$-function.
\end{proof}

If $z \in I_\ZZ$ is vexillary, then the normal $\q_n$-crystal
$\Incr_n(z)$ is also connected, but the latter may be connected without $z$ being vexillary;
see \cite[Cor. 4.56]{HMP4}, which via  \cite[Cor. 3.34]{Marberg2019b} gives
a more complicated pattern avoidance condition characterizing when this happens.

\end{document}